\documentclass{article}

\usepackage[final]{neurips_2024}

\usepackage[utf8]{inputenc} 
\usepackage[T1]{fontenc}    
\usepackage{hyperref}       
\usepackage{url}            
\usepackage{booktabs}       
\usepackage{amsfonts}       
\usepackage{nicefrac}       
\usepackage{microtype}      
\usepackage{xcolor}         

\newcommand{\newmethod}{M3}
\newcommand{\explain}{M3 = MARINA-P + Momentum + MARINA}

\newcommand{\parens}[1]{\left( #1 \right)}
\newcommand{\brac}[1]{\left\{ #1 \right\}}

\usepackage{amsmath}
\usepackage{amssymb}
\usepackage{mathtools}
\usepackage{amsthm}
\usepackage{thm-restate}
\usepackage{adjustbox}

\allowdisplaybreaks

\usepackage[capitalize,noabbrev]{cleveref}

\usepackage{graphicx}
\usepackage{apptools}
\usepackage[flushleft]{threeparttable}
\usepackage{array,booktabs,makecell}
\usepackage{multirow}
\usepackage{nicefrac}
\usepackage{amsthm}
\usepackage{amsmath,amsfonts,bm}
\usepackage{wrapfig}
\usepackage{caption}
\usepackage{subcaption}
\usepackage{siunitx}
\usepackage{thm-restate}
\usepackage{nccmath}
\usepackage{bbm}
\usepackage[shortlabels]{enumitem}

\usepackage{algorithmic}
\usepackage{algorithm}

\usepackage{color}
\usepackage{colortbl}
\definecolor{bgcolor}{rgb}{0.76,0.88,0.50}
\definecolor{bgcolor0}{rgb}{0.93,0.99,1}
\definecolor{bgcolor1}{rgb}{0.8,1,1}
\definecolor{bgcolor2}{rgb}{0.8,1,0.8}
\definecolor{bgcolor3}{rgb}{0.50,0.90,0.50}
\usepackage{tcolorbox}
\usepackage{pifont}

\newcommand{\norm}[1]{\left\| #1 \right\|}


\newcommand{\R}{\mathbb{R}} 
\newcommand{\N}{\mathbb{N}} 

\newcommand{\Exp}[1]{{\rm \mathbb{E}}\left[#1\right]}
\newcommand{\ExpSub}[2]{{\rm \mathbb{E}}_{#1}\left[#2\right]}

\newcommand{\Prob}[1]{\mathbb{P}\left(#1\right)} 
\newcommand{\ProbCond}[2]{\mathbb{P}\left(#1\middle\vert#2\right)}

\newcommand{\cA}{\mathcal{A}}

\newcommand{\cC}{\mathcal{C}}

\newcommand{\cF}{\mathcal{F}}

\newcommand{\cO}{\mathcal{O}}
\newcommand{\cQ}{\mathcal{Q}}

\newcommand{\mA}{\mathbf{A}}
\newcommand{\mX}{\mathbf{X}}

\newcommand{\mE}{\mathbf{E}}

\newcommand{\mI}{\mathbf{I}}

\newcommand{\mD}{\mathbf{D}}

\newcommand{\mQ}{\mathbf{Q}}

\newcommand{\eqdef}{:=} 

\makeatletter
\newcommand{\vast}{\bBigg@{4}}

\theoremstyle{plain}
\newtheorem{theorem}{Theorem}[section]

\newtheorem{lemma}[theorem]{Lemma}
\newtheorem{corollary}[theorem]{Corollary}
\theoremstyle{definition}
\newtheorem{definition}[theorem]{Definition}
\newtheorem{assumption}[theorem]{Assumption}
\theoremstyle{remark}
\newtheorem{remark}[theorem]{Remark}
\newtheorem{example}[theorem]{Example}

\newcommand{\algnamebig}[1]{{\sf #1}}
\newcommand{\algname}[1]{{\small \sf #1}}
\newcommand{\algnamesmall}[1]{{\scriptsize \sf #1}}

\makeatletter
\newcommand{\setword}[2]{%
  \phantomsection
  #1\def\@currentlabel{\unexpanded{#1}}\label{#2}%
}
\makeatother

  {\list{}{\leftmargin=0.2in\rightmargin=0.2in}\item[]}%
  {\endlist}

\usepackage[textsize=tiny]{todonotes}

\makeatletter
\newenvironment{protocol}[1][htb]{%
    \renewcommand{\ALG@name}{Protocol}
   \begin{algorithm}[#1]%
  }{\end{algorithm}}
\makeatother

\usepackage{tcolorbox}

\newtcolorbox{theorembox}{
  colback=gray!20,
  colframe=gray!20,
  boxrule=0.8pt,
  before skip=10pt,
  after skip=10pt
}

\newcommand{\alglinelabelmain}{%
  \addtocounter{ALC@line}{-1}
  \refstepcounter{ALC@line}
  \label
}

\newcommand{\alglinelabel}{%
  \addtocounter{ALC@line}{-1}
  \refstepcounter{ALC@line}
  \label
}

\newcommand{\alglinelabelfirst}{%
  \addtocounter{ALC@line}{-1}
  \refstepcounter{ALC@line}
  \label
}

\hypersetup{
  colorlinks   = true, 
  urlcolor     = blue, 
  linkcolor    = {red!75!black}, 
  citecolor   = {blue!50!black} 
}

\title{Improving the Worst-Case Bidirectional Communication Complexity for Nonconvex Distributed Optimization under Function Similarity}

\author{%
  Kaja Gruntkowska \\
  KAUST\thanks{King Abdullah University of Science and Technology, Thuwal, Saudi Arabia} \\
  \And
  Alexander Tyurin \\
  KAUST\footnotemark[1],\,\,AIRI\thanks{AIRI, Moscow, Russia},\,\,Skoltech\thanks{Skolkovo Institute of Science and Technology, Moscow, Russia} \\
  \And
  Peter Richt\'{a}rik \\
  KAUST\footnotemark[1] \\
}

\begin{document}

\maketitle

\begin{abstract}
    Effective communication between the server and workers plays a key role in distributed optimization. In this paper, we focus on optimizing communication, uncovering inefficiencies in prevalent downlink compression approaches.
Considering first the pure setup where the uplink communication costs are negligible, we introduce \algname{MARINA-P}, a novel method for downlink compression, employing a collection of correlated compressors. 
Theoretical analysis demonstrates that \algname{MARINA-P} with permutation compressors can achieve a server-to-worker communication complexity improving with the number of workers, thus being provably superior to existing algorithms.
We further show that \algname{MARINA-P} can serve as a starting point for extensions such as methods supporting bidirectional compression: we introduce \algname{M3}, a method combining \algname{MARINA-P} with uplink compression and a momentum step, achieving bidirectional compression with provable improvements in total communication complexity as the number of workers increases.
Theoretical findings align closely with empirical experiments, underscoring the efficiency of the proposed algorithms.
\end{abstract}

\section{Introduction}
\label{sec:introduction}

In federated learning \citep{mcmahan2017communication, konevcny2016federated} and large-scale machine learning \citep{ramesh2021zero,OpenAI_GPT4_2023}, a typical environment consists of multiple devices working together to train a model. Facilitating this collaborative process requires the transmission of substantial information (e.g., gradients, current model) between these devices. In the centralized framework, communication takes place via a server. As a result, practical challenges arise due to the large size of machine learning models and network speed limitations, potentially creating a communication bottleneck \citep{kairouz2021advances,wang2023cocktailsgd}. One possible strategy to reduce this communication burden is to use \emph{lossy compression} \citep{Seide2014,alistarh2017qsgd}. Our paper focuses on this research direction.

We consider the following nonconvex distributed optimization task:
\begin{align}
\label{eq:main_task}
\textstyle \min \limits_{x \in \R^d} \left\{f(x) \eqdef \frac{1}{n} \sum\limits_{i=1}^n f_i(x)\right\},
\end{align}
where $x\in\R^d$ is the vector of parameters of the model, $n$ is the number of workers and $f_i \,:\, \R^d \rightarrow \R$, $i \in [n] \eqdef \{1, \dots, n\}$ are smooth nonconvex functions. We investigate the scenario where the functions $f_i$ are stored on $n$ distinct workers, each directly connected to the server via some communication port \citep{kairouz2021advances}. At present, we operate under the following generic assumptions:
\begin{assumption}
    \label{ass:lipschitz_constant}
    The function $f$ is $L$--smooth, i.e., $\norm{\nabla f(x) - \nabla f(y)} \leq L \norm{x - y}$ $\forall x, y \in \R^d.$
\end{assumption}

\begin{assumption}
    \label{ass:lower_bound}
    There exists $f^* \in \R$ such that $f(x) \geq f^*$ $\forall x \in \R^d.$
\end{assumption}

In the nonconvex world, our goal is to find a (possibly) random point $\bar{x}$ such that ${\rm \mathbb{E}}[\norm{\nabla f(\bar{x})}^2] \leq \varepsilon.$ We refer to such a point an $\varepsilon$--stationary point.

\subsection{Related Work}
\label{sec:related_work}
Before we discuss more advanced optimization methods, let us consider the simplest baseline: the gradient descent (\algname{GD}) \citep{lan2020first}, which iteratively performs updates $x^{t+1} = x^{t} - \gamma \nabla f(x^t) = x^{t} - \nicefrac{\gamma}{n} \sum_{i=1}^{n} \nabla f_i(x^t)$. In the distributed setting, the method can be implemented as follows: each worker calculates $\nabla f_i(x^t)$ and sends it to the server where the gradients are aggregated, after which the server takes the step and broadcasts $x^{t+1}$ back to the workers. With step size $\gamma = \nicefrac{1}{L},$ \algname{GD} finds an $\varepsilon$--stationary point after $\cO\left(\nicefrac{\delta^0 L}{\varepsilon}\right)$ steps, where $\delta^0 \eqdef f(x^0) - f^*$ for a starting point $x^0$. Since at each step the workers and the server send $\Theta(d)$ coordinates/bits, the worker-to-server (w2s, uplink) and server-to-worker (s2w, downlink) communication costs are 
\begin{align}
    \label{eq:comm_gd}
    \textstyle \cO\left(\frac{d \delta^0 L}{\varepsilon}\right).
\end{align}
\begin{definition}
    The \emph{worker-to-server (w2s)} and \emph{server-to-worker (s2w)} communication complexities of a method are the expected number of coordinates/floats that a worker sends to the server and that the server sends to a worker, respectively, to find an $\varepsilon$--solution. The \emph{total communication complexity} is the sum of these complexities.
\end{definition}
\textbf{Unbiased compressors.}
In this work, to perform lossy compression, we employ mappings from the following family:
\begin{definition}
\label{def:unbiased_compression}
A stochastic mapping $\cC\,:\,\R^d \rightarrow \R^d$ is an \textit{unbiased compressor} if
there exists $\omega \geq 0$ such that
\begin{align}
    \label{eq:compressor}
    \textstyle \Exp{\cC(x)} = x, \, \Exp{\norm{\cC(x) - x}^2} \leq \omega \norm{x}^2 \, \forall x \in \R^d.
\end{align}
\end{definition}

We denote the family of such mappings by $\mathbb{U}(\omega)$. A canonical example is the Rand$K \in \mathbb{U}(\nicefrac{d}{K} - 1)$ sparsifier, which preserves $K$ random coordinates of a vector scaled by $\nicefrac{d}{K}$ \citep{beznosikov2020biased}. More examples can be found in \citet{wangni2018gradient,beznosikov2020biased,szlendak2021permutation,horvoth2022natural}. A larger family of compressors, called \emph{biased compressors}, also exists (see Section~\ref{sec:biased_compressors}). In this paper, we implicitly assume that compressors are mutually independent \emph{across iterations} of algorithms.

\textbf{Worker-to-server compression scales with $n$.}
Many previous works ignore the s2w communication costs and focus solely on w2s compression, assuming that \emph{broadcasting is free}. 
For nonconvex objective functions, the current state-of-the-art w2s communication complexities are achieved by the \algname{MARINA} and \algname{DASHA} methods \citep{MARINA,szlendak2021permutation,tyurin2022dasha}.
Here, two additional assumptions are needed:
\begin{assumption}
    \label{ass:local_lipschitz_constant}
    The function $f_i$ is $L_i$--smooth. We define $\widehat{L}^2 \eqdef \frac{1}{n} \sum_{i=1}^n L_i^2$ and $L_{\max} \eqdef \max_{i \in [n]} L_i.$
\end{assumption}
\begin{assumption}
    \label{ass:independent}
    For all $\cC \in \mathbb{U}(\omega),$ all calls of $\cC$ are mutually independent.\footnote{This assumptions means that if an algorithm calls a compressor $\cC$ at some points $x_1, \dots, x_m,$ then $\cC(x_1), \dots, \cC(x_m)$ are \emph{i.i.d.}}
\end{assumption}
Under Assumptions~\ref{ass:lipschitz_constant}, \ref{ass:lower_bound}, \ref{ass:local_lipschitz_constant}, \ref{ass:independent}, and considering the Rand$K$ compressor with $K \leq \nicefrac{d}{\sqrt{n}}$ as an example, the w2s communication complexity of both methods is
\begin{align}
    \label{eq:comm_marina}
    \textstyle \underbrace{\textstyle K}_{\textnormal{\# of sent coord.}} \times \underbrace{\textstyle \cO\left(\frac{\delta^0}{\varepsilon} (L + \frac{\omega}{\sqrt{n}}\widehat{L})\right)}_{\textnormal{\# of iterations}} = \cO\left(\frac{d\delta^0\widehat{L}}{\sqrt{n} \varepsilon}\right),
\end{align}
where we use the facts that $L \leq \widehat{L}$ and $\omega = \nicefrac{d}{K} - 1$ for Rand$K$. The key observation is that when comparing \eqref{eq:comm_gd} and \eqref{eq:comm_marina}, one sees that \eqref{eq:comm_marina} can be $\sqrt{n}$ times smaller if $\widehat{L} \approx L.$
Consequently, the communication complexity of \mbox{\algname{MARINA}/\algname{DASHA}} scales with the number of workers $n,$ and can \emph{provably} improve the \emph{worker-to-server} communication complexity $\cO\left(\nicefrac{d \delta^0 L}{\varepsilon}\right)$ achieved by \algname{GD}.

\textbf{Server-to-worker compression does not scale with $n$.} In certain applications, the significance of s2w communication cannot be ignored. In 4G LTE and 5G networks, w2s and s2w communication speeds can be almost the same \citep{huang2012close} or differ by at most a factor of $10$ \citep{narayanan2021variegated}. Although important, this issue is often overlooked and that is why it is the s2w communication that this work places a central emphasis on.

There exist many papers using communication compression techniques to reduce the s2w communication \citep{zheng2019communication,liu2020double,philippenko2021preserved,fatkhullin2021ef21,gruntkowska2023ef21,tyurin20232direction}. However, to the best of our knowledge, under Assumptions~\ref{ass:lipschitz_constant}, \ref{ass:lower_bound}, \ref{ass:local_lipschitz_constant}, and \ref{ass:independent}, in the worst case, all previous theoretical s2w communication guarantees \emph{are greater or equal} to \eqref{eq:comm_gd}.
As an example, let us consider the result from \citet{gruntkowska2023ef21}[Theorem E.3]. If the server employs operators from $\mathbb{U}(\omega)$ and we ignore w2s compression, the method from \citet{gruntkowska2023ef21} converges in $\cO\left(\nicefrac{(\omega + 1) \delta^0 L}{\varepsilon}\right)$ iterations. Thus, with Rand$K$, the s2w communication complexity is $\cO\left(K \times \nicefrac{(\omega + 1) \delta^0 L}{\varepsilon}\right) = \cO\left(\nicefrac{d \delta^0 L}{\varepsilon}\right)$.
Another method, called \algname{CORE}, proposed by \citet{yue2023core}, achieves s2w and w2s communication complexities equal to $\cO\left(\nicefrac{r_1(f) \delta^0 L}{\varepsilon}\right)$, where $r_1(f)$ is a uniform upper bound of the trace of the Hessian. When $r_1(f) \leq d L,$ \algname{CORE} can improve on \algname{GD}. However, this complexity does not scale with $n$ and requires an additional assumption about the Hessian of $f$.

\section{Contributions}
In our work, we aim to investigate whether the \emph{server-to-worker and total communication complexities} \eqref{eq:comm_gd} of the vanilla \algname{GD} method can be improved.
We make the following contributions:\\\\
1. We start by proving the impossibility of devising a method where the server communicates with the workers using unbiased compressors $\mathbb{U}(\omega)$ (or biased compressors from Section~\ref{sec:biased_compressors}) and achieves an \emph{iteration rate} faster than $\Omega\left(\nicefrac{(\omega + 1) L \delta^0}{\varepsilon}\right)$ (Theorem~\ref{theorem:simple_lower_bound}) under Assumptions~\ref{ass:lipschitz_constant}, \ref{ass:lower_bound} and \ref{ass:independent}. This result provides a lower bound for any method that applies such compressors to vectors sent from the server to the workers in every iteration. Moreover, we prove a more general iteration lower bound of $\Omega\left(\nicefrac{(\omega + 1) L \delta^0}{\varepsilon}\right)$ for all methods where the server zeroes out a coordinate with probability $\nicefrac{1}{(\omega+1)}$ (see Remark \ref{remark:lower_bound}).\\\\
2. In view of this result, it is clear that an extra assumption is needed to break the lower bound $\Omega\left(\nicefrac{(\omega + 1) L \delta^0}{\varepsilon}\right).$ In response, we introduce a novel assumption termed ``Functional $(L_A, L_B)$ Inequality'' (see Assumption~\ref{ass:functional}). We prove that this assumption is relatively weak and holds, for instance, under the local smoothness of the functions $f_i$ (see Assumption~\ref{ass:local_lipschitz_constant}). \\\\
3. We develop a new method for downlink compression, \algname{MARINA-P}, and show that under our new assumption, along with Assumptions~\ref{ass:lipschitz_constant} \ref{ass:lower_bound}, and \ref{ass:independent}, it can achieve the iteration rate of
\begin{align*}
    \textstyle \cO\parens{\frac{\delta^0 L}{\varepsilon} + \frac{\delta^0 L_A (\omega+1)}{\varepsilon} + \frac{\delta^0 L_B (\omega+1)}{\sqrt{n} \varepsilon}}
\end{align*}
(see Theorem \ref{thm:marinap_gen_a} with $p=\nicefrac{1}{(\omega+1)}$ + Lemma \ref{lemma:theta_indep_same}). Notably, when $L_A$ is small and $n \gg 1,$ this complexity is \emph{provably} superior to $\Theta(\nicefrac{(\delta^0 L (\omega+1))}{\varepsilon})$ and the complexities of the previous compressed methods. In this context, $L_A$ serves as a measure of the similarity between the functions $f_i$, and can be bounded by the ``variance'' of the Hessians of the functions $f_i$ (see Theorem~\ref{theorem:a_b_hess}). Thus, \algname{MARINA-P} is \emph{the first method whose iteration complexity can provably improve with the number of workers $n$}. \\\\
4. Moreover, \algname{MARINA-P} can achieve the s2w communication complexity of
\begin{align*}
    \textstyle \cO\parens{\frac{d \delta^0 L}{n \varepsilon} + \frac{d \delta^0 L_A}{\varepsilon}}.
\end{align*}
When $L_A$ is small and $n \gg 1,$ this communication complexity is provably superior to \eqref{eq:comm_gd} and the communication complexities of the previous compressed methods.\\\\
5. Our theoretical improvements can be combined with techniques enhancing the \emph{w2s communication complexities}. In particular, by combining \algname{MARINA-P} with \algname{MARINA} \citep{MARINA} and adding the crucial momentum step, we develop a new method, \algname{\newmethod}, that guarantees a \emph{total communication complexity} (s2w + w2s) of
\begin{align*}
    \textstyle \cO\left(\frac{d \delta^0 L_{\max}}{n^{1/3} \varepsilon} + \frac{d \delta^0 L_A}{\varepsilon}\right).
\end{align*}
When $n \gg 1$ and in the close-to-homogeneous regime, i.e., when $L_A$ is small, this complexity is better than \eqref{eq:comm_gd} and the complexities of the previous bidirectionally compressed methods. \\\\
6. Our theoretical results are supported by numerical experiments (see Section~\ref{sec:experimetns_main}).

\section{Lower Bound under Smoothness}
\label{sec:lower_bound_main}
Let us first investigate the possibility of improving the iteration complexity $\cO\left(\nicefrac{(\omega+1) \delta^0 L}{\varepsilon}\right)$ under Assumptions~\ref{ass:lipschitz_constant},\ref{ass:lower_bound} and \ref{ass:independent}. In Section~\ref{sec:lower_bound}, we consider a family of methods that include those proposed in \citet{zheng2019communication,liu2020double,philippenko2021preserved,fatkhullin2021ef21,gruntkowska2023ef21}, where the server communicates with workers using unbiased/biased compressors, and establish that
\begin{theorem}[Slightly Less Formal Reformulation of Theorem~\ref{theorem:lower_bound}]
    \label{theorem:simple_lower_bound}
    Under Assumptions~\ref{ass:lipschitz_constant}, \ref{ass:lower_bound} and \ref{ass:independent}, all methods in which the server communicates with clients using different and independent unbiased compressors from $\mathbb{U}\left(\omega\right)$ and sends one compressed vector to each worker cannot converge before $\Omega\left(\nicefrac{(\omega + 1) L \delta^0}{\varepsilon}\right)$ iterations.
\end{theorem}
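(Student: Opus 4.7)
The plan is a resisting-oracle argument adapted to the downlink-compression protocol. Since the adversary is free to choose any compressor from $\mathbb{U}(\omega)$, I fix the Bernoulli compressor $\cC(x)\eqdef (\omega+1)\,\xi\, x$, where $\xi \sim \mathrm{Bernoulli}(\nicefrac{1}{\omega+1})$ is drawn independently across (worker, iteration) pairs. A direct computation gives $\mathbb{E}[\cC(x)]=x$ and $\mathbb{E}\|\cC(x)-x\|^2=\omega\|x\|^2$, so $\cC \in \mathbb{U}(\omega)$, and the mutual independence required by Assumption~\ref{ass:independent} is immediate. Crucially, $\cC$ ``blocks'' the server's message (outputs $0$) with probability $\nicefrac{\omega}{\omega+1}$, and otherwise transmits it up to a known scalar; this all-or-nothing behaviour is what makes it adversarial.

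For the hard instance, I take $d=nm$ for an appropriately chosen $m$, split $[d]$ into disjoint blocks $B_1,\dots,B_n$ of size $m$, and set $f_i(x)\eqdef g(x_{B_i})$, where $g$ is the Carmon--Duchi--Hinder--Sidford / Nesterov zero-chain quadratic on $\R^m$, scaled so that $f\eqdef \tfrac{1}{n}\sum_i f_i$ has smoothness $L$ and $f(x^0)-f^*=\delta^0$, while each $f_i$ retains the zero-chain property \emph{within} $B_i$. Since the blocks are disjoint, $\nabla f_i$ is supported entirely in $B_i$, so only worker $i$ ever produces any information about the coordinates of $B_i$.

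The core combinatorial step is frontier tracking. Let $k_i$ be the number of rounds $t\leq T$ for which $\cC_i(s^t)\neq 0$, and let $p_i^T$ be the largest index within $B_i$ ever carrying a nonzero value in any vector held by the server or any worker after $T$ rounds. On a blocked round for worker $i$, neither its state nor its gradient response can change, so $p_i$ is unaffected; on a productive round, $p_i$ can advance by at most one by the zero-chain property of $g$. Hence $p_i^T\leq k_i$, and the classical lower bound $\|\nabla g_i\|^2\gtrsim \nicefrac{nL\delta^0}{k_i}$ on each chain, combined with $\|\nabla f\|^2=\tfrac{1}{n^2}\sum_i \|\nabla g_i\|^2$ and Cauchy--Schwarz $\sum_i \nicefrac{1}{k_i}\geq \nicefrac{n^2}{\sum_i k_i}$, turns the requirement $\mathbb{E}\|\nabla f(\bar x)\|^2\leq \varepsilon$ into $\mathbb{E}\!\left[\sum_i k_i\right]\gtrsim \nicefrac{nL\delta^0}{\varepsilon}$. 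Since $\mathbb{E}\!\left[\sum_i k_i\right]=\nicefrac{Tn}{\omega+1}$ by the Bernoulli model, this yields $T=\Omega\!\left(\nicefrac{(\omega+1)L\delta^0}{\varepsilon}\right)$.

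The main obstacle is formalizing the frontier-tracking step inside a fully general communication protocol, where server and workers keep arbitrary memory and exchange arbitrary messages subject only to the constraint that the downlink channel to worker $i$ factors through $\cC_i$. One must argue that on a blocked round, the randomness of $\cC_i$ leaks no coordinate information about $B_i$ beyond the indicator $\{\cC_i(s^t)=0\}$ itself --- i.e.\ that worker $i$ genuinely gains no new direction in $B_i$. Setting up the filtration correctly and verifying that the zero-chain property survives under arbitrary (possibly randomized) state updates is the genuinely subtle part; the choice of compressor, the construction of $f$, and the final Cauchy--Schwarz calculation are otherwise routine.
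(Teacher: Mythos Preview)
Your construction has a fundamental gap: with the separable choice $f_i(x)=g(x_{B_i})$, worker $i$ can advance its own chain \emph{without any help from the server}. In the protocol under consideration (and in any reasonable formalization), worker $i$ has oracle access to $\nabla f_i$ and may query it at points in the span of what it already knows; since $\nabla f_i$ is supported entirely in $B_i$ and obeys the zero-chain rule there, worker $i$ can start from $0$, obtain coordinate $1$ of $B_i$ from $\nabla f_i(0)$, then coordinate $2$ from a second query, and so on. None of this requires receiving anything through the compressed downlink. Consequently, your key claim ``on a blocked round for worker $i$, neither its state nor its gradient response can change, so $p_i$ is unaffected'' is false: $p_i$ can grow regardless of whether $\cC_i$ blocks, and the bound $p_i^T\le k_i$ collapses. (Even if workers were artificially restricted to one oracle call per round, you would get $p_i^T\le T$, not $p_i^T\le k_i$.)

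The paper avoids this by using a \emph{single} zero-chain of length $T\asymp L\delta^0/\varepsilon$ and distributing its terms \emph{cyclically} across workers: the block $\Psi(-[x]_{j-1})\Phi(-[x]_j)-\Psi([x]_{j-1})\Phi([x]_j)$ is assigned to worker $(j-1)\bmod n+1$. Once coordinate $j$ is discovered, only worker $j\bmod n+1$ can discover coordinate $j+1$, and for $n\ge 2$ that is a \emph{different} worker. Progress therefore requires the server to relay coordinate $j$ across the compressed downlink to the correct next worker; the paper takes a coordinate-wise Bernoulli sparsifier in $\mathbb{U}(\omega)$ so that each such relay succeeds with probability $1/(\omega+1)$, and a Chernoff bound on the number of successful relays gives the $\Omega((\omega+1)L\delta^0/\varepsilon)$ lower bound. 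The interleaving is precisely what forces information to flow through the server and makes the downlink bottleneck bite; your separable construction removes that coupling.

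A secondary issue: the inequality $\|\nabla g_i\|^2\gtrsim nL\delta^0/k_i$ is not what the nonconvex chain construction delivers. For the Carmon--Duchi--Hinder--Sidford function (appropriately scaled), the guarantee is a threshold --- $\|\nabla g\|^2>c$ whenever the progress is strictly below the chain length --- with no $1/k_i$ decay in the current progress. So the Cauchy--Schwarz step that follows would need a different justification even if the frontier-tracking step were salvageable.
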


\begin{remark}\label{remark:lower_bound}
    The theorem remains applicable to biased compressors $\mathbb{B}\left(\alpha\right)$ (see Section~\ref{sec:biased_compressors}) with a lower bound of $\Theta\left(\nicefrac{L \delta^0}{\alpha \varepsilon}\right)$. This is because if $\cC \in \mathbb{U}(\omega),$ then $(\omega + 1)^{-1} \cC \in \mathbb{B}\left((\omega + 1)^{-1}\right).$ We also establish a more general result (Theorem~\ref{theorem:lower_bound_generic}): 
    ``all methods in which the server zeroes out a coordinate with probability $\leq p$ independently \emph{across iterations} cannot converge before $\Omega\left(\nicefrac{L \delta^0}{p \varepsilon}\right)$ iterations.''
\end{remark}

This lower bound is tight up to a constant factor. For instance, under exactly the same assumptions, the \algname{EF21-P} mechanism from \citet{gruntkowska2023ef21} converges after $\Theta\left(\nicefrac{(\omega + 1) L \delta^0}{\varepsilon}\right)$ iterations. Unlike~\eqref{eq:comm_marina}, this convergence rate does not scale with $n,$ and Theorem~\ref{theorem:simple_lower_bound} leaves no room for improvement. Consequently, breaking the lower bound requires an additional assumption about the structure of the problem. Before presenting our candidate assumption, we first introduce the ingredients needed to leverage it to the fullest extent: our novel downlink compression method and the type of compressors we shall employ.

\section{The \algnamebig{MARINA-P} Method}
\label{sec:marina_p}

Let us first recall the \algname{MARINA} method \citep{MARINA, szlendak2021permutation}:
\begin{equation}
\begin{aligned}
    \label{eq:marina}
    x^{t+1} &= x^t - \gamma g^t, \qquad c^t \sim \textnormal{Bernoulli}(p)\\
    g_i^{t+1} &= 
    \begin{cases}
        \nabla f_i(x^{t+1}) & \text{if } c^t = 1,\\
        g^{t} + \cC_i^t(\nabla f_i(x^{t+1}) - \nabla f_i(x^{t})) & \text{if } c^t = 0
    \end{cases}
    \quad \textnormal{for all } i \in [n], \\ g^{t+1} &= \frac{1}{n} \sum_{i=1}^n g_i^{t+1},
\end{aligned}
\end{equation}
where $g^{0} = \nabla f(x^0)$.
Motivated by \algname{MARINA}, we design its primal counterpart, \algname{MARINA-P} (Algorithm~\ref{alg:marinap}), operating in the primal space of the model parameters, as outlined in \eqref{eq:marina_p}.

\begin{figure}[t]
\begin{theorembox}
\centerline{The \algname{MARINA-P} Method: }
Initialize vectors $x_0, w_1^0,\dots,w_n^0 \in \R^d$, step size $\gamma > 0$, probability $0 < p \leq 1$ and compressors $\cC_1^t,\ldots,\cC_n^t \in \mathbb{U}(\omega_P)$ for all $t \geq 0.$ The method iterates
\begin{equation}
\begin{aligned}
    \label{eq:marina_p}
    g^t &= \frac{1}{n} \sum_{i=1}^n \nabla f_i(w_i^t), \\
    x^{t+1} &= x^t - \gamma g^t, \\
    c^t &\sim \textnormal{Bernoulli}(p), \\
    w^{t+1}_i &= 
    \begin{cases}
        x^{t+1} & \text{if } c^t = 1,\\
        w_i^t + \cC_i^t(x^{t+1} - x^t) & \text{if } c^t = 0
    \end{cases} \\
    \textnormal{for al}&\textnormal{l } i \in [n].
\end{aligned}
\end{equation}
We denote $w^{t} \eqdef \nicefrac{1}{n} \sum_{i=1}^n w_i^t.$ See the implementation in Algorithm~\ref{alg:marinap}.
\end{theorembox}
\end{figure}

At each iteration of \algname{MARINA-P}, the workers calculate $\nabla f_i(w_i^t)$ and transmit it to the server. The server then averages the gradients and updates the global model $x^t$. Subsequently, with some (typically small) probability $p$, the master sends the non-compressed vector $x^{t+1}$ to all workers. Otherwise, the $i$\textsuperscript{th} worker receives a compressed vector $\cC_i^t(x^{t+1} - x^t)$. Each worker then uses the received message to compute $w^{t+1}_i$ locally. Importantly, $\cC_1^t(x^{t+1} - x^t), \dots, \cC_n^t(x^{t+1} - x^t)$ \emph{can differ}, and this distinction will form the basis of our forthcoming advancements.

Comparing \eqref{eq:marina} and \eqref{eq:marina_p}, \algname{MARINA-P} and \algname{MARINA} are dual methods: both learn control variables ($w^{t}_i$ and $g_i^{t}$), compress the differences ($x^{t+1} - x^t$ and $\nabla f_i(x^{t+1}) - \nabla f_i(x^{t})$), and with some probability~$p$ send non-compressed vectors ($x^{t+1}$ and $\nabla f_i(x^{t+1})$). However, unlike \algname{MARINA}, which compresses vectors sent \emph{from workers to server} and operates in the \emph{dual} space of gradients, \algname{MARINA-P} compresses messages sent \emph{from server to workers} and operates in the \emph{primal} space of arguments.

Let us take Rand$K \in \mathbb{U}(\nicefrac{d}{K} - 1)$ as an example. If we set $p = \parens{\omega + 1}^{-1} = \nicefrac{K}{d}$ to balance heavy communications of $x^{t+1}$ and light communications of $\cC^t_i$ in \eqref{eq:marina_p}, \algname{MARINA-P} averages sending $p d + (1 - p) K \leq 2 K$ coordinates per iteration. Then, the lower bound from Theorem~\ref{theorem:lower_bound_generic} implies that at least $\Omega\left(\nicefrac{(\omega + 1)\delta^0 L}{\varepsilon}\right)$ iterations of the algorithm are needed.

At first glance, it seems that \algname{MARINA-P} does not offer any extra benefits compared to previous methods, and that is true -- we could not expect to break the lower bound. However, as we shall soon see, under an extra assumption, \algname{MARINA-P} can achieve a communication complexity that improves with~$n$.

\subsection{Three ways to compress}

Existing algorithms performing s2w compression share a common characteristic: at each iteration, the server broadcasts \emph{the same} message to all workers \citep{zheng2019communication, liu2020double, fatkhullin2021ef21, gruntkowska2023ef21, tyurin20232direction}.\footnote{A notable exception form this rule is the \algname{MCM} method \citep{philippenko2021preserved} - see Appendix~\ref{sec:3compr}.} In contrast, in w2s compression methods, each worker sends to the server a \emph{different} message, specific to the data stored on that particular device.
An analogous approach can be taken in the s2w communication: intuitively, sending~$n$ distinct messages would convey more information, potentially leading to theoretical improvements. This indeed proves to be the case. While the usual approach of the server broadcasting the same vector to all clients does not lead to an improvement over \eqref{eq:comm_gd}, allowing these vectors to differ enables a well-crafted method to achieve communication complexity that improves with $n$ (see Corollary~\ref{cor:main_corolary_3comp}).

In Appendix \ref{sec:3compr} we provide a detailed discussion of the topic and compare the theoretical complexities of \algname{MARINA-P} when the server employs three different compression techniques: a) uses \emph{one compressor} and sends the same vector to all clients, b) uses a \emph{collection of independent compressors}, or c) uses a \emph{collection of correlated compressors}. We now turn to presenting the technique that gives the best theoretical s2w communication complexity out of these, namely the use of a set of \emph{correlated compressors}.

\subsection{Recap: permutation compressors Perm$K$}
\label{sec:permk}

\citet{szlendak2021permutation} propose compressors that will play a key role in our new theory. For clarity of presentation, we shall assume that $d \geq n$ and $n|d$.\footnote{The general definition of Perm$K$ for $d \bmod n \neq 0$ is presented in \citep{szlendak2021permutation}[App. I].}

\begin{definition}[Perm$K$ (for $d \geq n$ and $n|d$)]
\label{def:PermK-1}
Assume that $d \geq n$ and $d  = q n$, where $q\in\N_{> 0}$. Let $\pi = (\pi_1,\dots,\pi_d)$ be a random permutation of $\{1, \dots, d\}$. For all $x \in \R^d$ and each $i\in \{1,2,\dots,n\}$, we define
\begin{equation*}
    \textstyle \cC_i(x) \eqdef n \times \sum \limits_{j = q (i - 1) + 1}^{q i} x_{\pi_j} e_{\pi_j}.
\end{equation*}
\end{definition}
Unpacking this definition: when the server compresses a vector using a Perm$K$ compressor, it randomly partitions its coordinates across the workers, so that each client receives a sparse vector containing a random subset of entries of the input vector. Like Rand$K$, Perm$K$ is also a sparsifier. However, unlike Rand$K$, it does not allow flexibility in choosing $K$, as it is fixed to $\nicefrac{d}{n}$. Furthermore, it can be shown (Lemma~\ref{lemma:3compr_omega_theta}) that $\cC_i \in \mathbb{U}(n - 1)$ for all $i \in [n]$.

An appealing property of Perm$K$ is the fact that
\begin{align}
    \label{eq:restore}
    \textstyle \frac{1}{n} \sum\limits_{i=1}^n \cC_i(x) = x
\end{align}
for all $x \in \R^d$ deterministically. Here, it is important to note that by design, compressors $\cC_i$ from Definition~\ref{def:PermK-1} are \emph{correlated}, and do not satisfy Assumption~\ref{ass:independent}. This correlation proves advantageous -- \citet{szlendak2021permutation} show that \algname{MARINA} with Perm$K$ compressors performs provably better than with i.i.d. Rand$K$ compressors.

\subsection{Warmup: homogeneous quadratics}
\label{sec:homogeneous_quadratics}
We are finally ready to present our first result showing that the s2w communication complexity can scale with the number of workers~$n$.
To explain the intuition behind our approach, let us consider the simplest (and somewhat impractical) choice of functions $f_i$ -- the homogeneous quadratics:
\begin{align}
    \label{eq:homog_quad}
    \textstyle f_i(x) = \frac{1}{2}x^\top\mA x + b^\top x + c, \quad i \in [n],
\end{align}
where $\mA \in \R^{d\times d}$ is a symmetric but not necessarily positive semidefinite matrix, $b \in \R^{d}$ and $c \in \R$. 
We now investigate the operation of \algname{MARINA-P} with Perm$K$ compressors. With probability~$p$, we have $w^{t+1} = x^{t+1}$. Otherwise
    $w^{t+1} = w^t + \frac{1}{n} \sum_{i=1}^{n} \cC_i^t(x^{t+1} - x^t) \overset{\eqref{eq:restore}}{=} x^{t+1} + (w^t - x^t).$
Hence, if we initialize $w_i^{0} = x^0$ for all $i \in [n],$ an inductive argument shows that $w^{t} = x^{t}$ deterministically for all $t \geq 0$.
Then, substituting the gradients of $f_i$ to \eqref{eq:marina_p}, one gets 
\begin{align*}
    \textstyle g^{t} = \frac{1}{n} \sum\limits_{i=1}^n (\mA w_i^{t} + b) = \mA w^{t} + b = \mA x^{t} + b = \nabla f(x^{t})
\end{align*}
for all $t \geq 0.$
Therefore, \algname{MARINA-P} with Perm$K$ compressor in this setting is essentially a smart implementation of vanilla \algname{GD}! Indeed, for $p \leq \nicefrac{1}{n}$, \algname{MARINA-P} with Perm$K$ sends on average $\leq \nicefrac{2d}{n}$ coordinates to each worker, so the s2w communication complexity is
\begin{align*}
    \textstyle \frac{2d}{n} \times \underbrace{\textstyle \cO\left(\frac{\delta^0 L}{\varepsilon}\right)}_{\textnormal{\algnamesmall{GD} rate}} = \cO\left(\frac{d \delta^0 L}{n \varepsilon}\right),
\end{align*} which is $n$ times smaller than in \eqref{eq:comm_gd}!

\begin{table}[t]
\caption{\textbf{The worst case \emph{communication complexities} to find an $\varepsilon$--stationary point.} For simplicity, we compare the complexities with non-homogeneous quadratics: $f_i(x) = \frac{1}{2}x^\top \mA_ix + b_i^\top x + c_i,$ where $\mA_i \in \R^{d\times d}$ is symmetric but not necessarily positive semidefinite, $b_i \in \R^d$ and $c_i \in \R$ for $i \in [n].$ We denote $\mA = \frac{1}{n} \sum_{i=1}^n \mA_i.$}
\label{table:complexities}
\centering 
\begin{adjustbox}{width=1.0\columnwidth,center}
\begin{threeparttable}
\begin{tabular}{cc}
\begin{tabular}[t]{cccccc}
  \toprule
  \multicolumn{2}{c}{Server-to-Workers Communication Complexities (s2w)} \\
  \midrule
  \bf  Method & \bf Complexity \\
  \midrule
  \makecell{\scriptsize \algnamesmall{GD} \\ \scriptsize and other compressed methods\textsuperscript{{\color{blue}(a)}}} & $\geq \frac{d \delta^0 \norm{\mA}}{\varepsilon}$ \\
  \midrule
  \makecell{\scriptsize \algnamesmall{CORE} \\ \scriptsize \citep{yue2023core}} & $\frac{\delta^0 \textnormal{tr} \mA}{\varepsilon}$ \\
  \midrule
  \makecell{\scriptsize \algnamesmall{MARINA-P} \\ \scriptsize with independent Rand$K$\textsuperscript{{\color{blue}(b)}} \\ \scriptsize (Corollary~\ref{cor:main_corolary_3comp})} & \makecell{$\frac{d \delta^0 \frac{1}{n} \sum_{i=1}^n \norm{\mA_i}}{\sqrt{n} \varepsilon} + \frac{d \delta^0 \max_{i\in[n]} \norm{\mA_i - \mA}}{\varepsilon}$}\\
  \midrule
  \makecell{\scriptsize \algnamesmall{MARINA-P} with Perm$K$\textsuperscript{{\color{blue}(b)}} \\ \scriptsize (Corollary~\ref{cor:main_corolary})} & \makecell{$\frac{d \delta^0 \norm{\mA}}{n \varepsilon} + \frac{d \delta^0 \max_{i\in[n]} \norm{\mA_i - \mA}}{\varepsilon}$}\\
  \bottomrule
  \end{tabular}
  \begin{tabular}[t]{cccccc}
    \toprule
    \multicolumn{2}{c}{Total Communication Complexities (s2w + w2s)} \\
    \midrule
    \bf  Method & \bf Complexity \\
    \midrule
    \makecell{\scriptsize \algnamesmall{GD} \\ \scriptsize and other compressed \\ \scriptsize methods\textsuperscript{{\color{blue}(a)}}} & $\geq \frac{d \delta^0 \norm{\mA}}{\varepsilon}$ \\
    \midrule
    \makecell{\scriptsize \algnamesmall{CORE} \\ \scriptsize \citep{yue2023core}} & $\frac{\delta^0 \textnormal{tr} \mA}{\varepsilon}$ \\
    \midrule
    \makecell{\scriptsize \algnamesmall{\newmethod} \\ \scriptsize with Perm$K$ and Rand$K$\textsuperscript{{\color{blue}(b)}} \\ \scriptsize (Theorem~\ref{thm:marinap_gen_m})} & \makecell{$\frac{d \delta^0 \max\limits_{i \in [n]}\norm{\mA_i}}{n^{1/3} \varepsilon} + \frac{d \delta^0 \max\limits_{i\in[n]} \norm{\mA_i - \mA}}{\varepsilon}$}\\
    \bottomrule
  \end{tabular}
  \end{tabular}
  \begin{tablenotes}
  \scriptsize
  \item The complexities of \algnamesmall{MARINA-P} and \algnamesmall{\newmethod} with Perm$K$ are better when $n > 1$ and in close-to-homogeneous regimes, i.e., when $\max_{i\in[n]} \norm{\mA_i - \mA}$ is small.
  \item [{\color{blue}(a)}] including \algnamesmall{EF21-P} \citep{gruntkowska2023ef21}, \algnamesmall{dist-EF-SGD} \citep{zheng2019communication}, \algnamesmall{DORE} \citep{liu2020double}, \algnamesmall{MCM} \citep{philippenko2021preserved}, and \algnamesmall{EF21-BC} \citep{fatkhullin2021ef21}.
  \item [{\color{blue}(b)}] This table only showcases the results for Rand$K$ and Perm$K.$ A more general result for all compressors is provided in Sections~\ref{sec:marinap_convergence_theory} and \ref{sec:m3_convergence_theory}. 
\end{tablenotes}
\end{threeparttable}
\end{adjustbox}
\end{table}

\subsection{Functional $(L_A, L_B)$ Inequality}
From the discussion in Section~\ref{sec:lower_bound_main}, we know that to improve~\eqref{eq:comm_gd}, an extra assumption about the structure of the problem is needed.
Building on the example from Section~\ref{sec:homogeneous_quadratics}, we introduce the \emph{Functional $(L_A, L_B)$ Inequality}.
\begin{assumption}[Functional $(L_A, L_B)$ Inequality]\label{as:AB_assumption}
    \label{ass:functional}
    There exist constants $L_A,L_B \geq 0$ such that
    \begin{align}\label{eq:functional}
        &\textstyle \norm{\frac{1}{n} \sum\limits_{i=1}^n (\nabla f_i(x+u_i) - \nabla f_i(x))}^2 \leq L_A^2\left(\frac{1}{n} \sum\limits_{i=1}^n \norm{u_i}^2\right) + L_B^2 \norm{\frac{1}{n} \sum\limits_{i=1}^n u_i}^2
    \end{align}
    for all $x, u_1, \dots, u_n \in \R^d.$
\end{assumption}
\begin{remark}
    \label{remark:previous_assumption}
    A similar assumption, termed ``Heterogeneity-driven Lipschitz Condition on Averaged Gradients'', is proposed in \citet{wang2023new}. 
    Our assumption aligns with theirs when $L_B = 0$. However, our formulation proves to be more powerful. The possibility that $L_B > 0$ becomes instrumental in driving the enhancements we introduce.
\end{remark}
Assumption~\ref{ass:functional} is defined for all functions together, and intuitively, it tries to capture the similarities between the functions $f_i$. For $n = 1$, inequality \eqref{eq:functional} reduces to 
\begin{align*}
    \textstyle \norm{\nabla f(x) - \nabla f(y)}^2 \leq \left(L_A^2 + L_B^2\right)\norm{x - y}^2 \forall x,y \in \R^d,
\end{align*}
equivalent to standard $L$-smoothness (Assumption~\ref{ass:lipschitz_constant}) with $L^2 = L_A^2 + L_B^2$. The Functional $(L_A, L_B)$ Inequality is reasonably weak also for $n>1$, as the next theorem shows.
\begin{restatable}{theorem}{THEOREMLMAX}
    \label{theorem:a_b_max}
    For all $i \in [n],$ assume that the functions $f_i$ are $L_i$--smooth (Assumption~\ref{ass:local_lipschitz_constant}). Then, Assumption~\ref{ass:functional} holds with $L_A = L_{\max}$ and $L_B = 0$.
\end{restatable}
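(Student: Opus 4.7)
The proof is essentially a two-line argument combining Jensen's inequality with the per-function smoothness. The plan is to bound the squared norm of an average by the average of squared norms, then apply $L_i$-smoothness term by term.

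First I would apply the convexity of $\|\cdot\|^2$ (Jensen's inequality) to the outer expression:
\begin{equation*}
    \left\|\frac{1}{n}\sum_{i=1}^n \bigl(\nabla f_i(x+u_i) - \nabla f_i(x)\bigr)\right\|^2 \leq \frac{1}{n}\sum_{i=1}^n \left\|\nabla f_i(x+u_i) - \nabla f_i(x)\right\|^2.
\end{equation*}
Next I would invoke Assumption~\ref{ass:local_lipschitz_constant} on each $f_i$, which gives $\|\nabla f_i(x+u_i) - \nabla f_i(x)\| \leq L_i \|u_i\|$, and then uniformly upper bound $L_i \leq L_{\max}$:
\begin{equation*}
    \frac{1}{n}\sum_{i=1}^n \left\|\nabla f_i(x+u_i) - \nabla f_i(x)\right\|^2 \leq \frac{1}{n}\sum_{i=1}^n L_i^2 \|u_i\|^2 \leq L_{\max}^2 \cdot \frac{1}{n}\sum_{i=1}^n \|u_i\|^2.
\end{equation*}
Chaining these two bounds yields inequality \eqref{eq:functional} with $L_A = L_{\max}$ and $L_B = 0$, since the final expression is exactly of the form $L_A^2 \bigl(\tfrac{1}{n}\sum_i \|u_i\|^2\bigr) + 0\cdot\bigl\|\tfrac{1}{n}\sum_i u_i\bigr\|^2$.

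There is no genuine obstacle here; the result is tight in the sense that it does not exploit any cross-function cancellation (which is precisely the gain that the separate $L_B$ term is designed to capture, as leveraged in the later sharper bound via Hessian variance in Theorem~\ref{theorem:a_b_hess}). The only minor subtlety is remembering that Jensen's inequality is what lets us pay $L_{\max}^2 \cdot \tfrac{1}{n}\sum_i \|u_i\|^2$ rather than a cross-term bound; no additional structural assumption on the $f_i$ (such as convexity or identical Hessians) is used.
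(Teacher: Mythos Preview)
Your proposal is correct and follows essentially the same argument as the paper's proof: Jensen's inequality on the squared norm of the average, followed by per-function $L_i$-smoothness and the uniform bound $L_i \leq L_{\max}$. The paper presents exactly these three steps in the same order.
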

Therefore, Assumption~\ref{ass:functional} holds whenever the functions $f_i$ are smooth, which is a standard assumption in the literature. 
Now, returning to the example from Section~\ref{sec:homogeneous_quadratics},
\begin{restatable}{theorem}{THEOREMLQUAD}
    \label{theorem:a_b_quad}
    For all $i \in [n],$ assume that the functions $f_i$ are homogeneous quadratics defined in~\eqref{eq:homog_quad}. Then, Assumption~\ref{ass:functional} holds with $L_A = 0$ and $L_B = \norm{\mA}.$
\end{restatable}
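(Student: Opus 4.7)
The plan is to exploit the fact that in the homogeneous-quadratic setting of \eqref{eq:homog_quad} all functions $f_i$ share the same Hessian $\mA$, so the local gradient differences $\nabla f_i(x+u_i)-\nabla f_i(x)$ collapse to a purely linear expression in $u_i$ that commutes with averaging. Concretely, I would first compute $\nabla f_i(x)=\mA x+b$ for every $i\in[n]$, and from this derive the identity $\nabla f_i(x+u_i)-\nabla f_i(x)=\mA u_i$, which crucially is independent of the point $x$ and of the index $i$ other than through $u_i$.

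Next I would average this identity over $i$ and pull $\mA$ out of the average by linearity, obtaining
\begin{equation*}
\frac{1}{n}\sum_{i=1}^n\bigl(\nabla f_i(x+u_i)-\nabla f_i(x)\bigr)=\mA\!\left(\frac{1}{n}\sum_{i=1}^n u_i\right).
\end{equation*}
Taking squared norms and using the standard operator-norm inequality $\norm{\mA v}\le\norm{\mA}\,\norm{v}$ then yields
\begin{equation*}
\norm{\frac{1}{n}\sum_{i=1}^n\bigl(\nabla f_i(x+u_i)-\nabla f_i(x)\bigr)}^2\le\norm{\mA}^2\norm{\frac{1}{n}\sum_{i=1}^n u_i}^2,
\end{equation*}
which is exactly \eqref{eq:functional} with $L_A=0$ and $L_B=\norm{\mA}$, matching the stated constants.

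There really is no obstacle: the whole point of the homogeneous-quadratic example is that the map $u\mapsto\nabla f_i(x+u)-\nabla f_i(x)=\mA u$ is linear and $i$-independent, so Jensen-type losses disappear and the averaging of the $u_i$'s happens \emph{inside} the norm rather than outside, killing the $L_A$ term entirely. The only thing to be careful about is that the theorem as stated already uses a single pair $(\mA,b,c)$ across all $i$, so I do not need a triangle-inequality step separating a ``common'' Hessian from an ``individual'' perturbation; that more general situation would instead fall under Theorem~\ref{theorem:a_b_hess}.
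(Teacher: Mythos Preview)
Your proposal is correct and essentially identical to the paper's proof: compute $\nabla f_i(x+u_i)-\nabla f_i(x)=\mA u_i$, average over $i$, pull $\mA$ out by linearity, and bound by the operator norm. There is nothing to add.
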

Under Assumption~\ref{ass:local_lipschitz_constant}, no information about the similarity of the functions $f_i$ is available, yielding $L_B = 0$ and $L_A > 0$ in Theorem~\ref{theorem:a_b_max}. However, once we have some information limiting heterogeneity, $L_A$ can decrease. Notably, $L_A = 0$ for homogeneous quadratics. As we shall see in Section~\ref{sec:convergence}, the values $L_A$ and $L_B$ significantly influence the s2w communication complexity of \algname{MARINA-P}, with lower $L_A$ values leading to greatly improved performance.

\subsection{The Convergence Theory of \algnamebig{MARINA-P} with Perm$K$}
\label{sec:convergence}
We are ready to present our main convergence result, focusing on the Perm$K$ compressor from Section~\ref{sec:permk}. This choice simplifies the presentation, but our approach generalizes to a much larger class of compression operators. The full theoretical framework, covering all unbiased compressors, is detailed in Appendix~\ref{sec:marinap_convergence_theory}.

\begin{theorem}\label{thm:marinap_gen}
    Let Assumptions~\ref{ass:lipschitz_constant}, \ref{ass:lower_bound} and \ref{ass:functional} be satisfied. Set $w_i^0 = x^0$ for all $i \in [n].$ Take Perm$K$ as $\cC_i^t$ and $\gamma = \big(L + L_A \sqrt{\omega_P(\nicefrac{1}{p}-1)}\big)^{-1},$ where $\omega_P = n - 1$ (Lemma~\ref{lemma:3compr_omega_theta}). Then, \algname{MARINA-P} finds an $\varepsilon$--stationary point after
    \begin{align*}
        \textstyle \cO\left(\frac{\delta^0}{\varepsilon} \left(L + L_A \sqrt{\nicefrac{\omega_P}{p}}\right)\right)
    \end{align*}
    iterations.
\end{theorem}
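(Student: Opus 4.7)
The plan is to combine a standard descent lemma with a drift-based Lyapunov analysis, leveraging two special features of Perm$K$: the exact restoration identity \eqref{eq:restore} and the variance bound $\omega_P = n-1$ from Lemma~\ref{lemma:3compr_omega_theta}. Define the drift $V^t \eqdef \frac{1}{n}\sum_{i=1}^n \Exp{\sqnorm{w_i^t - x^t}}$ and the Lyapunov function $\Phi^t \eqdef \Exp{f(x^t)} - f^\star + c\,V^t$ for a constant $c$ to be chosen. The initialization $w_i^0 = x^0$ gives $V^0 = 0$ and $\Phi^0 = \delta^0$, and the telescoping inequality for $\Phi^t$ will yield the claimed iteration count.

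First I would establish the key structural observation that propels the result: when compressors are chosen as Perm$K$, an induction on $t$ using \eqref{eq:restore} shows that the average $w^t = \tfrac{1}{n}\sum_i w_i^t$ equals $x^t$ deterministically for all $t \geq 0$, as already exploited in Section~\ref{sec:homogeneous_quadratics}. Consequently, applying Assumption~\ref{ass:functional} at $x = x^t$ with $u_i = w_i^t - x^t$ gives
\begin{align*}
    \sqnorm{g^t - \nabla f(x^t)} = \sqnorm{\tfrac{1}{n}\sum_{i=1}^n(\nabla f_i(w_i^t) - \nabla f_i(x^t))} \leq L_A^2 \cdot \tfrac{1}{n}\sum_{i=1}^n \sqnorm{w_i^t - x^t} + L_B^2 \sqnorm{w^t - x^t},
\end{align*}
and the second (troublesome) term vanishes, leaving $\Exp{\sqnorm{g^t - \nabla f(x^t)}} \leq L_A^2 V^t$. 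The standard $L$-smooth descent lemma applied to $x^{t+1} = x^t - \gamma g^t$, combined with the identity $\inp{a}{b} = \tfrac{1}{2}(\sqnorm{a} + \sqnorm{b} - \sqnorm{a-b})$, then yields $\Exp{f(x^{t+1})} \leq \Exp{f(x^t)} - \tfrac{\gamma}{2}\Exp{\sqnorm{\nabla f(x^t)}} - \tfrac{\gamma}{2}(1-L\gamma)\Exp{\sqnorm{g^t}} + \tfrac{\gamma L_A^2}{2}V^t$.

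Next I would derive a one-step recursion for the drift. Conditioning on $c^t$, using unbiasedness of $\cC_i^t$ to kill the cross term, and the variance bound $\omega_P = n-1$, one obtains $V^{t+1} \leq (1-p)\bigl(V^t + \omega_P \gamma^2 \Exp{\sqnorm{g^t}}\bigr)$. Adding $c$ times this to the descent inequality gives
\begin{align*}
    \Phi^{t+1} - \Phi^t \leq -\tfrac{\gamma}{2}\Exp{\sqnorm{\nabla f(x^t)}} + \bigl(\tfrac{\gamma L_A^2}{2} - cp\bigr)V^t + \bigl(c(1-p)\omega_P\gamma^2 - \tfrac{\gamma}{2}(1-L\gamma)\bigr)\Exp{\sqnorm{g^t}}.
\end{align*}
Setting $c = \tfrac{\gamma L_A^2}{2p}$ zeroes out the $V^t$ coefficient and reduces the $\Exp{\sqnorm{g^t}}$ coefficient to a non-positive quantity precisely when $L\gamma + L_A^2 \omega_P (1-p)\gamma^2/p \leq 1$, which is implied by the prescribed step size $\gamma = (L + L_A\sqrt{\omega_P(1/p - 1)})^{-1}$ (via the elementary bound $(a+b)^2 \geq a^2+b^2$ applied to $a=L\gamma,\, b = L_A\gamma\sqrt{\omega_P(1-p)/p}$). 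Telescoping from $0$ to $T-1$ and using $V^0 = 0$, $\Phi^T \geq 0$ gives $\tfrac{1}{T}\sum_{t=0}^{T-1}\Exp{\sqnorm{\nabla f(x^t)}} \leq 2\delta^0/(\gamma T)$, so $T = \cO(\delta^0(L + L_A\sqrt{\omega_P/p})/\varepsilon)$ iterations suffice. The main subtlety is identifying the correct coupling constant $c$ and verifying that the step-size constraint absorbs both the smoothness term and the compression-induced term additively rather than multiplicatively; everything else is bookkeeping.
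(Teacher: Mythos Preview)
Your proposal is correct and follows essentially the same Lyapunov-plus-drift argument as the paper: the descent lemma (the paper's Lemma~\ref{lemma:page}), the drift recursion for $\frac{1}{n}\sum_i\sqnorm{w_i^t-x^t}$ (the paper's Lemma~\ref{lemma:marinap_sum_w_x}), the choice $c=\gamma L_A^2/(2p)$, and the step-size verification all match. The one genuine difference is how the $L_B$ term is killed. You eliminate it by invoking the deterministic identity $w^t=x^t$ for Perm$K$ (from \eqref{eq:restore}, as in Section~\ref{sec:homogeneous_quadratics}) before applying Assumption~\ref{ass:functional}, so $\sqnorm{w^t-x^t}$ never enters the analysis. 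The paper instead proves the general Theorem~\ref{thm:marinap_gen_a} with a second Lyapunov term $\frac{\gamma L_B^2}{2p}\sqnorm{w^t-x^t}$ governed by a recursion (Lemma~\ref{lemma:w_sum_x}) involving the correlated-compressor parameter $\theta$, and then specializes to Perm$K$ by setting $\theta=0$. Your route is a bit more direct for Perm$K$ specifically; the paper's buys a single proof that simultaneously covers SameRand$K$, independent Rand$K$, and Perm$K$ (Corollary~\ref{cor:main_corolary_3comp}).
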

\begin{corollary}
    \label{cor:main_corolary}
    Let $p = \nicefrac{K}{d} \equiv \nicefrac{1}{n}$. Then, in the view of Theorem~\ref{thm:marinap_gen}, the average s2w communication complexity of \algname{MARINA-P} with Perm$K$ compressor is
    \begin{align}
        \label{eq:main_compl}
        \textstyle \cO\left(\frac{d \delta^0 L}{n \varepsilon} + \frac{d \delta^0 L_A}{\varepsilon}\right).
    \end{align}
\end{corollary}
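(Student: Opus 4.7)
The plan is to simply multiply the iteration complexity supplied by Theorem~\ref{thm:marinap_gen} by the expected number of coordinates the server transmits to each worker per iteration, using the parameter choices stated in the corollary.

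First, I would substitute $p=1/n$ and $\omega_P=n-1$ (the latter coming from Lemma~\ref{lemma:3compr_omega_theta} applied to Perm$K$) into the iteration bound of Theorem~\ref{thm:marinap_gen}. Since $\sqrt{\omega_P/p}=\sqrt{n(n-1)}\leq n$, the number of iterations required to reach an $\varepsilon$--stationary point is
\begin{align*}
    \textstyle \cO\!\left(\frac{\delta^0}{\varepsilon}\!\left(L + L_A\sqrt{\omega_P/p}\right)\right) = \cO\!\left(\frac{\delta^0 L}{\varepsilon}+\frac{\delta^0 L_A\, n}{\varepsilon}\right).
\end{align*}

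Second, I would bound the expected per-iteration s2w communication cost per worker. Inspecting the update rule~\eqref{eq:marina_p}: with probability $p$ (when $c^t=1$) the server sends the uncompressed vector $x^{t+1}\in\R^d$, costing $d$ coordinates; otherwise, it sends $\cC_i^t(x^{t+1}-x^t)$, which, by Definition~\ref{def:PermK-1} of Perm$K$, is supported on exactly $K=d/n$ coordinates. Hence the expected cost per worker per iteration is at most
\begin{align*}
    \textstyle p\cdot d + (1-p)\cdot K \;=\; \frac{d}{n} + \left(1-\frac{1}{n}\right)\frac{d}{n} \;\leq\; \frac{2d}{n}.
\end{align*}

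Multiplying the two quantities, and using that the Bernoulli variables $c^t$ are independent across iterations so the total expected communication decomposes as the iteration count times the per-iteration expectation, yields
\begin{align*}
    \textstyle \frac{2d}{n}\cdot \cO\!\left(\frac{\delta^0 L}{\varepsilon}+\frac{\delta^0 L_A\, n}{\varepsilon}\right) = \cO\!\left(\frac{d\,\delta^0 L}{n\varepsilon}+\frac{d\,\delta^0 L_A}{\varepsilon}\right),
\end{align*}
which is exactly~\eqref{eq:main_compl}. There is no real obstacle here: the corollary is a one-line arithmetic consequence of Theorem~\ref{thm:marinap_gen} and the Perm$K$ sparsity pattern; the only mild subtlety is matching $p=K/d$ to $1/n$, which is forced by the rigid choice $K=d/n$ inherent to Perm$K$.
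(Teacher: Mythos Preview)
Your proposal is correct and follows essentially the same route as the paper: compute the expected per-iteration s2w cost as $p\,d+(1-p)K\le 2d/n$, substitute $p=1/n$ and $\omega_P=n-1$ into the iteration bound of Theorem~\ref{thm:marinap_gen}, and multiply. The only cosmetic difference is that the paper carries out this arithmetic inside the proof of the more general Corollary~\ref{cor:main_corolary_3comp} (treating SameRand$K$, Rand$K$, and Perm$K$ simultaneously) and uses $\sqrt{\omega_P(1/p-1)}=n-1$ rather than your $\sqrt{\omega_P/p}=\sqrt{n(n-1)}$, but both are $\cO(n)$ and the conclusion is identical.
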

The key observation is that \eqref{eq:main_compl} is independent  of $L_B,$ and only depends on $L_A$. This particular property is specific to \emph{correlated compressors} with parameter $\theta=0$ (defined in Appendix~\ref{sec:3compr}), such as Perm$K$. A similar result holds for \emph{independent} Rand$K$ compressors (see 
Corollary~\ref{cor:main_corolary_3comp}), but the convergence rate is worse and depends on $L_B$. Nevertheless, this dependence improves with $n$.

When $L_A = 0,$ which is the case for homogeneous quadratics, the step size bound from Theorem~\ref{thm:marinap_gen} simplifies to $\gamma \leq \nicefrac{1}{L}$, the standard \algname{GD} stepsize (recall that in this case our method reduces to \algname{GD}). Most importantly, \eqref{eq:main_compl} scales with the number of workers $n$! Even when $L_A > 0,$ for sufficiently big $n,$ \eqref{eq:main_compl} can improve \eqref{eq:comm_gd} to $\cO\left(\nicefrac{d \delta^0 L_A}{\varepsilon}\right).$

Let us now investigate how the constants $L_A$ and $L_B$ change in the general case.

\subsection{Estimating $L_A$ and $L_B$ in the General Case}
\label{sec:general_case}

It is clear from Corollary~\ref{cor:main_corolary} that \algname{MARINA-P} with Perm$K$ shines when $L_A$ is small. 
To gain further insights into what values $L_A$ may take, we now provide an analysis based on the Hessians of the functions $f_i$.

\begin{restatable}{theorem}{THEOREMLHESS}
    \label{theorem:a_b_hess}
    Assume that the functions $f_i$ are twice continuously differentiable, $L_i$--smooth (Assumption~\ref{ass:local_lipschitz_constant}), and that there exist $D_i\geq 0$ such that
    \begin{align}
        \label{eq:hess_diff}
        \textstyle \sup\limits_{z_1,\ldots,z_n\in\R^d} \norm{\nabla^2 f_i(z_i) - \frac{1}{n} \sum_{j=1}^n \nabla^2 f_j(z_j)} \leq D_i
    \end{align}
    for all $i \in [n]$.
    Then, Assumption~\ref{ass:functional} holds with $L_A = \sqrt{2} \max_{i\in[n]} D_i \leq 2 \sqrt{2} \max_{i \in [n]} L_i$ and $L_B = \sqrt{2} \left(\frac{1}{n} \sum_{i=1}^n L_i\right).$
\end{restatable}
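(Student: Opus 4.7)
The plan is to reduce the claim to a Hessian-level estimate using the fundamental theorem of calculus. For any $x, u_i \in \R^d$, I would write
\[
\nabla f_i(x+u_i) - \nabla f_i(x) = \int_0^1 \nabla^2 f_i(x + t u_i) u_i \, dt,
\]
which is valid by the twice continuous differentiability of $f_i$. Denote $H_i^t \eqdef \nabla^2 f_i(x + t u_i)$ and its average $\bar H^t \eqdef \frac{1}{n}\sum_{j=1}^n H_j^t$. Averaging over $i$ and adding and subtracting $\bar H^t u_i$ inside the sum gives the decomposition
\[
\frac{1}{n}\sum_{i=1}^n \bigl(\nabla f_i(x+u_i) - \nabla f_i(x)\bigr) = \int_0^1 \underbrace{\frac{1}{n}\sum_{i=1}^n (H_i^t - \bar H^t)\, u_i}_{=:\,A(t)} \, dt + \int_0^1 \bar H^t \Bigl(\tfrac{1}{n}\sum_{i=1}^n u_i\Bigr) dt.
\]
This is the key structural step: the ``heterogeneity'' piece $A(t)$ will yield the $L_A$ term, while the ``averaged Hessian acting on the averaged displacement'' will yield the $L_B$ term.

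Next I would take squared norms and apply Jensen's inequality to each integral (so that the integral moves inside $\|\cdot\|^2$), together with $\|a+b\|^2 \le 2\|a\|^2 + 2\|b\|^2$. For the first piece, Jensen on the finite sum gives
\[
\|A(t)\|^2 \leq \frac{1}{n}\sum_{i=1}^n \bigl\|(H_i^t - \bar H^t) u_i\bigr\|^2 \leq \frac{1}{n}\sum_{i=1}^n \|H_i^t - \bar H^t\|^2 \,\|u_i\|^2 \leq \bigl(\max_{i\in[n]} D_i\bigr)^2 \cdot \frac{1}{n}\sum_{i=1}^n \|u_i\|^2,
\]
where the last inequality is exactly the hypothesis \eqref{eq:hess_diff} applied with $z_j = x + t u_j$. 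For the second piece, submultiplicativity of the operator norm, the triangle inequality $\|\bar H^t\| \leq \frac{1}{n}\sum_j \|H_j^t\|$, and the bound $\|\nabla^2 f_j(\cdot)\| \leq L_j$ (which follows from $L_j$-smoothness of $f_j$) give
\[
\bigl\|\bar H^t \bar u\bigr\|^2 \leq \Bigl(\tfrac{1}{n}\sum_{j=1}^n L_j\Bigr)^2 \|\bar u\|^2, \qquad \bar u \eqdef \tfrac{1}{n}\sum_{i=1}^n u_i.
\]
Combining these bounds and integrating over $t\in[0,1]$ yields \eqref{eq:functional} with $L_A = \sqrt{2}\,\max_{i\in[n]} D_i$ and $L_B = \sqrt{2}\,\bigl(\tfrac{1}{n}\sum_{i=1}^n L_i\bigr)$.

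Finally, the estimate $L_A \leq 2\sqrt{2}\,\max_i L_i$ follows immediately from the triangle inequality: $D_i \leq \|\nabla^2 f_i(z_i)\| + \|\tfrac{1}{n}\sum_j \nabla^2 f_j(z_j)\| \leq L_i + \tfrac{1}{n}\sum_j L_j \leq 2\max_i L_i$. The main obstacle is essentially just choosing the right decomposition in the first step; once one splits $H_i^t = (H_i^t - \bar H^t) + \bar H^t$, the two pieces map cleanly onto the two terms of Assumption~\ref{ass:functional}, and the remaining work is routine Jensen/Cauchy-Schwarz plus operator-norm bounds, with the factor $\sqrt{2}$ coming from the single use of $\|a+b\|^2 \leq 2\|a\|^2 + 2\|b\|^2$.
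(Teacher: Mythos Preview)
Your proposal is correct and follows essentially the same route as the paper: fundamental theorem of calculus, the split $H_i^t = (H_i^t - \bar H^t) + \bar H^t$, one use of $\|a+b\|^2 \le 2\|a\|^2 + 2\|b\|^2$, Jensen on the sum, and the operator-norm bounds from \eqref{eq:hess_diff} and $L_j$-smoothness. The only cosmetic difference is that the paper first integrates to form $\mQ_i = \int_0^1 \nabla^2 f_i(x+tu_i)\,dt$ and then splits $\mQ_i = (\mQ_i - \mQ) + \mQ$, whereas you split pointwise in $t$ and then integrate; the estimates and constants are identical.
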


Intuitively, \eqref{eq:hess_diff} measures the similarity between the functions $f_i$. The above theorem yields a more refined result than Theorem~\ref{theorem:a_b_max}: it is always true that $\max_{i\in[n]} D_i \leq 2 \max_{i \in [n]} L_i$, and, in fact, $\max_{i\in[n]} D_i$ can be much smaller, as the next result shows.

\begin{restatable}{theorem}{THEOREMLGENQAUD}
    \label{theorem:gen_quad}
    Assume that 
        $f_i(x) = \frac{1}{2}x^\top\mA_ix + b_i^\top x + c_i,$
    where $\mA_i \in \R^{d\times d}$ is symmetric but not necessarily positive semidefinite, $b_i \in \R^d$ and $c_i \in \R$ for $i \in [n].$ Define $\mA = \frac{1}{n} \sum_{i=1}^n \mA_i.$
    Then, Assumption~\ref{ass:functional} holds with $L_A = \sqrt{2} \max_{i\in[n]} \norm{\mA_i - \mA}$ and $L_B = \sqrt{2} \left(\frac{1}{n} \sum_{i=1}^n \norm{\mA_i}\right).$
\end{restatable}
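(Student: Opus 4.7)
The plan is to exploit the fact that for quadratics the gradient difference is affine in $u_i$, with the base point $x$ cancelling out entirely: for every $i \in [n]$,
\begin{align*}
    \nabla f_i(x+u_i) - \nabla f_i(x) = \mA_i u_i.
\end{align*}
This reduces the verification of Assumption~\ref{ass:functional} to the operator-theoretic inequality
\begin{align*}
    \textstyle \left\| \frac{1}{n}\sum_{i=1}^n \mA_i u_i \right\|^2 \leq L_A^2 \cdot \frac{1}{n}\sum_{i=1}^n \|u_i\|^2 + L_B^2 \cdot \left\| \frac{1}{n}\sum_{i=1}^n u_i \right\|^2,
\end{align*}
with the target constants $L_A = \sqrt{2}\max_i \|\mA_i - \mA\|$ and $L_B = \sqrt{2} \cdot \frac{1}{n}\sum_i \|\mA_i\|$.

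The key step is an additive split that isolates the average matrix $\mA$ from the per-worker deviations. Writing $\mA_i = (\mA_i - \mA) + \mA$ and using linearity, I would decompose
\begin{align*}
    \textstyle \frac{1}{n}\sum_{i=1}^n \mA_i u_i = \frac{1}{n}\sum_{i=1}^n (\mA_i - \mA) u_i + \mA \cdot \frac{1}{n}\sum_{i=1}^n u_i.
\end{align*}
Then applying $\|a+b\|^2 \leq 2\|a\|^2 + 2\|b\|^2$ separates the two terms into one that should match the $L_A$-part of the inequality and one that should match the $L_B$-part. The factor $2$ explains the $\sqrt{2}$ constants.

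For the first (heterogeneity) term, I would apply Jensen's inequality to pull the squared norm inside the average, then use $\|(\mA_i - \mA) u_i\|^2 \leq \|\mA_i - \mA\|^2 \|u_i\|^2$ and upper bound each $\|\mA_i - \mA\|^2$ by $\max_{j} \|\mA_j - \mA\|^2$ to factor it out of the sum. This yields exactly $L_A^2 \cdot \frac{1}{n}\sum_i \|u_i\|^2$ with $L_A^2 = 2\max_i \|\mA_i - \mA\|^2$. For the second (average) term, the operator norm bound $\|\mA \bar{u}\|^2 \leq \|\mA\|^2 \|\bar u\|^2$ combined with the convexity bound $\|\mA\| = \|\frac{1}{n}\sum_i \mA_i\| \leq \frac{1}{n}\sum_i \|\mA_i\|$ delivers the $L_B$ constant, since $\left(\frac{1}{n}\sum_i \|\mA_i\|\right)^2$ is exactly $\frac{1}{2} L_B^2$.

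There is no real obstacle here: once the affine structure is noted, the proof is a one-line decomposition followed by Young's inequality, Jensen's inequality, and sub-multiplicativity of the operator norm. The only care needed is keeping track of the factor $2$ from $\|a+b\|^2 \leq 2\|a\|^2 + 2\|b\|^2$, which is what produces the $\sqrt{2}$ in both $L_A$ and $L_B$; a tighter Young's inequality could trade this factor between the two constants, but the symmetric choice matches the statement of the theorem.
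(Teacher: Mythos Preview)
Your proof is correct and uses exactly the same decomposition and inequalities as the paper: split $\mA_i = (\mA_i - \mA) + \mA$, apply Young's inequality $\|a+b\|^2 \leq 2\|a\|^2 + 2\|b\|^2$, then Jensen plus operator-norm bounds on the two pieces. The only organizational difference is that the paper obtains this theorem as a one-line corollary of the general Hessian result (Theorem~\ref{theorem:a_b_hess}), whose proof carries out precisely your argument with $\mQ_i = \int_0^1 \nabla^2 f_i(x+tu_i)\,dt$ in place of $\mA_i$; for quadratics $\mQ_i \equiv \mA_i$, so your direct route simply unpacks that invocation.
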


Thus, $L_A$ is less than or equal to $\sqrt{2} \max_{i\in[n]} \norm{\mA_i - \mA},$ which serves as a measure of similarity between the matrices. The smaller the values of $\norm{\mA_i - \mA}$ (indicating greater similarity among the functions $f_i$), the smaller the $L_A$ value.

In the view of this theorem, the s2w communication complexity of \algname{MARINA-P} with Perm$K$ on non-homogeneous quadratics is
\begin{align}
    \label{eq:non_homog_quad}
    \textstyle \cO\left(\frac{d \delta^0 \norm{\mA}}{n \varepsilon} + \frac{d \delta^0 \max_{i\in[n]} \norm{\mA_i - \mA}}{\varepsilon}\right).
\end{align}
Since the corresponding complexity of \algname{GD} is
\begin{align}
    \label{eq:non_homog_quad_gd}
    \textstyle \cO\left(\frac{d \delta^0 \norm{\mA}}{\varepsilon}\right),
\end{align}
in the close-to-homogeneous regimes (i.e., when $\max_{i\in[n]} \norm{\mA_i - \mA}$ is small), the complexity \eqref{eq:non_homog_quad} can be \emph{provably} much smaller than \eqref{eq:non_homog_quad_gd}. The same reasoning applies to the general case when the functions $f_i$ are not quadratics: \algname{MARINA-P} improves with the number of workers $n$ in the regimes when $D_i$ are small (see Theorem~\ref{theorem:a_b_hess}).

Let us note that there is another method, \algname{CORE}, by \citet{yue2023core}, that can also provably outperform \algname{GD}, achieving the s2w communication complexity of $\Omega\left(\nicefrac{\delta^0 \textnormal{tr}{\mA}}{\varepsilon}\right) $ on non-homogeneous quadratics. Neither their method nor ours universally provides the best possible communication guarantees. Our method excels in the close-to-homogeneous regimes: for example, if we take $\mA_i = L_i \mI$ for all $i \in [n],$ and define $L = \nicefrac{1}{n}\sum_{i=1}^n L_i$, then the complexity of \algname{CORE} is $\Omega\left(\nicefrac{d \delta^0 L}{\varepsilon}\right),$ while ours is $\cO\left(\frac{d \delta^0 L}{n \varepsilon} + \frac{d \delta^0 \max_{i\in[n]} \lvert L_i - L \rvert}{\varepsilon}\right).$ Hence, our guarantees are superior in regimes where $\max_{i\in[n]} \lvert L_i - L\rvert \ll L$. One interesting research direction is to develop a universally better method combining the benefits of both approaches.

\section{\algnamebig{\newmethod}: A New Bidirectional Method}\label{sec:m3}

In the previous sections, we introduce a new method that provably improves the \emph{server-to-worker} communication, but ignores the \emph{worker-to-server} communication overhead. Our aim now is to treat \algname{MARINA-P} as a starting point for developing methods applicable to more practical scenarios, by combining it with techniques that compress in the opposite direction.
Since the theoretical state-of-the-art w2s communication complexity is obtained by \algname{MARINA} (see Section~\ref{sec:related_work}), our next research step was to combine the two and analyze ``\algname{MARINA} + \algname{MARINA-P}'', but this naive approach did not yield communication complexity guarantees surpassing \eqref{eq:comm_gd} in any regime. It became apparent that some ``buffer'' step between these two techniques is needed, and this step turned out to be the momentum. Our new method, \algname{\newmethod} (Algorithm \ref{alg:m3}), is described in \eqref{eq:mthree}.

\begin{figure}[t]
\begin{theorembox}
\centerline{The \algname{\newmethod} Method}
\centerline{(\algname{\explain}): }
Initialize vectors $x_0, w_i^0, g_i^0, z_i^0 \in \R^d$ for all $i \in [n]$, step size $\gamma > 0$, probabilities $0 < p_P, p_D \leq 1$ and compressors $\cC_1^t,\ldots,\cC_n^t \in \mathbb{U}(\omega_P) \cap \mathbb{P}(\theta)$\footnote{By $\mathbb{P}(\theta)$ we denote a family of \emph{correlated compressors} (defined in Appendix \ref{sec:3compr}). It includes, among others, Perm$K$ compressors.}, $\cQ_1^t,\ldots,\cQ_n^t \in \mathbb{U}(\omega_D)$ for all $t \geq 0.$ The method iterates
\begin{equation}
\begin{aligned}
    \label{eq:mthree}
    x^{t+1} &= x^t - \gamma g^t, \\
    w^{t+1}_i &= 
    \begin{cases}
        x^{t+1} & \textnormal{with probability } p_P,\\
        w_i^t + \cC_i^t(x^{t+1} - x^t) & \textnormal{with probability } 1 - p_P,
    \end{cases} \\
    z_i^{t+1} &= \beta w_i^{t+1} + (1 - \beta) z_i^{t} \qquad \textnormal{\algname{(Momentum)}}\\
    g^{t+1}_i &= 
    \begin{cases}
        \nabla f_i(z_i^{t+1}) & \textnormal{with probability } p_D,\\
        g^{t}_i + \cQ_i^t(\nabla f_i(z_i^{t+1}) - \nabla f_i(z_i^{t})) & \textnormal{with probability } 1 - p_D
    \end{cases} \\
    \textnormal{for al}&\textnormal{l } i \in [n],
\end{aligned}
\end{equation}
where the probabilistic decisions are the same for all $i \in [n],$ i.e., one coin is tossed for all workers (as in \eqref{eq:marina} and \eqref{eq:marina_p}), and the coins for the first and second probabilistic decisions with $p_P$ and $p_D$ are independent.
We denote $w^{t} \eqdef \nicefrac{1}{n} \sum_{i=1}^n w_i^t,$ $g^{t} \eqdef \nicefrac{1}{n} \sum_{i=1}^n g_i^t,$ $z^{t} \eqdef \nicefrac{1}{n} \sum_{i=1}^n z_i^t.$ See the implementation in Algorithm~\ref{alg:m3}.
\end{theorembox}
\end{figure}
\algname{\newmethod} combines \eqref{eq:marina}, \eqref{eq:marina_p}, and the momentum step $z_i^{t+1} = \beta w_i^{t+1} + (1 - \beta) z_i^{t},$ which is the key to our improvements. A similar technique is used to reduce the variance in \citet{fatkhullin2023momentum}.
Let us explain how \algname{\newmethod} works in practice. First, the server calculates $x^{t+1}.$ Depending on the first probabilistic decision, it sends either $x^{t+1}$ or $\cC_i^t(x^{t+1} - x^{t})$ to the workers, who then calculate $w^{t+1}_i$ locally. Next, the workers compute $z^{t+1}_i,$ and depending on the second probabilistic decision, they send either $\nabla f_i(z_i^{t+1})$ or $\cQ_i^t(\nabla f_i(z_i^{t+1}) - \nabla f_i(z_i^{t}))$ back to the server. The server aggregates the received vectors and calculates $g^{t+1}.$ As in \algname{MARINA}, $p_P$ and $p_D$ are chosen in such a way that the non-compressed communication does not negatively affect the communication complexity. Therefore, the method predominantly transmits compressed information, with only a marginal probability of sending uncompressed vectors.

\subsection{The Convergence Theory of \algnamebig{\newmethod}}
\label{sec:convergence_m}

For simplicity, we consider Perm$K$ in the role of $\cC_i^t$ and Rand$K$ in the role of $\cQ_i^t$. The general theory for all unbiased compressors is presented in Section~\ref{sec:m3_convergence_theory}.

\begin{theorem}\label{thm:marinap_gen_m}
    Let Assumptions~\ref{ass:lipschitz_constant}, \ref{ass:lower_bound}, \ref{ass:local_lipschitz_constant} and \ref{ass:functional} be satisfied. Take $\gamma = \big(L + 34 \parens{n L_A + n^{2/3} L_B + n^{2/3} L_{\max}}\big)^{-1}$, $p_D = p_P = \nicefrac{1}{n}$, $\beta = n^{-2/3}$, $w_i^0 = z_i^0 = x^0$ and $g_i^0 = \nabla f_i(x^0)$ for all $i \in [n]$. Then \algname{MARINA-P} with $\cC_i^t = $Perm$K$ and $\cQ_i^t =$ Rand$K$ with $K = \nicefrac{d}{n}$ finds an $\varepsilon$--stationary point after
    $\cO\big(\frac{\delta^0}{\varepsilon} \left(n^{2/3} L_{\max} + n L_A\right)\big)$
    iterations. The total communication complexity is
    \begin{align}
        \label{eq:total_comm}
        \textstyle
        \cO\left(\frac{d \delta^0 L_{\max}}{n^{1/3} \varepsilon} + \frac{d \delta^0 L_A}{\varepsilon}\right).
    \end{align}
\end{theorem}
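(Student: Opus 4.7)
The plan is a Lyapunov-function analysis that simultaneously controls three error quantities: the MARINA-side gradient-tracking error, the MARINA-P-side primal-tracking error, and the momentum lag. Concretely, I would work with
\begin{equation*}
\Psi^t \eqdef \mathbb{E}[f(x^t)] - f^\star + \tfrac{A\gamma}{n^2}\sum_{i=1}^n \mathbb{E}\|g_i^t - \nabla f_i(z_i^t)\|^2 + \tfrac{B}{n}\sum_{i=1}^n \mathbb{E}\|w_i^t - x^t\|^2 + \tfrac{C}{n}\sum_{i=1}^n \mathbb{E}\|z_i^t - w_i^t\|^2,
\end{equation*}
with positive constants $A,B,C$ to be tuned, and show a per-step descent of the form $\mathbb{E}[\Psi^{t+1}] \leq \Psi^t - \tfrac{\gamma}{4}\mathbb{E}\|\nabla f(x^t)\|^2$. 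Telescoping then delivers the claimed iteration bound, and multiplying by the expected per-iteration message size $\leq 2d/n$ in each direction (because $p_P = p_D = 1/n$ and both Perm$K$ and Rand$K$ transmit $d/n$ coordinates otherwise) yields the total communication complexity \eqref{eq:total_comm}.

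First I would apply $L$-smoothness to get $f(x^{t+1}) \leq f(x^t) - \tfrac{\gamma}{2}\|\nabla f(x^t)\|^2 - \tfrac{\gamma}{2}(1-\gamma L)\|g^t\|^2 + \tfrac{\gamma}{2}\|g^t - \nabla f(x^t)\|^2$ and split the residual as $\|g^t - \nabla f(x^t)\|^2 \leq 3\|g^t - \nabla f(z^t)\|^2 + 3\|\nabla f(z^t) - \nabla f(w^t)\|^2 + 3\|\nabla f(w^t) - \nabla f(x^t)\|^2$. The third summand vanishes: by \eqref{eq:restore} and induction from $w_i^0 = x^0$, the Perm$K$ compressor gives $w^t = x^t$ deterministically for all $t$, as in the warm-up of Section~\ref{sec:homogeneous_quadratics}. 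The second summand is bounded by $L^2 \cdot \tfrac{1}{n}\sum_i \|z_i^t - w_i^t\|^2$ via Jensen and smoothness, feeding the momentum Lyapunov term. The first enters the gradient-tracking term directly; the crucial $1/n$ savings on that side are preserved because the residuals $g_i^t - \nabla f_i(z_i^t)$ are independent mean-zero across workers (Assumption~\ref{ass:independent} applied to Rand$K$).

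Next I would derive three one-step recursions. Splitting on $c_P^t$ and using unbiasedness with the $\omega_P$-variance bound yields the downlink recursion $\tfrac{1}{n}\sum_i \mathbb{E}\|w_i^{t+1} - x^{t+1}\|^2 \leq (1 - p_P)\tfrac{1}{n}\sum_i \|w_i^t - x^t\|^2 + (1 - p_P)\omega_P \gamma^2 \|g^t\|^2$. The momentum recursion, obtained from $z_i^{t+1} - w_i^{t+1} = (1-\beta)(z_i^t - w_i^t) - (1-\beta)(w_i^{t+1} - w_i^t)$ together with Young's inequality tuned to a $(1-\beta)$-contraction, injects $\Theta(1/\beta) \cdot \mathbb{E}\|w_i^{t+1} - w_i^t\|^2$, which in turn expands into the downlink error plus $\gamma^2\|g^t\|^2$. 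The gradient-tracking recursion is the standard MARINA one, $\tfrac{1}{n^2}\sum_i \mathbb{E}\|g_i^{t+1} - \nabla f_i(z_i^{t+1})\|^2 \leq (1 - p_D)\tfrac{1}{n^2}\sum_i \mathbb{E}\|g_i^t - \nabla f_i(z_i^t)\|^2 + \tfrac{(1-p_D)\omega_D}{n^2}\sum_i L_i^2 \mathbb{E}\|z_i^{t+1} - z_i^t\|^2$, and the pivotal observation is $\|z_i^{t+1} - z_i^t\|^2 = \beta^2 \|w_i^{t+1} - z_i^t\|^2$: this $\beta^2$ factor is what converts the naive $\omega_D/p_D \asymp n^2$ cost into $\omega_D\beta^2/p_D \asymp n^{2/3}$, which is the origin of the $n^{2/3} L_{\max}$ scaling in the step size.

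The hard part will be coordinating the three recursions: the momentum injection couples to the downlink error through $\|w_i^{t+1} - w_i^t\|^2$; the gradient-tracking injection couples to the momentum error, the downlink error, and $\|g^t\|^2$ through $\|w_i^{t+1} - z_i^t\|^2$. Closing the loop amounts to choosing $A,B,C$ so that the resulting linear system of injection/contraction coefficients admits the clean $\Psi$-decrease with the stated constant $34$ in $\gamma^{-1}$. Wherever Assumption~\ref{ass:functional} is invoked to bound a term of the form $\|\tfrac{1}{n}\sum_i(\nabla f_i(x^t + u_i) - \nabla f_i(x^t))\|^2$ with $\tfrac{1}{n}\sum_i u_i = 0$, the Perm$K$ identity $w^t = x^t$ kills the $L_B$ contribution, which is why the $n L_A$ term (rather than $n L_B$) governs the downlink side. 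The residual $L_B$-type terms arising in the gradient-tracking recursion from the shifted average $z^t \neq x^t$ are dominated by $L_{\max}$ under Assumption~\ref{ass:local_lipschitz_constant} (via Theorem~\ref{theorem:a_b_max} or Theorem~\ref{theorem:a_b_hess}), so $n^{2/3} L_B$ is absorbed into $n^{2/3} L_{\max}$ when translating the iteration bound into the final complexity \eqref{eq:total_comm}.
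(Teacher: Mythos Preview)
Your three-term split conflates two different objects. You write $\nabla f(z^t)$ and $\nabla f(w^t)$, meaning $\nabla f$ evaluated at the \emph{averaged} points $z^t,w^t$; with that reading the third summand indeed vanishes (since $w^t=x^t$ under Perm$K$), but then the first summand $\|g^t-\nabla f(z^t)\|^2$ is \emph{not} the gradient-tracking error: $g^t$ tracks $\tfrac1n\sum_i\nabla f_i(z_i^t)$, not $\tfrac1n\sum_i\nabla f_i(z^t)$. The paper instead splits only once, through $\tfrac1n\sum_i\nabla f_i(z_i^t)$, and applies Assumption~\ref{ass:functional} with $u_i=z_i^t-x^t$ to obtain
\[
L_A^2\,\tfrac1n\sum_i\|z_i^t-x^t\|^2+L_B^2\,\|z^t-x^t\|^2.
\]

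This exposes the real gap in your Lyapunov: you carry $\tfrac1n\sum_i\|z_i^t-w_i^t\|^2$ but omit the \emph{averaged} momentum lag $\|z^t-w^t\|^2$. That omission is fatal for the claimed rate. The recursion for the individual average $\tfrac1n\sum_i\|z_i^t-w_i^t\|^2$ injects $\Theta(1/\beta+\omega_P)\|x^{t+1}-x^t\|^2$, whereas the recursion for $\|z^t-w^t\|^2$ injects only $\Theta(1/\beta+\theta)\|x^{t+1}-x^t\|^2$, and $\theta=0$ for Perm$K$. If you absorb $L_B^2\|z^t-x^t\|^2$ (equivalently $L^2\|z^t-w^t\|^2$) into the individual term via Jensen, the step-size constraint picks up a factor $\omega_P/\beta\approx n^{5/3}$ instead of $1/\beta^2=n^{4/3}$, yielding $n^{5/6}L_{\max}$ in the iteration bound rather than $n^{2/3}L_{\max}$. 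Your remark that ``$w^t=x^t$ kills the $L_B$ contribution'' is only half right: it kills $\|w^t-x^t\|^2$, but the $L_B$ weight lands on $\|z^t-x^t\|^2=\|z^t-w^t\|^2$, which is nonzero and must be tracked with its own (favourable) recursion. The paper's Lyapunov therefore carries five quadratic terms (the two individual averages plus their averaged counterparts and the MARINA error), and the separation between $\eta\|z^t-w^t\|^2$ and $\nu\,\tfrac1n\sum_i\|z_i^t-w_i^t\|^2$ is precisely what delivers the $n^{2/3}$ exponent.
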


Once again, we observe improvement with the number of workers $n$, and the obtained complexity~\eqref{eq:total_comm} can be provably smaller than \eqref{eq:comm_gd}. Indeed, in scenarios like federated learning, where the number of workers (e.g., mobile phones) is typically large \citep{kairouz2021advances, chowdhery2023palm}, the first term can be significantly smaller than $\nicefrac{d \delta^0 L}{\varepsilon}$. The second term can also be small in close-to-homogeneous regimes (see Section~\ref{sec:marina_p}).

\section{Experimental Highlights}\label{sec:marinap_quadratic}
This section presents insights from the experiments, with further details and additional results in Appendix~\ref{sec:experimetns_main}.
The experiment aims to empirically test the theoretical results from Section~\ref{sec:marina_p}. We consider a quadratic optimization problem, where the functions $f_i$ are
as defined in Theorem \ref{theorem:gen_quad} and $\mA_i\in\R^{300\times300}$.
We compare \algname{GD}, \algname{MARINA-P} sending the same message compressed using a single Rand$K$ compressor to all workers (``SameRand$K$'' from Appendix \ref{sec:3compr}), \algname{MARINA-P} with independent Rand$K$ compressors, \algname{MARINA-P} with Perm$K$ compressors, and \algname{EF21-P} with Top$K$ compressor. We consider $n \in \{10, 100, 1000\}$ and fine-tune the step size for each algorithm.
The results, presented in Figure~\ref{fig:marinap_quadratic}, align closely with the theory, with \algname{MARINA-P} using Perm$K$ compressors consistently performing best. Moreover, the convergence rate of \algname{MARINA-P} with Perm$K$ and independent Rand$K$ compressors improves with $n$. Since this is not the case for \algname{EF21-P}, even though it outperforms \algname{MARINA-P} with independent Rand$K$ compressors for $n=10$, it falls behind for $n\in\{100,1000\}$.

\begin{figure}[t]
\centering
\begin{subfigure}{0.33\columnwidth}
  \centering
  \includegraphics[width=\columnwidth]{./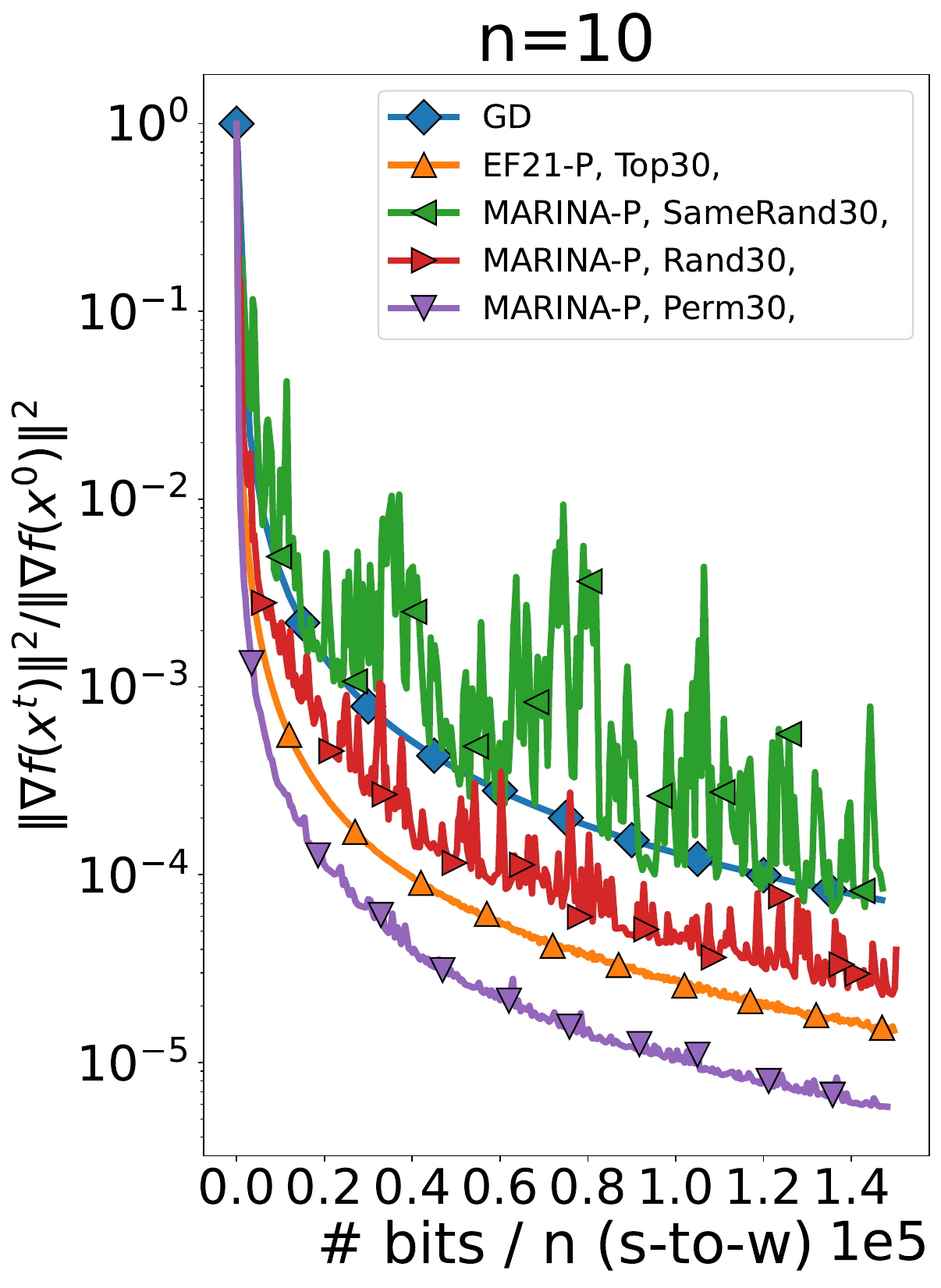}
\end{subfigure}
\begin{subfigure}{0.32\columnwidth}
  \centering
  \includegraphics[width=\columnwidth]{./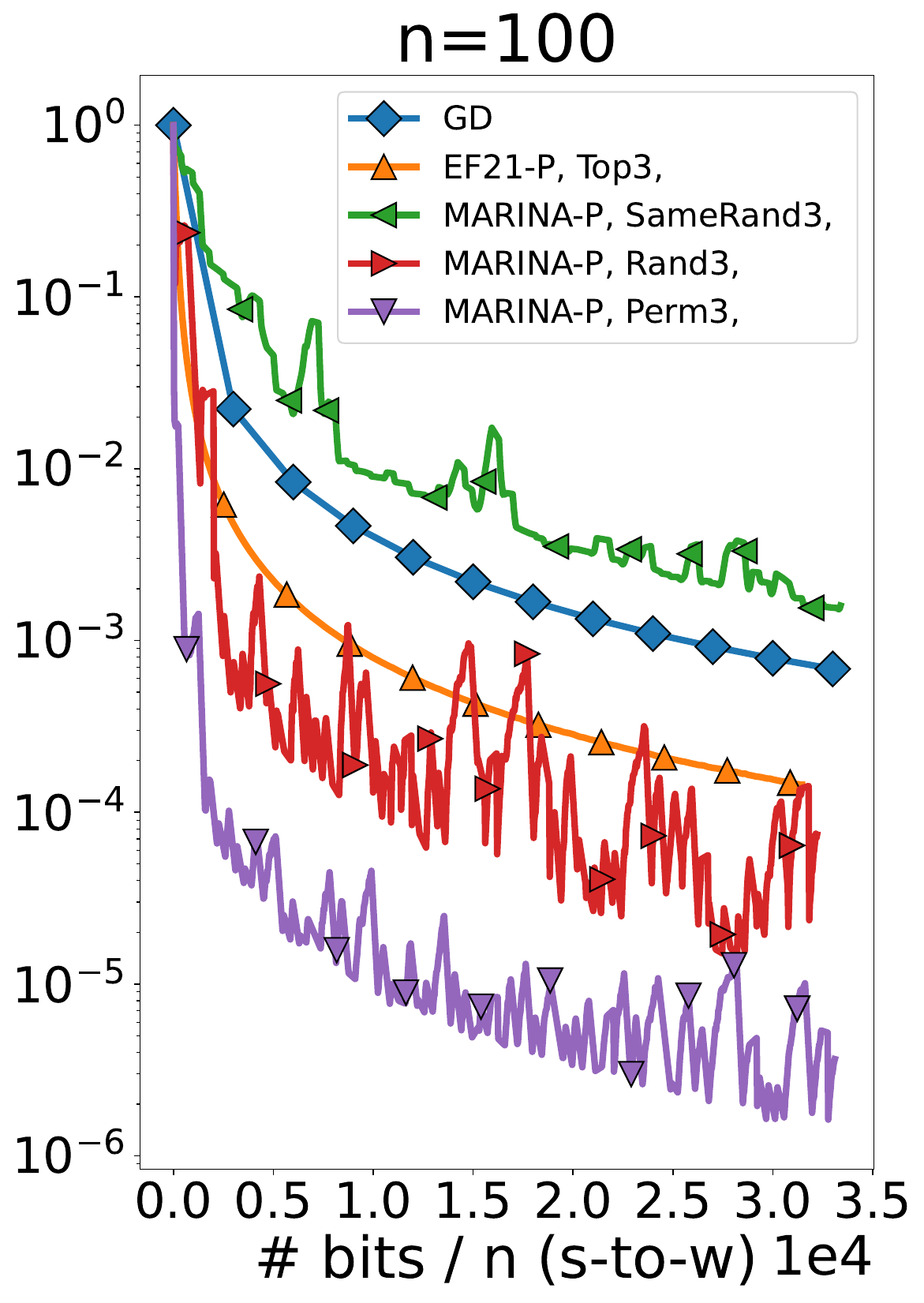}
\end{subfigure}
\begin{subfigure}{0.31\columnwidth}
  \centering
  \includegraphics[width=\columnwidth]{./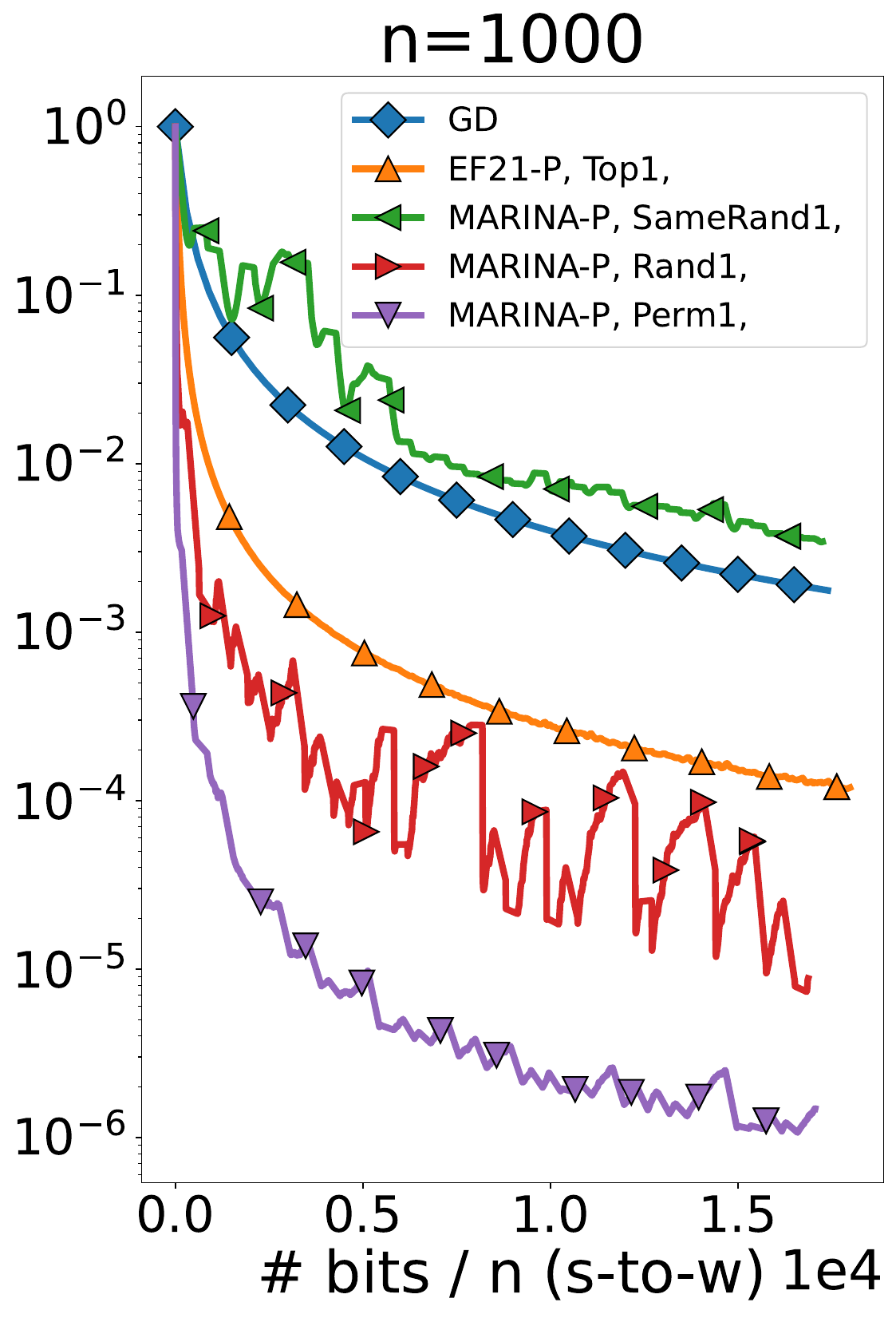}
\end{subfigure}
\caption{Experiments on the quadratic optimization problem from Section~\ref{sec:marinap_quadratic}. We plot the norm of the gradient w.r.t. \# of coordinates sent from the server to the workers.}
\label{fig:marinap_quadratic}
\end{figure}

\begin{ack}
The research reported in this publication was supported by funding from King Abdullah University of Science and Technology (KAUST): i) KAUST Baseline Research Scheme, ii) Center of Excellence for Generative AI, under award number 5940, iii) SDAIA-KAUST Center of Excellence in Artificial Intelligence and Data Science. The work of A.T. was partially supported by the Analytical center under the RF Government (subsidy agreement 000000D730321P5Q0002, Grant No. 70-2021-00145 02.11.2021).
\end{ack}

\bibliography{example_paper}
\bibliographystyle{apalike}

\newpage

\appendix

\tableofcontents

\newpage

\section{Three unbiased ways to compress}\label{sec:3compr}

The main focus of this paper is handling the server-to-worker communication costs. To explain better where the improvements outlined in the main part of this paper come from, let us first consider the scenario where uplink communication cost is negligible but downilnk communication cost is not.
While we do no necessarily say that this is a realistic setup, examining it first enables us to understand how downlink compression should be performed, capturing all the intricacies.

Existing algorithms with lossy s2w and w2s communication have a certain common feature. The compression mechanism employed on the clients is very different from the one used on the server: while each client transmits to the server a different message, specific to the data stored on each device, the server broadcasts the same update to all clients. We want to question this algorithmic step and suggest to the reader that if compression is applied multiple times and each worker receives its individual update, then intuitively more information can be transmitted. A well-designed algorithm should be able to take advantage of this.

One can depart from the usual approach of sending the same update to all workers in two ways: a)~compress the update $n$ times \textit{independently}, or b) produce $n$ such updates in a \textit{correlated} way. Either way, the server broadcasts $n$ different compressed messages rather than one, and sends a \emph{different} update to each worker.
The key discovery here is that both a) and b) are mathematically \emph{provably better} than the prevalent approach of sending the same update to all clients.

This is a crucial improvement in a system where the above setup is a good approximation of reality. And even if it is not, and the current model is not perfectly capturing the reality, we can accept it for now, as it allows us to focus on the novel aspects of the approach. With that said, these considerations can serve as a starting point for thinking about \emph{bidirectional compression}: having focused on the simplified setup and equipped with knowledge on how the compression on the master should be performed, we employ this mechanism in more complex scenarios (see Section \ref{sec:m3}).

Let us now describe the three possible ways to perform compression on the server.

\paragraph{``Same'' compressors.}

The prevalent approach in downlink compression is to transmit the same update to all workers. To illustrate this, let us call a collection $\cC_1,\ldots,\cC_n$ of compressors ``SameRand$K$'' if for all $i \in [n]$ we have $\cC_i = \cC$ for some Rand$K$ compressor $\cC$.
Now, consider one iteration $t$ of \algname{MARINA-P} with SameRand$K$ compressor. The server calculates $\cC_i^t(x^{t+1}-x^t)$ for $i\in[n]$, but in this case, $\cC_1^t(x^{t+1}-x^t)=\ldots=\cC_n^t(x^{t+1}-x^t)=\cC^t(x^{t+1}-x^t)$.
Thus, applying a collection of SameRand$K$ compressors to some vector $x\in\R^d$ is equivalent to using a single Rand$K$ compression operator and transmitting the same message $\cC(x)$ to all workers.

\paragraph{Independent Compressors.}

Rather than setting $\cC_i(x) = \cC(x)$ for all $i \in [n]$, one can break the dependency between the messages and allow the compressors to differ. For illustrational purposes, suppose that $\cC_i, i\in[n]$ are independent Rand$K$ compressors (Assumption~\ref{ass:independent}). Then, applying such a collection of mappings to the vector of interest $x\in\R^d$, one obtains $n$ distinct and independent sparse vectors $\cC_1(x), \ldots, \cC_n(x)$.

\begin{remark}
    We are aware of only one method that uses $n$ distinct compressors in downlink compression, \algname{Rand-MCM} by \citet{philippenko2021preserved}. Given the absence of results in the non-convex case, let us compare the communication complexities of \algname{Rand-MCM} and \algname{M3} under the Polyak-Łojasiewicz condition (Assumption \ref{ass:pl}), which holds under strong convexity.
    In the strongly convex case, the proved iteration complexity of \algname{Rand-MCM} is
    \begin{align*}
        \Omega\parens{\frac{L_{\max}}{\mu} \parens{\omega_P^{3/2} + \frac{\omega_P\omega_D^{1/2}}{\sqrt{n}} + \frac{\omega_D}{n}} \log\frac{\delta^0}{\varepsilon}}.
    \end{align*}
    Assuming for simplicity that the server and the workers use Rand$K$ compressors with $K=\nicefrac{d}{n}$, this gives the total communication complexity of
    \begin{align*}
        \Omega\parens{\frac{d}{n} \times \frac{L_{\max}}{\mu} \parens{\omega_P^{3/2} + \frac{\omega_P\omega_D^{1/2}}{\sqrt{n}} + \frac{\omega_D}{n}} \log\frac{\delta^0}{\varepsilon}}
        &= \Omega\parens{\frac{d \sqrt{n} L_{\max}}{\mu} \log\frac{\delta^0}{\varepsilon}},
    \end{align*}
    which is getting \emph{worse} as the number of workers $n$ increases.
    Meanwhile, by Corollary \ref{cor:m3_totalcomm_perm}, the total communication complexity of \algname{M3} (where $\cC_i^t$ are the Perm$K$ compressors and $\cQ_i^t$ are independent Rand$K$ compressors, both with $K = d / n$) under the Polyak-Łojasiewicz condition is
    \begin{align*}
        \cO\left(\left(\frac{d L_{\max}}{n^{1/3} \mu} + \frac{d L_A}{\mu} + d\right) \log\frac{\delta^0}{\varepsilon}\right).
    \end{align*}
    Since $n$ is typically large, the total communication complexity of \algname{M3} can be much better than that of \algname{Rand-MCM}.
\end{remark}

\paragraph{Correlated Compressors.}

In their work, \citet{szlendak2021permutation} introduce an alternative class of compressors, which satisfy the following condition:

\begin{definition}[AB-inequality \citep{szlendak2021permutation}]\label{def:corr_compr_AB}
    There exist constants $A, B \geq 0$ such that the random operators $\cC_1, \ldots \cC_n$ satisfy
    \begin{align}\label{eq:corr_compr_AB}
        \Exp{\cC_i(x)} &= x, \nonumber \\
        \Exp{\norm{\frac{1}{n} \sum_{i=1}^n \cC_i(x_i) - \frac{1}{n} \sum_{i=1}^n x_i}^2} &\leq A \frac{1}{n} \sum_{i=1}^n \norm{x_i}^2
        - B \norm{\frac{1}{n} \sum_{i=1}^n x_i}^2
    \end{align}
    for all $x, x_1,\ldots,x_n \in \R^d$. If these conditions hold, we write $\brac{\cC_i}_{i=1}^n \in \mathbb{U}(A,B)$.
\end{definition}

Following on this idea, we introduce the concept of a \textit{collection of correlated compressors}.

\begin{definition}[Collection of Correlated Compressors]\label{def:corr_compr}
    There exists a constant $\theta\geq 0$ such that the random operators $\cC_1, \ldots \cC_n$ satisfy:
    \begin{align}\label{eq:corr_compr}
        \Exp{\cC_i(x)} &= x \nonumber \\
        \Exp{\norm{\frac{1}{n} \sum_{i=1}^n \cC_i(x) - x}^2} &\leq \theta\norm{x}^2
    \end{align}
for all $x\in\R^d$. If these conditions hold, we write $\brac{\cC_i}_{i=1}^n \in \mathbb{P}(\theta)$.
\end{definition}
Definition \ref{def:corr_compr} will play a key role in our upcoming advancements. But what makes this assumption reasonable?

First, it is easy to note that condition \eqref{eq:corr_compr} is weaker than \eqref{eq:corr_compr_AB}. Indeed, if $\brac{\cC_i}_{i=1}^n \in \mathbb{U}(A,B)$, then inequality~\eqref{eq:corr_compr} holds with $\theta \eqdef A-B$. It turns out that it is in fact strictly weaker, as the following example shows.

\begin{example}
    Let $n = 2$, $d = 1$. Let $\brac{\zeta_x: x \in \R}$ be a collection of independent Cauchy variables indexed by real numbers. Define $\cC_1(u) = u + \zeta_u$, and $\cC_2(u) = u - \zeta_u$. Then
    \begin{align*}
        \frac{1}{2} \parens{\cC_1(u) + \cC_2(u)} = u,
    \end{align*}
    so $\cC_1(u)$ and $\cC_2(u)$ satisfy Definition \ref{def:corr_compr} with $\theta = 0$. However, for $u_1 \neq u_2$, by the properties of Cauchy distribution we have
    \begin{align*}
        \Exp{\parens{\frac{1}{2} \parens{\cC_1(u) + \cC_2(u)} - \frac{1}{2} \parens{u_1 + u_2}}^2}
        &= \Exp{\parens{\frac{1}{2} \parens{\zeta_1+\zeta_2}}^2} \\
        &= \frac{1}{4} \Exp{\zeta_1^2+\zeta_2^2+2\zeta_1\zeta_2}
        = \infty.
    \end{align*}
    Thus, $\cC_1(u)$ and $\cC_2(u)$ do not satisfy Definition \ref{def:corr_compr_AB}.
\end{example}

In fact, the condition specified in Definition \ref{def:corr_compr} does not impose any restrictions on the compressor class when working with unbiased compressors. This is because, for any set of compressors $\cC_1,\ldots,\cC_n \in \mathbb{U}(\omega)$, there exists $\theta \geq 0$ such that $\brac{\cC_i}_{i=1}^n \in \mathbb{P}(\theta)$, as shown in the following lemma.

\begin{lemma}\label{lemma:theta_indep_same}
    \leavevmode
    \begin{enumerate}
        \item Let $\cC_1,\ldots,\cC_n$  be a collection of compressors such that $\cC_i \in \mathbb{U}(\omega)$ for all $i\in[n]$. Then $\brac{\cC_i}_{i=1}^n \in \mathbb{P}(\omega)$.
        \item Let us further assume that $\cC_1,\ldots,\cC_n$ are independent (Assumption~\ref{ass:independent}). Then $\brac{\cC_i}_{i=1}^n \in \mathbb{P}(\nicefrac{\omega}{n})$.
    \end{enumerate}
\end{lemma}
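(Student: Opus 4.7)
The plan is to verify the two defining properties of $\mathbb{P}(\theta)$ (Definition~\ref{def:corr_compr}) directly from $\cC_i \in \mathbb{U}(\omega)$. Unbiasedness of $\frac{1}{n}\sum_i \cC_i(x)$ follows immediately by linearity of expectation from the unbiasedness of each $\cC_i$, so in both parts the only real work is bounding $\mathbb{E}\bigl[\bigl\|\frac{1}{n}\sum_{i=1}^n \cC_i(x) - x\bigr\|^2\bigr] = \mathbb{E}\bigl[\bigl\|\frac{1}{n}\sum_{i=1}^n (\cC_i(x) - x)\bigr\|^2\bigr]$.

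For part 1, I would apply convexity of $\|\cdot\|^2$ (Jensen's inequality, equivalently $\|\sum a_i\|^2 \le n \sum \|a_i\|^2$) to get
\[
\Bigl\|\tfrac{1}{n}\sum_{i=1}^n (\cC_i(x) - x)\Bigr\|^2 \leq \tfrac{1}{n}\sum_{i=1}^n \|\cC_i(x) - x\|^2,
\]
take expectations, and apply the variance bound $\mathbb{E}\|\cC_i(x) - x\|^2 \le \omega\|x\|^2$ termwise. This yields $\theta = \omega$. No independence needed; this is why the step is robust enough to cover the worst case (including $\cC_1 = \cdots = \cC_n$, where no cancellation occurs).

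For part 2, I would instead expand the square and exploit pairwise independence together with unbiasedness. Writing
\[
\mathbb{E}\Bigl\|\tfrac{1}{n}\sum_{i=1}^n (\cC_i(x) - x)\Bigr\|^2 = \tfrac{1}{n^2}\sum_{i,j=1}^n \mathbb{E}\bigl\langle \cC_i(x) - x,\, \cC_j(x) - x\bigr\rangle,
\]
the $i \neq j$ cross terms split as $\langle \mathbb{E}[\cC_i(x) - x], \mathbb{E}[\cC_j(x) - x]\rangle = 0$ by independence and unbiasedness, leaving only the $n$ diagonal terms. Each diagonal term is bounded by $\omega\|x\|^2$, producing $\frac{1}{n^2}\cdot n\omega\|x\|^2 = \frac{\omega}{n}\|x\|^2$, i.e., $\theta = \omega/n$.

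There is no real obstacle here; the content of the lemma is essentially an observation that unbiased errors cancel to order $1/n$ when averaged, while without independence one can only guarantee the worst-case $O(1)$ bound. The only thing worth being slightly careful about is the decoupling of the expectation of the inner product into the product of expectations, which requires that $\cC_i(x)$ and $\cC_j(x)$ be independent random vectors (not merely uncorrelated componentwise); this is exactly what Assumption~\ref{ass:independent} provides in part 2.
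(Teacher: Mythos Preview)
Your proposal is correct and follows essentially the same approach as the paper: Jensen's inequality for part~1, and the vanishing of cross terms under independence and unbiasedness for part~2. The paper states part~2 slightly more compactly (writing directly $\mathbb{E}\bigl\|\frac{1}{n}\sum_i(\cC_i(x)-x)\bigr\|^2 = \frac{1}{n^2}\sum_i \mathbb{E}\|\cC_i(x)-x\|^2$ by independence), but this is just your inner-product expansion with the cross terms suppressed.
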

\begin{proof}
    \begin{enumerate}
        \item Jensen's inequality gives
        \begin{align*}
            \Exp{\norm{\frac{1}{n} \sum_{i=1}^n \cC_i(u) - u}^2}
            &\overset{\eqref{eq:jensen}}{\leq} \frac{1}{n} \sum_{i=1}^n \Exp{\norm{\cC_i(u) - u}^2}
            \overset{\textnormal{Def.}\ref{def:unbiased_compression}}{\leq} \frac{1}{n} \sum_{i=1}^n \omega \norm{u}^2
            = \omega \norm{u}^2,
        \end{align*}
        so $\theta=\omega$.
        \item Using independence of compressors, we have
        \begin{align*}
            \Exp{\norm{\frac{1}{n} \sum_{i=1}^n \cC_i(u) - u}^2}
            &= \frac{1}{n^2} \sum_{i=1}^n \Exp{\norm{\cC_i(u) - u}^2}
            \overset{\textnormal{Def.}\ref{def:unbiased_compression}}{\leq} \frac{1}{n^2} \sum_{i=1}^n \omega \norm{u}^2
            = \frac{\omega}{n} \norm{u}^2.
        \end{align*}
        Thus $\theta=\nicefrac{\omega}{n}$.
    \end{enumerate}
\end{proof}

However, the true advantages of employing correlated compressors become apparent when the definition holds with $\theta=0$, as in the case of Perm$K$ compressors.

\begin{lemma}\label{lemma:3compr_omega_theta}
    Let $\cC_1,\ldots,\cC_n$  be a collection of a) SameRand$K$, b) independent Rand$K$, c) Perm$K$ compressors. Then
    \begin{enumerate}[a)]
        \item $\brac{\cC_i}_{i=1}^n \in \mathbb{P}(\omega)$ and $\cC_i \in \mathbb{U}(\omega)$ where $\omega = \nicefrac{d}{K} - 1$ for all $i \in [n]$,
        \item $\brac{\cC_i}_{i=1}^n \in \mathbb{P}(\nicefrac{\omega}{n})$ and $\cC_i \in \mathbb{U}(\omega)$ where $\omega = \nicefrac{d}{K} - 1$ for all $i \in [n]$,
        \item $\brac{\cC_i}_{i=1}^n \in \mathbb{P}(0)$ and $\cC_i \in \mathbb{U}(\omega)$ where $\omega = n - 1$ for all $i \in [n]$.
    \end{enumerate}
\end{lemma}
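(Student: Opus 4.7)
The plan is to verify each of the three parts separately, leveraging the definitions of the compressors and Lemma~\ref{lemma:theta_indep_same}. The common piece is that Rand$K$ is known to belong to $\mathbb{U}(\nicefrac{d}{K}-1)$ (this is the canonical example stated right after Definition~\ref{def:unbiased_compression}), so for parts (a) and (b) the individual membership $\cC_i \in \mathbb{U}(\omega)$ with $\omega = \nicefrac{d}{K}-1$ is immediate.

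For part (a) with SameRand$K$, I would observe that since $\cC_1 = \dots = \cC_n = \cC$ for a single Rand$K$ operator, the aggregate satisfies $\frac{1}{n}\sum_{i=1}^n \cC_i(x) = \cC(x)$. Then the $\mathbb{P}(\theta)$ bound from Definition~\ref{def:corr_compr} reduces exactly to the $\mathbb{U}(\omega)$ bound for $\cC$ with $\theta = \omega$. For part (b) with independent Rand$K$, I would simply invoke Lemma~\ref{lemma:theta_indep_same}(2), which gives $\theta = \nicefrac{\omega}{n}$ with $\omega = \nicefrac{d}{K}-1$; no new work is required.

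Part (c) is the only one that uses the specific structure of Perm$K$. First I would check unbiasedness and the second-moment bound for a single $\cC_i$ from Definition~\ref{def:PermK-1}: for a uniformly random permutation $\pi$, each coordinate $k\in[d]$ appears in the block assigned to worker $i$ with probability exactly $\nicefrac{q}{d}=\nicefrac{1}{n}$. This gives $\mathbb{E}[\cC_i(x)_k] = n\cdot\nicefrac{1}{n}\cdot x_k = x_k$, so $\cC_i$ is unbiased, and $\mathbb{E}[\|\cC_i(x)\|^2] = n^2\cdot \nicefrac{1}{n}\cdot \|x\|^2 = n\|x\|^2$, so $\mathbb{E}[\|\cC_i(x)-x\|^2] = (n-1)\|x\|^2$, which means $\cC_i \in \mathbb{U}(n-1)$. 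For the $\mathbb{P}(0)$ claim, I would invoke identity~\eqref{eq:restore}, which states $\frac{1}{n}\sum_{i=1}^n \cC_i(x) = x$ deterministically; this makes the left-hand side of~\eqref{eq:corr_compr} identically zero, so $\theta = 0$ works.

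The proof is essentially a bookkeeping exercise; there is no real obstacle. The only mild subtlety is being careful with the counting argument for Perm$K$ in part (c)—specifically that the random permutation places each coordinate into exactly one worker's block, which simultaneously yields unbiasedness, the $\omega=n-1$ bound, and the deterministic averaging identity~\eqref{eq:restore} that makes $\theta=0$. Everything else reduces to direct substitution into the relevant definitions or a one-line appeal to Lemma~\ref{lemma:theta_indep_same}.
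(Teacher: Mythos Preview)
Your proposal is correct and matches the paper's proof essentially line for line: the paper also cites Rand$K \in \mathbb{U}(d/K-1)$ as known, derives (a) and (b) from Lemma~\ref{lemma:theta_indep_same}, computes $\omega=n-1$ for Perm$K$ via $\Exp{\norm{\cC_i(x)}^2}=n\norm{x}^2$ (your counting argument is exactly this calculation spelled out), and gets $\theta=0$ from the deterministic averaging identity. The only cosmetic difference is that for part~(a) the paper invokes Lemma~\ref{lemma:theta_indep_same}(1) rather than your direct observation that $\tfrac{1}{n}\sum_i \cC_i(x)=\cC(x)$, and the paper also records the $d\leq n$ case for Perm$K$, which is outside the scope of Definition~\ref{def:PermK-1} anyway.
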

\begin{proof}
    Unbiasedness follows easily from definitions of compressors (the proof for Perm$K$ compressors can be found in \citet{szlendak2021permutation}). That Rand$K \in \mathbb{U}(\nicefrac{d}{K} - 1)$ (and hence trivially SameRand$K \in \mathbb{U}(\nicefrac{d}{K} - 1)$) is a well-known fact.
    Next, the fact that a) $\brac{\cC_i}_{i=1}^n \in \mathbb{P}(\omega)$ for SameRand$K$ compressors and b) $\brac{\cC_i}_{i=1}^n \in \mathbb{P}(\nicefrac{\omega}{n})$ for independent Rand$K$ compressors follows directly from Lemma \ref{lemma:theta_indep_same}.
    
    To compute $\omega$ for Perm$K$ compressor, first assume that $d \geq n$. Then $\Exp{\norm{\cC_i(x)}^2} = n\norm{x}^2$ \citep{szlendak2021permutation}, so
    \begin{align*}
        \Exp{\norm{\cC_i(x) - x}^2} \overset{\eqref{eq:vardecomp}}{=} \Exp{\norm{\cC_i(x)}^2} - \norm{x}^2 = (n-1) \norm{x}^2.
    \end{align*}
    Similarly, suppose that $d \leq n$, and write $n$ as $n=qd+r$, where $q\in\mathbb{N}_{>0}$ and $0\leq r<d$. Then $\Exp{\norm{\cC_i(x)}^2} = \nicefrac{n}{q} \norm{x}^2$ \citep{szlendak2021permutation}, and hence
    \begin{align*}
        \Exp{\norm{\cC_i(x) - x}^2}
        \overset{\eqref{eq:vardecomp}}{=} \Exp{\norm{\cC_i(x)}^2} - \norm{x}^2
        = \parens{\frac{n}{q}-1} \norm{x}^2
        \leq \parens{n-1} \norm{x}^2.
    \end{align*}
    In both cases $\omega=n-1$.

    Finally, by construction of Perm$K$, we have $\frac{1}{n} \sum_{i=1}^n \cC_i(x) = x$, implying $\theta = 0$.
\end{proof}

In what follows, when considering the Perm$K$ compressor, we shall assume for simplicity that $d \geq n$. The results for $d < n$ are analogous.

\section{Biased Compressors}
\label{sec:biased_compressors}

In addition to unbiased compressors (Definition~\ref{def:unbiased_compression}), the literature of compressed methods distinguishes another class of mappings:
\begin{definition}
\label{def:biased_compression}
A stochastic mapping $\cC\,:\,\R^d \rightarrow \R^d$ is a \textit{biased compressor} if
there exists $\alpha \in (0,1]$ such that
\begin{align}
    \label{eq:biased_compressor}
    \qquad \Exp{\norm{\cC(x) - x}^2} \leq (1 - \alpha) \norm{x}^2 \quad \forall x \in \R^d.
\end{align}
\end{definition}

The family of such compressors is denoted by $\mathbb{B}(\alpha)$. It is well-known that if $\cC \in \mathbb{U}(\omega),$ then $(\omega + 1)^{-1} \cC \in \mathbb{B}\left((\omega + 1)^{-1}\right)$, meaning that the family of biased compressors is broader. A canonical example is the Top$K \in \mathbb{B}(\nicefrac{K}{d})$ compressor, which preserves the $K$ largest in magnitude coordinates of the input vector \citep{beznosikov2020biased}.

\section{Properties of $L_A$ and $L_B$}

We first prove the results from Section \ref{sec:marina_p}, starting with calculating the constants $L_A$ and $L_B$ from Assumption \ref{ass:functional} in some special cases.

\begin{algorithm}[t]
    \caption{\algname{MARINA-P}}
    \begin{algorithmic}[1]\label{alg:marinap}
    \STATE \textbf{Input:} initial model $x_0\in\R^d$ {\color{gray}(stored on the server)}, initial model shifts $w_1^0=\ldots=w_n^0 = x^0$ {\color{gray}(stored on the workers)}, step size $\gamma > 0$, probability $0 < p \leq 1$, compressors $\cC_1^t,\dots,\cC_n^t \in \mathbb{U}(\omega_P)$
    \FOR{$t = 0, \dots, T$}
    \FOR{$i = 1, \dots, n$ in parallel}
    \STATE Calculate $\nabla f_i(w_i^t)$ and send it to the server
    \hfill{\scriptsize \color{gray} Workers evaluate the gradients at the current model estimate}
    \ENDFOR \\
    \textbf{On the server:}
    \STATE $g^t = \frac{1}{n} \sum_{i=1}^n \nabla f_i(w_i^t)$
    \hfill{\scriptsize \color{gray}Server averages the messages received from the workers}
    \STATE $x^{t+1} = x^t - \gamma g^t$ \hfill{\scriptsize \color{gray}Server takes a gradient-type step to update the global model}
    \STATE Sample $c^t \sim \textnormal{Bernoulli}(p)$
    \IF{$c^t = 0$}
        \STATE Send $\cC_i^t(x^{t+1} - x^t)$ to worker $i$ for $i\in[n]$ \hfill{\scriptsize \color{gray}Server sends compressed messages to all workers w.p. $1-p$}
    \ELSE
        \STATE Send $x^{t+1}$ to worker $i$ for $i\in[n]$ \hfill{\scriptsize \color{gray}Server sends the same uncompressed message to all workers w.p. $p$}
    \ENDIF \\
    \textbf{On the workers:}
    \FOR{$i = 1, \dots, n$ in parallel}
    \STATE $w^{t+1}_i = 
    \begin{cases}
        x^{t+1} & \text{if } c^t = 1,\\
        w_i^t + \cC_i^t(x^{t+1} - x^t) & \text{if } c^t = 0
    \end{cases}$ \alglinelabelmain{line:prob}
    \hfill{\scriptsize\color{gray}Worker $i$ updates its local model shift}
    \ENDFOR
    \ENDFOR
    \end{algorithmic}
\end{algorithm}

\THEOREMLMAX*
\begin{proof}
    From Assumption~\ref{ass:local_lipschitz_constant} it follows that
    \begin{eqnarray*}
        \norm{\frac{1}{n} \sum_{i=1}^n (\nabla f_i(x+u_i) - \nabla f_i(x))}^2
        &\overset{\eqref{eq:jensen}}{\leq}& \frac{1}{n} \sum_{i=1}^n \norm{\nabla f_i(x+u_i) - \nabla f_i(x)}^2 \\
        &\overset{\textnormal{Ass.} \ref{ass:local_lipschitz_constant}}{\leq}& 
        \frac{1}{n} \sum_{i=1}^n L_i^2 \norm{u_i}^2 \\
        &\leq& 
        L_{\max}^2 \parens{\frac{1}{n} \sum_{i=1}^n \norm{u_i}^2},
    \end{eqnarray*}
    so Assumption~\ref{ass:functional} holds with $L_A = L_{\max}$ and $L_B = 0$.
\end{proof}

\THEOREMLQUAD*
\begin{proof}
    It is easy to verify that
    \begin{eqnarray*}
        \norm{\frac{1}{n} \sum_{i=1}^n (\nabla f_i(x+u_i) - \nabla f_i(x))}^2
        &=& \norm{\frac{1}{n} \sum_{i=1}^n \parens{\mA (x+u_i) + b - \parens{\mA x + b}}}^2 \\
        &=& \norm{\mA \parens{\frac{1}{n} \sum_{i=1}^n u_i}}^2 \\
        &\leq& \norm{\mA}^2 \norm{\frac{1}{n} \sum_{i=1}^n u_i}^2,
    \end{eqnarray*}
    meaning that Assumption \ref{ass:functional} holds with $L_A = 0$ and $L_B = \norm{\mA}$.
\end{proof}

\begin{lemma}\label{lemma:lalblmax}
    Let Assumption \ref{ass:local_lipschitz_constant} hold. Then, there exist constants $L_A, L_B \geq 0$ such that Assumption~\ref{ass:functional} holds and $L_A^2 + L_B^2 \leq L_{\max}^2$.
\end{lemma}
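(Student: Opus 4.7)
The plan is to observe that this lemma is an immediate corollary of Theorem~\ref{theorem:a_b_max}, which was proved just above. Indeed, Theorem~\ref{theorem:a_b_max} establishes that under Assumption~\ref{ass:local_lipschitz_constant}, the Functional $(L_A, L_B)$ Inequality (Assumption~\ref{ass:functional}) is satisfied by the particular choice $L_A = L_{\max}$ and $L_B = 0$. With this choice, $L_A^2 + L_B^2 = L_{\max}^2 + 0 = L_{\max}^2$, which clearly satisfies $L_A^2 + L_B^2 \leq L_{\max}^2$. Therefore, the existence claim of the lemma follows without any additional work.

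The only subtlety to mention is that one should verify the constants $L_A, L_B$ furnished by Theorem~\ref{theorem:a_b_max} are nonnegative, which is obvious since $L_{\max} \geq 0$ and $0 \geq 0$. So my proof would consist of a single sentence: take $L_A = L_{\max}$ and $L_B = 0$, apply Theorem~\ref{theorem:a_b_max} to conclude that Assumption~\ref{ass:functional} holds with these constants, and note that $L_A^2 + L_B^2 = L_{\max}^2$.

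There is no real obstacle here — the work has already been done by Theorem~\ref{theorem:a_b_max}, whose proof just combines Jensen's inequality applied to the average with the individual $L_i$-smoothness of each $f_i$ and the bound $L_i \leq L_{\max}$. The lemma appears to be stated separately only to record the $L_A^2 + L_B^2 \leq L_{\max}^2$ form of the bound in a format convenient for subsequent references (e.g., when plugging into convergence rates where this combined quantity naturally appears).
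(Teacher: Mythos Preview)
Your proposal is correct and takes essentially the same approach as the paper: the paper's proof simply redoes the Jensen-plus-$L_i$-smoothness computation from Theorem~\ref{theorem:a_b_max} verbatim to conclude that Assumption~\ref{ass:functional} holds with $L_A = L_{\max}$ and $L_B = 0$. Your version, which just cites Theorem~\ref{theorem:a_b_max} instead of repeating the calculation, is equivalent and arguably cleaner.
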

\begin{proof}
    Assumption \ref{ass:local_lipschitz_constant} gives
    \begin{eqnarray*}
        \norm{\frac{1}{n} \sum_{i=1}^n (\nabla f_i(x+u_i) - \nabla f_i(x))}^2
        &\overset{\eqref{eq:jensen}}{\leq}& \frac{1}{n} \sum_{i=1}^n \norm{\nabla f_i(x+u_i) - \nabla f_i(x)}^2 \\
        &\overset{\textnormal{Ass.}\ref{ass:local_lipschitz_constant}}{\leq}& 
        \frac{1}{n} \sum_{i=1}^n L_i^2 \norm{u_i}^2 \\
        &\leq& 
        L_{\max}^2 \parens{\frac{1}{n} \sum_{i=1}^n \norm{u_i}^2},
    \end{eqnarray*}
    and hence Assumption \ref{ass:functional} holds with $L_A^2 = L_{\max}^2$ and $L_B^2 = 0$.
\end{proof}

\begin{remark}
    Under Assumption \ref{ass:functional} we have
    \begin{eqnarray*}
        \norm{\frac{1}{n} \sum_{i=1}^n (\nabla f_i(x+u_i) - \nabla f_i(x))}^2
        &\overset{\textnormal{Ass.}\ref{ass:functional}}{\leq}& L_A^2\left(\frac{1}{n} \sum_{i=1}^n \norm{u_i}^2\right) + L_B^2 \norm{\frac{1}{n} \sum_{i=1}^n u_i}^2 \\
        &\overset{\textnormal{Ass.}\ref{ass:local_lipschitz_constant}}{\leq}& \parens{L_A^2 + L_B^2}\left(\frac{1}{n} \sum_{i=1}^n \norm{u_i}^2\right),
    \end{eqnarray*}
    so, in principle, one could always set $L_B^2=0$. However, the bound could be tightened by decreasing $L_A$ and increasing $L_B$. The smaller $L_A$, the better the performance of our algorithms (see Corollaries~\ref{cor:main_corolary_3comp} and~\ref{cor:m3_total_comm}).
\end{remark}

Now, we proceed to prove the result that relates the values of $L_A$ and $L_B$ to the Hessians of the functions $f_i$.

\THEOREMLHESS*
\begin{proof}
    By the fundamental theorem of calculus,
    \begin{eqnarray*}
        \nabla f_i(x+u_i) - \nabla f_i(x)
        = \int_0^1 \nabla^2 f_i(x+tu_i)u_i dt
        = \parens{\int_0^1 \nabla^2 f_i(x+tu_i) dt} u_i
        = \mQ_iu_i,
    \end{eqnarray*}
    where $\mQ_i = \int_0^1 \nabla^2 f_i(x+tu_i) dt$. Letting $\mQ = \frac{1}{n} \sum_{i=1}^n \mQ_i$, we can write
    \begin{eqnarray*}
        \norm{\frac{1}{n} \sum_{i=1}^n \parens{\nabla f_i(x+u_i) - \nabla f_i(x)}}^2
        &=& \norm{\frac{1}{n} \sum_{i=1}^n \mQ_iu_i}^2 \\
        &=& \norm{\frac{1}{n} \sum_{i=1}^n \parens{\mQ_i - \mQ}u_i + \mQ \parens{\frac{1}{n} \sum_{i=1}^n u_i}}^2 \\
        &\overset{\eqref{eq:young}}{\leq}& 2\norm{\frac{1}{n} \sum_{i=1}^n \parens{\mQ_i - \mQ}u_i}^2
        + 2\norm{\mQ \parens{\frac{1}{n} \sum_{i=1}^n u_i}}^2 \\
        &\overset{\eqref{eq:jensen}}{\leq}& 2\frac{1}{n} \sum_{i=1}^n \norm{\parens{\mQ_i - \mQ}u_i}^2
        + 2\norm{\mQ}^2 \norm{\frac{1}{n} \sum_{i=1}^n u_i}^2 \\
        &\leq& 2\frac{1}{n} \sum_{i=1}^n \norm{\mQ_i - \mQ}^2 \norm{u_i}^2
        + 2\norm{\mQ}^2 \norm{\frac{1}{n} \sum_{i=1}^n u_i}^2.
    \end{eqnarray*}
    Further,
    \begin{eqnarray*}
        \norm{\mQ_i - \mQ}
        &=& \norm{\int_0^1 \nabla^2 f_i(x+tu_i) dt - \frac{1}{n} \sum_{j=1}^n \int_0^1 \nabla^2 f_j(x+tu_j) dt} \\
        &=& \norm{\int_0^1 \nabla^2 f_i(x+tu_i) dt - \int_0^1 \frac{1}{n} \sum_{j=1}^n \nabla^2 f_j(x+tu_j) dt} \\
        &=& \norm{\int_0^1 \frac{1}{n} \sum_{j=1}^n \parens{\nabla^2 f_i(x+tu_i) - \nabla^2 f_j(x+tu_j)} dt} \\
        &\leq& \int_0^1 \norm{\nabla^2 f_i(x+tu_i) - \frac{1}{n} \sum_{j=1}^n \nabla^2 f_j(x+tu_j)} dt \\
        &\leq& \int_0^1 D_i dt
        = D_i,
    \end{eqnarray*}
    and
    \begin{eqnarray*}
        \norm{\mQ}
        &=& \norm{\frac{1}{n} \sum_{j=1}^n \int_0^1 \nabla^2 f_j(x+tu_j) dt}
        = \norm{\int_0^1 \frac{1}{n} \sum_{j=1}^n \nabla^2 f_j(x+tu_j) dt} \\
        &\leq& \int_0^1 \norm{\frac{1}{n} \sum_{j=1}^n \nabla^2 f_j(x+tu_j)} dt
        \overset{\eqref{eq:jensen}}{\leq} \int_0^1 \frac{1}{n} \sum_{j=1}^n \norm{\nabla^2 f_j(x+tu_j)} dt \\
        &\leq& \int_0^1 \frac{1}{n} \sum_{j=1}^n L_j dt
        = \frac{1}{n} \sum_{j=1}^n L_j.
    \end{eqnarray*}
    By combining the above, we get
    \begin{eqnarray*}
        \norm{\frac{1}{n} \sum_{i=1}^n \parens{\nabla f_i(x+u_i) - \nabla f_i(x)}}^2
        &\leq& 2\frac{1}{n} \sum_{i=1}^n D_i^2 \norm{u_i}^2
        + 2 \parens{\frac{1}{n} \sum_{j=1}^n L_j}^2 \norm{\frac{1}{n} \sum_{i=1}^n u_i}^2 \\
        &\leq& 2 \parens{\max_i D_i^2} \norm{u_i}^2
        + 2 \parens{\frac{1}{n} \sum_{j=1}^n L_j}^2 \norm{\frac{1}{n} \sum_{i=1}^n u_i}^2,
    \end{eqnarray*}
    which means that Assumption \ref{ass:functional} holds with $L_A^2 = 2(\max_i D_i^2)$ and $L_B^2 = 2\parens{\frac{1}{n} \sum_{i=1}^n L_i}^2$.
\end{proof}

\begin{remark}
    Clearly, if Assumption \ref{ass:local_lipschitz_constant} holds, i.e., if there exists $L_i \geq 0$ such that $\sup_{z_i\in\R^d} \norm{\nabla^2 f_i(z_i)} \leq L_i$ for all $i\in[n]$, then there exists $D_i$ such that $\sup_{z_1,\ldots,z_n\in\R^d} \norm{\nabla^2 f_i(z_i) - \frac{1}{n} \sum_{j=1}^n \nabla^2 f_j(z_j)} \leq D_i$, which means that this latter condition is not restrictive. Indeed,
    \begin{align*}
        \norm{\nabla^2 f_i(z_i) - \frac{1}{n} \sum_{j=1}^n \nabla^2 f_j(z_j)}
        &\leq \norm{\nabla^2 f_i(z_i)} + \norm{\frac{1}{n} \sum_{j=1}^n \nabla^2 f_j(z_j)} \\
        &\leq \norm{\nabla^2 f_i(z_i)} + \frac{1}{n} \sum_{j=1}^n \norm{\nabla^2 f_j(z_j)} \\
        &\leq L_i + \frac{1}{n} \sum_{j=1}^n L_j.
    \end{align*}
    However, $D_i$ can be small even if the constants $\{L_i\}$ are large, as the next theorem shows.
\end{remark}

\THEOREMLGENQAUD*
\begin{proof}
    In this case $\nabla^2 f_i(z_i) \equiv \mA_i$, and the result easily follows from Theorem~\ref{theorem:a_b_hess}.
\end{proof}

\section{Convergence of \algnamebig{MARINA-P} in the General Case}\label{sec:marinap_convergence_theory}

\subsection{Main Results}

As promised, we now present a result generalizing Theorem~\ref{thm:marinap_gen} to all unbiased compressors.

\begin{restatable}{theorem}{THEOREMMARINAPGENERAL}\label{thm:marinap_gen_a}
    Let Assumptions \ref{ass:lipschitz_constant}, \ref{ass:lower_bound} and \ref{ass:functional} be satisfied and suppose that $\brac{\cC_i^t}_{i=1}^n \in \mathbb{P}(\theta)$ (Def.~\ref{def:corr_compr}) and $\cC_i^t \in \mathbb{U}(\omega_P)$ (Def.~\ref{def:unbiased_compression}) for all $i\in[n]$. Let
    \begin{align*}
        0 < \gamma \leq \frac{1}{L + \sqrt{\parens{L_A^2 \omega_P + L_B^2 \theta} \parens{\frac{1}{p}-1}}}.
    \end{align*}
    Letting
    \begin{align*}
        \Psi^t = f(x^t) - f^* + \frac{\gamma L_A^2}{2p} \frac{1}{n} \sum_{i=1}^n \norm{w_i^t - x^t}^2
        + \frac{\gamma L_B^2}{2p} \norm{w^t - x^t}^2,
    \end{align*}
    for each $T \geq 1$ we have
    \begin{align*}
        \sum_{t=0}^{T-1} \frac{1}{T} \Exp{\norm{\nabla f (x^t)}^2} \leq \frac{2 \Psi^0}{\gamma T}.
    \end{align*}
\end{restatable}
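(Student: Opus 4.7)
The plan is a Lyapunov descent argument. I would start from the $L$-smoothness descent inequality $f(x^{t+1}) \leq f(x^t) - \gamma \inp{\nabla f(x^t)}{g^t} + \tfrac{L\gamma^2}{2}\norm{g^t}^2$ and use the identity $-\inp{\nabla f(x^t)}{g^t} = \tfrac{1}{2}(\norm{g^t-\nabla f(x^t)}^2 - \norm{\nabla f(x^t)}^2 - \norm{g^t}^2)$ to isolate $\norm{\nabla f(x^t)}^2$. The key move is to control the gradient-estimation error using Assumption~\ref{ass:functional}: since $g^t - \nabla f(x^t) = \tfrac{1}{n}\sum_{i=1}^n (\nabla f_i(w_i^t) - \nabla f_i(x^t))$, inequality~\eqref{eq:functional} applied with $u_i = w_i^t - x^t$ gives
$$\norm{g^t - \nabla f(x^t)}^2 \leq L_A^2 \cdot \frac{1}{n} \sum_{i=1}^n \norm{w_i^t - x^t}^2 + L_B^2 \norm{w^t - x^t}^2.$$
This is exactly what motivates the two ``$w$-correction'' terms inside $\Psi^t$, weighted by $L_A^2$ and $L_B^2$ respectively.

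Next I would derive one-step recursions for these two quantities. On the event $\{c^t=1\}$ both vanish; on $\{c^t=0\}$ one has $w_i^{t+1}-x^{t+1} = (w_i^t-x^t) - (I-\cC_i^t)(x^{t+1}-x^t)$. Since $x^{t+1}-x^t = -\gamma g^t$ is measurable with respect to the past while $\cC_i^t$ is sampled fresh with $\Exp{\cC_i^t(x^{t+1}-x^t)} = x^{t+1}-x^t$, the cross term disappears. Combining this with the per-compressor variance bound $\cC_i^t \in \mathbb{U}(\omega_P)$ yields
$$\Exp{\frac{1}{n}\sum_{i=1}^n \norm{w_i^{t+1}-x^{t+1}}^2 \mid \mathcal{F}_t} \leq (1-p) \cdot \frac{1}{n}\sum_{i=1}^n \norm{w_i^t-x^t}^2 + (1-p)\,\omega_P\,\gamma^2 \norm{g^t}^2,$$
and the correlated-compressor bound $\{\cC_i^t\}_{i=1}^n \in \mathbb{P}(\theta)$ gives analogously
$$\Exp{\norm{w^{t+1}-x^{t+1}}^2 \mid \mathcal{F}_t} \leq (1-p) \norm{w^t-x^t}^2 + (1-p)\,\theta\,\gamma^2 \norm{g^t}^2.$$
The clean split here is the payoff of the decomposition into $\omega_P$ (paired with $L_A^2$) and $\theta$ (paired with $L_B^2$).

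Plugging these three ingredients into $\Exp{\Psi^{t+1}} - \Psi^t$, the choice of the $\tfrac{1}{p}$ weight in $\Psi^t$ becomes transparent: the ``$+\tfrac{\gamma L_A^2}{2}$'' created by the descent step is matched by ``$\tfrac{\gamma L_A^2}{2p}((1-p)-1) = -\tfrac{\gamma L_A^2}{2}$'' from the $w$-recursion, and similarly for the $L_B^2$ term. What survives is
$$\Exp{\Psi^{t+1}}-\Psi^t \leq -\frac{\gamma}{2}\norm{\nabla f(x^t)}^2 - \frac{\gamma}{2}\Bigl(1 - L\gamma - \gamma^2 (L_A^2 \omega_P + L_B^2 \theta)\,\tfrac{1-p}{p}\Bigr)\norm{g^t}^2.$$
The only real technical step is then to check that the bracketed coefficient is non-negative for the prescribed stepsize; this follows from the elementary inequality $\tfrac{L}{L+M} + \tfrac{M^2}{(L+M)^2} \leq 1$ applied with $M = \sqrt{(L_A^2 \omega_P + L_B^2 \theta)(\tfrac{1}{p}-1)}$, which is exactly where the stated bound on $\gamma$ comes from. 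Telescoping $\Exp{\Psi^{t+1}} \leq \Exp{\Psi^t} - \tfrac{\gamma}{2}\Exp{\norm{\nabla f(x^t)}^2}$ from $t=0$ to $T-1$ and using $\Exp{\Psi^T} \geq 0$ (all three summands of $\Psi$ are non-negative) yields the claimed bound. The main obstacle is really bookkeeping: ensuring the $(1-p)$ factors and the $\tfrac{1}{p}$ weights conspire to eliminate the $w$-terms without leaving residual slack; once this cancellation is spotted, the rest of the argument is essentially mechanical.
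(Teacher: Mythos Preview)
Your proposal is correct and mirrors the paper's proof essentially step for step: the paper isolates the same descent inequality (its Lemma~\ref{lemma:page}), bounds $\norm{g^t-\nabla f(x^t)}^2$ via Assumption~\ref{ass:functional}, proves the two $w$-recursions as separate lemmas (Lemmas~\ref{lemma:marinap_sum_w_x} and~\ref{lemma:w_sum_x}) exactly as you sketch, and then combines them with the same $\tfrac{1}{p}$-weighted Lyapunov function, invoking Lemma~\ref{lemma:step_lemma} (equivalent to your elementary inequality) to absorb the residual $\norm{x^{t+1}-x^t}^2$ term. The only cosmetic difference is that the paper phrases everything in terms of $\norm{x^{t+1}-x^t}^2$ rather than $\gamma^2\norm{g^t}^2$.
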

Let us provide some important examples:

\begin{restatable}{theorem}{THEOREMMARINAPGENERALEXAMPLES}
    \label{thm:marinap_gen_3comp}
    Let Assumptions~\ref{ass:lipschitz_constant}, \ref{ass:lower_bound} and \ref{ass:functional} be satisfied. Choose
    \begin{align*}
        \gamma =
        \begin{cases}
            \parens{L + \sqrt{\parens{L_A^2 + L_B^2}\nicefrac{\omega_P}{p}}}^{-1} & \text{for \algname{SameRandK} compressors} \\
            \parens{L + \sqrt{\parens{L_A^2 + \nicefrac{L_B^2}{n}} \nicefrac{\omega_P}{p}}}^{-1} & \text{for independent \algname{RandK} compressors} \\
            \parens{L + L_A \sqrt{\nicefrac{\omega_P}{p}}}^{-1} & \text{for \algname{PermK} compressors}
        \end{cases}
    \end{align*}
    and set $w_i^0 = x^0$ for all $i \in [n]$. Then \algname{MARINA-P} finds an $\varepsilon$--stationary point after
    \begin{align*}
        \bar{T} =
        \begin{cases}
            \cO\left(\frac{\delta^0 \left(L + \sqrt{\parens{L_A^2 + L_B^2}\nicefrac{\omega_P}{p}}\right)}{\varepsilon}\right) & \text{for \algname{SameRandK} compressors} \\
            \cO\left(\frac{\delta^0 \left(L + \sqrt{\parens{L_A^2 + \nicefrac{L_B^2}{n}}\nicefrac{\omega_P}{p}}\right)}{\varepsilon}\right) & \text{for independent  \algname{RandK} compressors} \\
            \cO\left(\frac{\delta^0 \left(L + L_A \sqrt{\nicefrac{\omega_P}{p}}\right)}{\varepsilon}\right) & \text{for \algname{PermK} compressors}
        \end{cases}
    \end{align*}
    iterations.
\end{restatable}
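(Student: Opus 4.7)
The plan is to derive Theorem~\ref{thm:marinap_gen_3comp} as a direct corollary of the general Theorem~\ref{thm:marinap_gen_a}, by plugging in the compressor parameters $(\omega_P,\theta)$ from Lemma~\ref{lemma:3compr_omega_theta}. There is no substantial analytical difficulty here; the argument is a book-keeping exercise in three mechanical substitutions and a choice of horizon.

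First, I would note that since $w_i^0 = x^0$ for every $i \in [n]$, the potential $\Psi^t$ defined in Theorem~\ref{thm:marinap_gen_a} collapses at $t=0$ to $\Psi^0 = f(x^0) - f^* = \delta^0$, because both correction terms $\tfrac{\gamma L_A^2}{2p n}\sum_i \norm{w_i^0 - x^0}^2$ and $\tfrac{\gamma L_B^2}{2p}\norm{w^0 - x^0}^2$ vanish. Hence the conclusion of Theorem~\ref{thm:marinap_gen_a} reads $\tfrac{1}{T}\sum_{t=0}^{T-1}\Exp{\norm{\nabla f(x^t)}^2} \leq \tfrac{2\delta^0}{\gamma T}$, so choosing $T \geq \lceil 2\delta^0/(\gamma \varepsilon) \rceil$ produces an $\varepsilon$--stationary point in $\cO(\delta^0/(\gamma \varepsilon))$ iterations. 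It remains to identify $\gamma$ in each of the three cases.

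Second, I would specialise the generic quantity $L_A^2 \omega_P + L_B^2 \theta$ appearing inside the admissibility bound $\gamma \leq (L + \sqrt{(L_A^2 \omega_P + L_B^2 \theta)(\tfrac{1}{p}-1)})^{-1}$ of Theorem~\ref{thm:marinap_gen_a} using Lemma~\ref{lemma:3compr_omega_theta}. For \algname{SameRand}$K$ compressors, $\theta = \omega_P$, so the quantity becomes $(L_A^2 + L_B^2)\omega_P$. For independent \algname{Rand}$K$ compressors, $\theta = \omega_P / n$, so it becomes $(L_A^2 + L_B^2/n)\omega_P$. For \algname{Perm}$K$ compressors, $\theta = 0$, so it reduces to $L_A^2 \omega_P$. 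Bounding $\tfrac{1}{p}-1 \leq \tfrac{1}{p}$, each of the three step sizes stated in the theorem satisfies the admissibility constraint of Theorem~\ref{thm:marinap_gen_a} up to a harmless constant absorbed by the $\cO(\cdot)$ notation.

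Third, substituting each such $\gamma$ into the iteration count $\cO(\delta^0/(\gamma \varepsilon))$ yields the three claimed bounds verbatim. The only genuinely substantive step is the derivation of Theorem~\ref{thm:marinap_gen_a} itself, which will involve a descent lemma on $f$, Assumption~\ref{ass:functional} applied to the shifts $u_i = w_i^t - x^t$, and a Lyapunov argument controlling the drift of $\tfrac{1}{n}\sum_i \norm{w_i^t - x^t}^2$ and $\norm{w^t - x^t}^2$ using unbiasedness plus the $\mathbb{U}(\omega_P)$ and $\mathbb{P}(\theta)$ properties through the Bernoulli$(p)$ coin. Since that is exactly what Theorem~\ref{thm:marinap_gen_a} already encapsulates, the present theorem requires nothing beyond the substitution above.
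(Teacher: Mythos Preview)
Your proposal is correct and follows essentially the same route as the paper: invoke Theorem~\ref{thm:marinap_gen_a}, specialise $\theta$ via Lemma~\ref{lemma:3compr_omega_theta} in each of the three compressor regimes, note that $w_i^0=x^0$ collapses $\Psi^0$ to $\delta^0$, and read off the iteration count. One small sharpening: since $\tfrac{1}{p}-1 \le \tfrac{1}{p}$, the step sizes stated in the theorem are actually \emph{smaller} than the maximal admissible ones from Theorem~\ref{thm:marinap_gen_a}, so they satisfy the constraint exactly, not merely ``up to a constant''.
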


\begin{remark}
    \leavevmode
    \begin{itemize}
        \item The result for Perm$K$ compressors proves Theorem \ref{thm:marinap_gen}.
        \item The above theorem demonstrates the complexities for a) SameRand$K$, b) independent Rand$K$ and c) Perm$K$ compressors. However, the result applies to any families of compressors such that for all $t\geq 0$ we have a) $\cC_1^t=\ldots=\cC_n^t=\cC^t \in\mathbb{U}(\omega_P)$, b) $\cC_1^t,\ldots,\cC_n^t \in\mathbb{U}(\omega_P)$ are independent, and c) $\cC_1^t,\ldots,\cC_n^t \in\mathbb{U}(\omega_P)\cap\mathbb{P}(\theta)$, respectively.
    \end{itemize}
\end{remark}

We now derive the communication complexities:

\begin{restatable}{corollary}{THEOREMMARINAPCOR}
    \label{cor:main_corolary_3comp}
    Let us take $p = \nicefrac{1}{n}$ and set $K=\nicefrac{d}{n}$ (corresponding to the sparsification level of a Perm$K$ compressor). Then, in the view of Theorem~\ref{thm:marinap_gen_3comp}, the average s2w communication complexity of \algname{MARINA-P} is
    \begin{align}\label{eq:comm_complexity_3comp}
        \begin{cases}
            \cO\parens{\frac{d \delta^0 L}{n \varepsilon} + \frac{d \delta^0}{\varepsilon} \sqrt{L_A^2 + L_B^2}} & \text{for \algname{SameRandK} compressors} \\
            \cO\parens{\frac{d \delta^0 L}{n \varepsilon} + \frac{d \delta^0}{\varepsilon} \sqrt{L_A^2 + \frac{L_B^2}{n}}} & \text{for independent \algname{RandK} compressors} \\
            \cO\left(\frac{d \delta^0 L}{n \varepsilon} + \frac{d \delta^0}{\varepsilon} L_A\right) & \text{for \algname{PermK} compressors}
        \end{cases}
    \end{align}
\end{restatable}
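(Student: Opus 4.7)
The proof is essentially a bookkeeping exercise: multiply the per-iteration server-to-worker cost by the iteration complexity from Theorem~\ref{thm:marinap_gen_3comp} and simplify. The plan is as follows.

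First, I would compute the expected number of coordinates the server sends to a given worker in a single iteration of \algname{MARINA-P}. Inspecting \eqref{eq:marina_p}, with probability $p$ the server broadcasts the uncompressed vector $x^{t+1} \in \R^d$ (cost $d$), and with probability $1-p$ it sends the sparse vector $\cC_i^t(x^{t+1}-x^t)$ containing $K$ nonzero coordinates (cost $K$, since for all three compressor families — SameRand$K$, independent Rand$K$, and Perm$K$ — each worker receives a $K$-sparse message). Hence the expected per-iteration per-worker s2w cost is
\begin{align*}
    p \cdot d + (1-p) \cdot K = \tfrac{d}{n} + \bigl(1-\tfrac{1}{n}\bigr) \tfrac{d}{n} \leq \tfrac{2d}{n},
\end{align*}
upon substituting $p = \nicefrac{1}{n}$ and $K = \nicefrac{d}{n}$.

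Second, I would plug in the parameter values into the iteration complexities from Theorem~\ref{thm:marinap_gen_3comp}. In all three cases the sparsifiers satisfy $\omega_P = \nicefrac{d}{K} - 1 = n - 1$, so $\nicefrac{\omega_P}{p} = n(n-1) \leq n^2$ and $\sqrt{\nicefrac{\omega_P}{p}} \leq n$. Substituting, the iteration counts become
\begin{align*}
    \bar{T} =
    \begin{cases}
        \cO\!\left(\frac{\delta^0 L}{\varepsilon} + \frac{\delta^0 n \sqrt{L_A^2 + L_B^2}}{\varepsilon}\right) & \text{for \algname{SameRandK}}, \\[2pt]
        \cO\!\left(\frac{\delta^0 L}{\varepsilon} + \frac{\delta^0 n \sqrt{L_A^2 + L_B^2/n}}{\varepsilon}\right) & \text{for independent \algname{RandK}}, \\[2pt]
        \cO\!\left(\frac{\delta^0 L}{\varepsilon} + \frac{\delta^0 n L_A}{\varepsilon}\right) & \text{for \algname{PermK}}.
    \end{cases}
\end{align*}

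Third, multiplying the per-iteration cost $\tfrac{2d}{n}$ by each iteration count and absorbing constants into the $\cO(\cdot)$ notation yields the three bounds in \eqref{eq:comm_complexity_3comp}: the $\tfrac{d \delta^0 L}{n\varepsilon}$ term comes from the $L$-dependent part of $\bar{T}$, and the remaining terms arise because the $\tfrac{2d}{n}$ factor cancels the leading $n$ in $\sqrt{\omega_P/p}$. There is no real obstacle here — the only thing to be careful about is making sure that the factors of $n$ track correctly between $\omega_P$, $p$, and $K$, so that the final communication bound no longer carries a spurious $n$ in front of $L_A$ or $L_B$. The whole statement is thus a direct corollary.
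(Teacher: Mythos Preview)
Your proposal is correct and follows essentially the same approach as the paper: compute the expected per-iteration s2w cost $pd+(1-p)K\le 2d/n$, substitute $\omega_P=n-1$ and $p=1/n$ into the iteration complexities of Theorem~\ref{thm:marinap_gen_3comp}, and multiply. The only cosmetic difference is that the paper invokes Lemma~\ref{lemma:3compr_omega_theta} explicitly to justify $\omega_P=n-1$ for Perm$K$ (rather than writing it as $d/K-1$), but since $K=d/n$ the two coincide.
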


\begin{remark}
    \leavevmode
    \begin{itemize}
        \item The result for Perm$K$ compressors proves Corollary \ref{cor:main_corolary}.
        \item The key observation from \eqref{eq:comm_complexity_3comp} is the dependence on $L_A$ and $L_B$. 
        In particular, if $L_A \approx 0$ (which is the case, e.g., for homogeneous quadratics), the above communication complexities are
        \begin{align*}
            \begin{cases}
                \cO\parens{\frac{\delta^0}{\varepsilon} d \parens{\frac{L}{n} + L_B}} & \text{for \algname{SameRandK} compressors}, \\
                \cO\parens{\frac{\delta^0}{\varepsilon} d \parens{\frac{L}{n} + \frac{L_B}{\sqrt{n}}}} & \text{for independent \algname{RandK} compressors}, \\
                \cO\left(\frac{\delta^0}{\varepsilon} d \frac{L}{n}\right) & \text{for \algname{PermK} compressors}.
            \end{cases}
        \end{align*}
        Hence, only by sending different messages to different clients, one obtains complexities improving with $n$. In particular, for Perm$K$, the complexity scales linearly with the number of workers.
    \end{itemize}
\end{remark}

\subsection{Proofs}

To prove the results from the previous section, we first establish several identities and inequalities satisfied by the sequences $\{w_1^t,\ldots,w_n^t\}_{t\geq0}$.
We start by studying the evolution of the quantity $\norm{w_i^{t} - x^{t}}^2$. In what follows, $\ExpSub{t}{\cdot}$ denotes the expectation conditioned on the first $t$ iterations.

\begin{lemma}\label{lemma:marinap_sum_w_x}
    Let $\cC_i^t \in \mathbb{U}(\omega_P)$ for all $i\in[n]$. Then
    \begin{align*}
        \frac{1}{n} \sum_{i=1}^n \Exp{\norm{w_i^{t+1} - x^{t+1}}^2}
        \leq (1-p) \frac{1}{n} \sum_{i=1}^n \Exp{\norm{w_i^{t} - x^{t}}^2}
        + (1-p) \omega_P \Exp{\norm{x^{t+1} - x^{t}}^2}.
    \end{align*}
\end{lemma}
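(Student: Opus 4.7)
My plan is to condition on the information available at the start of iteration $t$ (which fixes $x^t$, the $w_i^t$'s, and also $x^{t+1} = x^t - \gamma g^t$ since $g^t$ is computed from $w_i^t$), and then take expectation over the two sources of iteration-$t$ randomness: the Bernoulli coin $c^t$ and the compressors $\cC_i^t$.

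First I would split on $c^t$. On $\{c^t = 1\}$ (probability $p$), the update sets $w_i^{t+1} = x^{t+1}$, so $\|w_i^{t+1} - x^{t+1}\|^2 = 0$. On $\{c^t = 0\}$ (probability $1-p$), I rewrite
\begin{align*}
w_i^{t+1} - x^{t+1} &= w_i^t - x^t + \cC_i^t(x^{t+1} - x^t) - (x^{t+1} - x^t).
\end{align*}
Conditional on iteration $t$, the first term $w_i^t - x^t$ is deterministic, and the compressor residual $\cC_i^t(x^{t+1}-x^t) - (x^{t+1}-x^t)$ has zero mean by unbiasedness (Definition~\ref{def:unbiased_compression}). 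The variance-decomposition identity then eliminates the cross term, yielding
\begin{align*}
\ExpSub{t,\cC}{\|w_i^{t+1} - x^{t+1}\|^2 \mid c^t = 0} &= \|w_i^t - x^t\|^2 + \ExpSub{\cC}{\|\cC_i^t(x^{t+1}-x^t) - (x^{t+1}-x^t)\|^2} \\
&\leq \|w_i^t - x^t\|^2 + \omega_P \|x^{t+1} - x^t\|^2,
\end{align*}
using the $\omega_P$-bound from $\cC_i^t \in \mathbb{U}(\omega_P)$.

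Next I would combine the two branches using the tower rule with respect to $c^t$:
\begin{align*}
\ExpSub{t}{\|w_i^{t+1} - x^{t+1}\|^2} &\leq (1-p)\,\|w_i^t - x^t\|^2 + (1-p)\,\omega_P\,\|x^{t+1} - x^t\|^2.
\end{align*}
Averaging this inequality over $i \in [n]$ and taking the outer (total) expectation yields exactly the claimed bound. The argument is essentially routine; the only ``obstacle'' worth flagging is being careful that $x^{t+1}$ is $\mathcal{F}_t$-measurable (so that it behaves as a constant when integrating the compressor noise) and that the cross term in the variance decomposition vanishes because $w_i^t - x^t$ is independent of the iteration-$t$ compressor randomness.
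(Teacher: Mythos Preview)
Your proposal is correct and follows essentially the same approach as the paper: split on the Bernoulli coin (the $c^t=1$ branch contributes zero), rewrite the $c^t=0$ branch as $(w_i^t-x^t)+\bigl(\cC_i^t(x^{t+1}-x^t)-(x^{t+1}-x^t)\bigr)$, apply the variance decomposition \eqref{eq:vardecomp} using unbiasedness, bound the compressor variance by $\omega_P\|x^{t+1}-x^t\|^2$, then average over $i$ and take the outer expectation via the tower property. Your care in noting that $x^{t+1}$ is $\mathcal{F}_t$-measurable (so the compressor acts on a conditionally fixed argument) is exactly the point that makes the variance decomposition legitimate.
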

\begin{proof}
    In the view of definition of $w^{t+1}$, we get
    \begin{eqnarray*}
        &&\hspace{-1cm}\ExpSub{t}{\norm{w_i^{t+1} - x^{t+1}}^2} \\
        &=& (1-p) \ExpSub{t}{\norm{w_i^t + \cC_i^t(x^{t+1} - x^t) - x^{t+1}}^2} \\
        &\overset{\eqref{eq:vardecomp}}{=}& (1-p) \ExpSub{t}{\norm{\cC_i^t(x^{t+1} - x^t) - (x^{t+1} - x^t)}^2}
        + (1-p) \ExpSub{t}{\norm{w_i^t - x^t}^2} \\
        &\overset{\textnormal{Def.}\ref{def:unbiased_compression}}{\leq}& (1-p) \omega_P \norm{x^{t+1} - x^t}^2
        + (1-p) \norm{w_i^t - x^t}^2.
    \end{eqnarray*}
    Averaging, taking expectation and using the tower property, we get the result.
\end{proof}

This lemma is less powerful: it is \textit{not} an identity, and hence some information is lost. Moreover, it focuses on a single client~$i$, and is therefore not able to take advantage of the correlation among the compressors. On the other hand, it can be used in the convergence analysis without any need to restrict the function class.

Next, we study the evolution of the quantity $\norm{w^{t} - x^{t}}^2$.
\begin{lemma}\label{lemma:w_sum_x}
    Let $\brac{\cC_i^t}_{i=1}^n \in \mathbb{P}(\theta)$. Then
    \begin{align*}
        \Exp{\norm{w^{t+1} - x^{t+1}}^2}
        &\leq (1-p) \Exp{\norm{w^t - x^t}^2}
        + (1-p) \theta \Exp{\norm{x^{t+1} - x^t}^2}.
    \end{align*}
\end{lemma}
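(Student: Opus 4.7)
The plan is to mirror the proof of Lemma~\ref{lemma:marinap_sum_w_x}, but working with the averaged quantity $w^{t+1} - x^{t+1}$ rather than with individual residuals $w_i^{t+1} - x^{t+1}$. The key advantage is that averaging turns the $\mathbb{U}(\omega_P)$ bound per compressor into the (potentially much tighter) $\mathbb{P}(\theta)$ bound on $\frac{1}{n}\sum_i \mathcal{C}_i^t(\cdot) - (\cdot)$, which is precisely where correlated compressors like $\mathrm{Perm}K$ (for which $\theta = 0$) pay off.

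First I would condition on the iteration $t$ and split based on the Bernoulli coin $c^t$. On the event $\{c^t=1\}$ (probability $p$), every $w_i^{t+1} = x^{t+1}$, so $w^{t+1} - x^{t+1} = 0$ and this branch contributes nothing. On the event $\{c^t=0\}$ (probability $1-p$), averaging the update rule gives
\begin{align*}
    w^{t+1} - x^{t+1}
    &= (w^t - x^t) + \left(\tfrac{1}{n}\sum_{i=1}^n \mathcal{C}_i^t(x^{t+1}-x^t) - (x^{t+1}-x^t)\right).
\end{align*}

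Next I would apply the standard variance decomposition $\mathbb{E}\|X\|^2 = \|\mathbb{E} X\|^2 + \mathbb{E}\|X-\mathbb{E} X\|^2$ to this sum, conditioning on $\mathcal{F}_t$ (so that $w^t$, $x^t$, $x^{t+1}$ are all fixed). Because each $\mathcal{C}_i^t$ is unbiased (the first condition in Definition~\ref{def:corr_compr}), the mean of the bracketed term is zero and the cross term vanishes, yielding
\begin{align*}
    \mathbb{E}_t\big[\|w^{t+1}-x^{t+1}\|^2 \,\big|\, c^t=0\big]
    = \|w^t - x^t\|^2 + \mathbb{E}_t\left\|\tfrac{1}{n}\sum_{i=1}^n \mathcal{C}_i^t(x^{t+1}-x^t) - (x^{t+1}-x^t)\right\|^2.
\end{align*}
Now I invoke the correlated-compressor property $\{\mathcal{C}_i^t\}_{i=1}^n \in \mathbb{P}(\theta)$ with the vector $u = x^{t+1}-x^t$ to bound the second term by $\theta\,\|x^{t+1}-x^t\|^2$.

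Combining the two branches via the Bernoulli probability gives
\begin{align*}
    \mathbb{E}_t\|w^{t+1}-x^{t+1}\|^2 \leq (1-p)\|w^t - x^t\|^2 + (1-p)\theta\,\|x^{t+1}-x^t\|^2,
\end{align*}
and taking total expectations and using the tower property yields the claim. I do not foresee a real obstacle here; the only subtlety worth being careful about is ensuring that $x^{t+1}$ is $\mathcal{F}_t$-measurable (it is, since it depends only on $g^t$ and $x^t$, which are determined before the compression step), so that the $\mathbb{P}(\theta)$ bound may be applied pointwise to $u = x^{t+1} - x^t$ before taking the outer expectation.
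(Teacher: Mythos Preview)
Your proposal is correct and follows essentially the same approach as the paper: condition on $c^t$, note the $c^t=1$ branch vanishes, on the $c^t=0$ branch decompose $w^{t+1}-x^{t+1}$ as $(w^t-x^t)$ plus the zero-mean compression error, apply variance decomposition and the $\mathbb{P}(\theta)$ bound, then take total expectation. Your remark that $x^{t+1}$ is $\mathcal{F}_t$-measurable is exactly the right care point and matches the paper's implicit use of $\ExpSub{t}{\cdot}$.
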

\begin{proof}
    In the view of definition of $w^{t+1}$, we get
    \begin{eqnarray*}
        &&\hspace{-1cm}\ExpSub{t}{\norm{w^{t+1} - x^{t+1}}^2} \\
        &=& \ExpSub{t}{\norm{\frac{1}{n} \sum_{i=1}^n w_i^{t+1} - x^{t+1}}^2} \\
        &=& (1-p) \ExpSub{t}{\norm{\frac{1}{n} \sum_{i=1}^n (w_i^t + \cC_i^t(x^{t+1} - x^t)) - x^{t+1}}^2} \\
        &=& (1-p) \ExpSub{t}{\norm{\frac{1}{n} \sum_{i=1}^n \cC_i^t(x^{t+1} - x^t) - (x^{t+1} - x^t) - x^t + w^t}^2} \\
        &\overset{\eqref{eq:vardecomp}}{=}& (1-p) \ExpSub{t}{\norm{\frac{1}{n} \sum_{i=1}^n \cC_i^t(x^{t+1} - x^t) - (x^{t+1} - x^t)}^2}
        + (1-p) \ExpSub{t}{\norm{w^t - x^t}^2} \\
        &\overset{\textnormal{Def.}\ref{def:corr_compr}}{\leq}& (1-p) \theta \norm{x^{t+1} - x^t}^2
        + (1-p) \norm{w^t - x^t}^2.
    \end{eqnarray*}
    Taking expectation and using the tower property, we get the result.
\end{proof}

This lemma is more powerful since it is able to take advantage of the correlation among the compressors.
Indeed, if $\theta = 0$ (as in the case of Perm$K$ compressors), then it becomes an identity:
\begin{align*}
    \Exp{\norm{w^{t+1} - x^{t+1}}^2} = (1-p) \Exp{\norm{w^t - x^t}^2}.
\end{align*}

We now prove convergence of \algname{MARINA-P} in the general case.
\THEOREMMARINAPGENERAL* 
\begin{proof}
    First, combining the inequalities in Lemmas \ref{lemma:marinap_sum_w_x} and \ref{lemma:w_sum_x}, we get
    \begin{align}\label{eq:l_marinap2}
        &\frac{\gamma L_A^2}{2p} \frac{1}{n} \sum_{i=1}^n \Exp{\norm{w_i^{t+1} - x^{t+1}}^2}
        + \frac{\gamma L_B^2}{2p} \Exp{\norm{w^{t+1} - x^{t+1}}^2} \nonumber \\
        &\leq \frac{\gamma L_A^2}{2p} (1-p) \frac{1}{n} \sum_{i=1}^n \Exp{\norm{w_i^{t} - x^{t}}^2}
        + \frac{\gamma L_A^2}{2p} (1-p) \omega_P \Exp{\norm{x^{t+1} - x^{t}}^2} \nonumber \\
        &\quad+ \frac{\gamma L_B^2}{2p} (1-p) \Exp{\norm{w^t - x^t}^2}
        + \frac{\gamma L_B^2}{2p} (1-p) \theta \Exp{\norm{x^{t+1} - x^t}^2} \nonumber \\
        &= \frac{\gamma L_A^2}{2p} (1-p) \frac{1}{n} \sum_{i=1}^n \Exp{\norm{w_i^{t} - x^{t}}^2}
        + \frac{\gamma L_B^2}{2p} (1-p) \Exp{\norm{w^t - x^t}^2} \nonumber \\
        &\quad+ \frac{\gamma}{2p} \parens{L_A^2 \omega_P + L_B^2 \theta} (1-p) \Exp{\norm{x^{t+1} - x^t}^2}.
    \end{align}
    Next, using Assumption \ref{ass:functional}, we have
    \begin{eqnarray*}
        \Exp{\norm{g^t - \nabla f(x^t)}^2}
        &=& \Exp{\norm{\frac{1}{n} \sum_{i=1}^n \parens{\nabla f_i(w_i^t) - \nabla f_i(x^t)}}^2} \\
        &\overset{\textnormal{Ass.}\ref{ass:functional}}{\leq}& L_A^2 \frac{1}{n} \sum_{i=1}^n \Exp{\norm{w_i^t-x^t}^2} + L_B^2 \Exp{\norm{\frac{1}{n} \sum_{i=1}^n w_i^t - x^t}^2}.
    \end{eqnarray*}
    Combining the above inequality with Lemma \ref{lemma:page} gives
    \begin{eqnarray}\label{eq:delta_marinap2}
        \Exp{\delta^{t+1}} &\leq& \Exp{\delta^{t}} - \frac{\gamma}{2} \Exp{\norm{\nabla f(x^t)}^2} - \left( \frac{1}{2\gamma} - \frac{L}{2} \right) \Exp{\norm{x^{t+1} - x^t}^2} + \frac{\gamma}{2} \Exp{\norm{g^t - \nabla f(x^t)}^2} \nonumber\\
        &\leq& \Exp{\delta^{t}} - \frac{\gamma}{2} \Exp{\norm{\nabla f(x^t)}^2} - \left( \frac{1}{2\gamma} - \frac{L}{2} \right) \Exp{\norm{x^{t+1} - x^t}^2} \nonumber\\
        &&+ \frac{\gamma}{2} \parens{L_A^2 \frac{1}{n} \sum_{i=1}^n \Exp{\norm{w_i^t-x^t}^2} + L_B^2 \Exp{\norm{w^t - x^t}^2}}.
    \end{eqnarray}
    By adding inequalities \eqref{eq:l_marinap2} and \eqref{eq:delta_marinap2}, we get
    \begin{eqnarray*}
        \Exp{\Psi^{t+1}} &=& \Exp{\delta^{t+1}}
        + \frac{\gamma L_A^2}{2p} \frac{1}{n} \sum_{i=1}^n \Exp{\norm{w_i^{t+1} - x^{t+1}}^2}
        + \frac{\gamma L_B^2}{2p} \Exp{\norm{w^{t+1} - x^{t+1}}^2} \\
        &\leq& \Exp{\delta^{t}} - \frac{\gamma}{2} \Exp{\norm{\nabla f(x^t)}^2} - \left( \frac{1}{2\gamma} - \frac{L}{2} \right) \Exp{\norm{x^{t+1} - x^t}^2}  \\
        &&+ \frac{\gamma}{2} \parens{L_A^2 \frac{1}{n} \sum_{i=1}^n \Exp{\norm{w_i^t-x^t}^2} + L_B^2 \Exp{\norm{w^t - x^t}^2}} \\
        &&+ \frac{\gamma L_A^2}{2p} (1-p) \frac{1}{n} \sum_{i=1}^n \Exp{\norm{w_i^{t} - x^{t}}^2}
        + \frac{\gamma L_B^2}{2p} (1-p) \Exp{\norm{w^t - x^t}^2} \\
        &&+ \frac{\gamma}{2p} \parens{L_A^2 \omega_P + L_B^2 \theta} (1-p) \Exp{\norm{x^{t+1} - x^t}^2} \\
        &=& \Exp{\Psi^t} - \frac{\gamma}{2} \Exp{\norm{\nabla f(x^t)}^2} \\
        &&- \left( \frac{1}{2\gamma} - \frac{L}{2} - \frac{\gamma}{2p} \parens{L_A^2 \omega_P + L_B^2 \theta} (1-p) \right) \Exp{\norm{x^{t+1} - x^t}^2} \\
        &\leq& \Exp{\Psi^t} - \frac{\gamma}{2} \Exp{\norm{\nabla f(x^t)}^2},
    \end{eqnarray*}
    where in the last line we use the assumption on the step size and Lemma \ref{lemma:step_lemma}. Summing up the above inequality for $t = 0, 1, \ldots, T-1$ and rearranging the terms, we get
    \begin{align*}
        \frac{1}{T} \sum_{t=0}^{T-1} \Exp{\norm{\nabla f (x^t)}^2}
        \leq \frac{2}{\gamma T} \parens{\Exp{\Psi^{0}} - \Exp{\Psi^{T}}}
        \leq \frac{2 \Psi^0}{\gamma T}.
    \end{align*}
\end{proof}

With the above result, we can establish the iteration and communication complexities of \algname{MARINA-P} for three different compression schemes described in Appendix \ref{sec:3compr}. First, let us prove a result when independent compressors are used.

\begin{theorem}\label{thm:marinap_gen_indep_compressors}
    Let Assumptions~\ref{ass:lipschitz_constant}, \ref{ass:lower_bound} and \ref{ass:functional} be satisfied and suppose that $\cC_1^t,\ldots,\cC_n^t$ is a collection of independent compressors (Assumption~\ref{ass:independent}) such that $\cC_i^t \in \mathbb{U}(\omega)$ for all $i\in[n]$, $t\in\mathbb{N}$. Choose
    \begin{align*}
        \gamma = \parens{L + \sqrt{\parens{L_A^2 + \nicefrac{L_B^2}{n}} \nicefrac{\omega_P}{p}}}^{-1}
    \end{align*}
    and set $w_i^0 = x^0$ for all $i \in [n]$. Then \algname{MARINA-P} finds an $\varepsilon$--stationary point after
    \begin{align*}
        \bar{T} = \cO\left(\frac{\delta^0 \left(L + \sqrt{\parens{L_A^2 + \nicefrac{L_B^2}{n}}\nicefrac{\omega_P}{p}}\right)}{\varepsilon}\right)
    \end{align*}
    iterations.
\end{theorem}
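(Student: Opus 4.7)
The plan is to derive this theorem as an immediate specialization of Theorem~\ref{thm:marinap_gen_a}, combined with the second part of Lemma~\ref{lemma:theta_indep_same} that quantifies the correlation parameter $\theta$ under independence.

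First, I would observe that independence of the compressors $\cC_1^t,\ldots,\cC_n^t \in \mathbb{U}(\omega_P)$ allows me to invoke Lemma~\ref{lemma:theta_indep_same}(2), which yields $\{\cC_i^t\}_{i=1}^n \in \mathbb{P}(\theta)$ with $\theta = \omega_P/n$. Substituting this value of $\theta$ into the step-size condition required by Theorem~\ref{thm:marinap_gen_a}, the quantity under the square root becomes
\begin{align*}
    L_A^2 \omega_P + L_B^2 \theta = \omega_P \parens{L_A^2 + \tfrac{L_B^2}{n}},
\end{align*}
so the admissible step size is
\begin{align*}
    \gamma \leq \parens{L + \sqrt{\omega_P \parens{L_A^2 + \tfrac{L_B^2}{n}} \parens{\tfrac{1}{p}-1}}}^{-1}.
\end{align*}
The stated choice of $\gamma$ drops the harmless $-1$ in $(1/p - 1)$ (bounding it by $1/p$) and therefore satisfies this condition, so Theorem~\ref{thm:marinap_gen_a} applies.

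Next, I would evaluate the Lyapunov function at $t=0$ under the prescribed initialization. Since $w_i^0 = x^0$ for every $i \in [n]$, both $\tfrac{1}{n}\sum_{i=1}^n \norm{w_i^0 - x^0}^2$ and $\norm{w^0 - x^0}^2$ vanish, leaving $\Psi^0 = f(x^0) - f^* = \delta^0$. Plugging everything into the bound $\sum_{t=0}^{T-1} \tfrac{1}{T}\Exp{\norm{\nabla f(x^t)}^2} \leq 2\Psi^0/(\gamma T)$ from Theorem~\ref{thm:marinap_gen_a} and demanding the right-hand side to be at most $\varepsilon$ gives
\begin{align*}
    T \geq \frac{2\delta^0}{\gamma\varepsilon} = \cO\parens{\frac{\delta^0}{\varepsilon}\parens{L + \sqrt{\parens{L_A^2 + \tfrac{L_B^2}{n}}\tfrac{\omega_P}{p}}}},
\end{align*}
which is exactly the claimed iteration complexity.

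This proof carries no real obstacle: all the heavy lifting has already been done in Theorem~\ref{thm:marinap_gen_a} (the Lyapunov descent argument) and in Lemma~\ref{thm:marinap_gen_a}'s companion Lemma~\ref{lemma:theta_indep_same} (the variance-reduction effect of independent compressors, which shrinks $\theta$ by a factor $1/n$). The only points requiring mild care are ensuring that the stated $\gamma$ indeed satisfies the Theorem~\ref{thm:marinap_gen_a} step-size inequality (trivial once we replace $1/p - 1$ by $1/p$), and verifying that the initialization makes the Lyapunov function collapse to $\delta^0$.
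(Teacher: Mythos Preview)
Your proposal is correct and follows essentially the same approach as the paper: specialize Theorem~\ref{thm:marinap_gen_a} by invoking Lemma~\ref{lemma:theta_indep_same}(2) to obtain $\theta=\omega_P/n$, then read off the step size and iteration complexity. If anything, you are slightly more explicit than the paper in spelling out why $\Psi^0=\delta^0$ under the initialization and why the stated $\gamma$ satisfies the step-size condition after bounding $1/p-1\le 1/p$.
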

\begin{proof}
    In view of Theorem \ref{thm:marinap_gen_a}, the step size satisfies the inequality
    \begin{align*}
        \gamma \leq \frac{1}{L + \sqrt{\parens{L_A^2 \omega_P + L_B^2 \theta} \parens{\frac{1}{p}-1}}}.
    \end{align*}
    Since by Lemma \ref{lemma:theta_indep_same}, when the compressors are independent we have $\theta = \nicefrac{\omega_P}{n}$, the algorithm converges in
    \begin{align}\label{eq:it_comp_marinap_indep}
        \bar{T} &= \frac{\Psi^0}{\varepsilon} \parens{L + \sqrt{\parens{L_A^2 \omega_P + L_B^2 \frac{\omega_P}{n}} \parens{\frac{1}{p}-1}}} \nonumber\\
        &= \cO\parens{\frac{\Psi^0}{\varepsilon} \parens{L + \sqrt{\frac{\omega_P}{p} \parens{L_A^2 + \frac{L_B^2}{n}}}}}
    \end{align}
    iterations.
\end{proof}

\THEOREMMARINAPGENERALEXAMPLES*

\begin{proof}
    In view of Theorem \ref{thm:marinap_gen_a}, the step size is such that
    \begin{align*}
        \gamma \leq \frac{1}{L + \sqrt{\parens{L_A^2 \omega_P + L_B^2 \theta} \parens{\frac{1}{p}-1}}}.
    \end{align*}
    We now use Lemma \ref{lemma:3compr_omega_theta} and substitute the vales of $\theta$ specific to each compression type.
    
    For SameRand$K$, we have $\theta = \omega_P$, so the algorithm converges after
    \begin{align}\label{eq:it_comp_marinap_samerand}
        \bar{T} = \frac{\Psi^0}{\varepsilon} \parens{L + \sqrt{\parens{L_A^2 \omega_P + L_B^2 \omega_P} \parens{\frac{1}{p}-1}}}
        = \cO\parens{\frac{\Psi^0}{\varepsilon} \parens{L + \sqrt{\frac{\omega_P}{p} \parens{L_A^2 + L_B^2}}}}
    \end{align}
    iterations.
    Following the same reasoning as in the proof of Theorem \ref{thm:marinap_gen_indep_compressors}, for Rand$K$ we have
    \begin{align}\label{eq:it_comp_marinap_rand}
        \bar{T} = \frac{\Psi^0}{\varepsilon} \parens{L + \sqrt{\parens{L_A^2 \omega_P + L_B^2 \frac{\omega_P}{n}} \parens{\frac{1}{p}-1}}}
        = \cO\parens{\frac{\Psi^0}{\varepsilon} \parens{L + \sqrt{\frac{\omega_P}{p} \parens{L_A^2 + \frac{L_B^2}{n}}}}}.
    \end{align}
    Finally, for Perm$K$ we have $\theta = 0$, so
    \begin{align}\label{eq:it_comp_marinap_perm}
        \bar{T} = \frac{\Psi^0}{\varepsilon} \parens{L + \sqrt{L_A^2 \omega_P \parens{\frac{1}{p}-1}}}
        = \cO\parens{\frac{\Psi^0}{\varepsilon} \parens{L + L_A \sqrt{\frac{\omega_P}{p}}}}.
    \end{align}
    
    The result follows from the fact that $w_i^0=x^0$ for all $i\in[n]$.
\end{proof}

\THEOREMMARINAPCOR*
\begin{proof}
    The expected number of floats a server is relaying to each client at each iteration of \algname{MARINA-P} is
    \begin{align*}
        pd + (1 - p)k
        = \frac{d}{n} + \frac{n-1}{n} k
        \leq \frac{2d}{n}.
    \end{align*}
    Next, using the results from Lemma \ref{lemma:3compr_omega_theta}, our choice of compressors and parameters gives $\omega_P=\nicefrac{d}{K}-1=n-1$ in each of the three cases. Hence, substituting $p = \nicefrac{1}{n}$ in \eqref{eq:it_comp_marinap_samerand}, \eqref{eq:it_comp_marinap_rand} and \eqref{eq:it_comp_marinap_perm}, we obtain the following server-to-worker communication complexities:
    \begin{enumerate}
        \item for SameRand$K$ compressors:
        \begin{align*}
            \frac{d}{n} \times \frac{\delta^0}{\varepsilon} \parens{L + \sqrt{\omega_P \parens{L_A^2 + L_B^2} \parens{\frac{1}{p}-1}}}
            &= \frac{\delta^0}{\varepsilon} \parens{\frac{d}{n}L + \frac{d}{n} \sqrt{\parens{L_A^2 + L_B^2} (n-1)^2}} \\
            &= \cO\parens{\frac{\delta^0}{\varepsilon} \parens{\frac{d}{n}L + d \sqrt{L_A^2 + L_B^2}}},
        \end{align*}
        \item for Rand$K$ compressors:
        \begin{align*}
            \frac{d}{n} \times \frac{\delta^0}{\varepsilon} \parens{L + \sqrt{\omega_P \parens{L_A^2 + \frac{L_B^2}{n}} \parens{\frac{1}{p}-1}}}
            &= \frac{\delta^0}{\varepsilon} \parens{\frac{d}{n}L + \frac{d}{n} \sqrt{\parens{L_A^2 + \frac{L_B^2}{n}}(n-1)^2}} \\
            &= \cO\parens{\frac{\delta^0}{\varepsilon} \parens{\frac{d}{n}L + d \sqrt{L_A^2 + \frac{L_B^2}{n}}}},
        \end{align*}
        \item for Perm$K$ compressors:
        \begin{align*}
            \frac{d}{n} \times \frac{\delta^0}{\varepsilon} \parens{L + \sqrt{L_A^2 \omega_P \parens{\frac{1}{p}-1}}}
            &= \frac{\delta^0}{\varepsilon} \parens{\frac{d}{n}L + \frac{d}{n}L_A \sqrt{(n-1)^2}} \\
            &= \cO\parens{\frac{\delta^0}{\varepsilon} \parens{\frac{d}{n}L + d L_A}}.
        \end{align*}
    \end{enumerate}  
\end{proof}

\subsection{Polyak-\L ojasiewicz condition}
\label{sec:pl}

\subsubsection{Main Results}

To complete the theory, we now establish a convergence result for \algname{MARINA-P} under the Polyak-Łojasiewicz assumption.

\begin{assumption}[Polyak-Łojasiewicz condition]\label{ass:pl}
    The function $f$ satisfies Polyak-Łojasiewicz (PŁ) condition with parameter~$\mu$, i.e., for all $x \in \R^d$ there exists $x^*\in\arg\min_{x\in\R^d} f(x)$ such that
    \begin{align}
        2\mu\parens{f(x) - f(x^*)} \leq \norm{\nabla f(x)}^2.
    \end{align}
\end{assumption}

\begin{restatable}{theorem}{THMPLMARINA}\label{thm:marinap_pl_gen}
    Let Assumptions \ref{ass:lipschitz_constant}, \ref{ass:lower_bound}, \ref{ass:functional} and \ref{ass:pl} be satisfied and suppose that $\brac{\cC_i^t}_{i=1}^n \in \mathbb{P}(\theta)$ and $\cC_i^t \in \mathbb{U}(\omega_P)$ for all $i\in[n]$. Take
    \begin{align}\label{eq:marinap_pl_gen_step}
        0< \gamma \leq \min\left\{ \frac{1}{L + \sqrt{2 \parens{L_A^2 \omega_P + L_B^2 \theta} \parens{\frac{1}{p}-1}}}, \frac{p}{2\mu} \right\}.
    \end{align}
    Letting
    \begin{align}
        \Psi^t = f(x^t) - f^* + \frac{\gamma L_A^2}{p} \frac{1}{n} \sum_{i=1}^n \Exp{\norm{w_i^t - x^t}^2}
        + \frac{\gamma L_B^2}{p} \Exp{\norm{\frac{1}{n} \sum_{i=1}^n w_i^t - x^t}^2},
    \end{align}
    for each $T \geq 1$ we have
    \begin{eqnarray*}
        \Exp{\Psi^{T}} \leq \parens{1 - \gamma\mu}^T \Psi^{0}.
    \end{eqnarray*}
\end{restatable}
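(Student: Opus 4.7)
The plan is to follow the same overall scheme as the proof of Theorem~\ref{thm:marinap_gen_a}, but with two modifications: the coefficients in the Lyapunov function are doubled (to $\gamma L_A^2 / p$ and $\gamma L_B^2/p$ instead of $\gamma L_A^2/(2p)$ and $\gamma L_B^2/(2p)$), and at the end we convert $\|\nabla f(x^t)\|^2$ into $f(x^t) - f^*$ via PŁ. So first I would take the descent inequality from Lemma~\ref{lemma:page}, bound $\Exp{\|g^t - \nabla f(x^t)\|^2}$ using Assumption~\ref{ass:functional} exactly as in \eqref{eq:delta_marinap2}, and add to it the contractions from Lemmas~\ref{lemma:marinap_sum_w_x} and~\ref{lemma:w_sum_x} scaled by the new coefficients $\gamma L_A^2/p$ and $\gamma L_B^2/p$.

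After collecting terms, the right-hand side will have the form
\begin{align*}
\Exp{f(x^t)-f^*} - \tfrac{\gamma}{2}\Exp{\|\nabla f(x^t)\|^2} - C_1\,\Exp{\|x^{t+1}-x^t\|^2} + C_A \tfrac{1}{n}\sum_i\Exp{\|w_i^t-x^t\|^2} + C_B\,\Exp{\|w^t-x^t\|^2},
\end{align*}
where $C_1 = \tfrac{1}{2\gamma} - \tfrac{L}{2} - \tfrac{\gamma(1-p)}{p}(L_A^2\omega_P + L_B^2\theta)$, $C_A = \tfrac{\gamma L_A^2}{2} + \tfrac{\gamma L_A^2(1-p)}{p} = \tfrac{\gamma L_A^2(2-p)}{2p}$ and analogously $C_B = \tfrac{\gamma L_B^2(2-p)}{2p}$. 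The first step-size bound in \eqref{eq:marinap_pl_gen_step}, combined with Lemma~\ref{lemma:step_lemma} applied to $\gamma L + 2\gamma^2\tfrac{1-p}{p}(L_A^2\omega_P + L_B^2\theta) \leq 1$, gives $C_1 \geq 0$ and lets us discard the $\|x^{t+1}-x^t\|^2$ term.

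Next I would verify that the second step-size bound $\gamma \leq p/(2\mu)$ is exactly what is needed to guarantee $C_A \leq \tfrac{\gamma L_A^2(1-\gamma\mu)}{p}$ and $C_B \leq \tfrac{\gamma L_B^2(1-\gamma\mu)}{p}$. This reduces to $\tfrac{2-p}{2} \leq 1-\gamma\mu$, i.e., $\gamma\mu \leq p/2$, which is precisely the second constraint. Then I would invoke the PŁ condition (Assumption~\ref{ass:pl}) in the form
\begin{align*}
f(x^t)-f^* - \tfrac{\gamma}{2}\|\nabla f(x^t)\|^2 \leq (1-\gamma\mu)(f(x^t)-f^*),
\end{align*}
which requires $\gamma\mu \leq 1$ (trivially implied by $\gamma \leq p/(2\mu) \leq 1/(2\mu)$). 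Combining all three contractions gives $\Exp{\Psi^{t+1}} \leq (1-\gamma\mu)\Exp{\Psi^t}$, and unrolling the recursion yields the claim.

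The main obstacle is purely algebraic: choosing the Lyapunov coefficients and step-size cutoffs so that the $\|x^{t+1}-x^t\|^2$ term can be dropped while every remaining piece contracts at the common rate $1-\gamma\mu$. The factor-of-two increase in the Lyapunov coefficients relative to Theorem~\ref{thm:marinap_gen_a} is what forces the factor of $2$ appearing inside the square root in \eqref{eq:marinap_pl_gen_step}, and the PŁ-driven constraint $\gamma \leq p/(2\mu)$ is exactly what makes $(2-p)/2 \leq 1-\gamma\mu$; once these two balancing acts are confirmed, the rest is a direct telescoping.
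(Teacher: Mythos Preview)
Your proposal is correct and follows essentially the same route as the paper: you combine the descent inequality \eqref{eq:delta_marinap2} with the contractions of Lemmas~\ref{lemma:marinap_sum_w_x} and~\ref{lemma:w_sum_x} scaled by the doubled coefficients $\gamma L_A^2/p$ and $\gamma L_B^2/p$, use Lemma~\ref{lemma:step_lemma} with the first step-size bound to drop the $\|x^{t+1}-x^t\|^2$ term, and then observe that $\gamma\leq p/(2\mu)$ is exactly the condition making $(2-p)/(2p)\leq(1-\gamma\mu)/p$ so that all three pieces of $\Psi^t$ contract at rate $1-\gamma\mu$ after applying P\L{}. The paper's proof is identical in structure and computation.
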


\begin{restatable}{corollary}{THMCORMARINAPL}\label{thm:marinap_pl_perm}
    Let $\cC_i^t \in \mathbb{P}(0)$ for all $i\in[n]$ (e.g. Perm$K$), choose $p = 1 / (\omega_P + 1).$ Then, in the view of Theorem~\ref{thm:marinap_pl_gen}, Algorithm~\ref{alg:marinap} ensures that $\Exp{f(x^T) - f^*} \leq \varepsilon$ after
    \begin{align*}
        \cO\left(\max \brac{\frac{L + L_A \omega_P}{\mu}, \omega_P + 1} \log\frac{\Psi^0}{\varepsilon}\right)
    \end{align*}
    iterations.
\end{restatable}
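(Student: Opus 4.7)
The plan is to invoke Theorem~\ref{thm:marinap_pl_gen} directly with the simplifications afforded by the choice $\theta = 0$ and $p = 1/(\omega_P + 1)$, and then to unpack the recursion $\Exp{\Psi^T} \leq (1 - \gamma\mu)^T \Psi^0$ to read off the iteration complexity.

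First, I would substitute the assumed values into the step size constraint \eqref{eq:marinap_pl_gen_step}. Since $\brac{\cC_i^t}_{i=1}^n \in \mathbb{P}(0)$, we have $\theta = 0$, and since $p = 1/(\omega_P+1)$, we get $1/p - 1 = \omega_P$. The first branch of the minimum becomes $\bigl(L + \sqrt{2 L_A^2 \omega_P^2}\bigr)^{-1} = \bigl(L + \sqrt{2}\, L_A \omega_P\bigr)^{-1}$, and the second branch becomes $1/(2\mu(\omega_P+1))$. Hence we can take
\begin{equation*}
    \gamma = \min\left\{\frac{1}{L + \sqrt{2}\, L_A \omega_P},\ \frac{1}{2\mu(\omega_P+1)}\right\},
\end{equation*}
so that $1/\gamma = \cO\bigl(L + L_A \omega_P + \mu(\omega_P+1)\bigr)$.

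Next, I would apply the standard bound $1 - \gamma\mu \leq \exp(-\gamma\mu)$ (valid since $\gamma\mu \leq 1/2$ by the second branch) to the conclusion of Theorem~\ref{thm:marinap_pl_gen}, obtaining $\Exp{\Psi^T} \leq \exp(-\gamma\mu T)\,\Psi^0$. Observing that the two extra terms in the definition of $\Psi^t$ are manifestly nonnegative, we have $f(x^T) - f^* \leq \Psi^T$, so $\Exp{f(x^T) - f^*} \leq \exp(-\gamma\mu T)\, \Psi^0$. Setting the right-hand side equal to $\varepsilon$ and solving for $T$ gives
\begin{equation*}
    T \geq \frac{1}{\gamma\mu} \log\frac{\Psi^0}{\varepsilon} = \cO\!\left(\left(\frac{L + L_A\omega_P}{\mu} + (\omega_P+1)\right)\log\frac{\Psi^0}{\varepsilon}\right),
\end{equation*}
which after absorbing the sum into a maximum yields exactly the stated rate.

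There is essentially no obstacle here, since the heavy lifting is done in Theorem~\ref{thm:marinap_pl_gen}; the only minor point to verify carefully is that the second branch of the step size constraint truly enforces $\gamma\mu \leq 1/2$, justifying the linearization of $\log(1-\gamma\mu)$, and that the $\sqrt{2}$ and additive $\mu(\omega_P+1)$ terms are swallowed by the $\cO(\cdot)$ notation. Everything else is direct substitution.
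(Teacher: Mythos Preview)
Your proposal is correct and follows essentially the same route as the paper: substitute $\theta=0$ and $p=1/(\omega_P+1)$ into the step size bound of Theorem~\ref{thm:marinap_pl_gen}, then invert $\gamma\mu$ to read off the iteration count for linear convergence. The paper's version is slightly terser (it jumps directly to $\bar T = \max\{(L+\sqrt{2}L_A\omega_P)/\mu,\,2/p\}\log(\Psi^0/\varepsilon)$ without spelling out the $1-\gamma\mu\le e^{-\gamma\mu}$ step or the $f(x^T)-f^*\le\Psi^T$ observation), but the argument is identical.
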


\begin{restatable}{corollary}{CORMARINAPL}\label{cor:main_pl_corolary}
    Let $\cC_i^t$ be the Perm$K$ compressors ($K = \nicefrac{d}{n}$). Then, in the view of Corollary~\ref{thm:marinap_pl_perm}, the s2w communication complexity of \algname{MARINA-P} with Perm$K$ is
    \begin{align*}
        \cO\parens{\left(\frac{d L}{n \mu} + \frac{d L_A}{\mu} + d\right) \log\frac{\Psi^0}{\varepsilon}}.
    \end{align*}
\end{restatable}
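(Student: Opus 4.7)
The plan is to derive Corollary \ref{cor:main_pl_corolary} as a direct consequence of Corollary \ref{thm:marinap_pl_perm} by multiplying the iteration complexity by the expected per-iteration server-to-worker payload. This is a purely bookkeeping argument — no new analytic inequality is needed beyond what is already established upstream.

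First, I would instantiate the parameters of Corollary \ref{thm:marinap_pl_perm} for the Perm$K$ compressor with $K = \nicefrac{d}{n}$. By Lemma \ref{lemma:3compr_omega_theta}(c), the Perm$K$ family satisfies $\{\cC_i^t\}_{i=1}^n \in \mathbb{P}(0)$ with each $\cC_i^t \in \mathbb{U}(\omega_P)$ for $\omega_P = n-1$, so the hypothesis $\theta = 0$ of Corollary \ref{thm:marinap_pl_perm} is met. The prescribed probability becomes $p = 1/(\omega_P + 1) = 1/n$, and the corollary yields an iteration count
\begin{align*}
\cO\!\left(\max\!\left\{\tfrac{L + L_A (n-1)}{\mu},\, n\right\} \log\tfrac{\Psi^0}{\varepsilon}\right).
\end{align*}

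Next, I would compute the expected number of coordinates transmitted from the server to each worker per iteration. With probability $p = 1/n$ the server sends the uncompressed vector $x^{t+1}$ (cost $d$), and with probability $1-p$ it sends $\cC_i^t(x^{t+1} - x^t)$, a Perm$K$-compressed message of cost $K = d/n$. Hence the per-iteration per-worker communication is at most
\begin{align*}
pd + (1-p)K = \tfrac{d}{n} + \tfrac{n-1}{n} \cdot \tfrac{d}{n} \leq \tfrac{2d}{n}.
\end{align*}

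Finally, multiplying the two quantities and distributing the maximum inside the big-$\cO$ as a sum gives
\begin{align*}
\tfrac{2d}{n} \cdot \cO\!\left(\max\!\left\{\tfrac{L + L_A (n-1)}{\mu},\, n\right\} \log\tfrac{\Psi^0}{\varepsilon}\right)
= \cO\!\left(\left(\tfrac{dL}{n\mu} + \tfrac{dL_A}{\mu} + d\right) \log\tfrac{\Psi^0}{\varepsilon}\right),
\end{align*}
which is exactly the claimed bound. There is no real obstacle here; the only minor subtlety is absorbing the $n-1$ factor on $L_A$ (coming from $\omega_P$) against the $n$ in the denominator of $\nicefrac{2d}{n}$ so that the $L_A$ term loses its $n$-dependence, and keeping the additive $+d$ term arising from the second branch of the maximum. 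The substantive work has already been done in Corollary \ref{thm:marinap_pl_perm} and Lemma \ref{lemma:3compr_omega_theta}.
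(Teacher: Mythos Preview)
Your proposal is correct and follows essentially the same approach as the paper: instantiate Corollary~\ref{thm:marinap_pl_perm} with $\omega_P = n-1$ for Perm$K$, compute the expected per-iteration payload $pd + (1-p)K \le 2d/n$, and multiply through. The paper's proof is identical up to writing $L_A n$ in place of your $L_A(n-1)$ and leaving the final bound as a max rather than a sum, both of which are absorbed by the $\cO$.
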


\subsubsection{Proofs}

\THMPLMARINA*

\begin{proof}
    We proceed similarly as in the proof of Theorem \ref{thm:marinap_gen_a}. Combining the inequalities in Lemmas~\ref{lemma:marinap_sum_w_x} and~\ref{lemma:w_sum_x} gives
    \begin{align}\label{eq:l_marinap2_pl}
        &\frac{\gamma L_A^2}{p} \frac{1}{n} \sum_{i=1}^n \Exp{\norm{w_i^{t+1} - x^{t+1}}^2}
        + \frac{\gamma L_B^2}{p} \Exp{\norm{w^{t+1} - x^{t+1}}^2} \nonumber \\
        &\leq \frac{\gamma L_A^2}{p} (1-p) \frac{1}{n} \sum_{i=1}^n \Exp{\norm{w_i^{t} - x^{t}}^2}
        + \frac{\gamma L_A^2}{p} (1-p) \omega_P \Exp{\norm{x^{t+1} - x^{t}}^2} \nonumber \\
        &\quad+ \frac{\gamma L_B^2}{p} (1-p) \Exp{\norm{w^t - x^t}^2}
        + \frac{\gamma L_B^2}{p} (1-p) \theta \Exp{\norm{x^{t+1} - x^t}^2} \nonumber \\
        &= \frac{\gamma L_A^2}{p} (1-p) \frac{1}{n} \sum_{i=1}^n \Exp{\norm{w_i^{t} - x^{t}}^2}
        + \frac{\gamma L_B^2}{p} (1-p) \Exp{\norm{w^t - x^t}^2} \nonumber \\
        &\quad+ \frac{\gamma}{p} \parens{L_A^2 \omega_P + L_B^2 \theta} (1-p) \Exp{\norm{x^{t+1} - x^t}^2}.
    \end{align}
    By adding inequalities \eqref{eq:delta_marinap2} and \eqref{eq:l_marinap2_pl}, we get
    \begin{eqnarray*}
        \Exp{\Psi^{t+1}} &=& \Exp{\delta^{t+1}}
        + \frac{\gamma L_A^2}{p} \frac{1}{n} \sum_{i=1}^n \Exp{\norm{w_i^{t+1} - x^{t+1}}^2}
        + \frac{\gamma L_B^2}{p} \Exp{\norm{w^{t+1} - x^{t+1}}^2} \\
        &\leq& \Exp{\delta^{t}} - \frac{\gamma}{2} \Exp{\norm{\nabla f(x^t)}^2} - \left( \frac{1}{2\gamma} - \frac{L}{2} \right) \Exp{\norm{x^{t+1} - x^t}^2}  \\
        &&+ \frac{\gamma}{2} \parens{L_A^2 \frac{1}{n} \sum_{i=1}^n \Exp{\norm{w_i^t-x^t}^2} + L_B^2 \Exp{\norm{w^t - x^t}^2}} \\
        &&+ \frac{\gamma L_A^2}{p} (1-p) \frac{1}{n} \sum_{i=1}^n \Exp{\norm{w_i^{t} - x^{t}}^2}
        + \frac{\gamma L_B^2}{p} (1-p) \Exp{\norm{w^t - x^t}^2} \\
        &&+ \frac{\gamma}{p} \parens{L_A^2 \omega_P + L_B^2 \theta} (1-p) \Exp{\norm{x^{t+1} - x^t}^2} \\
        &=& \Exp{\delta^{t}} - \frac{\gamma}{2} \Exp{\norm{\nabla f(x^t)}^2} \\
        &&- \parens{\frac{1}{2\gamma} - \frac{L}{2} - \frac{\gamma}{p} \parens{L_A^2 \omega_P + L_B^2 \theta} (1-p)} \Exp{\norm{x^{t+1} - x^t}^2} \\
        &&+ \gamma L_A^2 \parens{\frac{1}{p} - \frac{1}{2}} \frac{1}{n} \sum_{i=1}^n \Exp{\norm{w_i^t-x^t}^2}
        + \gamma L_B^2 \parens{\frac{1}{p} - \frac{1}{2}} \Exp{\norm{w^t-x^t}^2} \\
        &\overset{\textnormal{Ass.}\ref{ass:pl}, \eqref{eq:marinap_pl_gen_step}}{\leq}& \parens{1-\gamma\mu} \Exp{\delta^{t}}
        + \frac{\gamma L_A^2}{p} \parens{1-\gamma\mu} \frac{1}{n} \sum_{i=1}^n \Exp{\norm{w_i^t-x^t}^2} \\
        &&+ \frac{\gamma L_B^2}{p} \parens{1-\gamma\mu} \Exp{\norm{w^t-x^t}^2} \\
        &=& \parens{1-\gamma\mu} \Exp{\Psi^t},
    \end{eqnarray*}    
    where the last inequality follows from the Polyak-Łojasiewicz condition, Lemma \ref{lemma:step_lemma} and our choice of $\gamma$. Applying the above inequality iteratively, we finish the proof.
\end{proof}

\THMCORMARINAPL*

\begin{proof}
    In view of Theorem \ref{thm:marinap_pl_gen}, the step size satisfies
    \begin{align*}
        \gamma \leq \min\left\{ \frac{1}{L + \sqrt{2 \parens{L_A^2 \omega_P + L_B^2 \theta} \parens{\frac{1}{p}-1}}}, \frac{p}{2\mu} \right\}.
    \end{align*}
    Therefore, since $\theta = 0$ and $p = 1 / (\omega_P + 1),$ the algorithm converges after
    \begin{align*}
        \bar{T}
        &= \max \brac{\frac{L + \sqrt{2 \parens{L_A^2 \omega_P + L_B^2 \theta} \parens{\frac{1}{p}-1}}}{\mu}, \frac{2}{p}} \log\frac{\Psi^0}{\varepsilon} \\
        &= \cO\parens{\max \brac{\frac{L + L_A \omega_P}{\mu}, \omega_P + 1} \log\frac{\Psi^0}{\varepsilon}}
    \end{align*}
    iterations.
\end{proof}

\CORMARINAPL*

\begin{proof}
    For Perm$K$, $\omega_P = n-1.$ Therefore, the iteration complexity is
    \begin{align*}
        \cO\parens{\max \brac{\frac{L + L_A n}{\mu}, n} \log\frac{\Psi^0}{\varepsilon}}.
    \end{align*}
    Since the expected number of floats the server is relaying to each client is
    \begin{align*}
        pd + (1 - p)k
        = \frac{d}{n} + \frac{n-1}{n} k
        \leq \frac{2d}{n},
    \end{align*}
    the server-to-worker communication complexity is
    \begin{align*}
        \cO\parens{\max \brac{\frac{\frac{d}{n}L + d L_A}{\mu}, d} \log\frac{\Psi^0}{\varepsilon}}.
    \end{align*}
\end{proof}

\newpage

\section{Convergence of \algnamebig{\newmethod} in the General Case} \label{sec:m3_convergence_theory}

We now move on to the bidirectionally compressed method. Below is a generalization of Theorem~\ref{thm:marinap_gen_m} to all unbiased compressors.

\subsection{Main Results}
\begin{restatable}{theorem}{THEOREMMTHREE}
    \label{theorem:mthree}
    Let Assumptions \ref{ass:lipschitz_constant}, \ref{ass:lower_bound}, \ref{ass:local_lipschitz_constant} and \ref{ass:functional} hold and suppose that the compressors $\cQ_i^t \in \mathbb{U}(\omega_D)$ satisfy Assumption~\ref{ass:independent}, $\brac{\cC_i^t}_{i=1}^n \in \mathbb{P}(\theta)$ and $\cC_i^t \in \mathbb{U}(\omega_P)$ for all $i\in[n]$. Let $\gamma>0$ be such that
    \begin{align*}
        \textstyle \gamma \leq \left(L + \sqrt{288 \parens{\left(\frac{\theta}{p_P} + \frac{1 + \theta p_P}{\beta^2}\right) L_B^2 + \left(\frac{\omega_P}{p_P} + \frac{1 + \omega_P p_P}{\beta^2}\right) L_A^2 + \left(\frac{\omega_D \omega_P \beta}{n p_D} + \frac{\omega_D(1 + \omega_P p_P)}{n p_D}\right) L_{\max}^2}}\right)^{-1}.
    \end{align*}
    Letting
    \begin{align*}
        \Psi^t &= \delta^{t}
        + \kappa \norm{g^{t} - \frac{1}{n} \sum_{i=1}^{n} \nabla f_i(z_i^{t})}^2
        + \eta \norm{z^{t} - w^{t}}^2
        + \nu \frac{1}{n} \sum_{i=1}^n \norm{z^{t}_i - w^{t}_i}^2 \\
        &\quad+ \rho \norm{w^{t} - x^{t}}^2
        + \mu \frac{1}{n} \sum_{i=1}^n \norm{w_i^{t} - x^{t}}^2,
    \end{align*}
    where $\kappa = \frac{\gamma}{p_D}$, $\eta = \frac{4\gamma L_B^2}{\beta}$, $\nu = \frac{4\gamma L_A^2}{\beta} + \frac{6 \gamma \omega_D \beta L_{\max}^2}{n p_D}$, $\rho = 32\gamma L_B^2 \left(\frac{1}{p_P} + \frac{p_P}{\beta^2}\right)$ and $\mu = 32 \gamma L_A^2\left(\frac{1}{p_P} + \frac{p_P}{\beta^2}\right) + \frac{48 \gamma \omega_D L_{\max}^2}{n p_D}\left(\beta + p_P\right)$, \algname{M3} ensures that
    \begin{align*}
        \frac{1}{T} \sum_{t=0}^{T-1} \Exp{\norm{\nabla f (x^t)}^2} = \cO\left(\frac{\Psi^0}{\gamma T}\right).
    \end{align*}
\end{restatable}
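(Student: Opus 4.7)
I would prove a one-step drift inequality of the form $\E[\Psi^{t+1}] \leq \E[\Psi^t] - \tfrac{\gamma}{2}\E\|\nabla f(x^t)\|^2$, from which the claim follows by telescoping over $t=0,\ldots,T-1$ and dividing by $T$, exactly as in the proof of Theorem~\ref{thm:marinap_gen_a}. The six summands of $\Psi^t$ are analysed separately and then combined with the specific weights $\kappa,\eta,\nu,\rho,\mu$ so that the cross terms cancel. The starting point is Lemma~\ref{lemma:page}, giving $\E[\delta^{t+1}] \leq \E[\delta^t] - \tfrac{\gamma}{2}\E\|\nabla f(x^t)\|^2 - (\tfrac{1}{2\gamma} - \tfrac{L}{2})\E\|x^{t+1}-x^t\|^2 + \tfrac{\gamma}{2}\E\|g^t - \nabla f(x^t)\|^2$. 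I would then split $g^t - \nabla f(x^t) = (g^t - \bar\nabla^t) + (\bar\nabla^t - \nabla f(x^t))$ where $\bar\nabla^t := \tfrac{1}{n}\sum_i \nabla f_i(z_i^t)$; the first summand is precisely the MARINA variance tracked by $\kappa$, while the second is controlled via Assumption~\ref{ass:functional} applied with $u_i = z_i^t - x^t$. Decomposing further $z_i^t - x^t = (z_i^t - w_i^t) + (w_i^t - x^t)$ turns this bound into the four auxiliary quantities appearing in $\Psi^t$.

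\textbf{Per-component recursions.} Next, I would establish five one-step inequalities. For the MARINA variance $V^t := \|g^t - \bar\nabla^t\|^2$, the standard MARINA argument (independence of $\cQ_i^t$ across workers plus $L_i$-smoothness) yields $\E_t[V^{t+1}] \leq (1-p_D) V^t + (1-p_D)\tfrac{\omega_D L_{\max}^2}{n}\cdot \tfrac{1}{n}\sum_i\|z_i^{t+1} - z_i^t\|^2$. The momentum identity $z_i^{t+1} - w_i^{t+1} = (1-\beta)(z_i^t - w_i^{t+1})$ combined with Young's inequality at parameter $\beta$ gives $(1-\beta)$-contractions for $\|z^t - w^t\|^2$ and $\tfrac{1}{n}\sum_i\|z_i^t - w_i^t\|^2$ with residuals proportional to $\|w^{t+1}-w^t\|^2$ and $\tfrac{1}{n}\sum_i\|w_i^{t+1}-w_i^t\|^2$. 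Lemmas~\ref{lemma:marinap_sum_w_x} and~\ref{lemma:w_sum_x}, which apply unchanged because the $w$-update in \eqref{eq:mthree} is identical to the one in \eqref{eq:marina_p}, give $(1-p_P)$-contractions for $\tfrac{1}{n}\sum_i\|w_i^t - x^t\|^2$ and $\|w^t - x^t\|^2$ with residuals $\omega_P\|x^{t+1}-x^t\|^2$ and $\theta\|x^{t+1}-x^t\|^2$. Finally, $\|w_i^{t+1}-w_i^t\|^2$ and $\|z_i^{t+1}-z_i^t\|^2 = \beta^2\|w_i^{t+1}-z_i^t\|^2$ are reduced back to tracked quantities and $\|x^{t+1}-x^t\|^2$ via further Young steps, using $p_P$ and $\beta$ as Young parameters.

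\textbf{Assembly and step-size choice.} I would then take the weighted sum with the prescribed $\kappa,\eta,\nu,\rho,\mu$ and verify that the contraction slack of each auxiliary term absorbs the corresponding cross term from Step~1: $p_D\kappa = \gamma$ cancels the $\tfrac{\gamma}{2}\cdot 2 V^t$ contribution; $\beta\eta \sim \gamma L_B^2$ and $\beta\nu \sim \gamma L_A^2$ absorb the $L_B^2\|z^t-x^t\|^2$ and $L_A^2 \tfrac{1}{n}\sum_i\|z_i^t-x^t\|^2$ terms after the $(z-w,w-x)$ split; $p_P\rho,p_P\mu$ absorb the shift terms generated by the momentum contraction residuals. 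Every remaining positive contribution is a coefficient in front of $\|x^{t+1}-x^t\|^2$, and collecting them produces exactly the expression $288\bigl((\tfrac{\theta}{p_P} + \tfrac{1+\theta p_P}{\beta^2})L_B^2 + (\tfrac{\omega_P}{p_P} + \tfrac{1+\omega_P p_P}{\beta^2})L_A^2 + (\tfrac{\omega_D\omega_P\beta}{np_D} + \tfrac{\omega_D(1+\omega_P p_P)}{np_D})L_{\max}^2\bigr)$ under the square root in the step-size bound, so the assumed $\gamma$ makes the coefficient $\tfrac{1}{2\gamma} - \tfrac{L}{2} - (\cdot)\gamma$ non-negative, à la Lemma~\ref{lemma:step_lemma}; that term is dropped and the advertised descent follows.

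\textbf{Main obstacle.} The hard part is purely the combinatorial bookkeeping of how contractions spill into other Lyapunov components: the momentum contraction creates $\|w_i^{t+1}-w_i^t\|^2$ which returns a mix of $\|w_i^t-x^t\|^2$ and $\|x^{t+1}-x^t\|^2$; the MARINA variance contraction creates $\|z_i^{t+1}-z_i^t\|^2 = \beta^2\|w_i^{t+1}-z_i^t\|^2$, which after further splitting returns $\|z_i^t - w_i^t\|^2$, $\|w_i^t - x^t\|^2$ and $\|x^{t+1}-x^t\|^2$. The mixed coefficients $\tfrac{1+\omega_P p_P}{\beta^2}$, $\tfrac{1+\theta p_P}{\beta^2}$ and $\tfrac{\omega_D(1+\omega_P p_P)}{np_D}$ inside the step-size bound are precisely what these chained Young expansions produce, so the delicate point is to pick the right Young parameter at each step ($\beta$ for momentum, $p_P$ for the $w$-shift, $p_D$ for the $g$-variance) so that the stated weights match; the constant $288$ then simply collects the $2$'s from Young and the $4,6,32,48$ from the weights.
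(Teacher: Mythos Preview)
Your proposal is correct and follows essentially the same Lyapunov-descent scheme as the paper: Lemma~\ref{lemma:page} plus the split $g^t-\nabla f(x^t)=(g^t-\bar\nabla^t)+(\bar\nabla^t-\nabla f(x^t))$, Assumption~\ref{ass:functional} with the decomposition $z_i^t-x^t=(z_i^t-w_i^t)+(w_i^t-x^t)$, and per-component recursions assembled with the stated weights so that only a coefficient of $\|x^{t+1}-x^t\|^2$ survives. The one place where the paper's route differs slightly is that instead of passing through $\|w_i^{t+1}-w_i^t\|^2$ as you suggest, it isolates a single auxiliary bound on $\E_t\|w_i^{t+1}-z_i^t\|^2$ (Lemma~\ref{lemma:pDguBXTkQyk}, obtained by a direct variance decomposition on the probabilistic update of $w_i^{t+1}$) and reuses it both in the momentum contraction (Lemma~\ref{lemma:z_t}) and in the MARINA variance bound (Lemma~\ref{lemma:g_t}); this saves one Young step and is what pins down the specific constants in the weights.
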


We now simplify the above result by considering $\theta = 0.$ 

\begin{restatable}{corollary}{CORMTHREE}
    \label{cor:mthree_cor}
    Let $\cC_i^t \in \mathbb{P}(0)$ for all $i\in[n]$ (e.g. Perm$K$), choose $p_P = 1 / (\omega_P + 1),$ $p_D = 1 / (\omega_D + 1)$ and 
    \begin{align*}
        \beta = \min\left\{\left(\frac{n}{\omega_D \omega_P (\omega_D + 1)}\right)^{1/3}, 1\right\}.
    \end{align*}
    Then, in the view of Theorem~\ref{theorem:mthree}, the iteration complexity is
    \begin{align*}
        \cO\left(\frac{\Psi^0}{\varepsilon} \left(L_{\max} + \left(\frac{\omega_D \omega_P (\omega_D + 1)}{n}\right)^{1/3} L_{\max} + \sqrt{\frac{\omega_D (\omega_D + 1)}{n}} L_{\max} + \sqrt{\omega_P (\omega_P + 1)} L_A\right)\right).
    \end{align*}
\end{restatable}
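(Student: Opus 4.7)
The approach is to treat this as a direct corollary of Theorem~\ref{theorem:mthree}: substitute $\theta = 0$, $p_P = 1/(\omega_P+1)$, $p_D = 1/(\omega_D+1)$, together with the prescribed $\beta$, into the step-size bound, simplify, and then read off the iteration count from $1/\gamma$ via $T = \cO(\Psi^0/(\gamma \varepsilon))$.

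\textbf{Simplifying the bracket.} With $\theta = 0$, the factor $\theta/p_P + (1+\theta p_P)/\beta^2$ reduces to $1/\beta^2$, so the $L_B^2$ contribution becomes $L_B^2/\beta^2$. With $p_P = 1/(\omega_P+1)$, we have $\omega_P/p_P = \omega_P(\omega_P+1)$ and $1 + \omega_P p_P \leq 2$, so the $L_A^2$ contribution is at most $\omega_P(\omega_P+1) L_A^2 + 2 L_A^2/\beta^2$. With $p_D = 1/(\omega_D+1)$, the $L_{\max}^2$ contribution becomes $\omega_D \omega_P(\omega_D+1)\beta/n \cdot L_{\max}^2 + 2\omega_D(\omega_D+1)/n \cdot L_{\max}^2$. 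Since Assumption~\ref{ass:local_lipschitz_constant} holds, Lemma~\ref{lemma:lalblmax} gives $L_A, L_B \leq L_{\max}$, allowing the $1/\beta^2$ pieces to be absorbed into a single $L_{\max}^2/\beta^2$ term. Applying $\sqrt{a_1+\cdots+a_k} \leq \sqrt{a_1} + \cdots + \sqrt{a_k}$ yields
\begin{align*}
    \gamma^{-1} = \cO\!\left(L + \sqrt{\omega_P(\omega_P+1)}\,L_A + \frac{L_{\max}}{\beta} + \sqrt{\frac{\omega_D \omega_P(\omega_D+1)\beta}{n}}\,L_{\max} + \sqrt{\frac{\omega_D(\omega_D+1)}{n}}\,L_{\max}\right).
\end{align*}

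\textbf{Optimizing $\beta$ and concluding.} The only $\beta$-dependent terms above are $L_{\max}/\beta$ (decreasing) and $\sqrt{\omega_D\omega_P(\omega_D+1)\beta/n}\,L_{\max}$ (increasing); balancing them as functions of $\beta>0$ gives the unconstrained optimum $\beta^\star = (n/(\omega_D\omega_P(\omega_D+1)))^{1/3}$, at which both equal $(\omega_D\omega_P(\omega_D+1)/n)^{1/3} L_{\max}$. With the prescribed capped value $\beta = \min\{\beta^\star,1\}$: in the interior regime $\beta^\star \leq 1$ the balanced value is realized, and in the boundary regime $\beta^\star > 1$ one has $\omega_D\omega_P(\omega_D+1) \leq n$, so both $\beta$-dependent terms are already $\cO(L_{\max})$, hence absorbed into the leading $L_{\max}$ term. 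Combined with $L \leq L_{\max}$, this gives
\begin{align*}
    \gamma^{-1} = \cO\!\left(L_{\max} + \left(\tfrac{\omega_D \omega_P(\omega_D+1)}{n}\right)^{1/3} L_{\max} + \sqrt{\tfrac{\omega_D(\omega_D+1)}{n}}\,L_{\max} + \sqrt{\omega_P(\omega_P+1)}\,L_A\right).
\end{align*}
Plugging into $T = \cO(\Psi^0/(\gamma \varepsilon))$ from Theorem~\ref{theorem:mthree} yields precisely the claimed complexity.

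\textbf{Main obstacle.} The step is essentially algebraic, but the bookkeeping is where care is needed: one has to correctly identify which pieces depend on $\beta$ versus only on $\omega_P,\omega_D,n$, verify that the capped value is the minimizer of $L_{\max}/\beta + \sqrt{\omega_D\omega_P(\omega_D+1)\beta/n}\,L_{\max}$ on $(0,1]$, and confirm that both the interior-minimum and the boundary regimes produce bounds absorbed by the stated formula. The key observation that makes the cleanup succeed is the use of Lemma~\ref{lemma:lalblmax} to bound $L_B$ by $L_{\max}$, thereby eliminating $L_B$ entirely from the final expression even though $\theta = 0$ only kills one of the two $L_B^2$ subterms.
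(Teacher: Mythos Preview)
Your proposal is correct and follows essentially the same route as the paper: substitute $\theta=0$, $p_P=1/(\omega_P+1)$, $p_D=1/(\omega_D+1)$ into the step-size bound of Theorem~\ref{theorem:mthree}, invoke Lemma~\ref{lemma:lalblmax} to absorb the $L_B^2/\beta^2$ and $L_A^2/\beta^2$ pieces into $L_{\max}^2/\beta^2$, and then balance $1/\beta^2$ against $\omega_D\omega_P(\omega_D+1)\beta/n$ via the prescribed $\beta$. The only cosmetic difference is that the paper keeps the terms inside the square root while bounding $\tfrac{1}{\beta^2}+\tfrac{\omega_D\omega_P(\omega_D+1)\beta}{n}\le 2\bigl(1+(\tfrac{\omega_D\omega_P(\omega_D+1)}{n})^{2/3}\bigr)$ directly, whereas you split the square root first and then do the case analysis on $\beta$; both yield the same bound.
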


We now give the bound for the total communication complexity of \algname{M3}.

\begin{restatable}{corollary}{CORMTHREECOMPL}\label{cor:m3_total_comm}
    Let $\cC_i^t$ be the Perm$K$ compressors and $\cQ_i^t$ be the independent (Assumption~\ref{ass:independent}) Rand$K$ compressors, both with $K = \nicefrac{d}{n}$. Then, in the view of Corollary~\ref{cor:mthree_cor}, the iteration complexity is
    \begin{align*}
        \cO\left(\frac{\Psi^0}{\varepsilon} \left(n^{2/3} L_{\max} + n L_A\right)\right),
    \end{align*}
    and the total communication complexity is
    \begin{align*}
        \cO\left(\frac{\Psi^0}{\varepsilon} \left(\frac{d L_{\max}}{n^{1/3}} + d L_A\right)\right).
    \end{align*}
\end{restatable}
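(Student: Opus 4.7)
\textbf{Proof plan for Corollary \ref{cor:m3_total_comm}.}
The strategy is to invoke Corollary~\ref{cor:mthree_cor} directly, substituting the compressor parameters given by Lemma~\ref{lemma:3compr_omega_theta}, and then multiplying the iteration complexity by the per-iteration expected communication cost.

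\emph{Step 1: identify compressor parameters.} For the Perm$K$ compressors with $K=d/n$, Lemma~\ref{lemma:3compr_omega_theta}(c) gives $\omega_P = n-1$ and, crucially, $\brac{\cC_i^t}_{i=1}^n \in \mathbb{P}(0)$, so the hypothesis $\theta=0$ of Corollary~\ref{cor:mthree_cor} is satisfied. For the independent Rand$K$ compressors with $K=d/n$, Lemma~\ref{lemma:3compr_omega_theta}(b) gives $\omega_D = d/K - 1 = n-1$, and Assumption~\ref{ass:independent} holds by construction.

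\emph{Step 2: simplify the iteration complexity.} Substituting $\omega_P = \omega_D = n-1$ into the bound from Corollary~\ref{cor:mthree_cor}, each of the terms simplifies as
\begin{align*}
    \left(\tfrac{\omega_D \omega_P (\omega_D+1)}{n}\right)^{1/3} = \left(\tfrac{(n-1)^2 n}{n}\right)^{1/3} \leq n^{2/3}, \quad \sqrt{\tfrac{\omega_D(\omega_D+1)}{n}} = \sqrt{n-1} \leq \sqrt{n}, \quad \sqrt{\omega_P(\omega_P+1)} \leq n.
\end{align*}
Absorbing the lower-order $L_{\max}$ and $\sqrt{n} L_{\max}$ terms into the dominant $n^{2/3} L_{\max}$ term, Corollary~\ref{cor:mthree_cor} yields the iteration complexity
\[
    \cO\left(\tfrac{\Psi^0}{\varepsilon}\left(n^{2/3} L_{\max} + n L_A\right)\right),
\]
which is the first claim.

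\emph{Step 3: bound the per-iteration communication cost.} With $p_P = p_D = 1/n$ and $K = d/n$, the expected number of coordinates the server sends to each worker per iteration is $p_P \cdot d + (1-p_P) \cdot K = d/n + (n-1)d/n^2 \leq 2d/n$. Symmetrically, the expected number of coordinates each worker sends to the server per iteration is $p_D \cdot d + (1-p_D) \cdot K \leq 2d/n$. Hence the per-iteration bidirectional cost is $\cO(d/n)$.

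\emph{Step 4: combine.} Multiplying the iteration complexity from Step~2 by the per-iteration cost from Step~3 gives the total communication complexity
\[
    \cO\!\left(\tfrac{d}{n}\right) \cdot \cO\!\left(\tfrac{\Psi^0}{\varepsilon}\left(n^{2/3} L_{\max} + n L_A\right)\right) = \cO\!\left(\tfrac{\Psi^0}{\varepsilon}\left(\tfrac{d L_{\max}}{n^{1/3}} + d L_A\right)\right),
\]
which is the stated bound. No step here involves any genuine difficulty: the real work has been done in Theorem~\ref{theorem:mthree} and Corollary~\ref{cor:mthree_cor}, and the only point requiring minor care is verifying that among the four terms appearing in Corollary~\ref{cor:mthree_cor}, the $n^{2/3} L_{\max}$ term indeed dominates the $L_{\max}$ and $\sqrt{n} L_{\max}$ terms for all $n\geq 1$, which is immediate.
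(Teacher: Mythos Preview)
Your proposal is correct and follows essentially the same approach as the paper: invoke Lemma~\ref{lemma:3compr_omega_theta} to get $\omega_P=\omega_D=n-1$ and $\theta=0$, plug into Corollary~\ref{cor:mthree_cor}, simplify the four terms (with $n^{2/3}L_{\max}$ dominating $L_{\max}$ and $\sqrt{n}L_{\max}$), and multiply by the $\cO(d/n)$ per-iteration cost coming from $p_P=p_D=1/n$ and $K=d/n$.
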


\begin{remark}
    The above result proves the complexities from Theorem \ref{thm:marinap_gen_m}.
\end{remark}

\subsection{Proofs}

Similar to our approach from the previous section, we start by establishing several inequalities satisfied by the sequences $\{w_1^t,\ldots,w_n^t\}_{t\geq0}$, $\{z_1^t,\ldots,z_n^t\}_{t\geq0}$ and $\{g_1^t,\ldots,g_n^t\}_{t\geq0}$.

\begin{algorithm}[t]
    \caption{\algname{\newmethod}}
    \begin{algorithmic}[1]\label{alg:m3}
    \STATE \textbf{Input:} initial model $x_0\in\R^d$ {\color{gray}(stored on the server)}, initial model shifts $w_i^0 = z_i^0 = x^0$, $i\in[n]$ {\color{gray}(stored on the workers)}, initial gradient estimators $g^0 = \nabla f(x^0)$ {\color{gray}(stored on the sever)}, step size $\gamma > 0$, probabilities $0 < p_P, p_D \leq 1$, compressors $\cC_1^t,\ldots,\cC_n^t \in \mathbb{U}(\omega_P) \cap \mathbb{P}(\theta)$, $\cQ_1^t,\ldots,\cQ_n^t \in \mathbb{U}(\omega_D)$ for all $t \geq 0.$
    \FOR{$t = 0, \dots, T$}
    \STATE $x^{t+1} = x^t - \gamma g^t$ \hfill{\scriptsize \color{gray}Server takes a gradient-type step to update the global model}
    \STATE Sample $c_P^t \sim \textnormal{Bernoulli}(p_P)$, $c_D^t \sim \textnormal{Bernoulli}(p_D)$
    \STATE For $i\in[n]$, send $\cC_i^t(x^{t+1} - x^t)$ to worker $i$ if $c_P^t=0$ and $x^{t+1}$ otherwise
    \FOR{$i = 1, \dots, n$ in parallel}
    \STATE $w^{t+1}_i = 
    \begin{cases}
        x^{t+1} & \textnormal{if } c_P^t=1,\\
        w_i^t + \cC_i^t(x^{t+1} - x^t) & \textnormal{if } c_P^t=0,
    \end{cases}$
    \hfill{\scriptsize\color{gray}Worker $i$ updates its local model shift}
    \STATE $z_i^{t+1} = \beta w_i^{t+1} + (1 - \beta) z_i^{t}$ \hfill{\scriptsize\color{gray}Worker $i$ takes the momentum step}
    \STATE Send $\cQ_i^t(\nabla f_i(z_i^{t+1}) - \nabla f_i(z_i^{t}))$ to the server if $c_D^t=0$ and $\nabla f_i(z_i^{t+1})$ otherwise
    \ENDFOR
    \IF{$c_D^t=1$}
        \STATE $g^{t+1} = \frac{1}{n} \sum_{i=1}^n f_i(z_i^{t+1})$
    \ELSE
        \STATE $g^{t+1} = g^{t} + \frac{1}{n} \sum_{i=1}^n \cQ_i^t(\nabla f_i(z_i^{t+1}) - \nabla f_i(z_i^{t}))$
    \ENDIF
    \ENDFOR
    \end{algorithmic}
    (maintaining only the sequence $g^t$ in the implementation is sufficient; the sequences $g^t_i$ from \eqref{eq:mthree} are \emph{virtual})
\end{algorithm}

\begin{lemma}
    \label{lemma:pDguBXTkQyk}
    Let $\cC_i^t \in \mathbb{U}(\omega_P)$ for all $i\in[n]$ and $\brac{\cC_i^t}_{i=1}^n \in \mathbb{P}(\theta)$. Then
    \begin{align*}
        \ExpSub{t}{\norm{w^{t+1}_i - z^t_i}^2} &\leq \norm{(x^{t+1} - x^t) - p_P (w_i^t - x^t) +  (w_i^t - z^t_i)}^2 \\
        &\quad+ p_P \norm{w_i^t - x^t}^2 + \omega_P \norm{x^{t+1} - x^t}^2
    \end{align*}
    for all $i \in [n],$
    and
    \begin{align*}
        \ExpSub{t}{\norm{w^{t+1} - z^t}^2} &\leq \norm{(x^{t+1} - x^t) - p_P (w^t - x^t) +  (w^t - z^t)}^2 \\
        &\quad+ p_P \norm{w^t - x^t}^2 + \theta \norm{x^{t+1} - x^t}^2.
    \end{align*}
\end{lemma}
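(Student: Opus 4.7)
The plan is to decompose the randomness at step $t+1$ into the independent contributions of the Bernoulli coin $c_P^t$ and the compressor $\cC_i^t$, and then apply a standard convex-combination identity to reassemble the two branches into the target form.

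Conditioning on $c_P^t$ by the law of total expectation gives
\begin{align*}
\ExpSub{t}{\|w^{t+1}_i - z^t_i\|^2}
&= p_P\,\|x^{t+1}-z^t_i\|^2 + (1-p_P)\,\ExpSub{t}{\|w_i^t + \cC_i^t(x^{t+1}-x^t) - z^t_i\|^2}.
\end{align*}
For the second summand I would invoke the variance decomposition $\ExpSub{t}{\|X\|^2}=\|\ExpSub{t}{X}\|^2+\ExpSub{t}{\|X-\ExpSub{t}{X}\|^2}$ together with unbiasedness and the $\omega_P$-variance bound of Definition~\ref{def:unbiased_compression} to obtain
\begin{align*}
\ExpSub{t}{\|w_i^t+\cC_i^t(x^{t+1}-x^t)-z_i^t\|^2}
\leq \|w_i^t+(x^{t+1}-x^t)-z_i^t\|^2 + \omega_P\|x^{t+1}-x^t\|^2.
\end{align*}

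Next I would use the elementary identity $p\|u\|^2+(1-p)\|v\|^2=\|pu+(1-p)v\|^2+p(1-p)\|u-v\|^2$ with $u=x^{t+1}-z^t_i$ and $v=w_i^t+(x^{t+1}-x^t)-z^t_i$. A direct calculation gives $u-v=-(w_i^t-x^t)$ and
\[
p_P u + (1-p_P) v = (x^{t+1}-x^t) - p_P(w_i^t-x^t) + (w_i^t-z^t_i),
\]
which exactly matches the centered quadratic in the statement. Collecting the pieces and loosening $p_P(1-p_P)\leq p_P$ and $(1-p_P)\omega_P\leq \omega_P$ yields the first inequality.

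For the second inequality I would average the update across $i$, observing that in the event $c_P^t=0$ we have $w^{t+1}-z^t = w^t + \tfrac{1}{n}\sum_i \cC_i^t(x^{t+1}-x^t) - z^t$. Repeating the variance decomposition and now invoking the correlated-compressor bound of Definition~\ref{def:corr_compr} replaces $\omega_P$ by $\theta$; the convex-combination identity, this time with $u=x^{t+1}-z^t$ and $v=w^t+(x^{t+1}-x^t)-z^t$, produces the announced form. No genuine obstacle is expected here: the only thing to be careful about is to keep the compressor expectation and the coin expectation properly separated (so that the $\omega_P\|x^{t+1}-x^t\|^2$ and $\theta\|x^{t+1}-x^t\|^2$ terms are picked up from the right source of randomness), and to recognise the right $u,v$ that make the convex-combination identity collapse onto the stated expression.
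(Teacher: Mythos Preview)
Your proposal is correct and follows essentially the same route as the paper. The only cosmetic difference is the order of operations: the paper first applies the variance decomposition to the full randomness (computing $\ExpSub{t}{w_i^{t+1}}=x^{t+1}+(1-p_P)(w_i^t-x^t)$ and then splitting the variance by conditioning on the coin), whereas you first condition on the coin and then recombine the two branches via the convex-combination identity $p\|u\|^2+(1-p)\|v\|^2=\|pu+(1-p)v\|^2+p(1-p)\|u-v\|^2$, which is precisely the variance decomposition for a two-point random variable. Both arrive at the same intermediate expression with $p_P(1-p_P)\|w_i^t-x^t\|^2$ and $(1-p_P)\omega_P\|x^{t+1}-x^t\|^2$ before the final loosening.
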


\begin{proof}
    Using the definition of $w_i^{t+1}$, we have
    \begin{align*}
        \ExpSub{t}{w^{t+1}_i} = x^{t+1} + (1 - p_P) (w_i^t - x^t)
    \end{align*}
    and hence
    \begin{eqnarray*}
        \ExpSub{t}{\norm{w^{t+1}_i - z^t_i}^2}
        &\overset{\eqref{eq:vardecomp}}{=}& \norm{x^{t+1} + (1 - p_P) (w_i^t - x^t) - z^t_i}^2 \\
        &&+ \ExpSub{t}{\norm{w^{t+1}_i - (x^{t+1} + (1 - p_P) (w_i^t - x^t))}^2}\\
        &=& \norm{(x^{t+1} - x^t) - p_P (w_i^t - x^t) +  (w_i^t - z^t_i)}^2 \\
        &&+ \ExpSub{t}{\norm{w^{t+1}_i - (x^{t+1} + (1 - p_P) (w_i^t - x^t))}^2}.
    \end{eqnarray*}
    Using the definition of $w_i^{t+1}$ again, we get
    \begin{eqnarray*}
        &&\hspace{-1cm}\ExpSub{t}{\norm{w^{t+1}_i - z^t_i}^2} \\
        &=& \norm{(x^{t+1} - x^t) - p_P (w_i^t - x^t) +  (w_i^t - z^t_i)}^2 \\
        &&+ p_P \norm{x^{t+1} - (x^{t+1} + (1 - p_P) (w_i^t - x^t))}^2 \\
        &&+ (1 - p_P)\ExpSub{t}{\norm{w_i^t + \cC_i^t(x^{t+1} - x^t) - (x^{t+1} + (1 - p_P) (w_i^t - x^t))}^2} \\
        &=& \norm{(x^{t+1} - x^t) - p_P (w_i^t - x^t) +  (w_i^t - z^t_i)}^2
        + p_P (1 - p_P)^2 \norm{w_i^t - x^t}^2 \\
        &&+ (1 - p_P)\ExpSub{t}{\norm{\cC_i^t(x^{t+1} - x^t) - (x^{t+1} - x^t) + p_P (w_i^t - x^t)}^2} \\
        &\overset{\eqref{eq:vardecomp}}{=}& \norm{(x^{t+1} - x^t) - p_P (w_i^t - x^t) +  (w_i^t - z^t_i)}^2
        + p_P (1 - p_P)^2 \norm{w_i^t - x^t}^2 \\
        &&+ (1 - p_P)p_P^2 \norm{w_i^t - x^t}^2 + (1 - p_P)\ExpSub{t}{\norm{\cC_i^t(x^{t+1} - x^t) - (x^{t+1} - x^t)}^2} \\
        &\overset{\textnormal{Def.}\ref{def:unbiased_compression}}{\leq}& \norm{(x^{t+1} - x^t) - p_P (w_i^t - x^t) +  (w_i^t - z^t_i)}^2 + p_P \norm{w_i^t - x^t}^2 + \omega_P \norm{x^{t+1} - x^t}^2.
    \end{eqnarray*}
    Using the same reasoning, we now prove the second inequality:
    \begin{eqnarray*}
        &&\hspace{-1cm}\ExpSub{t}{\norm{w^{t+1} - z^t}^2} \\
        &\overset{\eqref{eq:vardecomp}}{=}& \norm{x^{t+1} + (1 - p_P) (w^t - x^t) - z^t}^2 + \ExpSub{t}{\norm{w^{t+1} - (x^{t+1} + (1 - p_P) (w^t - x^t))}^2}\\
        &=& \norm{(x^{t+1} - x^t) - p_P (w^t - x^t) +  (w^t - z^t)}^2 \\
        &&+ \ExpSub{t}{\norm{w^{t+1} - (x^{t+1} + (1 - p_P) (w^t - x^t))}^2} \\
        &=& \norm{(x^{t+1} - x^t) - p_P (w^t - x^t) +  (w^t - z^t)}^2
        + p_P(1 - p_P)^2 \norm{w^t - x^t}^2 \\
        &&+ (1 - p_P)\ExpSub{t}{\norm{w^t + \frac{1}{n}\sum_{i=1}^n \cC_i^t(x^{t+1} - x^t) - (x^{t+1} + (1 - p_P) (w^t - x^t))}^2} \\
        &=& \norm{(x^{t+1} - x^t) - p_P (w^t - x^t) +  (w^t - z^t)}^2
        + p_P(1 - p_P)^2 \norm{w^t - x^t}^2 \\
        &&+ (1 - p_P)\ExpSub{t}{\norm{\frac{1}{n}\sum_{i=1}^n \cC_i^t(x^{t+1} - x^t) - (x^{t+1} - x^t) + p_P (w^t - x^t)}^2} \\
        &\overset{\eqref{eq:vardecomp}}{=}& \norm{(x^{t+1} - x^t) - p_P (w^t - x^t) +  (w^t - z^t)}^2
        + p_P(1 - p_P)^2 \norm{w^t - x^t}^2 \\
        &&+ (1 - p_P)\ExpSub{t}{\norm{\frac{1}{n}\sum_{i=1}^n \cC_i^t(x^{t+1} - x^t) - (x^{t+1} - x^t)}^2}
        + p_P^2 (1 - p_P)\norm{w^t - x^t}^2 \\
        &\overset{\textnormal{Def.}\ref{def:corr_compr}}{\leq}& \norm{(x^{t+1} - x^t) - p_P (w^t - x^t) +  (w^t - z^t)}^2 + p_P \norm{w^t - x^t}^2 + \theta \norm{x^{t+1} - x^t}^2.
    \end{eqnarray*}
\end{proof}

\begin{lemma}
    \label{lemma:g_t}
    Let Assumption \ref{ass:local_lipschitz_constant} hold. Furthermore, suppose that the compressors $\cQ_i^t \in \mathbb{U}(\omega_D)$ satisfy Assumption \ref{ass:independent} and that $\cC_i^t \in \mathbb{U}(\omega_P)$ for $i\in[n]$. Then
    \begin{align*}
        &\Exp{\norm{g^{t+1} - \frac{1}{n} \sum_{i=1}^{n} \nabla f_i(z_i^{t+1})}^2}\\
        &\leq \frac{\omega_D L_{\max}^2}{n} \left(4 p_P \beta^2 \Exp{\frac{1}{n}\sum_{i=1}^n \norm{w^{t}_i - x^t}^2} + 3 \beta^2 \Exp{\frac{1}{n}\sum_{i=1}^n \norm{z^{t}_i - w_i^t}^2} + 3 (\omega_P + 1) \beta^2 \Exp{\norm{x^{t+1} - x^t}^2}\right) \\
        &\quad+ (1-p_D) \Exp{\norm{g^t - \frac{1}{n} \sum_{i=1}^n \nabla f_i(z_i^{t})}^2}.
    \end{align*}
\end{lemma}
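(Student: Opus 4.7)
The plan is to unfold the randomized recursion defining $g^{t+1}$ in \eqref{eq:mthree}, then apply variance decomposition successively over the two sources of randomness at step $t$: the Bernoulli coin $c_D^t$, and the independent compressors $\cQ_i^t$. First, write
\begin{align*}
g^{t+1} - \tfrac{1}{n}\sum_i \nabla f_i(z_i^{t+1}) =
\begin{cases}
0 & \textnormal{if } c_D^t = 1,\\
\bigl(g^t - \tfrac{1}{n}\sum_i \nabla f_i(z_i^t)\bigr) + \tfrac{1}{n}\sum_i \bigl[\cQ_i^t(v_i^t) - v_i^t\bigr] & \textnormal{if } c_D^t = 0,
\end{cases}
\end{align*}
with $v_i^t \eqdef \nabla f_i(z_i^{t+1}) - \nabla f_i(z_i^t)$. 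Conditioning on $c_D^t$ gives a prefactor $(1-p_D)$; then, using $\ExpSub{t}{\cQ_i^t(v_i^t)} = v_i^t$ and applying \eqref{eq:vardecomp} splits the norm into the ``bias'' term $\norm{g^t - \tfrac{1}{n}\sum_i \nabla f_i(z_i^t)}^2$ and a pure compression variance term; Assumption~\ref{ass:independent} makes the cross terms among different $\cQ_i^t$ vanish, producing the $\tfrac{\omega_D}{n^2}\sum_i \norm{v_i^t}^2 = \tfrac{\omega_D}{n}\cdot \tfrac{1}{n}\sum_i \norm{v_i^t}^2$ factor.

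Next I would bound $\norm{v_i^t}^2$ using $L_{\max}$-smoothness (Assumption~\ref{ass:local_lipschitz_constant}) to get $L_{\max}^2 \norm{z_i^{t+1} - z_i^t}^2$, then exploit the momentum identity $z_i^{t+1} - z_i^t = \beta(w_i^{t+1} - z_i^t)$ to reduce this to $\beta^2\norm{w_i^{t+1} - z_i^t}^2$. Averaging over $i$ and invoking the first inequality of Lemma~\ref{lemma:pDguBXTkQyk} yields
\begin{align*}
\ExpSub{t}{\tfrac{1}{n}\sum_i \norm{w_i^{t+1} - z_i^t}^2} \leq \tfrac{1}{n}\sum_i \norm{(x^{t+1}-x^t) - p_P(w_i^t - x^t) + (w_i^t - z_i^t)}^2 + p_P\cdot\tfrac{1}{n}\sum_i\norm{w_i^t - x^t}^2 + \omega_P \norm{x^{t+1}-x^t}^2.
\end{align*}

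The only delicate bookkeeping is extracting the stated coefficients $4 p_P \beta^2$, $3\beta^2$, and $3(\omega_P+1)\beta^2$. Applying the three-term Jensen/Young bound $\norm{a+b+c}^2 \leq 3(\norm{a}^2+\norm{b}^2+\norm{c}^2)$ to the squared sum above produces $3\norm{x^{t+1}-x^t}^2 + 3 p_P^2\norm{w_i^t-x^t}^2 + 3\norm{w_i^t - z_i^t}^2$; combining with the residual $p_P \norm{w_i^t-x^t}^2$ and $\omega_P \norm{x^{t+1}-x^t}^2$ terms, and then using $3p_P^2 + p_P \leq 4 p_P$ (valid since $p_P \leq 1$) together with $3 + \omega_P \leq 3(\omega_P+1)$, gives exactly the three desired coefficients. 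Multiplying through by the accumulated $\tfrac{(1-p_D)\omega_D L_{\max}^2 \beta^2}{n}$, taking a full expectation via the tower property, and absorbing $(1-p_D)\leq 1$ into the variance term produces the stated inequality.

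The main obstacle is the coefficient bookkeeping in the last step: the three terms on the right-hand side must be produced with sharp enough constants so that the subsequent Lyapunov construction of Theorem~\ref{theorem:mthree} closes. Expanding $\norm{(x^{t+1}-x^t) - p_P(w_i^t - x^t) + (w_i^t - z_i^t)}^2$ more tightly (e.g.\ directly cross-multiplying) would produce smaller constants, but the symmetric $3$-term Young bound together with $p_P \leq 1$ is exactly what yields the clean $4 p_P, 3, 3(\omega_P+1)$ pattern recorded in the lemma.
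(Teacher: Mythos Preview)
Your proposal is correct and follows essentially the same route as the paper's proof: unfold $g^{t+1}$, condition on $c_D^t$, apply variance decomposition \eqref{eq:vardecomp} and independence of the $\cQ_i^t$ to extract the $\tfrac{\omega_D}{n}$ factor, use $L_{\max}$-smoothness and the momentum identity $z_i^{t+1}-z_i^t=\beta(w_i^{t+1}-z_i^t)$, invoke Lemma~\ref{lemma:pDguBXTkQyk}, and finish with the three-term bound \eqref{eq:young_2} together with $3p_P^2+p_P\le 4p_P$. The paper actually lands on the slightly sharper coefficient $(\omega_P+3)$ before relaxing to $3(\omega_P+1)$, exactly via your inequality $3+\omega_P\le 3(\omega_P+1)$.
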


\begin{proof}
    First, from the definition of $g_i^{t+1},$ we get
    \begin{eqnarray*}
        &&\hspace{-1cm}\Exp{\norm{g^{t+1} - \frac{1}{n} \sum_{i=1}^{n} \nabla f_i(z_i^{t+1})}^2} \\
        &=& (1-p_D) \Exp{\norm{\frac{1}{n} \sum_{i=1}^n \parens{g_i^{t} + \cQ_i^t(\nabla f_i(z_i^{t+1}) - \nabla f_i(z_i^{t})) - \nabla f_i(z_i^{t+1})}}^2}
    \end{eqnarray*}
    and hence
    \begin{eqnarray}
        &&\hspace{-1cm}\Exp{\norm{g^{t+1} - \frac{1}{n} \sum_{i=1}^{n} \nabla f_i(z_i^{t+1})}^2} \nonumber \\
        &\overset{\eqref{eq:vardecomp}}{=}& (1-p_D) \Exp{\norm{\frac{1}{n} \sum_{i=1}^n \cQ_i^t\parens{\nabla f_i(z_i^{t+1}) - \nabla f_i(z_i^{t})} - \frac{1}{n} \sum_{i=1}^n \parens{\nabla f_i(z_i^{t+1}) - \nabla f_i(z_i^{t})}}^2} \nonumber\\
        &&+ (1-p_D) \Exp{\norm{g^t - \frac{1}{n} \sum_{i=1}^n \nabla f_i(z_i^{t})}^2} \nonumber\\
        &\overset{\textnormal{Def.}\ref{def:unbiased_compression}, \eqref{ass:independent}}{\leq}& \frac{\omega_D}{n} \Exp{\frac{1}{n}\sum_{i=1}^n \norm{\nabla f_i(z_i^{t+1}) - \nabla f_i(z_i^{t})}^2} + (1-p_D) \Exp{\norm{g^t - \frac{1}{n} \sum_{i=1}^n \nabla f_i(z_i^{t})}^2} \nonumber\\
        &\overset{\textnormal{Ass.}\ref{ass:local_lipschitz_constant}}{\leq}& \frac{\omega_D L_{\max}^2}{n} \Exp{\frac{1}{n}\sum_{i=1}^n \norm{z_i^{t+1} - z_i^{t}}^2} + (1-p_D) \Exp{\norm{g^t - \frac{1}{n} \sum_{i=1}^n \nabla f_i(z_i^{t})}^2}. \label{eq:XfqBNaTJxvMlGtg}
    \end{eqnarray}
    Let us consider the first term separately:
    \begin{align*}
        \Exp{\frac{1}{n}\sum_{i=1}^n \norm{z_i^{t+1} - z_i^{t}}^2} &=\Exp{\frac{1}{n}\sum_{i=1}^n \norm{\beta w^{t+1}_i + (1 - \beta ) z^{t}_i - z_i^{t}}^2} \\
        &=\beta^2 \Exp{\frac{1}{n}\sum_{i=1}^n \norm{w^{t+1}_i - z^{t}_i}^2}.
    \end{align*}
    Using the result from Lemma~\ref{lemma:pDguBXTkQyk}, we have
    \begin{eqnarray*}
        &&\hspace{-1cm}\Exp{\frac{1}{n}\sum_{i=1}^n \norm{z_i^{t+1} - z_i^{t}}^2} \\
        &\leq& \beta^2 \Exp{\frac{1}{n}\sum_{i=1}^n \norm{(x^{t+1} - x^t) - p_P (w_i^t - x^t) +  (w_i^t - z^t_i)}^2} \\
        &&+ \beta^2 \Exp{p_P \frac{1}{n}\sum_{i=1}^n \norm{w_i^t - x^t}^2 + \omega_P \norm{x^{t+1} - x^t}^2} \\
        &\overset{\eqref{eq:young_2}}{\leq}& \beta^2 \Exp{\frac{3}{n}\sum_{i=1}^n \norm{w_i^t - z^t_i}^2 + 4 p_P \frac{1}{n}\sum_{i=1}^n \norm{w_i^t - x^t}^2 + (\omega_P + 3) \norm{x^{t+1} - x^t}^2},
    \end{eqnarray*}
    where in the last line we use the fact that $p_P \leq 1$. It remains to substitute the above inequality in~\eqref{eq:XfqBNaTJxvMlGtg}.
\end{proof}

\begin{lemma}
    \label{lemma:z_t}
    Let $\cC_i^t \in \mathbb{U}(\omega_P)$ for all $i\in[n]$ and $\brac{\cC_i^t}_{i=1}^n \in \mathbb{P}(\theta)$. Then
    \begin{align*}
        \Exp{\norm{z^{t+1}_i - w^{t+1}_i}^2} &\leq \left(1 - \frac{\beta}{2}\right) \Exp{\norm{z^t_i - w_i^t}^2} + 4\left(\frac{1}{\beta} + \omega_P\right)\Exp{\norm{x^{t+1} - x^t}^2} \\
        &\quad+ 4 p_P \left(1 + \frac{p_P}{\beta}\right)\Exp{\norm{w_i^t - x^t}^2}
    \end{align*}
    for all $i \in [n],$
    and 
    \begin{align*}
        \Exp{\norm{z^{t+1} - w^{t+1}}^2} &\leq \left(1 - \frac{\beta}{2}\right) \Exp{\norm{z^t - w^t}^2} + 4\left(\frac{1}{\beta} + \theta\right)\Exp{\norm{x^{t+1} - x^t}^2} \\
        &\quad+ 4 p_P\left(1 + \frac{p_P}{\beta}\right)\Exp{\norm{w^t - x^t}^2}.
    \end{align*}
\end{lemma}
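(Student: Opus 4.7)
The plan is to exploit the momentum update in a very direct way. From $z_i^{t+1} = \beta w_i^{t+1} + (1-\beta) z_i^{t}$ we get the key identity
\begin{align*}
z_i^{t+1} - w_i^{t+1} = (1-\beta)\bigl(z_i^{t} - w_i^{t+1}\bigr),
\end{align*}
so $\norm{z_i^{t+1} - w_i^{t+1}}^2 = (1-\beta)^2 \norm{w_i^{t+1} - z_i^{t}}^2$. The right-hand side is exactly the quantity already controlled by Lemma~\ref{lemma:pDguBXTkQyk}. Substituting that bound, I reduce the problem to controlling
\begin{align*}
(1-\beta)^2 \,\bigl\|\underbrace{(x^{t+1}-x^t) - p_P(w_i^t - x^t)}_{=: \,b} \;+\; \underbrace{(w_i^t - z_i^t)}_{=:\,a}\bigr\|^2,
\end{align*}
plus the two already-separated terms $(1-\beta)^2 p_P \norm{w_i^t - x^t}^2$ and $(1-\beta)^2 \omega_P \norm{x^{t+1}-x^t}^2$, which are absorbed harmlessly into the final inequality.

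Next, I would apply Young's inequality $\|a+b\|^2 \leq (1+c)\|a\|^2 + (1+1/c)\|b\|^2$ with $c = \beta$. The point of this choice is that $(1-\beta)^2(1+\beta) = (1-\beta)(1-\beta^2) \leq 1-\beta \leq 1 - \beta/2$, which produces precisely the contraction factor appearing in the statement, while $(1-\beta)^2 (1+1/\beta) \leq 2/\beta$ gives a manageable constant on $\|b\|^2$. A second application of Young's inequality splits $\|b\|^2 \leq 2\|x^{t+1}-x^t\|^2 + 2 p_P^2 \|w_i^t - x^t\|^2$, so the $2/\beta$ factor becomes $4/\beta$ on $\|x^{t+1}-x^t\|^2$ and $4p_P^2/\beta$ on $\|w_i^t - x^t\|^2$. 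Combining with the leftover $\omega_P\|x^{t+1}-x^t\|^2$ and $p_P\|w_i^t - x^t\|^2$ terms (and bounding $(1-\beta)^2 \leq 1$) yields coefficients $4/\beta + \omega_P$ and $p_P + 4p_P^2/\beta$, both dominated by the coefficients claimed in the lemma. Taking total expectation gives the first inequality.

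For the averaged inequality, the argument is identical with $w^{t+1}-z^{t+1} = (1-\beta)(z^t - w^{t+1})$, using part (b) of Lemma~\ref{lemma:pDguBXTkQyk} which replaces $\omega_P$ by $\theta$ (this is where the $\mathbb{P}(\theta)$ assumption on the collection $\{\cC_i^t\}$ enters). No new step is needed.

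The only subtlety is the choice of the Young's parameter $c=\beta$: one must verify the algebraic inequality $(1-\beta)^2(1+\beta) \leq 1-\beta/2$ for $\beta \in (0,1]$, which is elementary since $(1-\beta)^2(1+\beta) = 1-\beta - \beta^2 + \beta^3 \leq 1-\beta \leq 1 - \beta/2$. Apart from that, the proof is a sequence of routine applications of Lemma~\ref{lemma:pDguBXTkQyk} and Young's inequality, with no deeper obstacle.
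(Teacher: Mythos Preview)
Your proposal is correct and follows essentially the same approach as the paper: reduce to $(1-\beta)^2\|w_i^{t+1}-z_i^t\|^2$, invoke Lemma~\ref{lemma:pDguBXTkQyk}, apply Young's inequality to split off the $(w_i^t-z_i^t)$ term with contraction factor $1-\beta/2$, then Young again on the remaining two-term sum. The only cosmetic difference is that the paper uses $s=\beta/2$ in Young together with the prepackaged inequalities $(1-\beta)(1+\beta/2)\le 1-\beta/2$ and $(1-\beta)(1+2/\beta)\le 2/\beta$, whereas you use $c=\beta$ and verify $(1-\beta)^2(1+\beta)\le 1-\beta/2$ directly; both choices yield the same coefficients.
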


\begin{proof}
    From the definition of $z^{t+1}_i,$ we get
    \begin{align*}
        \Exp{\norm{z^{t+1}_i - w^{t+1}_i}^2}
        =\Exp{\norm{\beta w^{t+1}_i + (1 - \beta ) z^{t}_i - w^{t+1}_i}^2}
        =(1 - \beta)^2 \Exp{\norm{w^{t+1}_i - z^{t}_i}^2}.
    \end{align*}
    Then, Lemma~\ref{lemma:pDguBXTkQyk} gives
    \begin{eqnarray*}
        &&\hspace{-1cm}\Exp{\norm{z^{t+1}_i - w^{t+1}_i}^2}\\
        &\leq& (1 - \beta)^2 \Exp{\norm{(x^{t+1} - x^t) - p_P (w_i^t - x^t) +  (w_i^t - z^t_i)}^2} \\
        &&+ (1 - \beta)^2 \Exp{p_P \norm{w_i^t - x^t}^2 + \omega_P \norm{x^{t+1} - x^t}^2} \\
        &\overset{{\eqref{eq:young}, \eqref{eq:ineq1}, \eqref{eq:ineq2}}}{\leq}& \left(1 - \frac{\beta}{2}\right) \Exp{\norm{w_i^t - z^t_i}^2} + \frac{2}{\beta}\Exp{\norm{(x^{t+1} - x^t) - p_P (w_i^t - x^t)}^2} \\
        &&+ \Exp{p_P \norm{w_i^t - x^t}^2 + \omega_P \norm{x^{t+1} - x^t}^2} \\
        &\overset{\eqref{eq:young_2}}{\leq}& \left(1 - \frac{\beta}{2}\right) \Exp{\norm{w_i^t - z^t_i}^2} + \frac{4}{\beta}\Exp{\norm{x^{t+1} - x^t}^2} + \frac{4 p_P^2}{\beta}\Exp{\norm{w_i^t - x^t}^2} \\
        &&+ \Exp{p_P \norm{w_i^t - x^t}^2 + \omega_P \norm{x^{t+1} - x^t}^2} \\
        &\leq& \left(1 - \frac{\beta}{2}\right) \Exp{\norm{w_i^t - z^t_i}^2} + 4\left(\frac{1}{\beta} + \omega_P\right)\Exp{\norm{x^{t+1} - x^t}^2} \\
        &&+ 4 p_P \left(1 + \frac{p_P}{\beta}\right)\Exp{\norm{w_i^t - x^t}^2}.
    \end{eqnarray*}
    The second inequality is proved almost in the same way. First,
    \begin{align*}
        \Exp{\norm{z^{t+1} - w^{t+1}}^2} = (1 - \beta)^2 \Exp{\norm{w^{t+1} - z^{t}}^2},
    \end{align*} 
    and using Lemma~\ref{lemma:pDguBXTkQyk}, we obtain
    \begin{eqnarray*}
        &&\hspace{-1cm}\Exp{\norm{z^{t+1} - w^{t+1}}^2} \\
        &\leq& (1 - \beta)^2 \Exp{\norm{(x^{t+1} - x^t) - p_P (w^t - x^t) +  (w^t - z^t)}^2 + p_P \norm{w^t - x^t}^2 + \theta \norm{x^{t+1} - x^t}^2} \\
        &\overset{\eqref{eq:young}}{\leq}& \left(1 - \frac{\beta}{2}\right) \Exp{\norm{w^t - z^t}^2} + \frac{2}{\beta}\Exp{\norm{(x^{t+1} - x^t) - p_P (w^t - x^t)}^2} \\
        &&+ \Exp{p_P \norm{w^t - x^t}^2 + \theta \norm{x^{t+1} - x^t}^2} \\
        &\overset{\eqref{eq:young_2}}{\leq}& \left(1 - \frac{\beta}{2}\right) \Exp{\norm{w^t - z^t}^2} + 4\left(\frac{1}{\beta} + \theta\right)\Exp{\norm{x^{t+1} - x^t}^2} \\
        &&+ \frac{4 p_P^2}{\beta}\Exp{\norm{w^t - x^t}^2} + \Exp{p_P \norm{w^t - x^t}^2} \\
        &\leq& \left(1 - \frac{\beta}{2}\right) \Exp{\norm{w^t - z^t}^2} + 4\left(\frac{1}{\beta} + \theta\right)\Exp{\norm{x^{t+1} - x^t}^2} \\
        &&+ 4 p_P\left(1 + \frac{p_P}{\beta}\right)\Exp{\norm{w^t - x^t}^2}.
    \end{eqnarray*} 
\end{proof}

\THEOREMMTHREE*
\begin{proof}
    Lemma \ref{lemma:page} gives
    \begin{eqnarray*}
        &&\hspace{-1cm}\Exp{\delta^{t+1}}\\
        &\leq& \Exp{\delta^{t}} - \frac{\gamma}{2} \Exp{\norm{\nabla f(x^t)}^2} - \left( \frac{1}{2\gamma} - \frac{L}{2} \right) \Exp{\norm{x^{t+1} - x^t}^2} + \frac{\gamma}{2} \Exp{\norm{g^t - \nabla f(x^t)}^2} \\
        &\overset{\eqref{eq:young}}{\leq}& \Exp{\delta^{t}} - \frac{\gamma}{2} \Exp{\norm{\nabla f(x^t)}^2} - \left( \frac{1}{2\gamma} - \frac{L}{2} \right) \Exp{\norm{x^{t+1} - x^t}^2} \\
        &&+ \gamma \Exp{\norm{g^t - \frac{1}{n} \sum_{i=1}^{n} \nabla f_i(z_i^t)}^2} + \gamma \Exp{\norm{\frac{1}{n} \sum_{i=1}^{n} \parens{\nabla f_i(z_i^t) - \nabla f_i(x^t)}}^2} \\
        &\overset{\eqref{as:AB_assumption}}{\leq}& \Exp{\delta^{t}} - \frac{\gamma}{2} \Exp{\norm{\nabla f(x^t)}^2} - \left( \frac{1}{2\gamma} - \frac{L}{2} \right) \Exp{\norm{x^{t+1} - x^t}^2} \\
        &&+ \gamma \Exp{\norm{g^t - \frac{1}{n} \sum_{i=1}^{n} \nabla f_i(z_i^t)}^2}
        + \gamma L_A^2 \frac{1}{n} \sum_{i=1}^{n} \Exp{\norm{z_i^t - x^t}^2}
        + \gamma L_B^2 \Exp{\norm{z^t - x^t}^2} \\
        &\overset{\eqref{eq:young}}{\leq}& \Exp{\delta^{t}} - \frac{\gamma}{2} \Exp{\norm{\nabla f(x^t)}^2} - \left( \frac{1}{2\gamma} - \frac{L}{2} \right) \Exp{\norm{x^{t+1} - x^t}^2} \\
        &&+ \gamma \Exp{\norm{g^t - \frac{1}{n} \sum_{i=1}^{n} \nabla f_i(z_i^t)}^2}
        + 2\gamma L_A^2 \frac{1}{n} \sum_{i=1}^{n} \left(\Exp{\norm{z_i^t - w_i^t}^2} + \Exp{\norm{w_i^t - x^t}^2}\right) \\
        &&+ 2\gamma L_B^2 \left(\Exp{\norm{z^t - w^t}^2} + \Exp{\norm{w^t - x^t}^2}\right).
    \end{eqnarray*}
    Let $\kappa, \eta, \nu, \rho, \mu \geq 0$ be some non-negative numbers that we define later. Using Lemmas~\ref{lemma:marinap_sum_w_x}, \ref{lemma:w_sum_x}, \ref{lemma:g_t} and \ref{lemma:z_t}, we get
    \begin{align*}
        &\Exp{\delta^{t+1}}
        + \kappa \Exp{\norm{g^{t+1} - \frac{1}{n} \sum_{i=1}^{n} \nabla f_i(z_i^{t+1})}^2}
        + \eta \Exp{\norm{z^{t+1} - w^{t+1}}^2}\\
        &\quad + \nu \Exp{\frac{1}{n} \sum_{i=1}^n \norm{z^{t+1}_i - w^{t+1}_i}^2}
        + \rho \Exp{\norm{w^{t+1} - x^{t+1}}^2} + \mu \Exp{\frac{1}{n} \sum_{i=1}^n \norm{w_i^{t+1} - x^{t+1}}^2} \\
        &\leq \Exp{\delta^{t}} - \frac{\gamma}{2} \Exp{\norm{\nabla f(x^t)}^2} - \left( \frac{1}{2\gamma} - \frac{L}{2} \right) \Exp{\norm{x^{t+1} - x^t}^2}
        + \gamma \Exp{\norm{g^t - \frac{1}{n} \sum_{i=1}^{n} \nabla f_i(z_i^t)}^2} \\
        &\quad+ 2\gamma L_A^2 \frac{1}{n} \sum_{i=1}^{n} \left(\Exp{\norm{z_i^t - w_i^t}^2} + \Exp{\norm{w_i^t - x^t}^2}\right) \\
        &\quad+ 2\gamma L_B^2 \left(\Exp{\norm{z^t - w^t}^2} + \Exp{\norm{w^t - x^t}^2}\right) \\
        &\quad + \kappa \Bigg(\frac{\omega_D L_{\max}^2}{n} \Bigg(4 p_P \beta^2 \Exp{\frac{1}{n}\sum_{i=1}^n \norm{w^{t}_i - x^t}^2} + 3 \beta^2 \Exp{\frac{1}{n}\sum_{i=1}^n \norm{z^{t}_i - w_i^t}^2} \\
        &\qquad\qquad\qquad\qquad+ 3 (\omega_P + 1) \beta^2 \Exp{\norm{x^{t+1} - x^t}^2}\Bigg)\Bigg) \\
        &\quad + \kappa (1-p_D) \Exp{\norm{g^t - \frac{1}{n} \sum_{i=1}^n \nabla f_i(z_i^{t})}^2} \\
        &\quad + \eta \Bigg(\left(1 - \frac{\beta}{2}\right) \Exp{\norm{z^t - w^t}^2} + 4\left(\frac{1}{\beta} + \theta\right)\Exp{\norm{x^{t+1} - x^t}^2} \\
        &\qquad\quad+ 4 p_P\left(1 + \frac{p_P}{\beta}\right)\Exp{\norm{w^t - x^t}^2}\Bigg) \\
        &\quad + \nu \Bigg(\left(1 - \frac{\beta}{2}\right) \Exp{\frac{1}{n} \sum_{i=1}^n\norm{z^t_i - w_i^t}^2} + 4\left(\frac{1}{\beta} + \omega_P\right)\Exp{\norm{x^{t+1} - x^t}^2} \\
        &\qquad\quad+ 4 p_P \left(1 + \frac{p_P}{\beta}\right)\Exp{\frac{1}{n} \sum_{i=1}^n\norm{w_i^t - x^t}^2}\Bigg) \\
        &\quad + \rho (1-p_P) \left(\Exp{\norm{w^t - x^t}^2} + \theta \Exp{\norm{x^{t+1} - x^t}^2}\right) \\
        &\quad + \mu (1-p_P) \left(\Exp{\frac{1}{n} \sum_{i=1}^n \norm{w_i^{t} - x^{t}}^2} + \omega_P \Exp{\norm{x^{t+1} - x^{t}}^2}\right).
    \end{align*}
    Taking $\kappa = \frac{\gamma}{p_D}$ and $\eta = \frac{4\gamma L_B^2}{\beta}$, we get $\gamma + \kappa (1-p_D) = \kappa$ and $2\gamma L_B^2 + \eta (1 - \nicefrac{\beta}{2}) = \eta$, which gives
    \begin{align*}
        &\Exp{\delta^{t+1}}
        + \kappa \Exp{\norm{g^{t+1} - \frac{1}{n} \sum_{i=1}^{n} \nabla f_i(z_i^{t+1})}^2}
        + \eta \Exp{\norm{z^{t+1} - w^{t+1}}^2} \\
        &\quad + \nu \Exp{\frac{1}{n} \sum_{i=1}^n \norm{z^{t+1}_i - w^{t+1}_i}^2}
        + \rho \Exp{\norm{w^{t+1} - x^{t+1}}^2}
        + \mu \Exp{\frac{1}{n} \sum_{i=1}^n \norm{w_i^{t+1} - x^{t+1}}^2} \\
        &\leq \Exp{\delta^{t}} - \frac{\gamma}{2} \Exp{\norm{\nabla f(x^t)}^2} - \left( \frac{1}{2\gamma} - \frac{L}{2} \right) \Exp{\norm{x^{t+1} - x^t}^2}
        + \kappa \Exp{\norm{g^{t} - \frac{1}{n} \sum_{i=1}^{n} \nabla f_i(z_i^{t})}^2} \\
        &\quad + \eta \Exp{\norm{z^{t} - w^{t}}^2}
        + 2\gamma L_A^2 \frac{1}{n} \sum_{i=1}^{n} \left(\Exp{\norm{z_i^t - w_i^t}^2} + \Exp{\norm{w_i^t - x^t}^2}\right) 
        + 2\gamma L_B^2 \Exp{\norm{w^t - x^t}^2} \\
        &\quad + \frac{\gamma}{p_D} \Bigg(\frac{\omega_D L_{\max}^2}{n} \Bigg(4 p_P \beta^2 \Exp{\frac{1}{n}\sum_{i=1}^n \norm{w^{t}_i - x^t}^2} + 3 \beta^2 \Exp{\frac{1}{n}\sum_{i=1}^n \norm{z^{t}_i - w_i^t}^2} \\
        &\qquad\qquad\qquad\qquad\quad+ 3 (\omega_P + 1) \beta^2 \Exp{\norm{x^{t+1} - x^t}^2}\Bigg)\Bigg) \\
        &\quad + \frac{4\gamma L_B^2}{\beta} \left(4\left(\frac{1}{\beta} + \theta\right)\Exp{\norm{x^{t+1} - x^t}^2} + 4 p_P\left(1 + \frac{p_P}{\beta}\right)\Exp{\norm{w^t - x^t}^2}\right) \\
        &\quad + \nu \Bigg(\left(1 - \frac{\beta}{2}\right) \Exp{\frac{1}{n} \sum_{i=1}^n\norm{z^t_i - w_i^t}^2} + 4\left(\frac{1}{\beta} + \omega_P\right)\Exp{\norm{x^{t+1} - x^t}^2} \\
        &\quad\quad\quad+ 4 p_P \left(1 + \frac{p_P}{\beta}\right)\Exp{\frac{1}{n} \sum_{i=1}^n\norm{w_i^t - x^t}^2}\Bigg) \\
        &\quad + \rho \left((1-p_P) \Exp{\norm{w^t - x^t}^2} + \theta \Exp{\norm{x^{t+1} - x^t}^2}\right) \\
        &\quad + \mu \left((1-p_P) \Exp{\frac{1}{n} \sum_{i=1}^n \norm{w_i^{t} - x^{t}}^2} + \omega_P \Exp{\norm{x^{t+1} - x^{t}}^2}\right).
    \end{align*}
    We rearrange the terms to obtain
    \begin{align*}
        &\Exp{\delta^{t+1}}
        + \kappa \Exp{\norm{g^{t+1} - \frac{1}{n} \sum_{i=1}^{n} \nabla f_i(z_i^{t+1})}^2}
        + \eta \Exp{\norm{z^{t+1} - w^{t+1}}^2} \\
        &\quad + \nu \Exp{\frac{1}{n} \sum_{i=1}^n \norm{z^{t+1}_i - w^{t+1}_i}^2}
        + \rho \Exp{\norm{w^{t+1} - x^{t+1}}^2} + \mu \Exp{\frac{1}{n} \sum_{i=1}^n \norm{w_i^{t+1} - x^{t+1}}^2} \\
        &\leq \Exp{\delta^{t}} - \frac{\gamma}{2} \Exp{\norm{\nabla f(x^t)}^2} - \left( \frac{1}{2\gamma} - \frac{L}{2} \right) \Exp{\norm{x^{t+1} - x^t}^2}
        + \kappa \Exp{\norm{g^{t} - \frac{1}{n} \sum_{i=1}^{n} \nabla f_i(z_i^{t})}^2} \\
        &\quad + \eta \Exp{\norm{z^{t} - w^{t}}^2}
        + \left(\nu \left(1 - \frac{\beta}{2}\right) + 2\gamma L_A^2 + \frac{3 \gamma \omega_D \beta^2 L_{\max}^2}{n p_D} \right) \Exp{\frac{1}{n} \sum_{i=1}^n\norm{z^t_i - w_i^t}^2} \\
        &\quad + \Bigg(\frac{3 \gamma \omega_D (\omega_P + 1) \beta^2 L_{\max}^2}{n p_D} + \rho \theta + \frac{16 \gamma L_B^2}{\beta} \left(\frac{1}{\beta} + \theta\right) \\
        &\quad\quad\quad+ 4 \nu \left(\frac{1}{\beta} + \omega_P\right) + \mu \omega_P\Bigg) \Exp{\norm{x^{t+1} - x^t}^2} \\
        &\quad + \left(\rho(1-p_P) + 2\gamma L_B^2 + \frac{16 \gamma L_B^2 p_P}{\beta} \left(1 + \frac{p_P}{\beta}\right)\right) \Exp{\norm{w^t - x^t}^2} \\
        &\quad + \left(\mu (1-p_P) + 2\gamma L_A^2 + 4 \nu p_P \left(1 + \frac{p_P}{\beta}\right) + \frac{4 \gamma \omega_D p_P \beta^2 L_{\max}^2}{n p_D}\right) \Exp{\frac{1}{n}\sum_{i=1}^n \norm{w^{t}_i - x^t}^2}.
    \end{align*}
    We now consider the coefficient of the term $\Exp{\norm{w^t - x^t}^2}$. Using the inequality $xy \leq \frac{x^2 + y^2}{2}$ for all $x,y \geq 0$, we get
    \begin{align*}
        \rho(1-p_P) + 2\gamma L_B^2 + \frac{16 \gamma L_B^2 p_P}{\beta} \left(1 + \frac{p_P}{\beta}\right)
        &\leq \rho(1-p_P) + 16\gamma L_B^2 \left(1 + \frac{p_P}{\beta} + \frac{p_P^2}{\beta^2}\right) \\
        &\leq \rho(1-p_P) + 32\gamma L_B^2 \left(1 + \frac{p_P^2}{\beta^2}\right) \\
        &= \rho
    \end{align*}
    for $\rho = 32\gamma L_B^2 \left(\frac{1}{p_P} + \frac{p_P}{\beta^2}\right).$ With this choice of $\rho,$ we obtain
    \begin{align*}
        &\Exp{\delta^{t+1}}
        + \kappa \Exp{\norm{g^{t+1} - \frac{1}{n} \sum_{i=1}^{n} \nabla f_i(z_i^{t+1})}^2}
        + \eta \Exp{\norm{z^{t+1} - w^{t+1}}^2} \\
        &\quad + \nu \Exp{\frac{1}{n} \sum_{i=1}^n \norm{z^{t+1}_i - w^{t+1}_i}^2}
        + \rho \Exp{\norm{w^{t+1} - x^{t+1}}^2} + \mu \Exp{\frac{1}{n} \sum_{i=1}^n \norm{w_i^{t+1} - x^{t+1}}^2} \\
        &\leq \Exp{\delta^{t}} - \frac{\gamma}{2} \Exp{\norm{\nabla f(x^t)}^2} - \left( \frac{1}{2\gamma} - \frac{L}{2} \right) \Exp{\norm{x^{t+1} - x^t}^2}
        + \kappa \Exp{\norm{g^{t} - \frac{1}{n} \sum_{i=1}^{n} \nabla f_i(z_i^{t})}^2} \\
        &\quad+ \eta \Exp{\norm{z^{t} - w^{t}}^2}
        + \rho \Exp{\norm{w^{t} - x^{t}}^2} \\
        &\quad + \left(\nu \left(1 - \frac{\beta}{2}\right) + 2\gamma L_A^2 + \frac{3 \gamma \omega_D \beta^2 L_{\max}^2}{n p_D} \right) \Exp{\frac{1}{n} \sum_{i=1}^n\norm{z^t_i - w_i^t}^2} \\
        &\quad + \Bigg(\frac{3 \gamma \omega_D (\omega_P + 1) \beta^2 L_{\max}^2}{n p_D} + 32\gamma L_B^2 \left(\frac{1}{p_P} + \frac{p_P}{\beta^2}\right) \theta + \frac{16 \gamma L_B^2}{\beta} \left(\frac{1}{\beta} + \theta\right) \\
        &\quad\quad\quad+ 4 \nu \left(\frac{1}{\beta} + \omega_P\right) + \mu \omega_P\Bigg) \Exp{\norm{x^{t+1} - x^t}^2} \\
        &\quad + \left(\mu (1-p_P) + 2\gamma L_A^2 + 4 \nu p_P \left(1 + \frac{p_P}{\beta}\right) + \frac{4 \gamma \omega_D p_P \beta^2 L_{\max}^2}{n p_D}\right) \Exp{\frac{1}{n}\sum_{i=1}^n \norm{w^{t}_i - x^t}^2}.
    \end{align*}
    Next, taking $\nu = \frac{4\gamma L_A^2}{\beta} + \frac{6 \gamma \omega_D \beta L_{\max}^2}{n p_D}$ gives 
    \begin{align*}
        &\Exp{\delta^{t+1}}
        + \kappa \Exp{\norm{g^{t+1} - \frac{1}{n} \sum_{i=1}^{n} \nabla f_i(z_i^{t+1})}^2}
        + \eta \Exp{\norm{z^{t+1} - w^{t+1}}^2} \\
        &\quad + \nu \Exp{\frac{1}{n} \sum_{i=1}^n \norm{z^{t+1}_i - w^{t+1}_i}^2}
        + \rho \Exp{\norm{w^{t+1} - x^{t+1}}^2} + \mu \Exp{\frac{1}{n} \sum_{i=1}^n \norm{w_i^{t+1} - x^{t+1}}^2} \\
        &\leq \Exp{\delta^{t}} - \frac{\gamma}{2} \Exp{\norm{\nabla f(x^t)}^2} - \left( \frac{1}{2\gamma} - \frac{L}{2} \right) \Exp{\norm{x^{t+1} - x^t}^2}
        + \kappa \Exp{\norm{g^{t} - \frac{1}{n} \sum_{i=1}^{n} \nabla f_i(z_i^{t})}^2} \\
        &\quad + \eta \Exp{\norm{z^{t} - w^{t}}^2}
        + \rho \Exp{\norm{w^{t} - x^{t}}^2}
        + \nu \Exp{\frac{1}{n} \sum_{i=1}^n \norm{z^{t}_i - w^{t}_i}^2} \\
        &\quad + \Bigg(\frac{3 \gamma \omega_D (\omega_P + 1) \beta^2 L_{\max}^2}{n p_D} + 32\gamma L_B^2 \left(\frac{1}{p_P} + \frac{p_P}{\beta^2}\right) \theta + \frac{16 \gamma L_B^2}{\beta} \left(\frac{1}{\beta} + \theta\right) \\
        &\quad\quad\quad + 4 \left(\frac{4\gamma L_A^2}{\beta} + \frac{6 \gamma \omega_D \beta L_{\max}^2}{n p_D}\right) \left(\frac{1}{\beta} + \omega_P\right) + \mu \omega_P\Bigg) \Exp{\norm{x^{t+1} - x^t}^2} \\
        &\quad + \Bigg(\mu (1-p_P) + 2\gamma L_A^2 + 4 \left(\frac{4\gamma L_A^2}{\beta} + \frac{6 \gamma \omega_D \beta L_{\max}^2}{n p_D}\right) p_P \left(1 + \frac{p_P}{\beta}\right) \\
        &\quad\quad\quad + \frac{4 \gamma \omega_D p_P \beta^2 L_{\max}^2}{n p_D}\Bigg) \Exp{\frac{1}{n}\sum_{i=1}^n \norm{w^{t}_i - x^t}^2}.
    \end{align*}
    Let us consider the last bracket:
    \begin{align*}
        &\mu (1-p_P) + 2\gamma L_A^2 + 4 \left(\frac{4\gamma L_A^2}{\beta} + \frac{6 \gamma \omega_D \beta L_{\max}^2}{n p_D}\right) p_P \left(1 + \frac{p_P}{\beta}\right) + \frac{4 \gamma \omega_D p_P \beta^2 L_{\max}^2}{n p_D} \\
        &=\mu (1-p_P) + 2\gamma L_A^2 + \frac{16\gamma p_P L_A^2}{\beta} + \frac{24 \gamma \omega_D p_P \beta L_{\max}^2}{n p_D} + \frac{16\gamma p_P^2 L_A^2}{\beta^2} \\
        &\quad+ \frac{24 \gamma \omega_D p_P^2 L_{\max}^2}{n p_D} + \frac{4 \gamma \omega_D p_P \beta^2 L_{\max}^2}{n p_D} \\
        &\leq \mu (1-p_P) + 16 \gamma L_A^2\left(1 + \frac{p_P}{\beta} + \frac{p_P^2}{\beta^2}\right) + \frac{24 \gamma \omega_D p_P L_{\max}^2}{n p_D}\left(\beta + p_P + \beta^2\right) \\
        &\leq \mu (1-p_P) + 32 \gamma L_A^2\left(1 + \frac{p_P^2}{\beta^2}\right) + \frac{48 \gamma \omega_D p_P L_{\max}^2}{n p_D}\left(\beta + p_P\right) \\
        &=\mu
    \end{align*}
    for $\mu = 32 \gamma L_A^2\left(\frac{1}{p_P} + \frac{p_P}{\beta^2}\right) + \frac{48 \gamma \omega_D L_{\max}^2}{n p_D}\left(\beta + p_P\right).$ For this choice, we get
    \begin{equation}
    \begin{aligned}
        &\Exp{\delta^{t+1}}
        + \kappa \Exp{\norm{g^{t+1} - \frac{1}{n} \sum_{i=1}^{n} \nabla f_i(z_i^{t+1})}^2}
        + \eta \Exp{\norm{z^{t+1} - w^{t+1}}^2} \\
        &\quad + \nu \Exp{\frac{1}{n} \sum_{i=1}^n \norm{z^{t+1}_i - w^{t+1}_i}^2}
        + \rho \Exp{\norm{w^{t+1} - x^{t+1}}^2} + \mu \Exp{\frac{1}{n} \sum_{i=1}^n \norm{w_i^{t+1} - x^{t+1}}^2} \\
        &\leq \Exp{\delta^{t}} - \frac{\gamma}{2} \Exp{\norm{\nabla f(x^t)}^2} - \left( \frac{1}{2\gamma} - \frac{L}{2} \right) \Exp{\norm{x^{t+1} - x^t}^2}
        + \kappa \Exp{\norm{g^{t} - \frac{1}{n} \sum_{i=1}^{n} \nabla f_i(z_i^{t})}^2} \\
        &\quad + \eta \Exp{\norm{z^{t} - w^{t}}^2} + \nu \Exp{\frac{1}{n} \sum_{i=1}^n \norm{z^{t}_i - w^{t}_i}^2}
        + \rho \Exp{\norm{w^{t} - x^{t}}^2}
        + \mu \Exp{\frac{1}{n} \sum_{i=1}^n \norm{w_i^{t} - x^{t}}^2} \\
        &\quad + \Bigg(\frac{3 \gamma \omega_D (\omega_P + 1) \beta^2 L_{\max}^2}{n p_D} + 32\gamma L_B^2 \left(\frac{1}{p_P} + \frac{p_P}{\beta^2}\right) \theta + \frac{16 \gamma L_B^2}{\beta} \left(\frac{1}{\beta} + \theta\right) \\
        &\qquad\quad + 4 \left(\frac{4\gamma L_A^2}{\beta} + \frac{6 \gamma \omega_D \beta L_{\max}^2}{n p_D}\right) \left(\frac{1}{\beta} + \omega_P\right) \\
        &\qquad\quad + 32 \gamma \omega_P L_A^2\left(\frac{1}{p_P} + \frac{p_P}{\beta^2}\right) + \frac{48 \gamma \omega_D \omega_P L_{\max}^2}{n p_D}\left(\beta + p_P\right)\Bigg) \Exp{\norm{x^{t+1} - x^t}^2}.
        \label{eq:GYtsgyPeoxgGCtoIMJKC}
    \end{aligned}
    \end{equation}
    Let us simplify the last bracket.
    \begin{equation}
    \begin{aligned}
        I &\eqdef \frac{3 \gamma \omega_D (\omega_P + 1) \beta^2 L_{\max}^2}{n p_D} + 32\gamma L_B^2 \left(\frac{1}{p_P} + \frac{p_P}{\beta^2}\right) \theta + \frac{16 \gamma L_B^2}{\beta} \left(\frac{1}{\beta} + \theta\right) \\
        &\quad + 4 \left(\frac{4\gamma L_A^2}{\beta} + \frac{6 \gamma \omega_D \beta L_{\max}^2}{n p_D}\right) \left(\frac{1}{\beta} + \omega_P\right) \\
        &\quad + 32 \gamma \omega_P L_A^2\left(\frac{1}{p_P} + \frac{p_P}{\beta^2}\right) + \frac{48 \gamma \omega_D \omega_P L_{\max}^2}{n p_D}\left(\beta + p_P\right) \\
        &\leq \left(\frac{3 \gamma \omega_D (\omega_P + 1) \beta^2}{n p_D} + \frac{24 \gamma \omega_D}{n p_D} + \frac{24 \gamma \omega_D \omega_P \beta}{n p_D} + \frac{48 \gamma \omega_D \omega_P \beta}{n p_D} + \frac{48 \gamma \omega_D \omega_P p_P}{n p_D}\right) L_{\max}^2 \\
        &\quad+ \left(\frac{16 \gamma}{\beta^2} + \frac{16 \gamma \omega_P}{\beta} + \frac{32 \gamma \omega_P}{p_P} + \frac{32 \gamma \omega_P p_P}{\beta^2}\right) L_A^2
        + \left(\frac{32\gamma \theta}{p_P} + \frac{32\gamma p_P \theta}{\beta^2} + \frac{16 \gamma}{\beta^2} + \frac{16 \gamma \theta}{\beta}\right)L_B^2. \label{eq:yIaYGZwqjZqELpEi}
    \end{aligned}
    \end{equation}
    We next consider the coefficients of $L_B^2$, $L_A^2$ and $L_{\max}^2$. First, for $L_B^2$, we have
    \begin{align*}
        \frac{32\gamma \theta}{p_P} + \frac{32\gamma p_P \theta}{\beta^2} + \frac{16 \gamma}{\beta^2} + \frac{16 \gamma \theta}{\beta}
        &\leq 32 \gamma \left(\frac{\theta}{p_P}\left(1 + \frac{p_P}{\beta} + \frac{p_P^2}{\beta^2}\right) + \frac{1}{\beta^2}\right) \\
        &\leq 64 \gamma \left(\frac{\theta}{p_P} + \frac{\theta p_P}{\beta^2} + \frac{1}{\beta^2}\right) \\
        &= 64 \gamma \left(\frac{\theta}{p_P} + \frac{1 + \theta p_P}{\beta^2}\right).
    \end{align*}
    Next, the coefficient of $L_A^2$ can be bounded as
    \begin{align*}
        \frac{16 \gamma}{\beta^2} + \frac{16 \gamma \omega_P}{\beta} + \frac{32 \gamma \omega_P}{p_P} + \frac{32 \gamma \omega_P p_P}{\beta^2}
        &\leq 32 \gamma \left(\frac{1}{\beta^2} + \frac{\omega_P}{p_P} \left(1 + \frac{p_P}{\beta} + \frac{p_P^2}{\beta^2}\right)\right) \\
        &\leq 64 \gamma \left(\frac{\omega_P}{p_P} + \frac{\omega_P p_P}{\beta^2} + \frac{1}{\beta^2}\right) \\
        &\leq 64 \gamma \left(\frac{\omega_P}{p_P} + \frac{1 + \omega_P p_P}{\beta^2}\right),
    \end{align*}
    and for $L_{\max}^2$ we obtain
    \begin{align*}
        &\frac{3 \gamma \omega_D (\omega_P + 1) \beta^2}{n p_D} + \frac{24 \gamma \omega_D}{n p_D} + \frac{24 \gamma \omega_D \omega_P \beta}{n p_D} + \frac{48 \gamma \omega_D \omega_P \beta}{n p_D} + \frac{48 \gamma \omega_D \omega_P p_P}{n p_D} \\
        &\leq 72 \gamma \omega_D \left(\frac{(\omega_P + 1) \beta^2}{n p_D} + \frac{1}{n p_D} + \frac{\omega_P \beta}{n p_D} + \frac{\omega_P p_P}{n p_D}\right) \\
        &\leq 144 \gamma \omega_D \left(\frac{1}{n p_D} + \frac{\omega_P \beta}{n p_D} + \frac{\omega_P p_P}{n p_D}\right) \\
        &= 144 \gamma \left(\frac{\omega_D \omega_P \beta}{n p_D} + \frac{\omega_D(1 + \omega_P p_P)}{n p_D}\right)
    \end{align*}
    since $\frac{(\omega_P + 1) \beta^2}{n p_D} \leq \frac{1}{n p_D} + \frac{\omega_P \beta}{n p_D}.$ Substituting these inequalities to \eqref{eq:GYtsgyPeoxgGCtoIMJKC} and \eqref{eq:yIaYGZwqjZqELpEi}, we get
    \begin{align*}
        &\Exp{\delta^{t+1}}
        + \kappa \Exp{\norm{g^{t+1} - \frac{1}{n} \sum_{i=1}^{n} \nabla f_i(z_i^{t+1})}^2}
        + \eta \Exp{\norm{z^{t+1} - w^{t+1}}^2} \\
        &\quad + \nu \Exp{\frac{1}{n} \sum_{i=1}^n \norm{z^{t+1}_i - w^{t+1}_i}^2}
        + \rho \Exp{\norm{w^{t+1} - x^{t+1}}^2} + \mu \Exp{\frac{1}{n} \sum_{i=1}^n \norm{w_i^{t+1} - x^{t+1}}^2} \\
        &\leq \Exp{\delta^{t}} - \frac{\gamma}{2} \Exp{\norm{\nabla f(x^t)}^2} - \left( \frac{1}{2\gamma} - \frac{L}{2} \right) \Exp{\norm{x^{t+1} - x^t}^2}
        + \kappa \Exp{\norm{g^{t} - \frac{1}{n} \sum_{i=1}^{n} \nabla f_i(z_i^{t})}^2} \\
        &\quad + \eta \Exp{\norm{z^{t} - w^{t}}^2} + \nu \Exp{\frac{1}{n} \sum_{i=1}^n \norm{z^{t}_i - w^{t}_i}^2}
        + \rho \Exp{\norm{w^{t} - x^{t}}^2} + \mu \Exp{\frac{1}{n} \sum_{i=1}^n \norm{w_i^{t} - x^{t}}^2} \\
        &\quad + 144 \gamma\Bigg(\left(\frac{\theta}{p_P} + \frac{1 + \theta p_P}{\beta^2}\right) L_B^2 + \left(\frac{\omega_P}{p_P} + \frac{1 + \omega_P p_P}{\beta^2}\right) L_A^2 \\
        &\qquad\qquad\quad+ \left(\frac{\omega_D \omega_P \beta}{n p_D} + \frac{\omega_D(1 + \omega_P p_P)}{n p_D}\right) L_{\max}^2 \Bigg) \Exp{\norm{x^{t+1} - x^t}^2}.
    \end{align*}
    By collecting all the terms w.r.t. $\Exp{\norm{x^{t+1} - x^t}^2},$ using the step size $\gamma$ from the theorem and Lemma~\ref{lemma:step_lemma}, we obtain
    \begin{align*}
        \Exp{\Psi^{t+1}}&=\Exp{\delta^{t+1}}
        + \kappa \Exp{\norm{g^{t+1} - \frac{1}{n} \sum_{i=1}^{n} \nabla f_i(z_i^{t+1})}^2}
        + \eta \Exp{\norm{z^{t+1} - w^{t+1}}^2} \\
        &\quad + \nu \Exp{\frac{1}{n} \sum_{i=1}^n \norm{z^{t+1}_i - w^{t+1}_i}^2}
        + \rho \Exp{\norm{w^{t+1} - x^{t+1}}^2} \\
        &\quad + \mu \Exp{\frac{1}{n} \sum_{i=1}^n \norm{w_i^{t+1} - x^{t+1}}^2} \\
        &\leq \Exp{\delta^{t}} - \frac{\gamma}{2} \Exp{\norm{\nabla f(x^t)}^2}
        + \kappa \Exp{\norm{g^{t} - \frac{1}{n} \sum_{i=1}^{n} \nabla f_i(z_i^{t})}^2}
        + \eta \Exp{\norm{z^{t} - w^{t}}^2} \\
        &\quad + \nu \Exp{\frac{1}{n} \sum_{i=1}^n \norm{z^{t}_i - w^{t}_i}^2}
        + \rho \Exp{\norm{w^{t} - x^{t}}^2} + \mu \Exp{\frac{1}{n} \sum_{i=1}^n \norm{w_i^{t} - x^{t}}^2} \\
        &= \Exp{\Psi^t} - \frac{\gamma}{2} \Exp{\norm{\nabla f(x^t)}^2}.
    \end{align*}
    It remains to rearrange and sum the last inequality for $t = 0, \dots, T - 1$.
\end{proof}

\CORMTHREE*

\begin{proof}
    By Theorem~\ref{theorem:mthree}, up to a constant factor, the algorithm converges after
    \begin{align*}
        \bar{T} 
        &\eqdef \frac{\Psi^0}{\varepsilon} \left(L + \sqrt{\left(\frac{\theta}{p_P} + \frac{1 + \theta p_P}{\beta^2}\right) L_B^2 + \left(\frac{\omega_P}{p_P} + \frac{1 + \omega_P p_P}{\beta^2}\right) L_A^2 + \left(\frac{\omega_D \omega_P \beta}{n p_D} + \frac{\omega_D(1 + \omega_P p_P)}{n p_D}\right) L_{\max}^2}\right) \\
        &= \frac{\Psi^0}{\varepsilon} \left(L + \sqrt{\frac{1}{\beta^2} L_B^2 + \left(\frac{\omega_P}{p_P} + \frac{1 + \omega_P p_P}{\beta^2}\right) L_A^2 + \left(\frac{\omega_D \omega_P \beta}{n p_D} + \frac{\omega_D(1 + \omega_P p_P)}{n p_D}\right) L_{\max}^2}\right) \\
        &\leq \frac{\Psi^0}{\varepsilon} \left(L + \sqrt{\frac{L_B^2}{\beta^2} + \left(\frac{\omega_P}{p_P} + \frac{2}{\beta^2}\right) L_A^2 + \left(\frac{\omega_D \omega_P \beta}{n p_D} + \frac{2 \omega_D}{n p_D}\right) L_{\max}^2}\right) \\
        &\leq \frac{\Psi^0}{\varepsilon} \left(L + \sqrt{\frac{L_B^2}{\beta^2} + \left(\omega_P (\omega_P + 1) + \frac{2}{\beta^2}\right) L_A^2 + \left(\frac{\omega_D \omega_P (\omega_D + 1) \beta}{n} + \frac{2 \omega_D (\omega_D + 1)}{n}\right) L_{\max}^2}\right) \\
        &\leq \frac{2 \Psi^0}{\varepsilon} \left(L + \sqrt{\frac{L_A^2 + L_B^2}{\beta^2} + \frac{\omega_D \omega_P (\omega_D + 1) \beta}{n} L_{\max}^2 + \frac{\omega_D (\omega_D + 1)}{n} L_{\max}^2 + \omega_P (\omega_P + 1) L_A^2}\right)
    \end{align*}
    iterations, where we use the choice of $p_P$ and $p_D.$ Using Lemma~\ref{lemma:lalblmax}, we have $L_A^2 + L_B^2 \leq L_{\max}^2$ and hence
    \begin{align*}
        \bar{T} 
        &\leq \frac{2 \Psi^0}{\varepsilon} \left(L + \sqrt{\left(\frac{1}{\beta^2} + \frac{\omega_D \omega_P (\omega_D + 1) \beta}{n}\right) L_{\max}^2 + \frac{\omega_D (\omega_D + 1)}{n} L_{\max}^2 + \omega_P (\omega_P + 1) L_A^2}\right) \\
        &\leq \frac{4 \Psi^0}{\varepsilon} \left(L + \sqrt{\left(1 + \left(\frac{\omega_D \omega_P (\omega_D + 1)}{n}\right)^{2/3}\right) L_{\max}^2 + \frac{\omega_D (\omega_D + 1)}{n} L_{\max}^2 + \omega_P (\omega_P + 1) L_A^2}\right) \\
        &\leq \frac{8 \Psi^0}{\varepsilon} \left(L_{\max} + \left(\frac{\omega_D \omega_P (\omega_D + 1)}{n}\right)^{1/3} L_{\max} + \sqrt{\frac{\omega_D (\omega_D + 1)}{n}} L_{\max} + \sqrt{\omega_P (\omega_P + 1)} L_A\right),
    \end{align*}
    where we substitute our choice of $\beta.$
\end{proof}
\CORMTHREECOMPL*
\begin{proof}
    The choice of compressors and parameters ensures that $\omega_P = \omega_D = n - 1$ (Lemma \ref{lemma:3compr_omega_theta}). Thus, the iteration complexity is
    \begin{align*}
        &\cO\left(\frac{\Psi^0}{\varepsilon} \left(L_{\max} + \left(\frac{\omega_D \omega_P (\omega_D + 1)}{n}\right)^{1/3} L_{\max} + \sqrt{\frac{\omega_D (\omega_D + 1)}{n}} L_{\max} + \sqrt{\omega_P (\omega_P + 1)} L_A\right)\right) \\
        &=\cO\left(\frac{\Psi^0}{\varepsilon} \left(L_{\max} + n^{2/3} L_{\max} + \sqrt{n} L_{\max} + n L_A\right)\right) = \cO\left(\frac{\Psi^0}{\varepsilon} \left(n^{2/3} L_{\max} + n L_A\right)\right).
    \end{align*}
    Since $p_P = p_D = 1 / n$ and $K = d / n,$ on average, the algorithm sends $\leq \frac{2 d}{n}$ coordinates in both directions. Therefore, the total communication complexity is 
    \begin{align*}
        \cO\left(\frac{d}{n} \times \frac{\Psi^0}{\varepsilon} \left(n^{2/3} L_{\max} + n L_A\right)\right) = \cO\left(\frac{\Psi^0}{\varepsilon} \left(\frac{d}{n^{1/3}} L_{\max} + d L_A\right)\right).
    \end{align*}
\end{proof}

\subsection{Polyak-Łojasiewicz condition}

\subsubsection{Main Results}

As with \algname{MARINA-P}, we provide the analysis of \algname{M3} under the Polyak-Łojasiewicz condition.

\begin{restatable}{theorem}{THEOREMMPL}\label{thm:m3_pl}
    Let Assumptions \ref{ass:lipschitz_constant}, \ref{ass:lower_bound}, \ref{ass:local_lipschitz_constant}, \ref{ass:functional} and \ref{ass:pl} be satisfied and suppose that the compressors $\cQ_i^t \in \mathbb{U}(\omega_D)$ satisfy Assumption~\ref{ass:independent}, $\brac{\cC_i^t}_{i=1}^n \in \mathbb{P}(\theta)$ and $\cC_i^t \in \mathbb{U}(\omega_P)$ for all $i\in[n]$. Let $\gamma>0$ be such that
    \begin{align}\label{eq:m3_pl_step}
        \gamma &= \textstyle \min\Bigg\{\parens{L + \sqrt{1536\Bigg(\left(\frac{\theta}{p_P} + \frac{1 + \theta p_P}{\beta^2}\right) L_B^2 + \left(\frac{\omega_P}{p_P} + \frac{1 + \omega_P p_P}{\beta^2}\right) L_A^2 + \left(\frac{\omega_D \omega_P \beta}{n p_D} + \frac{\omega_D(1 + \omega_P p_P)}{n p_D}\right) L_{\max}^2 \Bigg)}}^{-1}, \nonumber\\
        &\textstyle \qquad\qquad\qquad \frac{p_P}{2\mu}, \frac{p_D}{2\mu}, \frac{\beta}{4\mu} \Bigg\}.
    \end{align}
    Letting
    \begin{align*}
        \Psi^t &= \delta^{t}
        + \kappa \norm{g^{t} - \frac{1}{n} \sum_{i=1}^{n} \nabla f_i(z_i^{t})}^2
        + \eta \norm{z^{t} - w^{t}}^2
        + \nu \frac{1}{n} \sum_{i=1}^n \norm{z^{t}_i - w^{t}_i}^2 \\
        &\quad+ \rho \norm{w^{t} - x^{t}}^2
        + \tau \frac{1}{n} \sum_{i=1}^n \norm{w_i^{t} - x^{t}}^2,
    \end{align*}
    where $\kappa = \frac{2\gamma}{p_D}$, $\eta = \frac{8\gamma L_B^2}{\beta}$, $\nu = \frac{8 \gamma L_A^2}{\beta} + \frac{24 \gamma \omega_D \beta L_{\max}^2}{n p_D}$, $\rho = 128 \gamma L_B^2 \left(\frac{1}{p_P} + \frac{p_P}{\beta^2}\right)$ and $\tau = 128 \gamma L_A^2\left(\frac{1}{p_P} + \frac{p_P}{\beta^2}\right) + \frac{384 \gamma \omega_D L_{\max}^2}{n p_D}\left(\beta + p_P\right)$, \algname{M3} ensures that for each $T \geq 1$
    \begin{eqnarray*}
        \Exp{\Psi^{T}} \leq \parens{1 - \gamma\mu}^T \Psi^{0}.
    \end{eqnarray*}
\end{restatable}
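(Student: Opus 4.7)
The plan is to mimic the Lyapunov analysis of Theorem \ref{theorem:mthree}, but now exploit the P{\L} condition to convert the $-\tfrac{\gamma}{2}\|\nabla f(x^t)\|^2$ term into a multiplicative contraction of $\delta^t$. The Lyapunov function has the same six-term shape as in Theorem \ref{theorem:mthree}, with the coefficients $\kappa,\eta,\nu,\rho,\tau$ all doubled. Doubling creates a slack of exactly $\tfrac12$ in each of the recursions proved in Lemmas \ref{lemma:marinap_sum_w_x}, \ref{lemma:w_sum_x}, \ref{lemma:pDguBXTkQyk}, \ref{lemma:g_t}, \ref{lemma:z_t}, which is what allows each individual term of $\Psi^t$ to decay by a factor $(1-\gamma\mu)$.

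Concretely, I would first rerun the descent lemma (Lemma \ref{lemma:page}) and the six auxiliary recursions verbatim, producing exactly the inequality chain from the proof of Theorem \ref{theorem:mthree}, but with the new coefficients $\kappa=2\gamma/p_D$, $\eta=8\gamma L_B^2/\beta$, $\nu=8\gamma L_A^2/\beta+24\gamma\omega_D\beta L_{\max}^2/(np_D)$, $\rho=128\gamma L_B^2(1/p_P+p_P/\beta^2)$, $\tau=128\gamma L_A^2(1/p_P+p_P/\beta^2)+384\gamma\omega_D L_{\max}^2(\beta+p_P)/(np_D)$. The algebraic cancellations that previously yielded $\gamma+\kappa(1-p_D)=\kappa$, $2\gamma L_B^2+\eta(1-\beta/2)=\eta$, and analogous identities for $\nu,\rho,\tau$, now yield instead
\[
    \gamma+\kappa(1-p_D)=\kappa(1-p_D/2),\quad 2\gamma L_B^2+\eta(1-\beta/2)=\eta(1-\beta/4),
\]
and similarly each of $\rho,\tau$ contracts by $(1-p_P/2)$ and each $\nu$-related combination by $(1-\beta/4)$. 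Under the step-size constraints $\gamma\le p_P/(2\mu)$, $\gamma\le p_D/(2\mu)$, $\gamma\le\beta/(4\mu)$ we can upper-bound each such factor by $(1-\gamma\mu)$.

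The coefficient of $\|x^{t+1}-x^t\|^2$ is controlled exactly as in the nonconvex analysis; doubling the Lyapunov coefficients multiplies the constant $144$ inside that bracket by a factor of (at most) $8$, producing the constant $1536$ in the step-size constraint \eqref{eq:m3_pl_step}. With these ingredients, the master recursion becomes
\[
\Exp{\Psi^{t+1}} \;\le\; \Exp{\Psi^{t}} - \tfrac{\gamma}{2}\Exp{\|\nabla f(x^t)\|^2} - \gamma\mu\bigl(\Exp{\Psi^t}-\Exp{\delta^t}\bigr),
\]
where the last term collects the contractions of the five auxiliary summands. Applying the P{\L} inequality $\|\nabla f(x^t)\|^2\ge 2\mu\,\delta^t$ to the second term gives $-\tfrac{\gamma}{2}\Exp{\|\nabla f(x^t)\|^2}\le -\gamma\mu\Exp{\delta^t}$, so the two remaining negative contributions combine into $-\gamma\mu\,\Exp{\Psi^t}$ and we obtain $\Exp{\Psi^{t+1}}\le(1-\gamma\mu)\Exp{\Psi^t}$. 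Iterating yields the claim.

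The main obstacle I anticipate is purely bookkeeping: verifying that every one of the five auxiliary recursions truly contracts by $(1-\gamma\mu)$ simultaneously. For $\kappa$ and $\eta$ this is immediate from the identities above together with $\gamma\le p_D/(2\mu)$ and $\gamma\le\beta/(4\mu)$. For $\rho$ and $\tau$, the recursion contains an additional cross-term of the form $32\gamma L_B^2(1+p_P^2/\beta^2)$ (resp.\ analogous $L_A^2,L_{\max}^2$ terms); doubling $\rho,\tau$ ensures that this cross-term is at most $\rho p_P/2$ (resp.\ $\tau p_P/2$), so that the combined coefficient is $\le\rho(1-p_P/2)\le\rho(1-\gamma\mu)$ under $\gamma\le p_P/(2\mu)$. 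For $\nu$, the analogous check uses $\gamma\le\beta/(4\mu)$ and the absorption of the $\omega_D\beta L_{\max}^2/(np_D)$ cross-term into the $\nu$ slack. Once these five elementary inequalities are verified, the remaining step—invoking P{\L} and iterating—is routine.
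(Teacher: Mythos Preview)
Your sketch is correct and follows essentially the same route as the paper's proof: rerun the descent lemma and the five auxiliary recursions, inflate the Lyapunov coefficients to leave slack, convert that slack into a factor $(1-p_D/2)$, $(1-\beta/4)$, or $(1-p_P/2)$ on each auxiliary term, and then use the step-size constraints $\gamma\le p_D/(2\mu)$, $\gamma\le\beta/(4\mu)$, $\gamma\le p_P/(2\mu)$ together with P{\L} to turn everything into $(1-\gamma\mu)$. One minor clarification: the scaling is not a uniform ``doubling''—the cascade forces $\kappa,\eta$ to double, $\nu$ to scale by $2$ and $4$ in its two parts, and $\rho,\tau$ to scale by $4$ (with the $L_{\max}^2$ piece of $\tau$ by $8$), which is exactly why the bracket constant becomes $768$ (not $144\times 8$) and the step-size constant $1536$; you already wrote down the correct values, so this is only a wording issue.
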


\begin{restatable}{corollary}{THEOREMMPLCOR}\label{thm:m3_pl_perm}
    Let $\cC_i^t \in \mathbb{P}(0)$ for all $i\in[n]$ (e.g. Perm$K$), choose $p_P = 1 / (\omega_P + 1),$ $p_D = 1 / (\omega_D + 1)$ and 
    \begin{align*}
        \beta = \min\left\{\left(\frac{n}{\omega_D \omega_P (\omega_D + 1)}\right)^{1/3}, 1\right\}.
    \end{align*}
    Then, in the view of Theorem~\ref{thm:m3_pl}, Algorithm~\ref{alg:m3} ensures that $\Exp{f(x^T) - f^*} \leq \varepsilon$ after
    \begin{align*}
      \textstyle \cO\left(\max\left\{ \frac{\left(1 + \left(\frac{\omega_D \omega_P (\omega_D + 1)}{n}\right)^{1/3} + \sqrt{\frac{\omega_D (\omega_D + 1)}{n}}\right) L_{\max} + \sqrt{\omega_P (\omega_P + 1)} L_A}{\mu},
        \omega_P + 1, \omega_D + 1, \left(\frac{\omega_D \omega_P (\omega_D + 1)}{n}\right)^{1/3} \right\} \log\frac{\Psi^0}{\varepsilon} \right)
    \end{align*}
    iterations.
\end{restatable}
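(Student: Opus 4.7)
The plan is to convert the geometric-rate bound of Theorem~\ref{thm:m3_pl} into an iteration complexity. Because the auxiliary terms $\kappa\|g^t - \tfrac1n\sum_i\nabla f_i(z_i^t)\|^2$, $\eta\|z^t-w^t\|^2$, $\nu\tfrac1n\sum_i\|z_i^t-w_i^t\|^2$, $\rho\|w^t-x^t\|^2$, $\tau\tfrac1n\sum_i\|w_i^t-x^t\|^2$ in $\Psi^t$ are all non-negative, we have $f(x^t)-f^*\leq\Psi^t$, so $\Exp{\Psi^T}\leq(1-\gamma\mu)^T\Psi^0$ implies $\Exp{f(x^T)-f^*}\leq\varepsilon$ whenever $T\geq(\gamma\mu)^{-1}\log(\Psi^0/\varepsilon)$. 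The task therefore reduces to producing a clean upper bound on $\gamma^{-1}$ under the stated parameter choices.

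Since $\gamma$ is the minimum of four terms in \eqref{eq:m3_pl_step}, $\gamma^{-1}$ is the maximum of their reciprocals. Three of these are immediate: $p_P=1/(\omega_P+1)$ gives $2\mu/p_P = 2\mu(\omega_P+1)$, $p_D=1/(\omega_D+1)$ gives $2\mu/p_D = 2\mu(\omega_D+1)$, and the chosen $\beta=\min\{(n/(\omega_D\omega_P(\omega_D+1)))^{1/3},1\}$ gives $4\mu/\beta = 4\mu\max\{(\omega_D\omega_P(\omega_D+1)/n)^{1/3},1\}$. These three contributions account for the last three arguments of the outer $\max$ in the claim.

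The first argument, coming from the smoothness constraint, requires more work. I would first invoke Lemma~\ref{lemma:3compr_omega_theta} to set $\theta=0$ for Perm$K$, killing the $L_B^2\theta/p_P$ and $L_B^2 p_P\theta/\beta^2$ pieces. The substitutions $p_P=1/(\omega_P+1)$ and $p_D=1/(\omega_D+1)$ yield $\omega_P/p_P=\omega_P(\omega_P+1)$, $\omega_P p_P\leq 1$, and $\omega_D(1+\omega_P p_P)/(n p_D)\leq 2\omega_D(\omega_D+1)/n$. Using Lemma~\ref{lemma:lalblmax} to absorb the leftover $L_B^2/\beta^2$ into $L_{\max}^2/\beta^2$ (exactly as in the proof of Corollary~\ref{cor:mthree_cor}), the expression under the square root in \eqref{eq:m3_pl_step} is bounded, up to a constant, by
\begin{align*}
\Bigl(\tfrac{\omega_D\omega_P(\omega_D+1)\beta}{n}+\tfrac{1}{\beta^2}\Bigr)L_{\max}^2 + \tfrac{\omega_D(\omega_D+1)}{n}L_{\max}^2 + \omega_P(\omega_P+1)L_A^2.
\end{align*}
The choice of $\beta$ then equalizes the two terms inside the inner bracket, each becoming at most $(\omega_D\omega_P(\omega_D+1)/n)^{2/3}+1$; the clipping at $\beta=1$ handles the regime $n\geq\omega_D\omega_P(\omega_D+1)$.

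Taking square roots and adding $L\leq L_{\max}$ gives, up to a constant, $(1+(\omega_D\omega_P(\omega_D+1)/n)^{1/3}+\sqrt{\omega_D(\omega_D+1)/n})L_{\max} + \sqrt{\omega_P(\omega_P+1)}L_A$ as the reciprocal of the first term. Dividing the maximum of the four reciprocals by $\mu$ and multiplying by $\log(\Psi^0/\varepsilon)$ yields exactly the four-term complexity stated in the corollary. The only hurdle is bookkeeping of constants (the factor $1536$ here replaces the $288$ of Theorem~\ref{theorem:mthree} because of the stronger conditions imposed in the PL analysis) and the two small case distinctions induced by the clipping in $\beta$; no conceptual step beyond what was already done for Corollary~\ref{cor:mthree_cor} is required.
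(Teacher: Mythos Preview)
Your proposal is correct and follows essentially the same route as the paper's proof: both deduce $f(x^T)-f^*\le\Psi^T$, convert the contraction $(1-\gamma\mu)^T$ into an iteration count $\cO((\gamma\mu)^{-1}\log(\Psi^0/\varepsilon))$, and then bound $\gamma^{-1}$ by substituting $\theta=0$, $p_P=1/(\omega_P+1)$, $p_D=1/(\omega_D+1)$, invoking Lemma~\ref{lemma:lalblmax} to fold $L_B^2/\beta^2$ into $L_{\max}^2/\beta^2$, and finally plugging in the chosen $\beta$. One trivial remark: you need not invoke Lemma~\ref{lemma:3compr_omega_theta} to obtain $\theta=0$, since the hypothesis $\cC_i^t\in\mathbb{P}(0)$ already supplies it directly.
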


\begin{restatable}{corollary}{MPLCOR}\label{cor:m3_totalcomm_perm}
    Let $\cC_i^t$ be the Perm$K$ compressors and $\cQ_i^t$ be the independent (Assumption~\ref{ass:independent}) Rand$K$ compressors, both with $K = \nicefrac{d}{n}$. Then, in the view of Corollary~\ref{thm:m3_pl_perm}, the total communication complexity is
    \begin{align*}
        \cO\left(\left(\frac{d L_{\max}}{n^{1/3} \mu} + \frac{d L_A}{\mu} + d\right) \log\frac{\Psi^0}{\varepsilon}\right).
    \end{align*}
\end{restatable}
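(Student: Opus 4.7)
\textbf{Proof plan for Corollary \ref{cor:m3_totalcomm_perm}.} The strategy is to plug the concrete compressor parameters into the abstract iteration complexity from Corollary \ref{thm:m3_pl_perm} and then multiply by the per-iteration communication cost. All the heavy lifting (the Lyapunov-based contraction analysis) is already done in Theorem \ref{thm:m3_pl}, so what remains is bookkeeping.

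First, I would identify the compressor constants. By Lemma \ref{lemma:3compr_omega_theta}, Perm$K$ satisfies $\cC_i^t \in \mathbb{U}(\omega_P)$ with $\omega_P = n - 1$ and $\brac{\cC_i^t}_{i=1}^n \in \mathbb{P}(0)$, while independent Rand$K$ with $K = \nicefrac{d}{n}$ gives $\cQ_i^t \in \mathbb{U}(\omega_D)$ with $\omega_D = \nicefrac{d}{K} - 1 = n - 1$. Since $\theta = 0$, the hypotheses of Corollary \ref{thm:m3_pl_perm} are met. Substituting $\omega_P = \omega_D = n - 1$ into each of the four arguments of the $\max$ yields, up to constants, $\parens{\omega_D \omega_P (\omega_D+1)/n}^{1/3} = \Theta(n^{2/3})$, $\sqrt{\omega_D(\omega_D+1)/n} = \Theta(\sqrt{n})$, $\sqrt{\omega_P(\omega_P+1)} = \Theta(n)$, and $\omega_P + 1 = \omega_D + 1 = n$. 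Hence the iteration complexity collapses to
\begin{align*}
    \cO\parens{\max\brac{\frac{n^{2/3} L_{\max} + n L_A}{\mu},\, n} \log\frac{\Psi^0}{\varepsilon}}.
\end{align*}

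Next, I would account for the per-iteration communication. With $p_P = p_D = 1/n$ and $K = d/n$, at each iteration, in each direction, a worker/server transmits on average $p d + (1 - p) K = d/n + (n-1)K/n \leq 2d/n$ coordinates, so the total (s2w + w2s) per-iteration cost is $\cO(d/n)$. Multiplying by the iteration count gives
\begin{align*}
    \cO\parens{\frac{d}{n} \cdot \max\brac{\frac{n^{2/3} L_{\max} + n L_A}{\mu},\, n} \log\frac{\Psi^0}{\varepsilon}} = \cO\parens{\parens{\frac{d L_{\max}}{n^{1/3} \mu} + \frac{d L_A}{\mu} + d} \log\frac{\Psi^0}{\varepsilon}},
\end{align*}
which is exactly the claimed bound. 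No obstacle is expected: the computation is essentially a one-line simplification of the general PŁ rate, entirely analogous to the nonconvex case in Corollary \ref{cor:m3_total_comm} (which follows from Corollary \ref{cor:mthree_cor} by the same substitute-and-multiply procedure). The only mild subtlety is tracking where the additive $d$ term comes from -- it originates from the $\max$ argument $\omega_P + 1 = n$ (and symmetrically $\omega_D+1 = n$ and $(\omega_D\omega_P(\omega_D+1)/n)^{1/3} = n^{2/3}$), which after multiplication by $d/n$ contributes $d$; this is the communication overhead of periodically sending uncompressed vectors with probabilities $p_P, p_D = 1/n$.
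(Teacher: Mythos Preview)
Your proposal is correct and follows essentially the same approach as the paper: substitute $\omega_P=\omega_D=n-1$ (via Lemma~\ref{lemma:3compr_omega_theta}) into the iteration bound of Corollary~\ref{thm:m3_pl_perm} to obtain $\cO\big(\max\{(n^{2/3}L_{\max}+nL_A)/\mu,\,n\}\log(\Psi^0/\varepsilon)\big)$, then multiply by the $\cO(d/n)$ per-iteration cost. Your added remark tracing the additive $d$ term to the $\omega_P+1=n$ branch of the $\max$ is a nice clarification beyond what the paper spells out.
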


\subsubsection{Proofs}

\THEOREMMPL*

\begin{proof}
    Starting as in the proof of Theorem \ref{theorem:mthree}, we have
    \begin{align*}
        &\Exp{\delta^{t+1}}
        + \kappa \Exp{\norm{g^{t+1} - \frac{1}{n} \sum_{i=1}^{n} \nabla f_i(z_i^{t+1})}^2}
        + \eta \Exp{\norm{z^{t+1} - w^{t+1}}^2}\\
        &\quad + \nu \Exp{\frac{1}{n} \sum_{i=1}^n \norm{z^{t+1}_i - w^{t+1}_i}^2}
        + \rho \Exp{\norm{w^{t+1} - x^{t+1}}^2} + \tau \Exp{\frac{1}{n} \sum_{i=1}^n \norm{w_i^{t+1} - x^{t+1}}^2} \\
        &\leq \Exp{\delta^{t}} - \frac{\gamma}{2} \Exp{\norm{\nabla f(x^t)}^2} - \left( \frac{1}{2\gamma} - \frac{L}{2} \right) \Exp{\norm{x^{t+1} - x^t}^2}
        + \gamma \Exp{\norm{g^t - \frac{1}{n} \sum_{i=1}^{n} \nabla f_i(z_i^t)}^2} \\
        &\quad+ 2\gamma L_A^2 \frac{1}{n} \sum_{i=1}^{n} \left(\Exp{\norm{z_i^t - w_i^t}^2} + \Exp{\norm{w_i^t - x^t}^2}\right) \\
        &\quad+ 2\gamma L_B^2 \left(\Exp{\norm{z^t - w^t}^2} + \Exp{\norm{w^t - x^t}^2}\right) \\
        &\quad + \kappa \Bigg(\frac{\omega_D L_{\max}^2}{n} \Bigg(4 p_P \beta^2 \Exp{\frac{1}{n}\sum_{i=1}^n \norm{w^{t}_i - x^t}^2} + 3 \beta^2 \Exp{\frac{1}{n}\sum_{i=1}^n \norm{z^{t}_i - w_i^t}^2} \\
        &\qquad\qquad\qquad\qquad+ 3 (\omega_P + 1) \beta^2 \Exp{\norm{x^{t+1} - x^t}^2}\Bigg)\Bigg) \\
        &\quad + \kappa (1-p_D) \Exp{\norm{g^t - \frac{1}{n} \sum_{i=1}^n \nabla f_i(z_i^{t})}^2} \\
        &\quad + \eta \Bigg(\left(1 - \frac{\beta}{2}\right) \Exp{\norm{z^t - w^t}^2} + 4\left(\frac{1}{\beta} + \theta\right)\Exp{\norm{x^{t+1} - x^t}^2} + 4 p_P\left(1 + \frac{p_P}{\beta}\right)\Exp{\norm{w^t - x^t}^2}\Bigg) \\
        &\quad + \nu \Bigg(\left(1 - \frac{\beta}{2}\right) \Exp{\frac{1}{n} \sum_{i=1}^n\norm{z^t_i - w_i^t}^2} + 4\left(\frac{1}{\beta} + \omega_P\right)\Exp{\norm{x^{t+1} - x^t}^2} \\
        &\qquad\qquad+ 4 p_P \left(1 + \frac{p_P}{\beta}\right)\Exp{\frac{1}{n} \sum_{i=1}^n\norm{w_i^t - x^t}^2}\Bigg) \\
        &\quad + \rho (1-p_P) \left(\Exp{\norm{w^t - x^t}^2} + \theta \Exp{\norm{x^{t+1} - x^t}^2}\right) \\
        &\quad + \tau (1-p_P) \left(\Exp{\frac{1}{n} \sum_{i=1}^n \norm{w_i^{t} - x^{t}}^2} + \omega_P \Exp{\norm{x^{t+1} - x^{t}}^2}\right)
    \end{align*}
    for some $\kappa, \eta, \nu, \rho, \tau \geq 0$. This time, we let $\kappa = \frac{2\gamma}{p_D}$ and $\eta = \frac{8\gamma L_B^2}{\beta}$, which gives $\gamma + \kappa (1-p_D) = \kappa \parens{1-\frac{p_D}{2}}$ and $2\gamma L_B^2 + \eta (1 - \nicefrac{\beta}{2}) = \eta \parens{1-\frac{\beta}{4}}$. Hence
    \begin{align*}
        &\Exp{\delta^{t+1}}
        + \kappa \Exp{\norm{g^{t+1} - \frac{1}{n} \sum_{i=1}^{n} \nabla f_i(z_i^{t+1})}^2}
        + \eta \Exp{\norm{z^{t+1} - w^{t+1}}^2} \\
        &\quad + \nu \Exp{\frac{1}{n} \sum_{i=1}^n \norm{z^{t+1}_i - w^{t+1}_i}^2}
        + \rho \Exp{\norm{w^{t+1} - x^{t+1}}^2}
        + \tau \Exp{\frac{1}{n} \sum_{i=1}^n \norm{w_i^{t+1} - x^{t+1}}^2} \\
        &\leq \Exp{\delta^{t}} - \frac{\gamma}{2} \Exp{\norm{\nabla f(x^t)}^2} - \left( \frac{1}{2\gamma} - \frac{L}{2} \right) \Exp{\norm{x^{t+1} - x^t}^2} \\
        &\quad + \kappa \parens{1-\frac{p_D}{2}} \Exp{\norm{g^{t} - \frac{1}{n} \sum_{i=1}^{n} \nabla f_i(z_i^{t})}^2}
        + \eta \parens{1-\frac{\beta}{4}} \Exp{\norm{z^{t} - w^{t}}^2} \\
        &\quad + 2\gamma L_A^2 \frac{1}{n} \sum_{i=1}^{n} \left(\Exp{\norm{z_i^t - w_i^t}^2} + \Exp{\norm{w_i^t - x^t}^2}\right) 
        + 2\gamma L_B^2 \Exp{\norm{w^t - x^t}^2} \\
        &\quad + \frac{2\gamma}{p_D} \Bigg(\frac{\omega_D L_{\max}^2}{n} \Bigg(4 p_P \beta^2 \Exp{\frac{1}{n}\sum_{i=1}^n \norm{w^{t}_i - x^t}^2} + 3 \beta^2 \Exp{\frac{1}{n}\sum_{i=1}^n \norm{z^{t}_i - w_i^t}^2} \\
        &\qquad\qquad\qquad\qquad\quad+ 3 (\omega_P + 1) \beta^2 \Exp{\norm{x^{t+1} - x^t}^2}\Bigg)\Bigg) \\
        &\quad + \frac{8\gamma L_B^2}{\beta} \left(4\left(\frac{1}{\beta} + \theta\right)\Exp{\norm{x^{t+1} - x^t}^2} + 4 p_P\left(1 + \frac{p_P}{\beta}\right)\Exp{\norm{w^t - x^t}^2}\right) \\
        &\quad + \nu \Bigg(\left(1 - \frac{\beta}{2}\right) \Exp{\frac{1}{n} \sum_{i=1}^n\norm{z^t_i - w_i^t}^2} + 4\left(\frac{1}{\beta} + \omega_P\right)\Exp{\norm{x^{t+1} - x^t}^2} \\
        &\qquad\qquad+ 4 p_P \left(1 + \frac{p_P}{\beta}\right)\Exp{\frac{1}{n} \sum_{i=1}^n\norm{w_i^t - x^t}^2}\Bigg) \\
        &\quad + \rho \left((1-p_P) \Exp{\norm{w^t - x^t}^2} + \theta \Exp{\norm{x^{t+1} - x^t}^2}\right) \\
        &\quad + \tau \left((1-p_P) \Exp{\frac{1}{n} \sum_{i=1}^n \norm{w_i^{t} - x^{t}}^2} + \omega_P \Exp{\norm{x^{t+1} - x^{t}}^2}\right).
    \end{align*}
    Rearranging the terms
    \begin{align*}
        &\Exp{\delta^{t+1}}
        + \kappa \Exp{\norm{g^{t+1} - \frac{1}{n} \sum_{i=1}^{n} \nabla f_i(z_i^{t+1})}^2}
        + \eta \Exp{\norm{z^{t+1} - w^{t+1}}^2} \\
        &\quad + \nu \Exp{\frac{1}{n} \sum_{i=1}^n \norm{z^{t+1}_i - w^{t+1}_i}^2}
        + \rho \Exp{\norm{w^{t+1} - x^{t+1}}^2} + \tau \Exp{\frac{1}{n} \sum_{i=1}^n \norm{w_i^{t+1} - x^{t+1}}^2} \\
        &\leq \Exp{\delta^{t}} - \frac{\gamma}{2} \Exp{\norm{\nabla f(x^t)}^2} - \left( \frac{1}{2\gamma} - \frac{L}{2} \right) \Exp{\norm{x^{t+1} - x^t}^2} \\
        &\quad + \kappa \parens{1-\frac{p_D}{2}} \Exp{\norm{g^{t} - \frac{1}{n} \sum_{i=1}^{n} \nabla f_i(z_i^{t})}^2}
        + \eta \parens{1-\frac{\beta}{4}} \Exp{\norm{z^{t} - w^{t}}^2} \\
        &\quad + \left(\nu \left(1 - \frac{\beta}{2}\right) + 2\gamma L_A^2 + \frac{6 \gamma \omega_D \beta^2 L_{\max}^2}{n p_D} \right) \Exp{\frac{1}{n} \sum_{i=1}^n\norm{z^t_i - w_i^t}^2} \\
        &\quad + \left(\frac{6 \gamma \omega_D (\omega_P + 1) \beta^2 L_{\max}^2}{n p_D} + \rho \theta + \frac{32 \gamma L_B^2}{\beta} \left(\frac{1}{\beta} + \theta\right) + 4 \nu \left(\frac{1}{\beta} + \omega_P\right) + \tau \omega_P\right) \Exp{\norm{x^{t+1} - x^t}^2} \\
        &\quad + \left(\rho(1-p_P) + 2\gamma L_B^2 + \frac{32 \gamma L_B^2 p_P}{\beta} \left(1 + \frac{p_P}{\beta}\right)\right) \Exp{\norm{w^t - x^t}^2} \\
        &\quad + \left(\tau (1-p_P) + 2\gamma L_A^2 + 4 \nu p_P \left(1 + \frac{p_P}{\beta}\right) + \frac{8 \gamma \omega_D p_P \beta^2 L_{\max}^2}{n p_D}\right) \Exp{\frac{1}{n}\sum_{i=1}^n \norm{w^{t}_i - x^t}^2}.
    \end{align*}
    Considering the coefficient of $\Exp{\norm{w^t - x^t}^2}$ and using the inequality $xy \leq \frac{x^2 + y^2}{2}$ for all $x,y \geq 0$, we get
    \begin{align*}
        \rho(1-p_P) + 2\gamma L_B^2 + \frac{32 \gamma L_B^2 p_P}{\beta} \left(1 + \frac{p_P}{\beta}\right)
        &\leq \rho(1-p_P) + 32\gamma L_B^2 \left(1 + \frac{p_P}{\beta} + \frac{p_P^2}{\beta^2}\right) \\
        &\leq \rho(1-p_P) + 64\gamma L_B^2 \left(1 + \frac{p_P^2}{\beta^2}\right) \\
        &= \rho \parens{1-\frac{p_P}{2}},
    \end{align*}
    where we define $\rho = 128\gamma L_B^2 \left(\frac{1}{p_P} + \frac{p_P}{\beta^2}\right)$. Substituting this choice of $\rho$, we obtain
    \begin{align*}
        &\Exp{\delta^{t+1}}
        + \kappa \Exp{\norm{g^{t+1} - \frac{1}{n} \sum_{i=1}^{n} \nabla f_i(z_i^{t+1})}^2}
        + \eta \Exp{\norm{z^{t+1} - w^{t+1}}^2} \\
        & + \nu \Exp{\frac{1}{n} \sum_{i=1}^n \norm{z^{t+1}_i - w^{t+1}_i}^2}
        + \rho \Exp{\norm{w^{t+1} - x^{t+1}}^2} + \tau \Exp{\frac{1}{n} \sum_{i=1}^n \norm{w_i^{t+1} - x^{t+1}}^2} \\
        &\leq \Exp{\delta^{t}} - \frac{\gamma}{2} \Exp{\norm{\nabla f(x^t)}^2} - \left( \frac{1}{2\gamma} - \frac{L}{2} \right) \Exp{\norm{x^{t+1} - x^t}^2} \\
        & + \kappa \parens{1-\frac{p_D}{2}} \Exp{\norm{g^{t} - \frac{1}{n} \sum_{i=1}^{n} \nabla f_i(z_i^{t})}^2}
        + \eta \parens{1-\frac{\beta}{4}} \Exp{\norm{z^{t} - w^{t}}^2} \\
        &+ \rho \parens{1-\frac{p_P}{2}} \Exp{\norm{w^{t} - x^{t}}^2}
        + \left(\nu \left(1 - \frac{\beta}{2}\right) + 2\gamma L_A^2 + \frac{6 \gamma \omega_D \beta^2 L_{\max}^2}{n p_D} \right) \Exp{\frac{1}{n} \sum_{i=1}^n\norm{z^t_i - w_i^t}^2} \\
        & + \Bigg(\frac{6 \gamma \omega_D (\omega_P + 1) \beta^2 L_{\max}^2}{n p_D} + 128 \gamma L_B^2 \left(\frac{1}{p_P} + \frac{p_P}{\beta^2}\right) \theta + \frac{32 \gamma L_B^2}{\beta} \left(\frac{1}{\beta} + \theta\right) \\
        &\qquad+ 4 \nu \left(\frac{1}{\beta} + \omega_P\right) + \tau \omega_P\Bigg) \Exp{\norm{x^{t+1} - x^t}^2} \\
        & + \left(\tau (1-p_P) + 2\gamma L_A^2 + 4 \nu p_P \left(1 + \frac{p_P}{\beta}\right) + \frac{8 \gamma \omega_D p_P \beta^2 L_{\max}^2}{n p_D}\right) \Exp{\frac{1}{n}\sum_{i=1}^n \norm{w^{t}_i - x^t}^2}.
    \end{align*}
    Similarly, taking $\nu = \frac{8 \gamma L_A^2}{\beta} + \frac{24 \gamma \omega_D \beta L_{\max}^2}{n p_D}$ gives $\nu \left(1 - \frac{\beta}{2}\right) + 2\gamma L_A^2 + \frac{6 \gamma \omega_D \beta^2 L_{\max}^2}{n p_D} = \nu \left(1 - \frac{\beta}{4}\right)$, so
    \begin{align*}
        &\Exp{\delta^{t+1}}
        + \kappa \Exp{\norm{g^{t+1} - \frac{1}{n} \sum_{i=1}^{n} \nabla f_i(z_i^{t+1})}^2}
        + \eta \Exp{\norm{z^{t+1} - w^{t+1}}^2} \\
        &\quad + \nu \Exp{\frac{1}{n} \sum_{i=1}^n \norm{z^{t+1}_i - w^{t+1}_i}^2}
        + \rho \Exp{\norm{w^{t+1} - x^{t+1}}^2} + \tau \Exp{\frac{1}{n} \sum_{i=1}^n \norm{w_i^{t+1} - x^{t+1}}^2} \\
        &\leq \Exp{\delta^{t}} - \frac{\gamma}{2} \Exp{\norm{\nabla f(x^t)}^2} - \left( \frac{1}{2\gamma} - \frac{L}{2} \right) \Exp{\norm{x^{t+1} - x^t}^2} \\
        &\quad + \kappa \parens{1-\frac{p_D}{2}} \Exp{\norm{g^{t} - \frac{1}{n} \sum_{i=1}^{n} \nabla f_i(z_i^{t})}^2}
        + \eta \parens{1-\frac{\beta}{4}} \Exp{\norm{z^{t} - w^{t}}^2} \\
        &\quad+ \rho \parens{1-\frac{p_P}{2}} \Exp{\norm{w^{t} - x^{t}}^2}
        + \nu \parens{1-\frac{\beta}{4}} \Exp{\frac{1}{n} \sum_{i=1}^n \norm{z^{t}_i - w^{t}_i}^2} \\
        &\quad + \Bigg(\frac{6 \gamma \omega_D (\omega_P + 1) \beta^2 L_{\max}^2}{n p_D} + 128 \gamma L_B^2 \left(\frac{1}{p_P} + \frac{p_P}{\beta^2}\right) \theta + \frac{32 \gamma L_B^2}{\beta} \left(\frac{1}{\beta} + \theta\right) \\
        &\quad\quad\quad + 4 \left(\frac{8 \gamma L_A^2}{\beta} + \frac{24 \gamma \omega_D \beta L_{\max}^2}{n p_D}\right) \left(\frac{1}{\beta} + \omega_P\right) + \tau \omega_P\Bigg) \Exp{\norm{x^{t+1} - x^t}^2} \\
        &\quad + \Bigg(\tau (1-p_P) + 2\gamma L_A^2 + 4 \left(\frac{8 \gamma L_A^2}{\beta} + \frac{24 \gamma \omega_D \beta L_{\max}^2}{n p_D}\right) p_P \left(1 + \frac{p_P}{\beta}\right) \\
        &\quad\quad\quad+ \frac{8 \gamma \omega_D p_P \beta^2 L_{\max}^2}{n p_D}\Bigg) \Exp{\frac{1}{n}\sum_{i=1}^n \norm{w^{t}_i - x^t}^2}.
    \end{align*}
    Considering the last bracket, we have
    \begin{align*}
        &\tau (1-p_P) + 2\gamma L_A^2 + 4 \left(\frac{8\gamma L_A^2}{\beta} + \frac{24 \gamma \omega_D \beta L_{\max}^2}{n p_D}\right) p_P \left(1 + \frac{p_P}{\beta}\right) + \frac{8 \gamma \omega_D p_P \beta^2 L_{\max}^2}{n p_D} \\
        &= \tau (1-p_P) + 2\gamma L_A^2 + \frac{32 \gamma p_P L_A^2}{\beta} + \frac{32 \gamma p_P^2 L_A^2}{\beta^2} + \frac{96 \gamma \omega_D p_P \beta L_{\max}^2}{n p_D} + \frac{96 \gamma \omega_D p_P^2 L_{\max}^2}{n p_D} \\
        &\quad+ \frac{8 \gamma \omega_D p_P \beta^2 L_{\max}^2}{n p_D} \\
        &\leq \tau (1-p_P) + 32\gamma L_A^2 \parens{1 + \frac{p_P}{\beta} + \frac{p_P^2}{\beta^2}} + \frac{96 \gamma \omega_D p_P L_{\max}^2}{n p_D}\left(\beta + p_P + \beta^2\right) \\
        &\leq \tau (1-p_P) + 64\gamma L_A^2 \parens{1 + \frac{p_P^2}{\beta^2}} + \frac{192 \gamma \omega_D p_P L_{\max}^2}{n p_D}\left(\beta + p_P\right) \\
        &= \tau \parens{1-\frac{p_P}{2}}
    \end{align*}
    for $\tau = 128\gamma L_A^2 \parens{\frac{1}{p_P} + \frac{p_P}{\beta^2}} + \frac{384 \gamma \omega_D L_{\max}^2}{n p_D}\left(\beta + p_P\right)$. Then
    \begin{align}
        &\Exp{\delta^{t+1}}
        + \kappa \Exp{\norm{g^{t+1} - \frac{1}{n} \sum_{i=1}^{n} \nabla f_i(z_i^{t+1})}^2}
        + \eta \Exp{\norm{z^{t+1} - w^{t+1}}^2} \nonumber \\
        &\quad + \nu \Exp{\frac{1}{n} \sum_{i=1}^n \norm{z^{t+1}_i - w^{t+1}_i}^2}
        + \rho \Exp{\norm{w^{t+1} - x^{t+1}}^2} + \tau \Exp{\frac{1}{n} \sum_{i=1}^n \norm{w_i^{t+1} - x^{t+1}}^2} \nonumber \\
        &\leq \Exp{\delta^{t}} - \frac{\gamma}{2} \Exp{\norm{\nabla f(x^t)}^2} - \left( \frac{1}{2\gamma} - \frac{L}{2} \right) \Exp{\norm{x^{t+1} - x^t}^2} \nonumber \\
        &\quad+ \kappa \parens{1-\frac{p_D}{2}} \Exp{\norm{g^{t} - \frac{1}{n} \sum_{i=1}^{n} \nabla f_i(z_i^{t})}^2}
        + \eta \parens{1-\frac{\beta}{4}} \Exp{\norm{z^{t} - w^{t}}^2} \nonumber \\
        &\quad+ \rho \parens{1-\frac{p_P}{2}} \Exp{\norm{w^{t} - x^{t}}^2}
        + \nu \parens{1-\frac{\beta}{4}} \Exp{\frac{1}{n} \sum_{i=1}^n \norm{z^{t}_i - w^{t}_i}^2} \nonumber \\
        &\quad + \tau \parens{1-\frac{p_P}{2}} \Exp{\frac{1}{n} \sum_{i=1}^n \norm{w_i^{t} - x^{t}}^2} \nonumber \\
        &\quad + \Bigg(\frac{6 \gamma \omega_D (\omega_P + 1) \beta^2 L_{\max}^2}{n p_D} + 128 \gamma L_B^2 \left(\frac{1}{p_P} + \frac{p_P}{\beta^2}\right) \theta + \frac{32 \gamma L_B^2}{\beta} \left(\frac{1}{\beta} + \theta\right) \nonumber \\
        &\qquad\quad + 4 \left(\frac{8 \gamma L_A^2}{\beta} + \frac{24 \gamma \omega_D \beta L_{\max}^2}{n p_D}\right) \left(\frac{1}{\beta} + \omega_P\right) \nonumber \\
        &\qquad\quad + 128 \gamma \omega_P L_A^2\left(\frac{1}{p_P} + \frac{p_P}{\beta^2}\right) + \frac{384 \gamma \omega_D \omega_P L_{\max}^2}{n p_D}\left(\beta + p_P\right)\Bigg) \Exp{\norm{x^{t+1} - x^t}^2},
        \label{eq:GYtsgyPeoxgGCtoIMJKC2}
    \end{align}
    where the last bracket can be bounded as
    \begin{equation}
    \begin{aligned}
        I &\eqdef \frac{6 \gamma \omega_D (\omega_P + 1) \beta^2 L_{\max}^2}{n p_D} + 128\gamma L_B^2 \left(\frac{1}{p_P} + \frac{p_P}{\beta^2}\right) \theta + \frac{32 \gamma L_B^2}{\beta} \left(\frac{1}{\beta} + \theta\right) \\
        &\quad + 4 \left(\frac{8 \gamma L_A^2}{\beta} + \frac{24 \gamma \omega_D \beta L_{\max}^2}{n p_D}\right) \left(\frac{1}{\beta} + \omega_P\right) + 128 \gamma \omega_P L_A^2\left(\frac{1}{p_P} + \frac{p_P}{\beta^2}\right) \\
        &\quad + \frac{384 \gamma \omega_D \omega_P L_{\max}^2}{n p_D}\left(\beta + p_P\right) \\
        &= \parens{\frac{6 \gamma \omega_D (\omega_P + 1) \beta^2}{n p_D} + \frac{96 \gamma \omega_D \beta}{n p_D} \left(\frac{1}{\beta} + \omega_P\right) + \frac{384 \gamma \omega_D \omega_P}{n p_D}\left(\beta + p_P\right)} L_{\max}^2 \\
        &\quad+ \parens{\frac{32 \gamma}{\beta} \left(\frac{1}{\beta} + \omega_P\right) + 128 \gamma \omega_P \left(\frac{1}{p_P} + \frac{p_P}{\beta^2}\right)} L_A^2 \\
        &\quad+ \parens{128\gamma \left(\frac{1}{p_P} + \frac{p_P}{\beta^2}\right) \theta + \frac{32 \gamma}{\beta} \left(\frac{1}{\beta} + \theta\right)} L_B^2\\
        &= \parens{\frac{6 \gamma \omega_D (\omega_P + 1) \beta^2}{n p_D} + \frac{96 \gamma \omega_D}{n p_D} + \frac{96 \gamma \omega_D \omega_P \beta}{n p_D} + \frac{384 \gamma \omega_D \omega_P \beta}{n p_D} + \frac{384 \gamma \omega_D \omega_P p_P}{n p_D}} L_{\max}^2 \\
        &\quad+ \parens{\frac{32 \gamma}{\beta^2} + \frac{32 \gamma \omega_P}{\beta} + \frac{128 \gamma \omega_P}{p_P} + \frac{128 \gamma \omega_P p_P}{\beta^2}} L_A^2 \\
        &\quad+ \parens{\frac{128\gamma \theta}{p_P} + \frac{128\gamma p_P \theta}{\beta^2} + \frac{32 \gamma}{\beta^2} + \frac{32 \gamma \theta}{\beta}} L_B^2. \label{eq:yIaYGZwqjZqELpEi2}
    \end{aligned}
    \end{equation}
    We next consider the coefficients of $L_B^2$, $L_A^2$ and $L_{\max}^2$. First, for $L_B^2$, we have
    \begin{align*}
        \frac{128\gamma \theta}{p_P} + \frac{128\gamma p_P \theta}{\beta^2} + \frac{32 \gamma}{\beta^2} + \frac{32 \gamma \theta}{\beta}
        &\leq 128 \gamma \left(\frac{\theta}{p_P}\left(1 + \frac{p_P}{\beta} + \frac{p_P^2}{\beta^2}\right) + \frac{1}{\beta^2}\right) \\
        &\leq 256 \gamma \left(\frac{\theta}{p_P} + \frac{\theta p_P}{\beta^2} + \frac{1}{\beta^2}\right) \\
        &= 256 \gamma \left(\frac{\theta}{p_P} + \frac{1 + \theta p_P}{\beta^2}\right).
    \end{align*}
    Next, the coefficient of $L_A^2$ can be bounded as
    \begin{align*}
        \frac{32 \gamma}{\beta^2} + \frac{32 \gamma \omega_P}{\beta} + \frac{128}{p_P} + \frac{128 \gamma \omega_P p_P}{\beta^2}
        &\leq 128 \gamma \left(\frac{1}{\beta^2} + \frac{\omega_P}{p_P} \left(1 + \frac{p_P}{\beta} + \frac{p_P^2}{\beta^2}\right)\right) \\
        &\leq 256 \gamma \left(\frac{\omega_P}{p_P} + \frac{\omega_P p_P}{\beta^2} + \frac{1}{\beta^2}\right) \\
        &\leq 256 \gamma \left(\frac{\omega_P}{p_P} + \frac{1 + \omega_P p_P}{\beta^2}\right),
    \end{align*}
    and for $L_{\max}^2$ we obtain
    \begin{align*}
        &\frac{6 \gamma \omega_D (\omega_P + 1) \beta^2}{n p_D} + \frac{96 \gamma \omega_D}{n p_D} + \frac{96 \gamma \omega_D \omega_P \beta}{n p_D} + \frac{384 \gamma \omega_D \omega_P \beta}{n p_D} + \frac{384 \gamma \omega_D \omega_P p_P}{n p_D} \\
        &\leq 384 \gamma \omega_D \left(\frac{(\omega_P + 1) \beta^2}{n p_D} + \frac{1}{n p_D} + \frac{\omega_P \beta}{n p_D} + \frac{\omega_P p_P}{n p_D}\right) \\
        &\leq 768 \gamma \omega_D \left(\frac{1}{n p_D} + \frac{\omega_P \beta}{n p_D} + \frac{\omega_P p_P}{n p_D}\right) \\
        &= 768 \gamma \left(\frac{\omega_D \omega_P \beta}{n p_D} + \frac{\omega_D(1 + \omega_P p_P)}{n p_D}\right)
    \end{align*}
    since $\frac{(\omega_P + 1) \beta^2}{n p_D} \leq \frac{1}{n p_D} + \frac{\omega_P \beta}{n p_D}.$ Substituting these inequalities to \eqref{eq:GYtsgyPeoxgGCtoIMJKC2} and \eqref{eq:yIaYGZwqjZqELpEi2}, we get
    \begin{align*}
        &\Exp{\delta^{t+1}}
        + \kappa \Exp{\norm{g^{t+1} - \frac{1}{n} \sum_{i=1}^{n} \nabla f_i(z_i^{t+1})}^2}
        + \eta \Exp{\norm{z^{t+1} - w^{t+1}}^2} \\
        &\quad + \nu \Exp{\frac{1}{n} \sum_{i=1}^n \norm{z^{t+1}_i - w^{t+1}_i}^2}
        + \rho \Exp{\norm{w^{t+1} - x^{t+1}}^2} + \tau \Exp{\frac{1}{n} \sum_{i=1}^n \norm{w_i^{t+1} - x^{t+1}}^2} \\
        &\leq \Exp{\delta^{t}} - \frac{\gamma}{2} \Exp{\norm{\nabla f(x^t)}^2} - \left( \frac{1}{2\gamma} - \frac{L}{2} \right) \Exp{\norm{x^{t+1} - x^t}^2} \\
        &\quad+ \kappa \parens{1-\frac{p_D}{2}} \Exp{\norm{g^{t} - \frac{1}{n} \sum_{i=1}^{n} \nabla f_i(z_i^{t})}^2}
        + \eta \parens{1-\frac{\beta}{4}} \Exp{\norm{z^{t} - w^{t}}^2} \\
        &\quad+ \rho \parens{1-\frac{p_P}{2}} \Exp{\norm{w^{t} - x^{t}}^2}
        + \nu \parens{1-\frac{\beta}{4}} \Exp{\frac{1}{n} \sum_{i=1}^n \norm{z^{t}_i - w^{t}_i}^2} \\
        &\quad+ \tau \parens{1-\frac{p_P}{2}} \Exp{\frac{1}{n} \sum_{i=1}^n \norm{w_i^{t} - x^{t}}^2} \\
        &\quad + 768 \gamma\Bigg(\left(\frac{\theta}{p_P} + \frac{1 + \theta p_P}{\beta^2}\right) L_B^2 + \left(\frac{\omega_P}{p_P} + \frac{1 + \omega_P p_P}{\beta^2}\right) L_A^2 \\
       &\qquad\qquad\quad + \left(\frac{\omega_D \omega_P \beta}{n p_D} + \frac{\omega_D(1 + \omega_P p_P)}{n p_D}\right) L_{\max}^2 \Bigg) \Exp{\norm{x^{t+1} - x^t}^2}.
    \end{align*}
    By collecting all the terms w.r.t. $\Exp{\norm{x^{t+1} - x^t}^2},$ using the step size $\gamma$ from the theorem and Lemma~\ref{lemma:step_lemma}, we obtain
    \begin{align*}
        \Exp{\Psi^{t+1}} &= \Exp{\delta^{t+1}}
        + \kappa \Exp{\norm{g^{t+1} - \frac{1}{n} \sum_{i=1}^{n} \nabla f_i(z_i^{t+1})}^2}
        + \eta \Exp{\norm{z^{t+1} - w^{t+1}}^2} \\
        &\quad + \nu \Exp{\frac{1}{n} \sum_{i=1}^n \norm{z^{t+1}_i - w^{t+1}_i}^2}
        + \rho \Exp{\norm{w^{t+1} - x^{t+1}}^2} \\
        &\quad+ \tau \Exp{\frac{1}{n} \sum_{i=1}^n \norm{w_i^{t+1} - x^{t+1}}^2} \\
        &\leq \Exp{\delta^{t}} - \frac{\gamma}{2} \Exp{\norm{\nabla f(x^t)}^2}
        + \kappa \parens{1-\frac{p_D}{2}} \Exp{\norm{g^{t} - \frac{1}{n} \sum_{i=1}^{n} \nabla f_i(z_i^{t})}^2} \\
        &\quad+ \eta \parens{1-\frac{\beta}{4}} \Exp{\norm{z^{t} - w^{t}}^2}
        + \rho \parens{1-\frac{p_P}{2}} \Exp{\norm{w^{t} - x^{t}}^2} \\
        &\quad+ \nu \parens{1-\frac{\beta}{4}} \Exp{\frac{1}{n} \sum_{i=1}^n \norm{z^{t}_i - w^{t}_i}^2}
        + \tau \parens{1-\frac{p_P}{2}} \Exp{\frac{1}{n} \sum_{i=1}^n \norm{w_i^{t} - x^{t}}^2}.
    \end{align*}
    Lastly, Assumption \ref{ass:pl} gives
    \begin{eqnarray*}
        \Exp{\Psi^{t+1}} &=& \Exp{\delta^{t+1}}
        + \kappa \Exp{\norm{g^{t+1} - \frac{1}{n} \sum_{i=1}^{n} \nabla f_i(z_i^{t+1})}^2}
        + \eta \Exp{\norm{z^{t+1} - w^{t+1}}^2} \\
        && + \nu \Exp{\frac{1}{n} \sum_{i=1}^n \norm{z^{t+1}_i - w^{t+1}_i}^2}
        + \rho \Exp{\norm{w^{t+1} - x^{t+1}}^2} \\
        &&+ \tau \Exp{\frac{1}{n} \sum_{i=1}^n \norm{w_i^{t+1} - x^{t+1}}^2} \\
        &\overset{\textnormal{Ass.}\ref{ass:pl}, \eqref{eq:m3_pl_step}}{\leq}& \parens{1-\gamma\mu} \Exp{\delta^{t}}
        + \kappa \parens{1-\gamma\mu} \Exp{\norm{g^{t} - \frac{1}{n} \sum_{i=1}^{n} \nabla f_i(z_i^{t})}^2} \\
        &&+ \eta \parens{1-\gamma\mu} \Exp{\norm{z^{t} - w^{t}}^2}
        + \nu \parens{1-\gamma\mu} \Exp{\frac{1}{n} \sum_{i=1}^n \norm{z^{t}_i - w^{t}_i}^2} \\
        &&+ \rho \parens{1-\gamma\mu} \Exp{\norm{w^{t} - x^{t}}^2}
        + \tau \parens{1-\gamma\mu} \Exp{\frac{1}{n} \sum_{i=1}^n \norm{w_i^{t} - x^{t}}^2} \\
        &=& \parens{1-\gamma\mu} \Exp{\Psi^{t}}
    \end{eqnarray*}
    It remains to apply the last inequality iteratively to finish the proof.
\end{proof}

\THEOREMMPLCOR*

\begin{proof}
    Note that $\Psi^T \geq f(x^T) - f^*.$ In view of condition \eqref{eq:m3_pl_step} from Theorem \ref{thm:m3_pl}, the step size satisfies 
    \begin{align*}
        \gamma &= \Theta\Bigg(\min\left\{ \left(L + \sqrt{\left(\frac{\theta}{p_P} + \frac{1 + \theta p_P}{\beta^2}\right) L_B^2 + \left(\frac{\omega_P}{p_P} + \frac{1 + \omega_P p_P}{\beta^2}\right) L_A^2 + \left(\frac{\omega_D \omega_P \beta}{n p_D} + \frac{\omega_D(1 + \omega_P p_P)}{n p_D}\right) L_{\max}^2}\right)^{-1}, \right. \\
        &\qquad\qquad\qquad\left. \frac{p_P}{2\mu}, \frac{p_D}{2\mu}, \frac{\beta}{4\mu} \right\}\Bigg).
    \end{align*}
    Therefore, since $\theta = 0$, the algorithm converges after
    \begin{align*}
        \bar{T} 
        &= \textstyle \cO\left(\max\left\{ \frac{L + \sqrt{\frac{1}{\beta^2}L_B^2 + \left(\frac{\omega_P}{p_P} + \frac{1 + \omega_P p_P}{\beta^2}\right) L_A^2 + \left(\frac{\omega_D \omega_P \beta}{n p_D} + \frac{\omega_D(1 + \omega_P p_P)}{n p_D}\right) L_{\max}^2}}{\mu},
        \frac{1}{p_P}, \frac{1}{p_D}, \frac{1}{\beta} \right\} \log\frac{\Psi^0}{\varepsilon} \right)
    \end{align*}
    iterations. Using the choice of $p_P$ and $p_D,$ we have
    \begin{align*}
        \bar{T}
        &= \textstyle \cO\left(\max\left\{ \frac{L + \sqrt{\frac{1}{\beta^2}(L_B^2 + L_A^2) + \omega_P (\omega_P + 1) L_A^2 + \left(\frac{\omega_D (\omega_D + 1) \omega_P \beta}{n} + \frac{\omega_D (\omega_D + 1)}{n}\right) L_{\max}^2}}{\mu},
        \omega_P + 1, \omega_D + 1, \frac{1}{\beta} \right\} \log\frac{\Psi^0}{\varepsilon} \right).
    \end{align*}
    Due to Lemma~\ref{lemma:lalblmax}, we get
    \begin{align*}
        \bar{T}
        &= \textstyle \cO\left(\max\left\{ \frac{L + \sqrt{\omega_P (\omega_P + 1) L_A^2 + \left(\frac{1}{\beta^2} + \frac{\omega_D (\omega_D + 1) \omega_P \beta}{n} + \frac{\omega_D (\omega_D + 1)}{n}\right) L_{\max}^2}}{\mu},
        \omega_P + 1, \omega_D + 1, \frac{1}{\beta} \right\} \log\frac{\Psi^0}{\varepsilon} \right).
    \end{align*}
    Using the choice of $\beta,$ we obtain the result of the theorem.
\end{proof}

\MPLCOR*

\begin{proof}
    The choice of compressors and parameters ensures that $\omega_P = \omega_D = n - 1$ (Lemma \ref{lemma:3compr_omega_theta}). Thus, the iteration complexity is
    \begin{align*}
        &\cO\left(\max\left\{ \frac{\left(1 + n^{2/3} + n^{1/2}\right) L_{\max} + n L_A}{\mu},
        n, n, n^{2/3} \right\} \log\frac{\Psi^0}{\varepsilon} \right) \\
        &=\cO\left(\max\left\{ \frac{n^{2/3}L_{\max} + n L_A}{\mu},
        n\right\} \log\frac{\Psi^0}{\varepsilon} \right).
    \end{align*}
    Since $p_P = p_D = \nicefrac{1}{n}$ and $K = \nicefrac{d}{n},$ on average, the algorithm sends $\leq \nicefrac{2 d}{n}$ coordinates in both directions. Therefore, the total communication complexity is 
    \begin{align*}
        \cO\left(\frac{d}{n} \times \max\left\{ \frac{n^{2/3}L_{\max} + n L_A}{\mu},
        n\right\} \log\frac{\Psi^0}{\varepsilon}\right) = \cO\left(\max\left\{ \frac{\frac{d}{n^{1/3}}L_{\max} + d L_A}{\mu},
        d\right\} \log\frac{\Psi^0}{\varepsilon}\right).
    \end{align*}
\end{proof}

\newpage

\section{Experiments}
\label{sec:experimetns_main}

The experiments were prepared in Python. The distributed environment was emulated on a machine with Intel(R) Xeon(R) Gold 6226R CPU @ 2.90GHz and 64 cores.

\subsection{Experiments with \algnamebig{M3} on quadratic optimization tasks}
\label{sec:core_m3}
We consider close-to-homogeneous quadratic optimization problem with $\mA_i = (1 + \xi_i) \mI_d,$ where $\xi_i \sim \mathcal{N}(0, 0.01)$ for all $i \in [n],$ and $d = 1000.$ We run two algorithms from Table~\ref{table:complexities}, \algname{M3} and \algname{CORE}, and check whether theory matches practice. In \algname{M3}, we use Perm$K$ followed by the natural compressor $\cC_{\textnormal{nat}}$ \citep{horvoth2022natural} (composition of two unbiased compressors) on the server's side, and Rand$K$ followed by $\cC_{\textnormal{nat}}$ on the workers' side. We use $K = \left\lfloor d / n\right\rfloor \in \{1, 10, 100\}$ for $n \in \{1000, 100, 10\}$. In \algname{CORE}, the number of communicated coordinates is set to $10.$ We run each experiment $5$ times with different seeds and plot the average to reduce the noise factor. Only the step size is fine-tuned for each algorithm.

The results are presented in Figure~\ref{fig:core_m3}. As expected, \algname{CORE} does not change its behavior as the number of workers increases from $10$ to $100;$ this is expected since \algname{CORE} does not depend on $n.$ At the same time, \algname{M3} does improve with $n$, which supports our findings from Theorem~\ref{thm:marinap_gen_m}.

\begin{figure}[t]
    \centering
    \begin{subfigure}{.4\columnwidth}
      \centering
      \includegraphics[width=\columnwidth]{./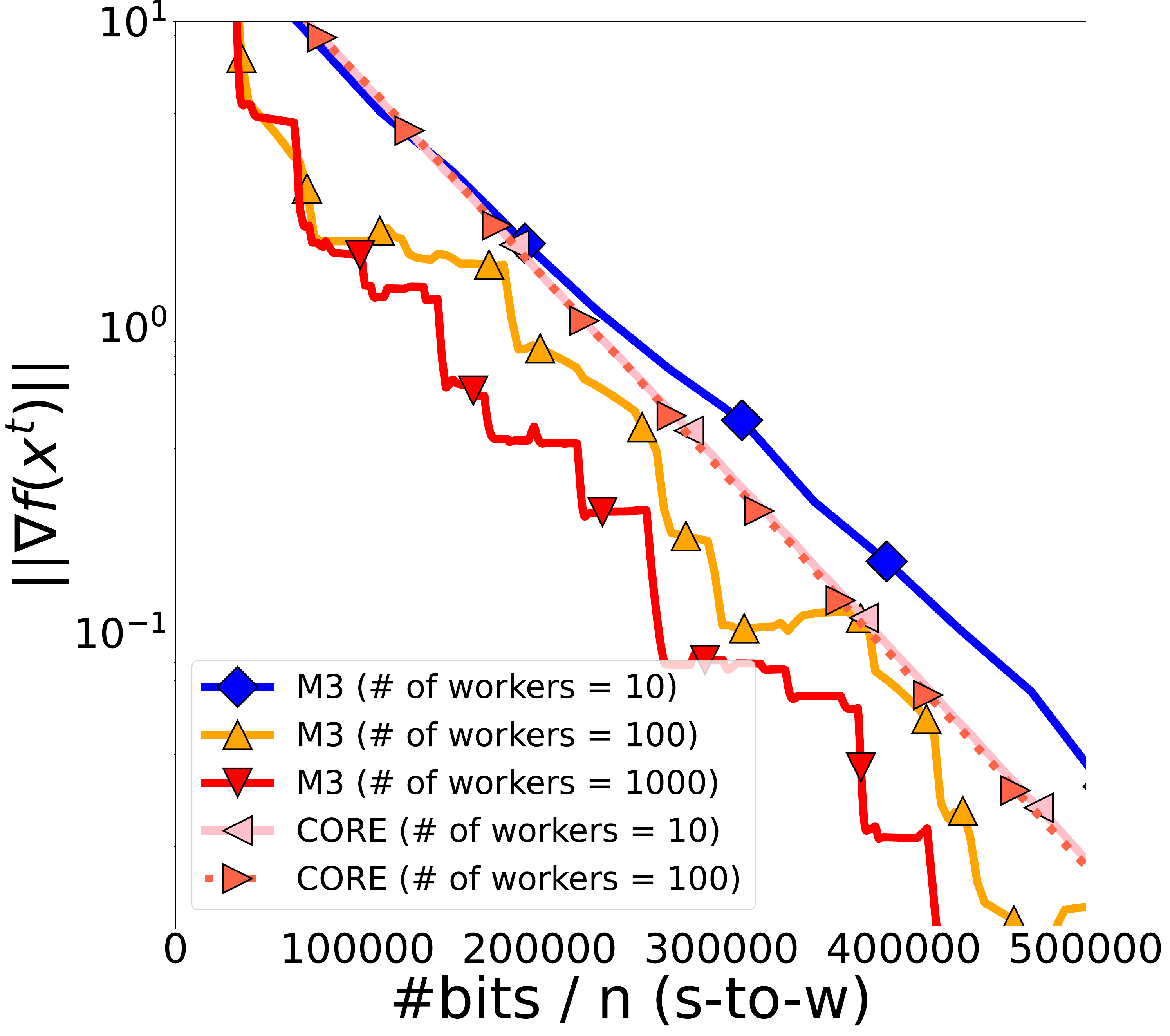}
    \end{subfigure}%
    \begin{subfigure}{.4\columnwidth}
      \centering
      \includegraphics[width=\columnwidth]{./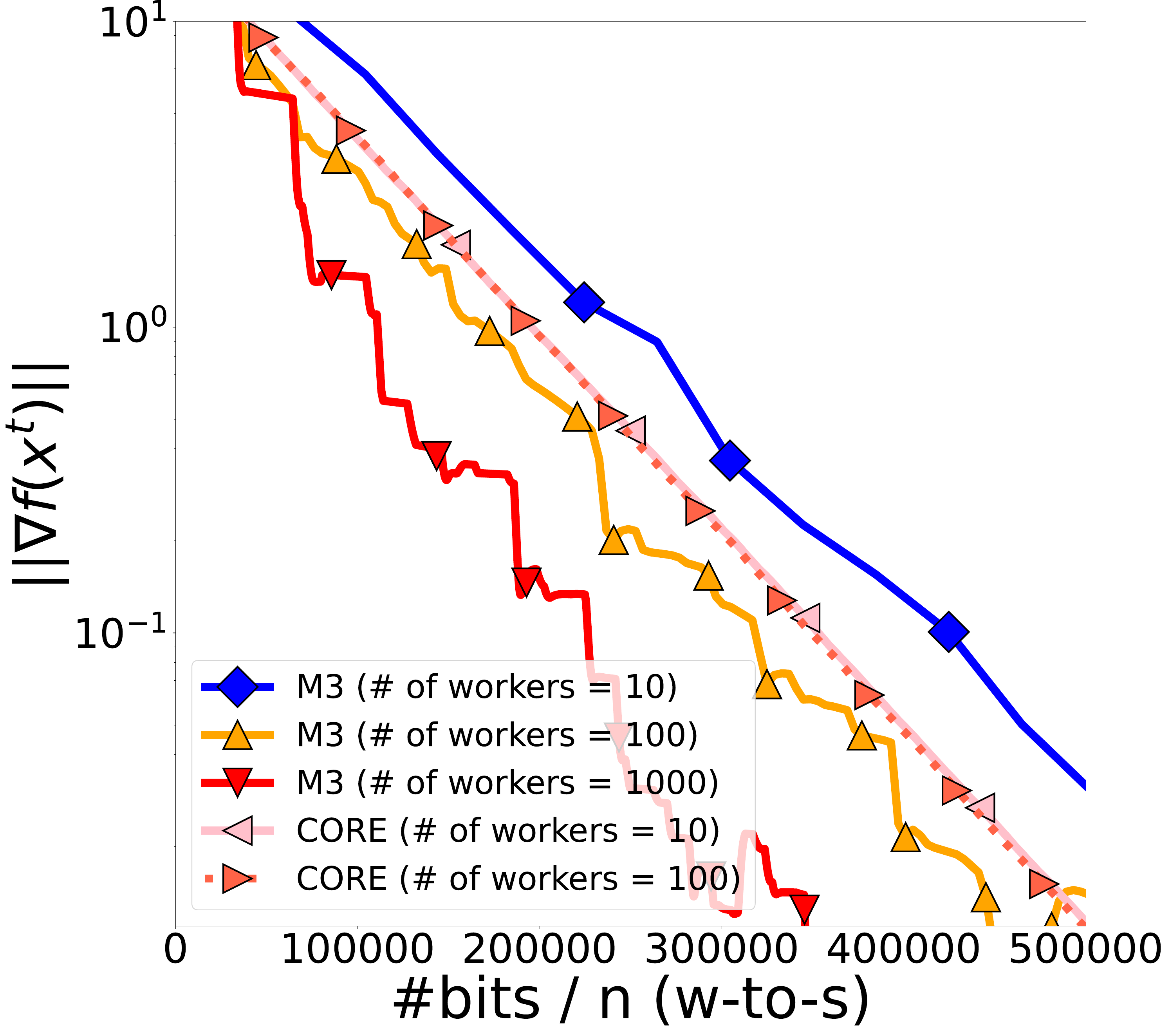}
    \end{subfigure}
    \caption{Experiments on the quadratic optimization problem from Section~\ref{sec:core_m3}.
    We plot the norm of the gradient w.r.t. \# of coordinates sent from the server (s-to-w) and from the workers (w-to-s).}
    \label{fig:core_m3}
\end{figure}

\begin{figure}[t]
    \centering
    \begin{subfigure}{.4\columnwidth}
      \centering
      \includegraphics[width=\columnwidth]{./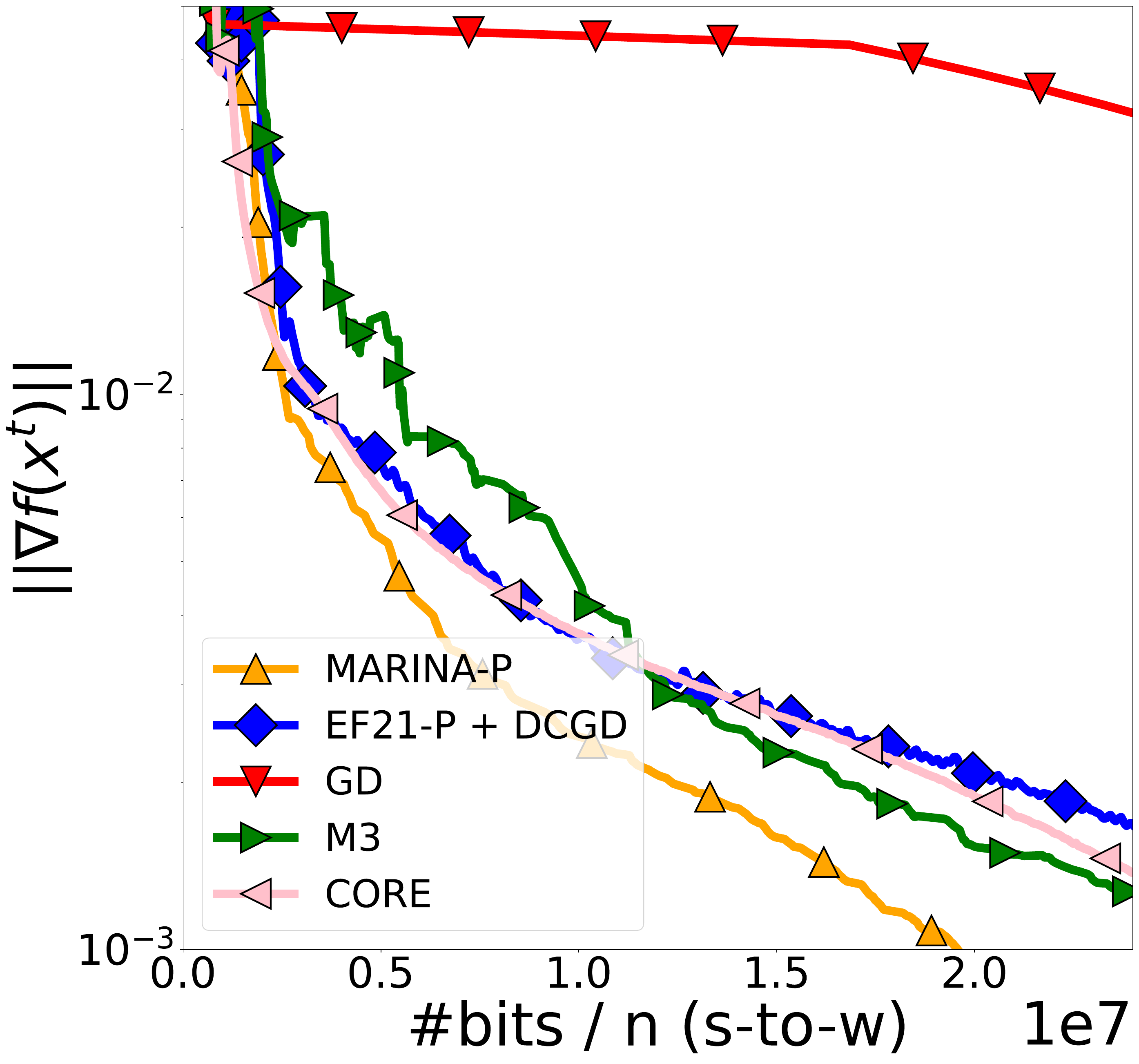}
    \end{subfigure}%
    \begin{subfigure}{.4\columnwidth}
      \centering
      \includegraphics[width=\columnwidth]{./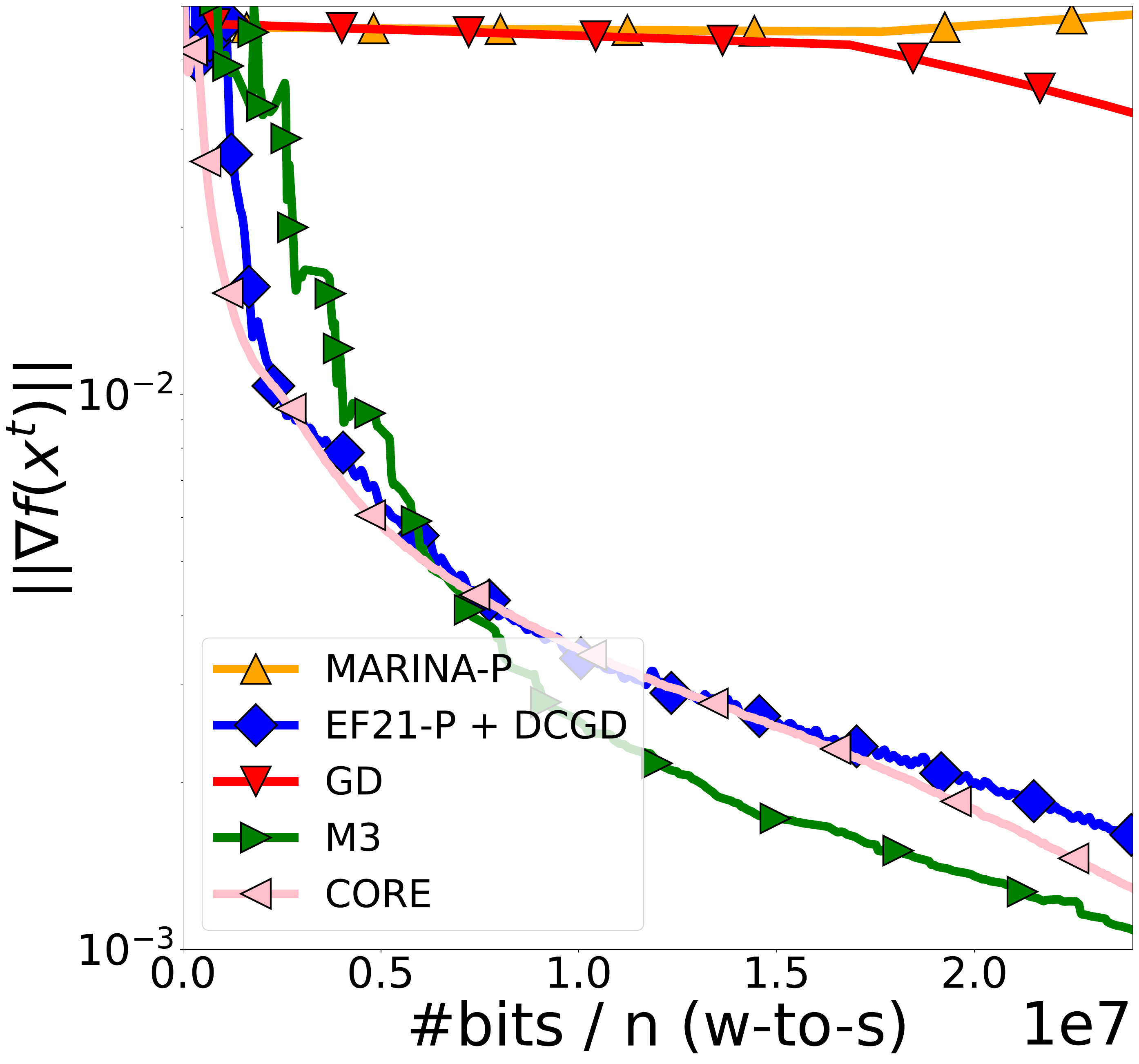}
    \end{subfigure}
    \caption{Experiments on the autoencoder task from Section~\ref{sec:autoencode}. We plot the norm of the gradient w.r.t. \# of coordinates sent from the server (s-to-w) and from the workers (w-to-s).}
    \label{fig:all_exp}
\end{figure}
\subsection{Experiments with an autoencoder and MNIST}
\label{sec:autoencode}
We now compare \algname{MARINA-P}, \algname{M3}, \algname{CORE}, \algname{EF21-P + DCGD}, and \algname{GD} on a non-convex autoencoder problem. We train it on the \texttt{MNIST} dataset \citep{lecun2010mnist} with objective function
$f(\mD, \mE) \eqdef \frac{1}{m}\sum_{i = 1}^m \norm{\mD \mE b_i - b_i}^2 + \frac{\lambda}{2}\norm{\mD \mE - \mI}_F^2,$
where $\mD \in \R^{d_1 \times d_2}$, $\mE \in \R^{d_2 \times d_1}$, $b_i \in \R^{d_1}$ are samples, $d_1 = 784$ is the number of features, 
$d_2 = 16$ is the size of the encoding space, $\lambda = 0.001$ is a regularizer, and $m = 60\,000$ is the number of samples. The dimension of the problem is $d = 25\,088.$ We randomly split the dataset among $n = 100$ workers. For \algname{MARINA-P} and \algname{M3}, we take Perm$K$ followed by the natural compressor $\cC_{\textnormal{nat}}$ on the server's side. On the workers' side, \algname{M3} uses Rand$K$ and $\cC_{\textnormal{nat}}$. For \algname{EF21-P + DCGD}, we take Rand$K$ with $\cC_{\textnormal{nat}}$ on both the workers' and server's sides. In each case, $K = \left\lfloor d / n\right\rfloor = 250.$ For \algname{CORE}, we set the number of communicated coordinates to $100.$ As in previous experiments, we only fine-tune the step size, repeat each experiment $5$ times, and plot the average results.

The results are presented in Figure~\ref{fig:all_exp}. All methods with bidirectional compression: \algname{M3}, \algname{CORE}, and \algname{EF21-P + DCGD}, converge much faster than \algname{GD}. \algname{MARINA-P} converges fastest only in the first plot. This is expected since it compresses only from the server to the workers. \algname{M3}, \algname{CORE}, and \algname{EF21-P + DCGD} have similar convergence rates in both metrics, with \algname{M3} performing better in the low accuracy regime.

\subsection{Extra experiments with quadratic optimization tasks}\label{sec:exp_quad_lalb}

The aim of this set of experiments is to empirically test our results under Assumption \ref{ass:functional}. We consider the problem of quadratic minimization with varying level of heterogeneity between the $n$ functions stored on the workers. The goal is to minimize the squared norm of the gradient of $\sum_{i=1}^n f_i$, where the functions $f_i$ are of form
\begin{align*}
    f_i(x) = \frac{1}{2} x^T\mA_i x + b_i^Tx.
\end{align*}
Here, $\mA_i$ are $d \times d$ matrices generated following the procedure in Algorithm \ref{algorithm:het_quad_problem}, and $b_i$ denotes a standard normal vector in $\R^d$.
The constants $L_A$ and $L_B$ from Assumption \ref{ass:functional} (in this case, by Theorem \ref{theorem:a_b_hess} $L_A = \sqrt{2} \max_{i\in[n]} \norm{\mA_i - \mA}$ and $L_B = \sqrt{2} \left(\frac{1}{n} \sum_{i=1}^n \norm{\mA_i}\right)$) are controlled by parameters $v_i$ and $\sigma_i^2$. In particular, for $\sigma^2_i=0$, all workers hold the same matrix $\mA_i$, and hence in this case $L_A=0$.

We compare the following algorithms:
\begin{enumerate}
    \item \algname{MARINA-P} with Perm$K$ compressors,
    \item \algname{MARINA-P} with Rand$K$ compressors,
    \item \algname{MARINA-P} with SameRand$K$ compressor,
    \item \algname{EF21-P} with Top$K$ compressor,
    \item \algname{GD}.
\end{enumerate}
In all compressed methods, we set $K=\nicefrac{d}{n}$ and use $p = \nicefrac{k}{d}$ in \algname{MARINA-P}.

The step sizes are tuned from $2^i, i\in\mathbb{Z}$ multiples of the values predicted by the theory (indicated by $\times 1, \times 2, \ldots$ in the plots).
We fix $d = 300$ and generate optimization tasks with $n \in \brac{10, 100, 900}$. The results are presented in Figures \ref{fig:grid_n10}, \ref{fig:grid_n100}, \ref{fig:grid_n900}.

\begin{algorithm}[t]
    \caption{\algname{Heterogeneous quadratic problem generation}}
    \label{algorithm:het_quad_problem}
    \begin{algorithmic}[1]
    \STATE \textbf{Parameters:} $v_0,\ldots,v_4 \in \R_+$, $\sigma_0, \ldots, \sigma_4 \in \R_{\geq 0}$.
    \STATE Let
    \begin{align*}
        \mX = \frac{1}{4}
        \begin{bmatrix}
            2  & -1     &        &  0 \\
            -1 & \ddots & \ddots &    \\
               & \ddots & \ddots &  -1  \\
            0  &        & -1     & 2
        \end{bmatrix}
        \in \R^{300\times 300},
    \end{align*}
    \FOR{$k = 0, \dots, 4$}
        \STATE Generate $\xi_i \sim \mathcal{N}(0,\sigma_k^2) \cap [-v_0,v_0]$ for $i\in[n]$
        \FOR{$l = 0, \dots, 4$}
            \STATE Set $\mA_i^{k,l} = (v_l + \xi_i) \mX$ for $i\in[n]$
            \STATE Sample $b_i^{k,l} \sim \mathcal{N}(0,\mathbb{I}_d)$ for $i\in[n]$
        \ENDFOR
        \STATE \textbf{Output:} matrices $\mA_i^{k,l}$, vectors $b_i^{k,l}$, $i\in[n]$, $k,l\in[4]$.
    \ENDFOR
    \end{algorithmic}
\end{algorithm}

The empirical results align well with the theory. Among the algorithms tested, \algname{MARINA-P} with Perm$K$ compressor exhibits the best performance, while \algname{MARINA-P} with SameRand$K$ converges the slowest and comparable to \algname{GD}. \algname{MARINA-P} with Rand$K$ compressor and \algname{EF21-P} achieve performance levels somewhere in between. Notably, the differences between the runs of \algname{MARINA-P} with different compressors become more pronounced as the value of $n$ increases. As anticipated, the performance of \algname{MARINA-P} with Rand$K$ and Perm$K$ compressors improves with an increase in the number of workers, while the performance of \algname{EF21-P} does not follow the same behaviour. Specifically, for $n=10$, \algname{EF21-P} outperforms \algname{MARINA-P} with Rand$K$ compressor, but this pattern reverses for both $n=100$ and $n=1000.$

\begin{figure}
\centering
\begin{subfigure}{0.24\textwidth}
  \centering
  \includegraphics[width=\textwidth]{./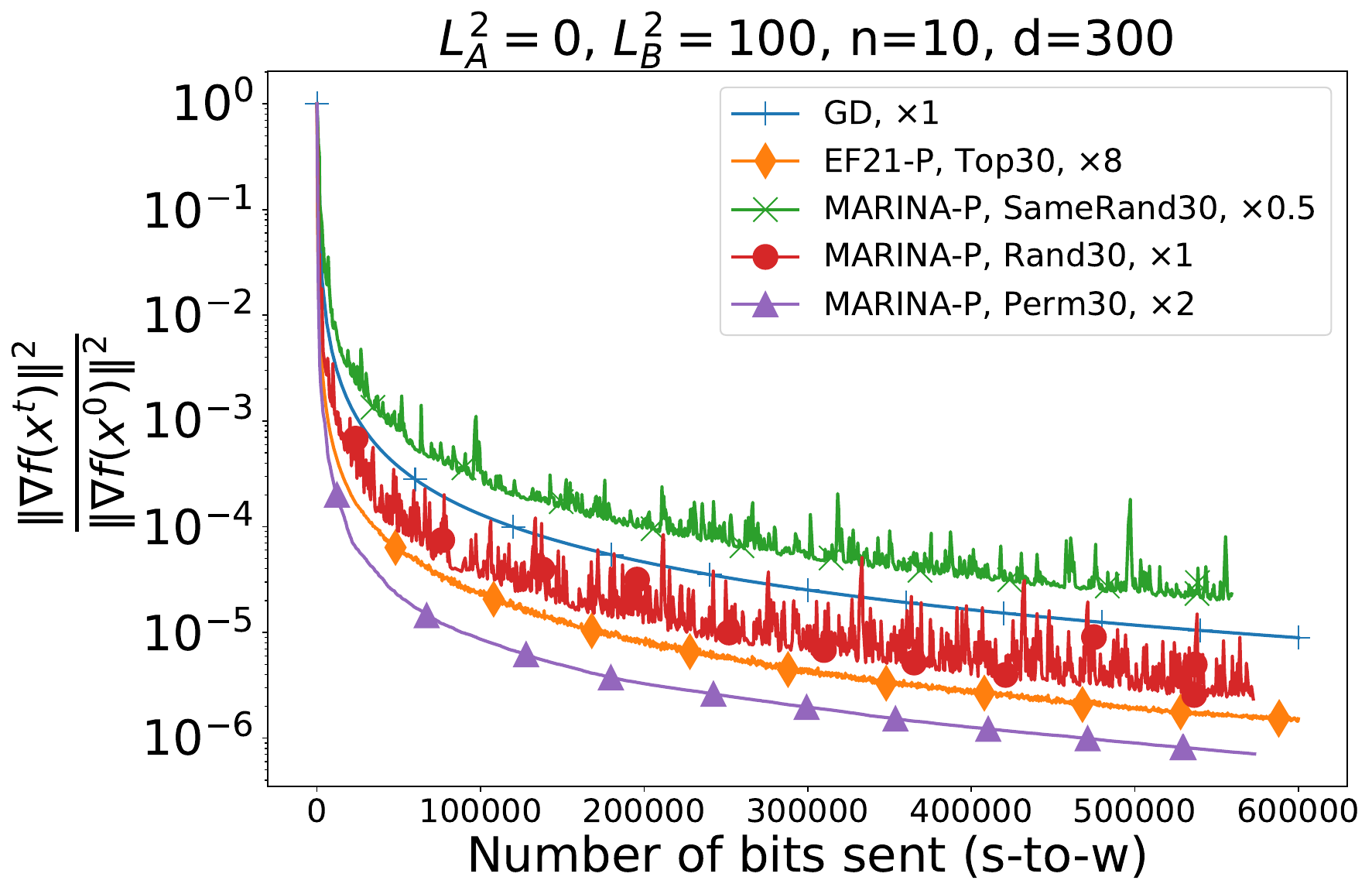}
\end{subfigure}%
\begin{subfigure}{0.24\textwidth}
  \centering
  \includegraphics[width=\textwidth]{./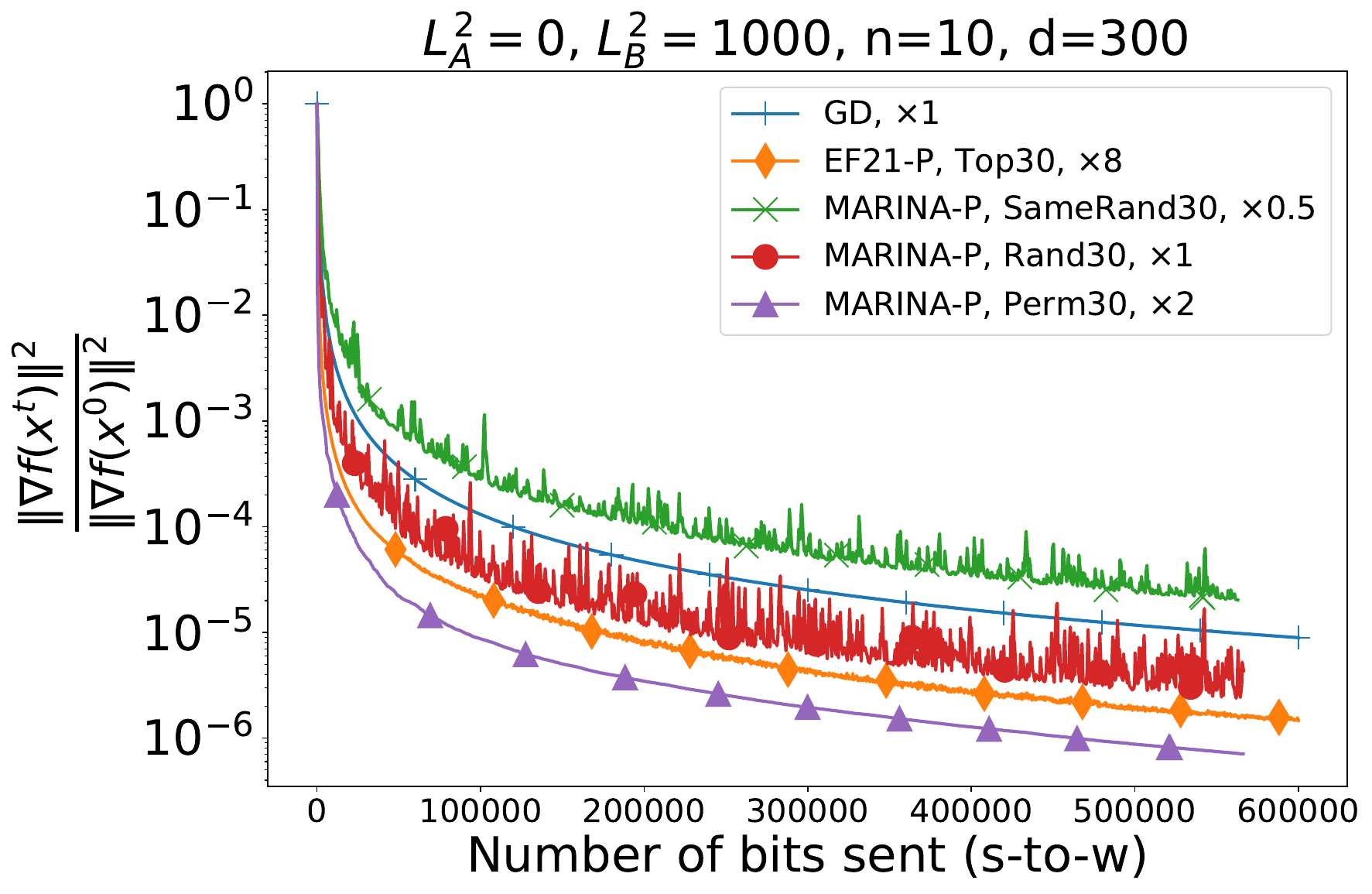}
\end{subfigure}
\begin{subfigure}{0.24\textwidth}
  \centering
  \includegraphics[width=\textwidth]{./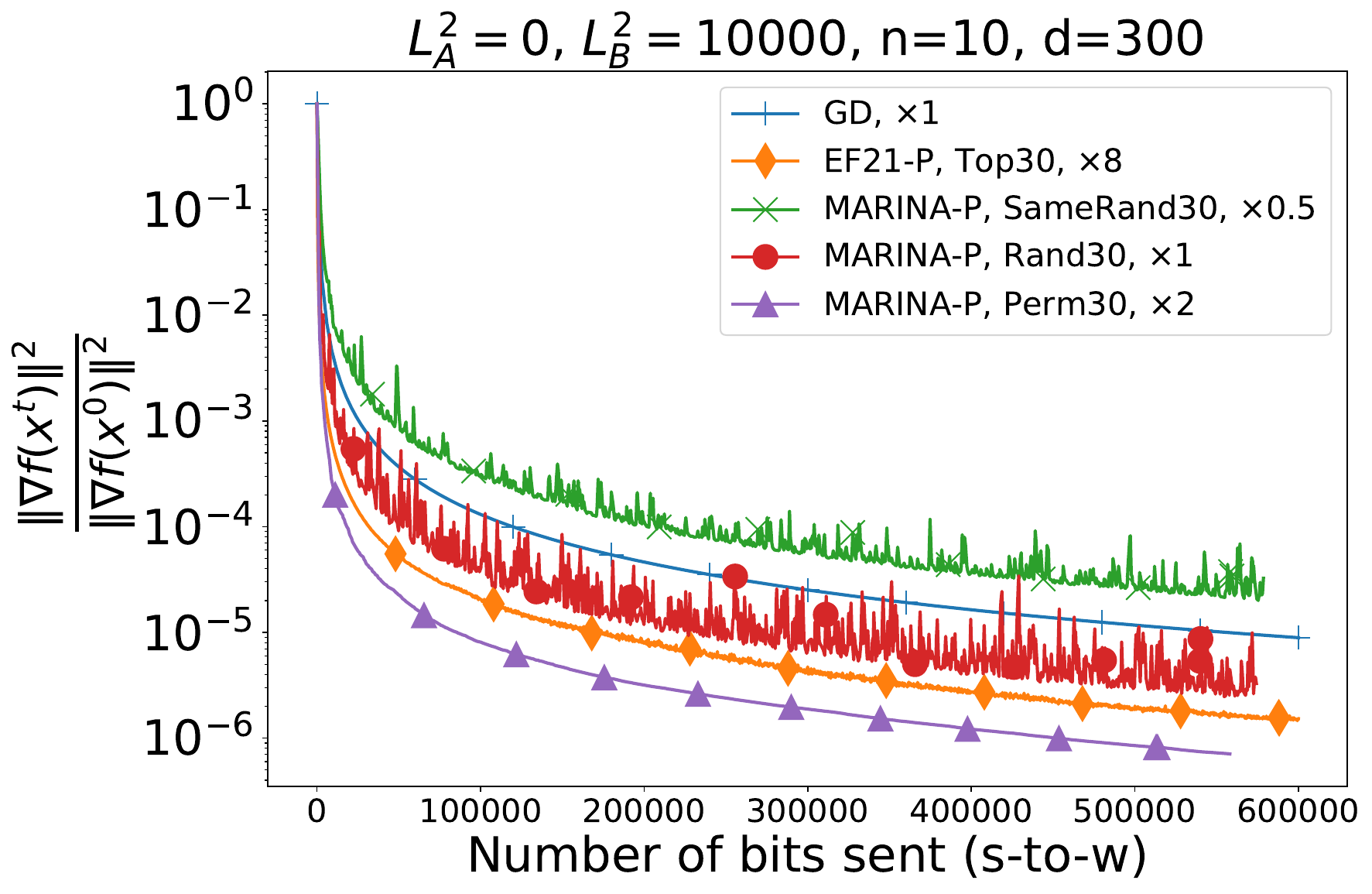}
\end{subfigure}
\begin{subfigure}{0.24\textwidth}
  \centering
  \includegraphics[width=\textwidth]{./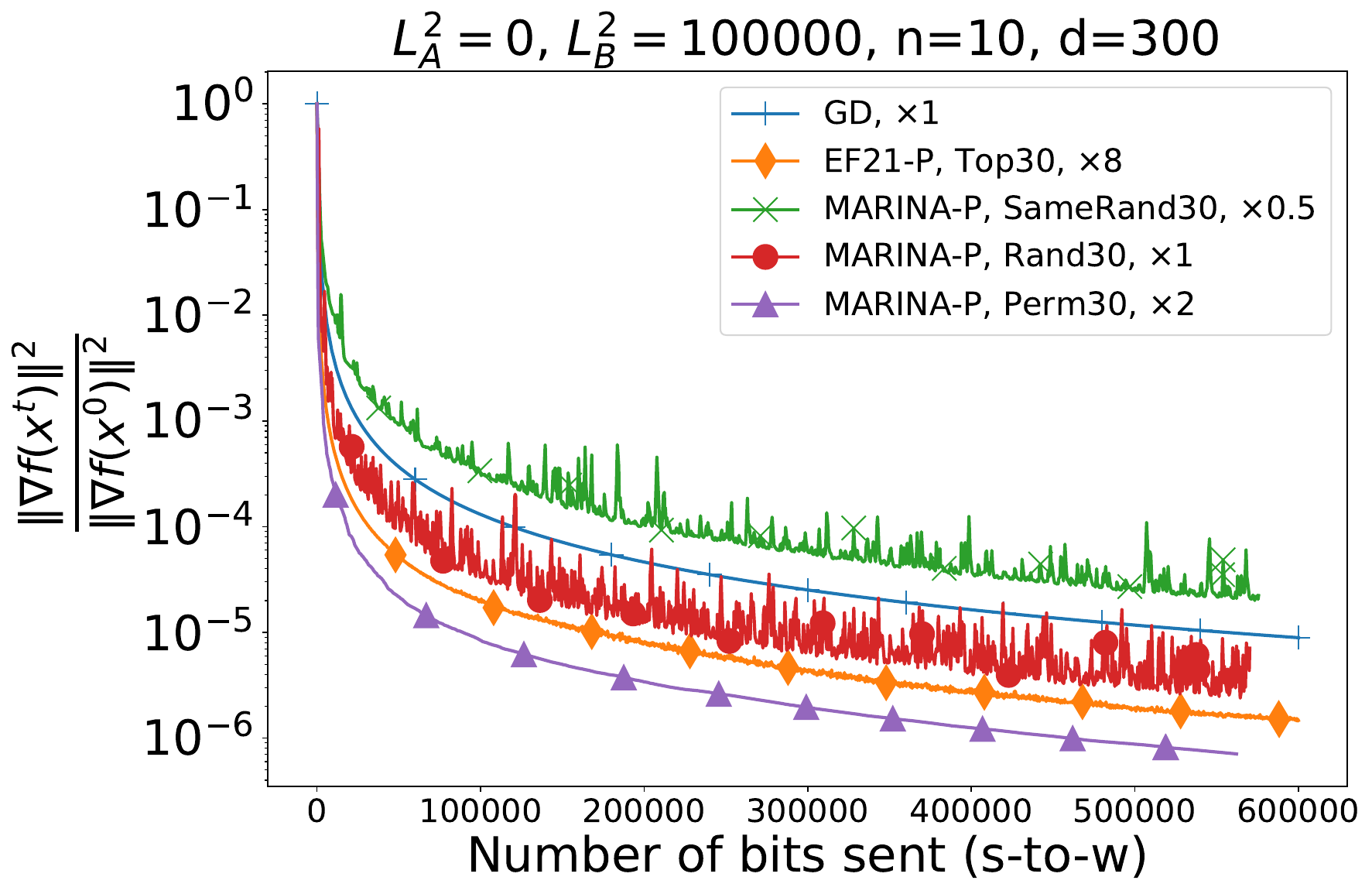}
\end{subfigure}
\begin{subfigure}{0.24\textwidth}
  \centering
  \includegraphics[width=\textwidth]{./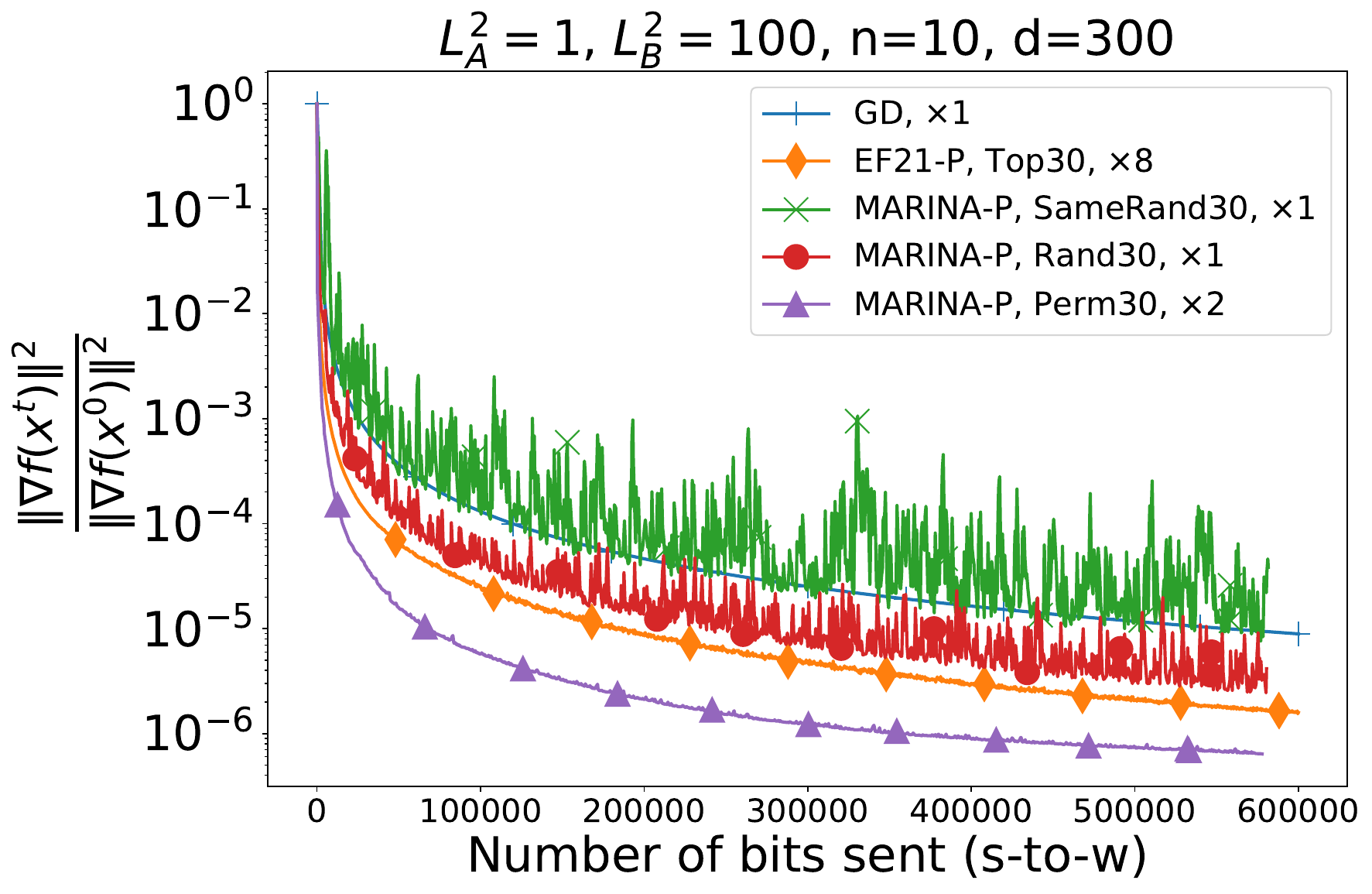}
\end{subfigure}
\begin{subfigure}{0.24\textwidth}
  \centering
  \includegraphics[width=\textwidth]{./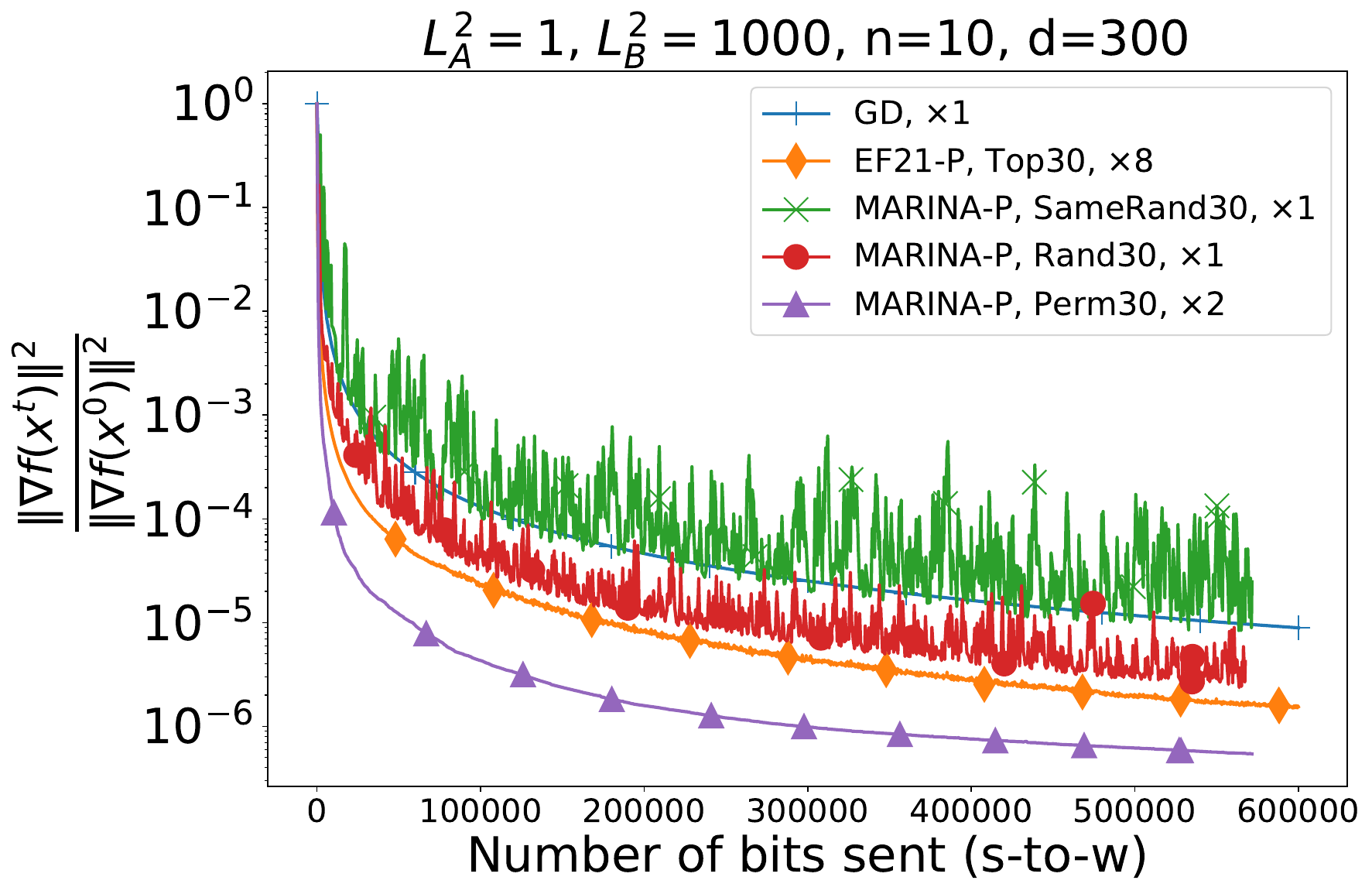}
\end{subfigure}
\begin{subfigure}{0.24\textwidth}
  \centering
  \includegraphics[width=\textwidth]{./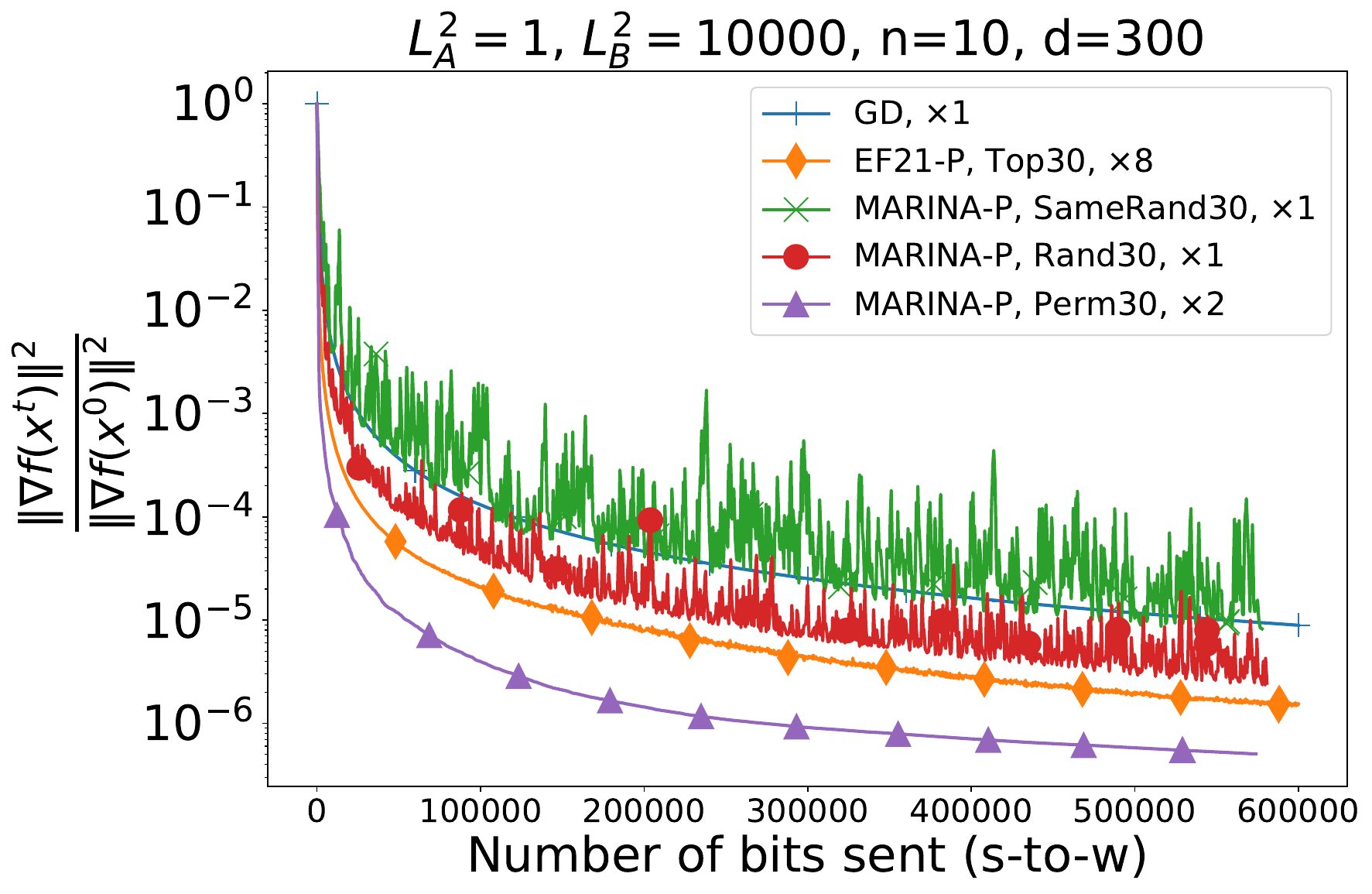}
\end{subfigure}
\begin{subfigure}{0.24\textwidth}
  \centering
  \includegraphics[width=\textwidth]{./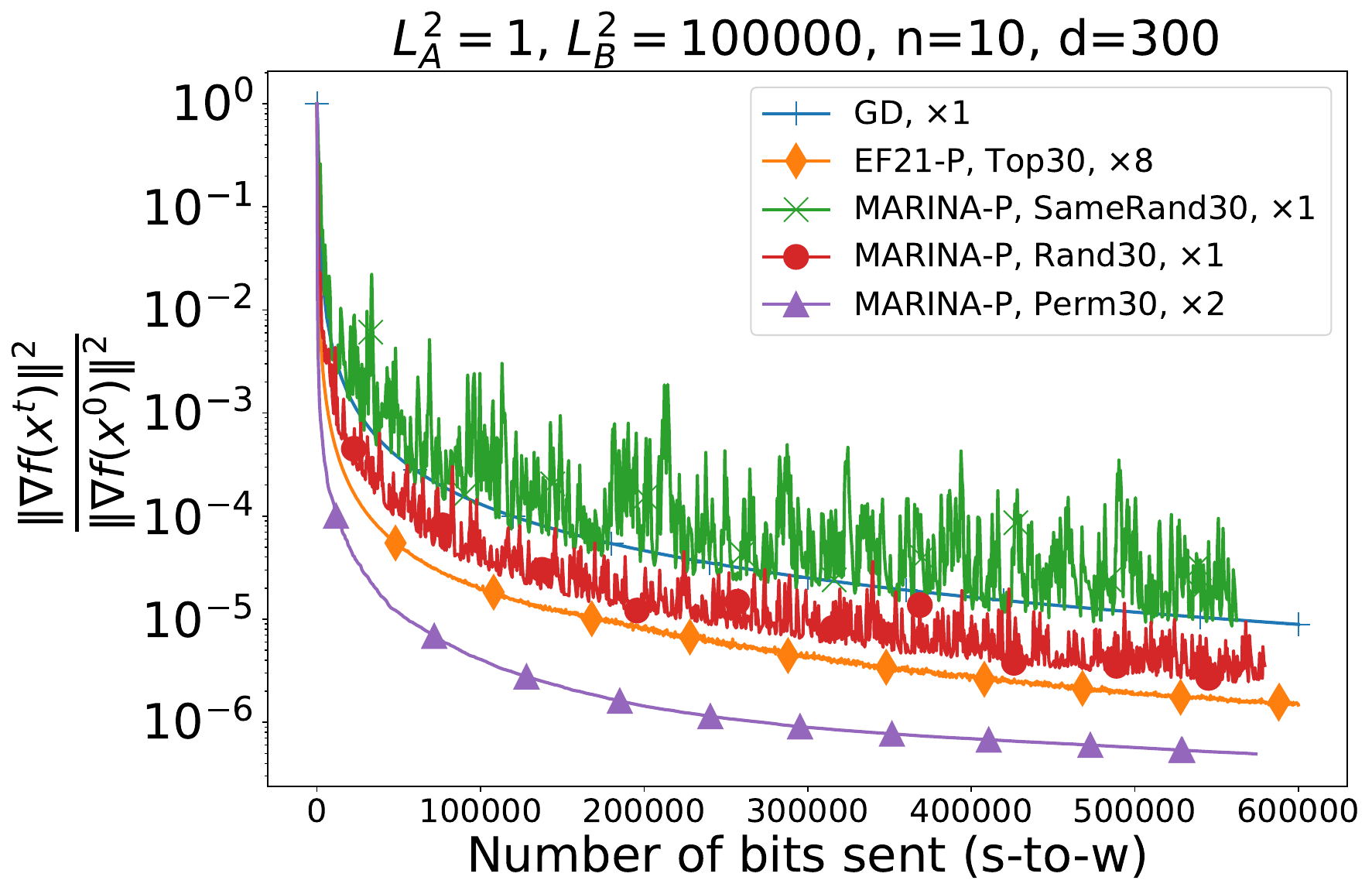}
\end{subfigure}
\begin{subfigure}{0.24\textwidth}
  \centering
  \includegraphics[width=\textwidth]{./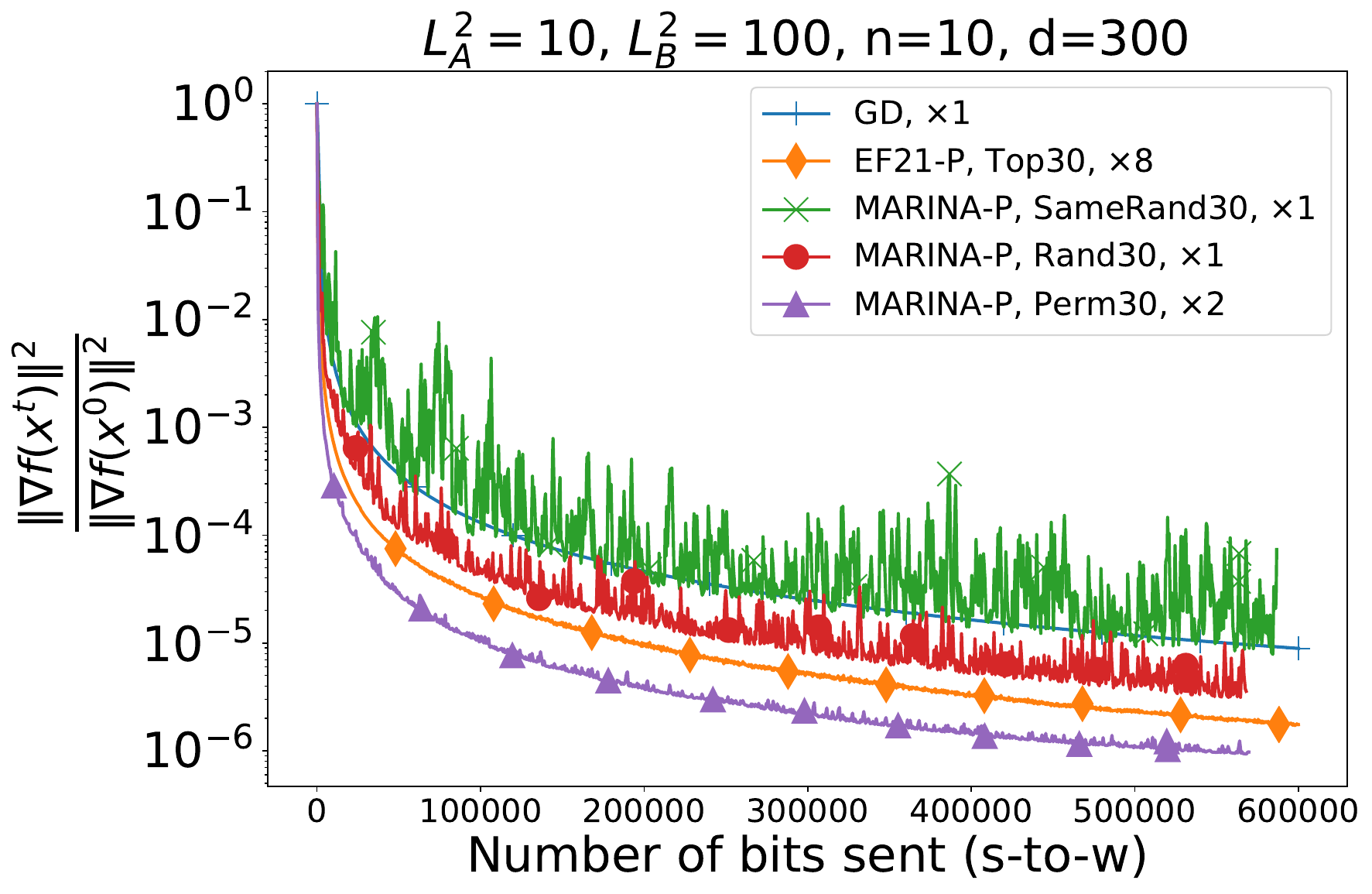}
\end{subfigure}
\begin{subfigure}{0.24\textwidth}
  \centering
  \includegraphics[width=\textwidth]{./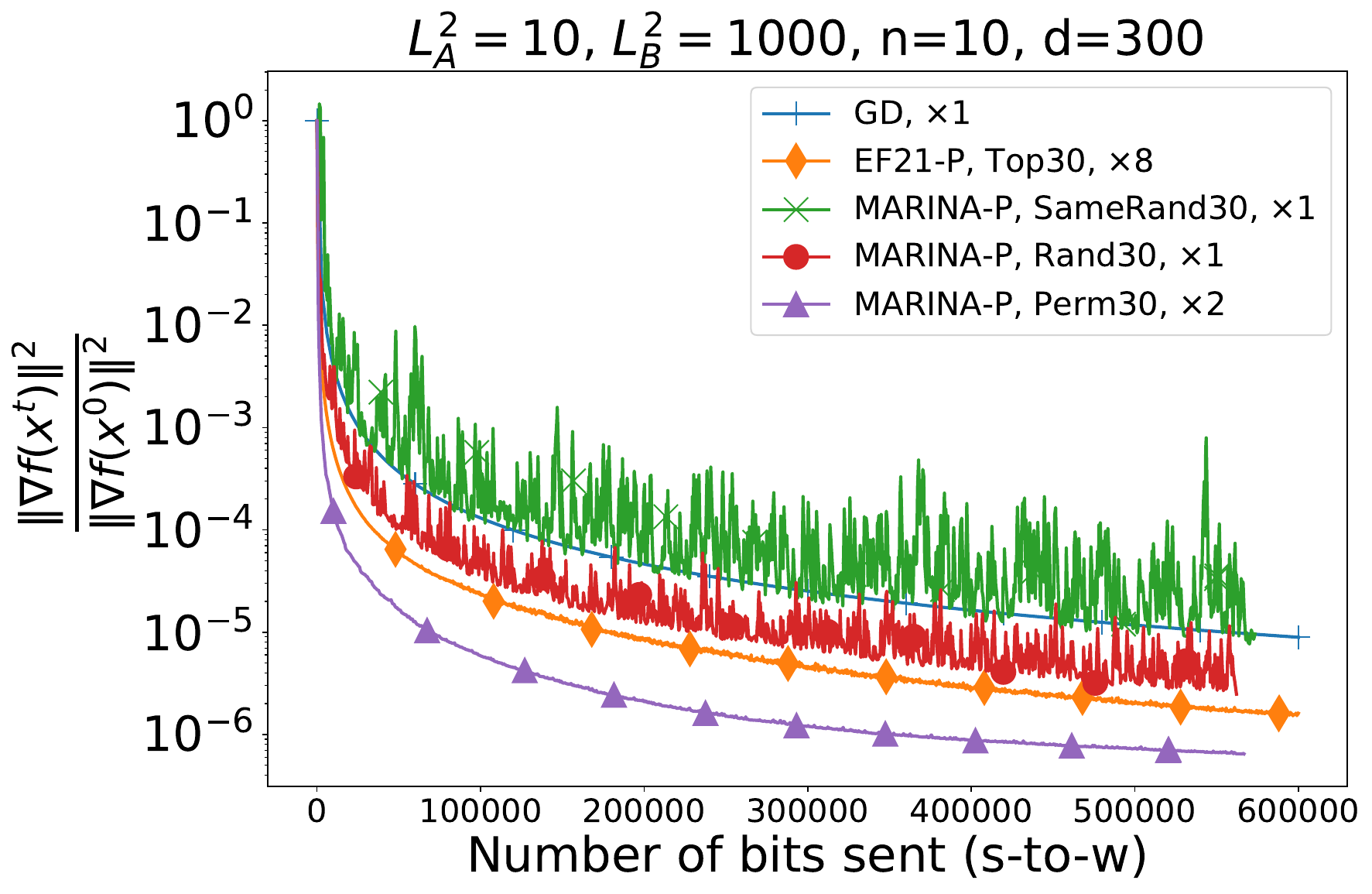}
\end{subfigure}
\begin{subfigure}{0.24\textwidth}
  \centering
  \includegraphics[width=\textwidth]{./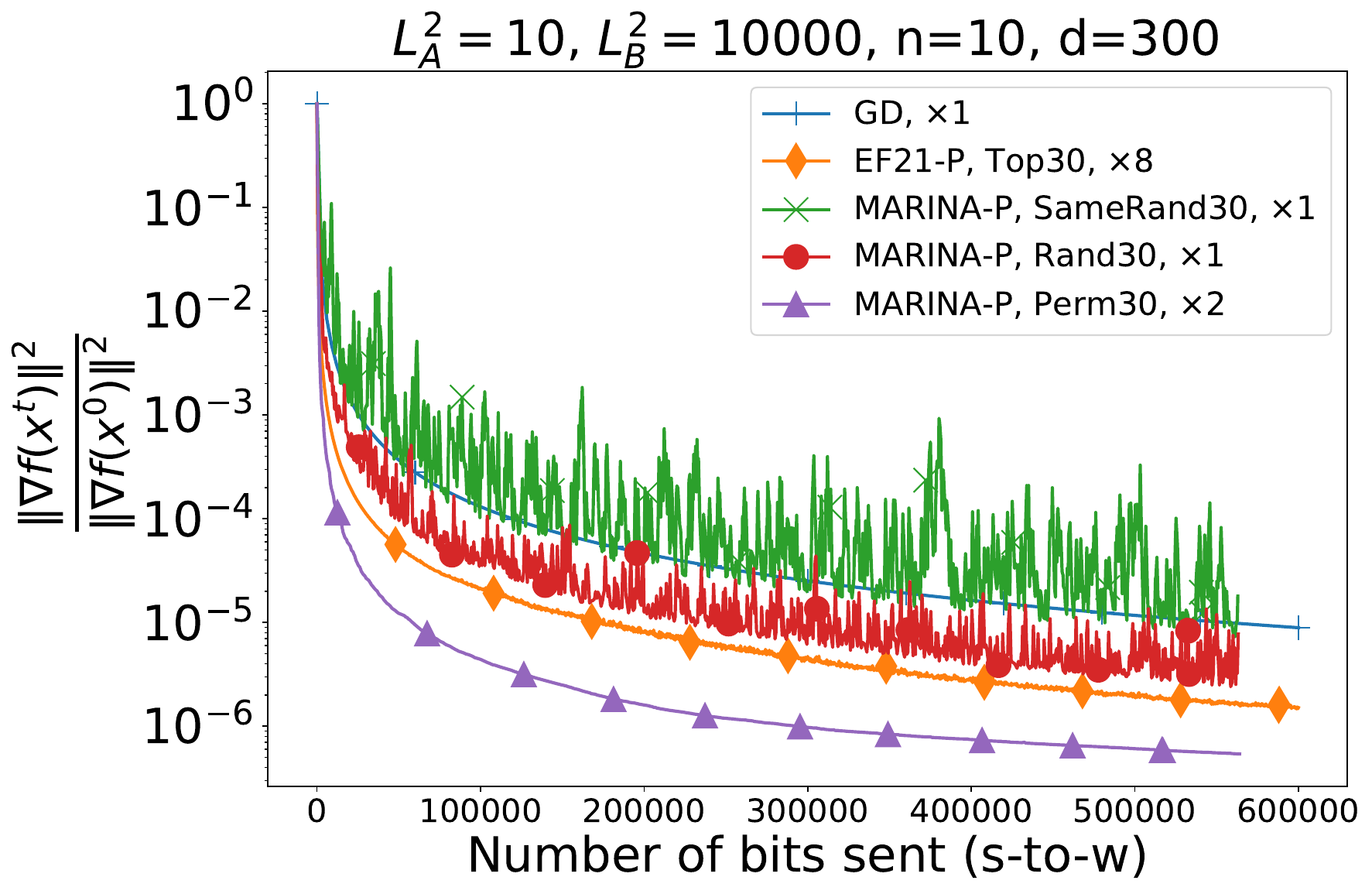}
\end{subfigure}
\begin{subfigure}{0.24\textwidth}
  \centering
  \includegraphics[width=\textwidth]{./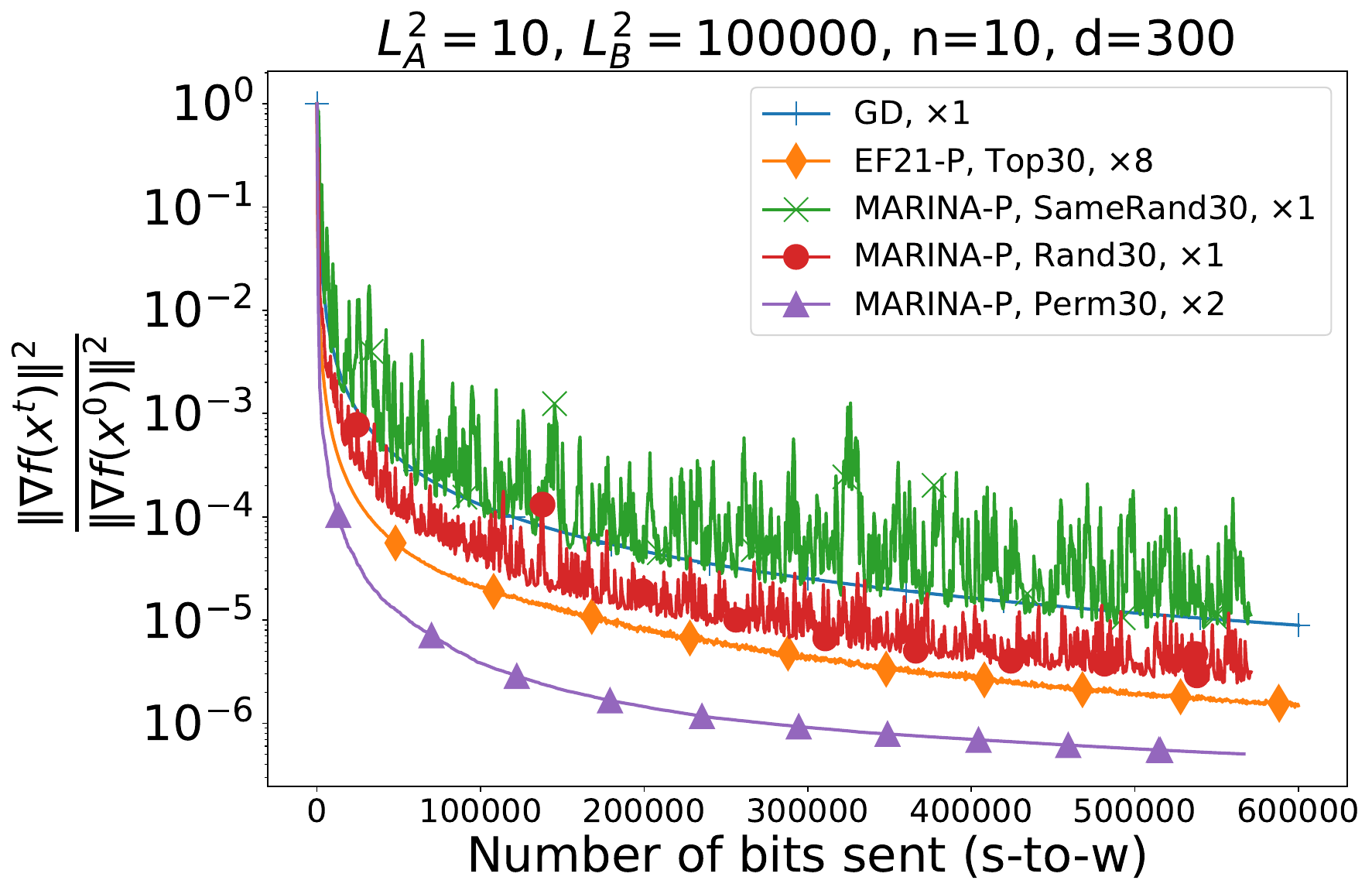}
\end{subfigure}
\begin{subfigure}{0.24\textwidth}
  \centering
  \includegraphics[width=\textwidth]{./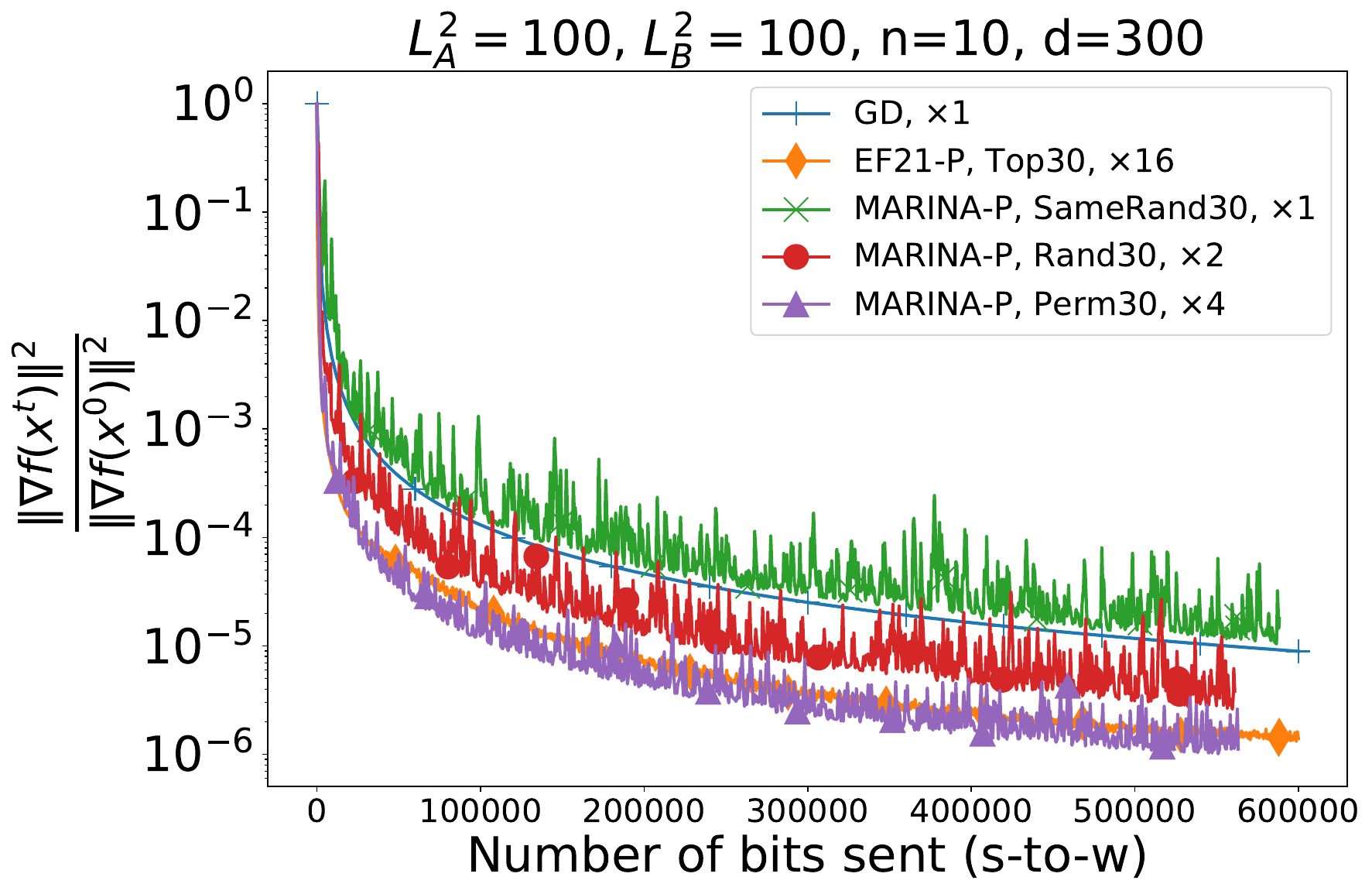}
\end{subfigure}
\begin{subfigure}{0.24\textwidth}
  \centering
  \includegraphics[width=\textwidth]{./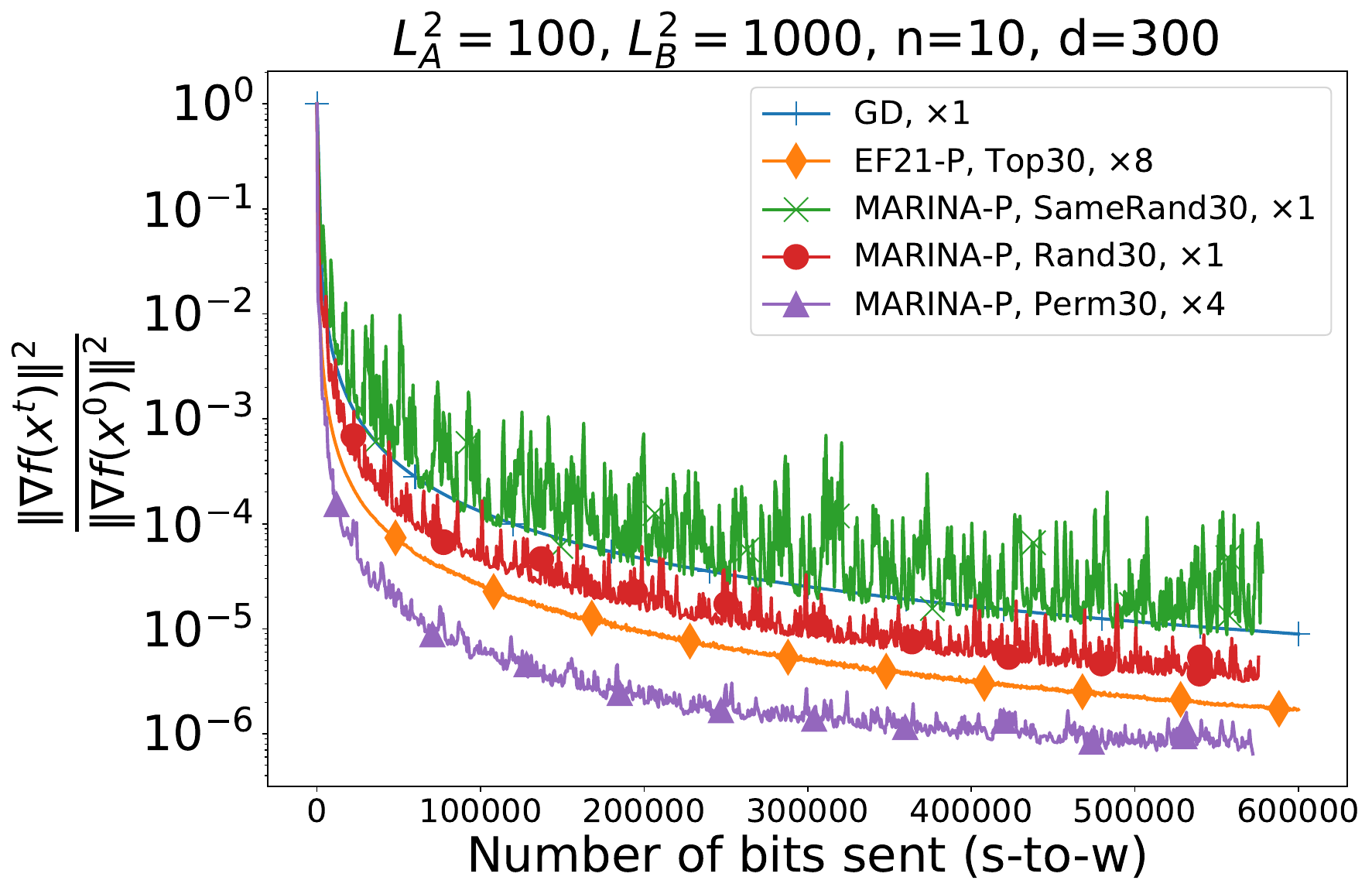}
\end{subfigure}
\begin{subfigure}{0.24\textwidth}
  \centering
  \includegraphics[width=\textwidth]{./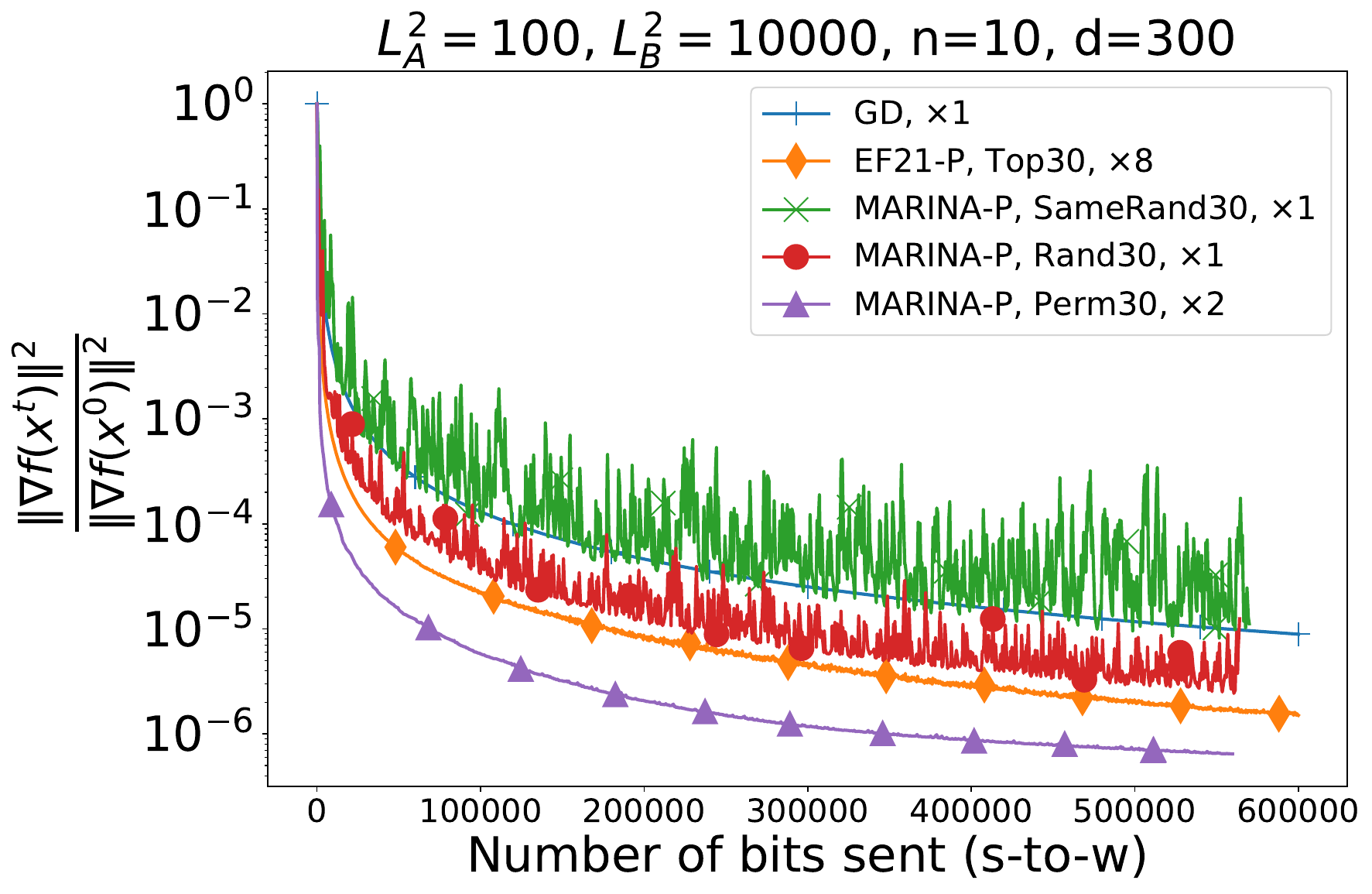}
\end{subfigure}
\begin{subfigure}{0.24\textwidth}
  \centering
  \includegraphics[width=\textwidth]{./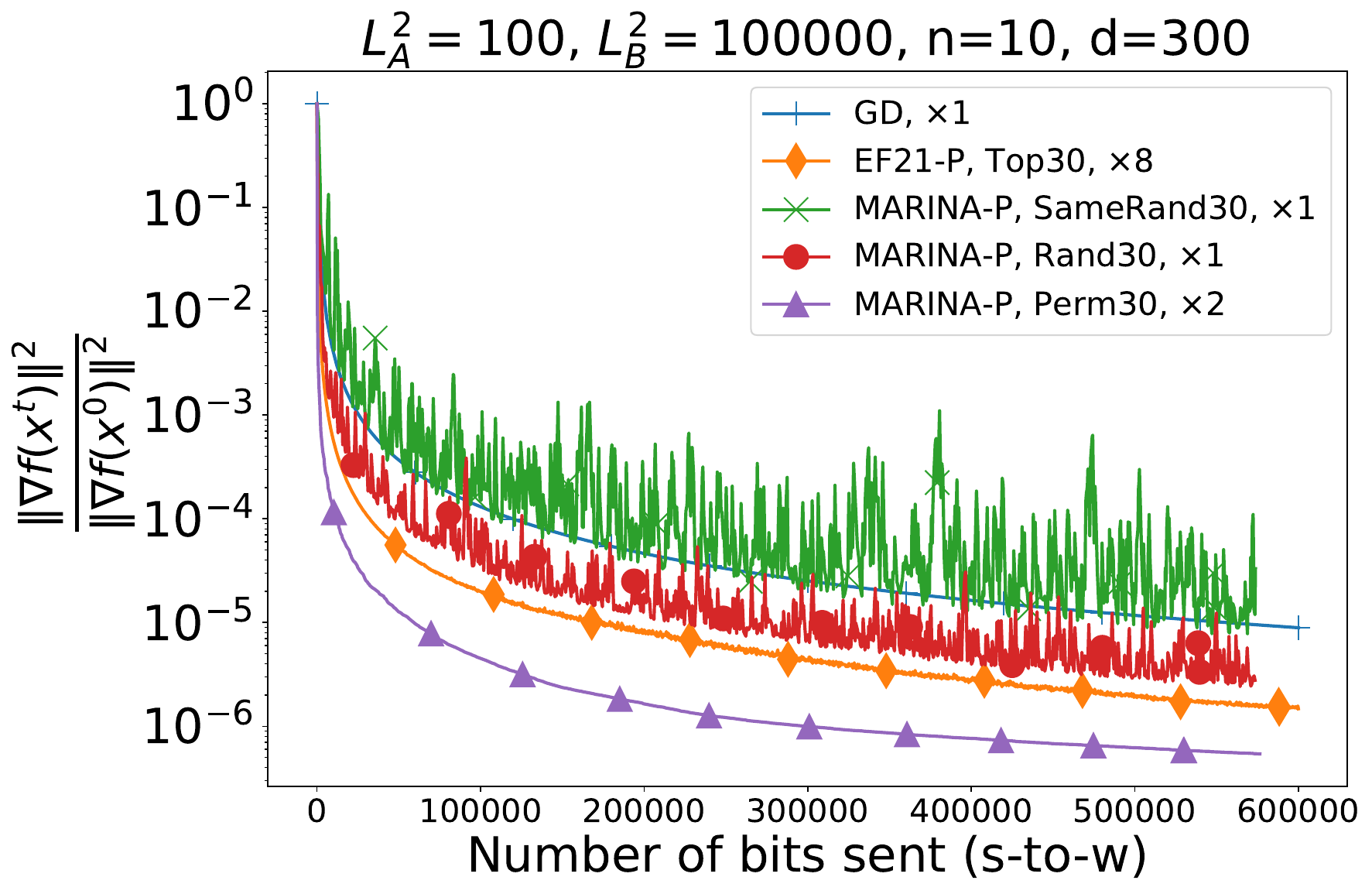}
\end{subfigure}
\caption{Experiments on the quadratic optimization problem from Section~\ref{sec:exp_quad_lalb} with $n=10$ for $L_A^2 \in \brac{0,1,10,100}$ and $L_B^2 \in \brac{100,1000,10000,100000}$.}
\label{fig:grid_n10}
\end{figure}

\begin{figure}[t]
\centering
\begin{subfigure}{0.24\textwidth}
  \centering
  \includegraphics[width=\textwidth]{./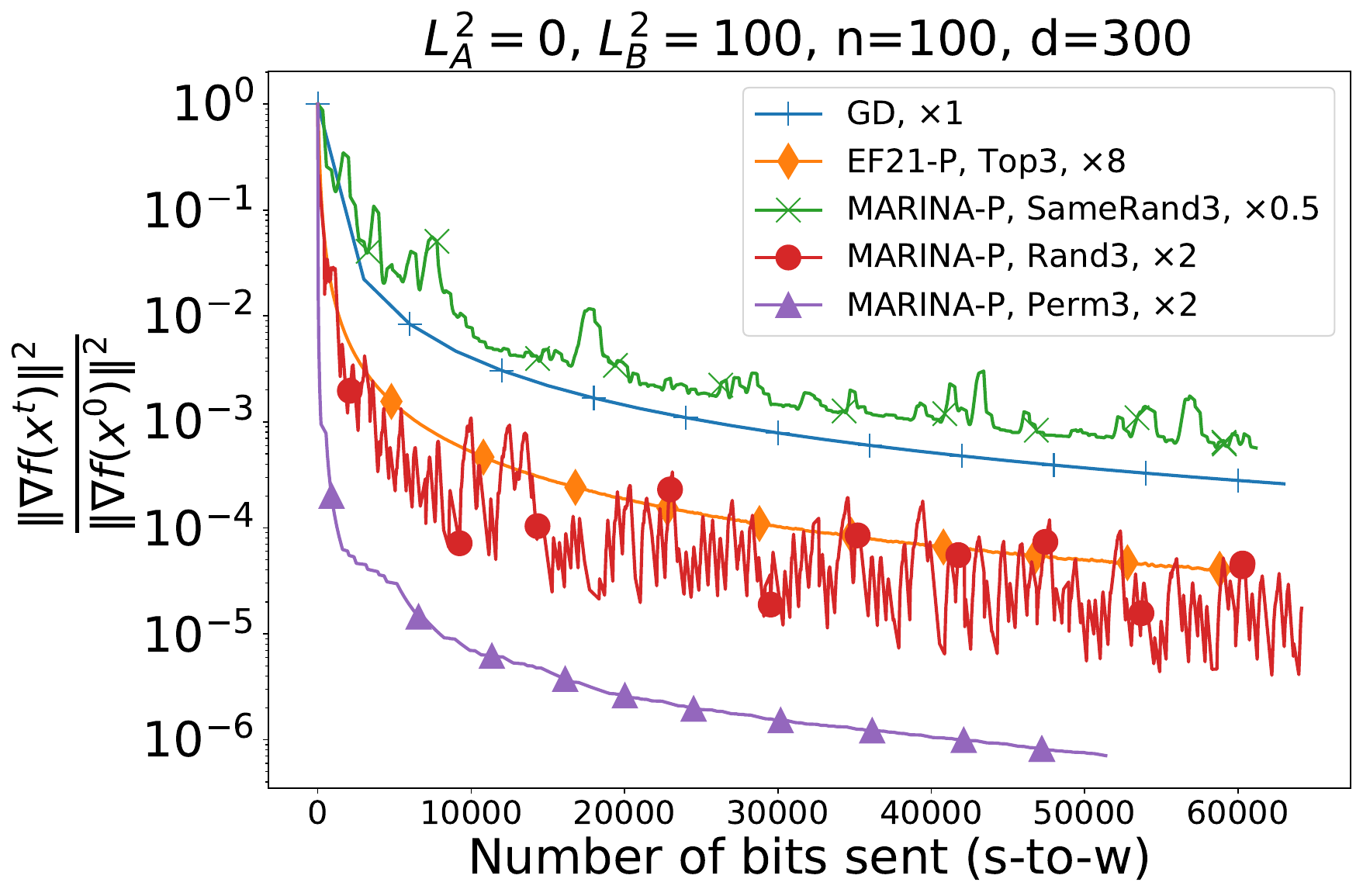}
\end{subfigure}%
\begin{subfigure}{0.24\textwidth}
  \centering
  \includegraphics[width=\textwidth]{./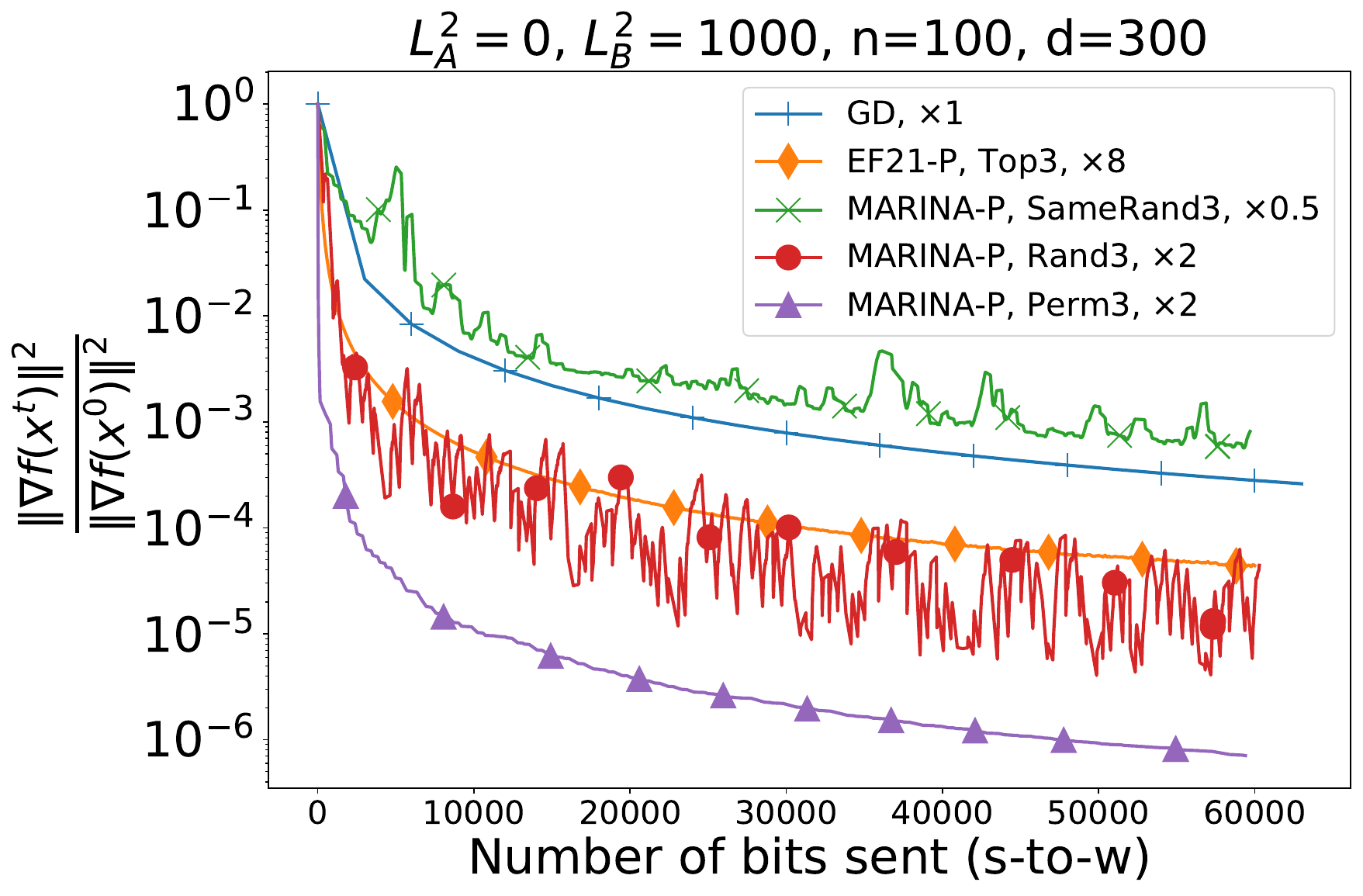}
\end{subfigure}
\begin{subfigure}{0.24\textwidth}
  \centering
  \includegraphics[width=\textwidth]{./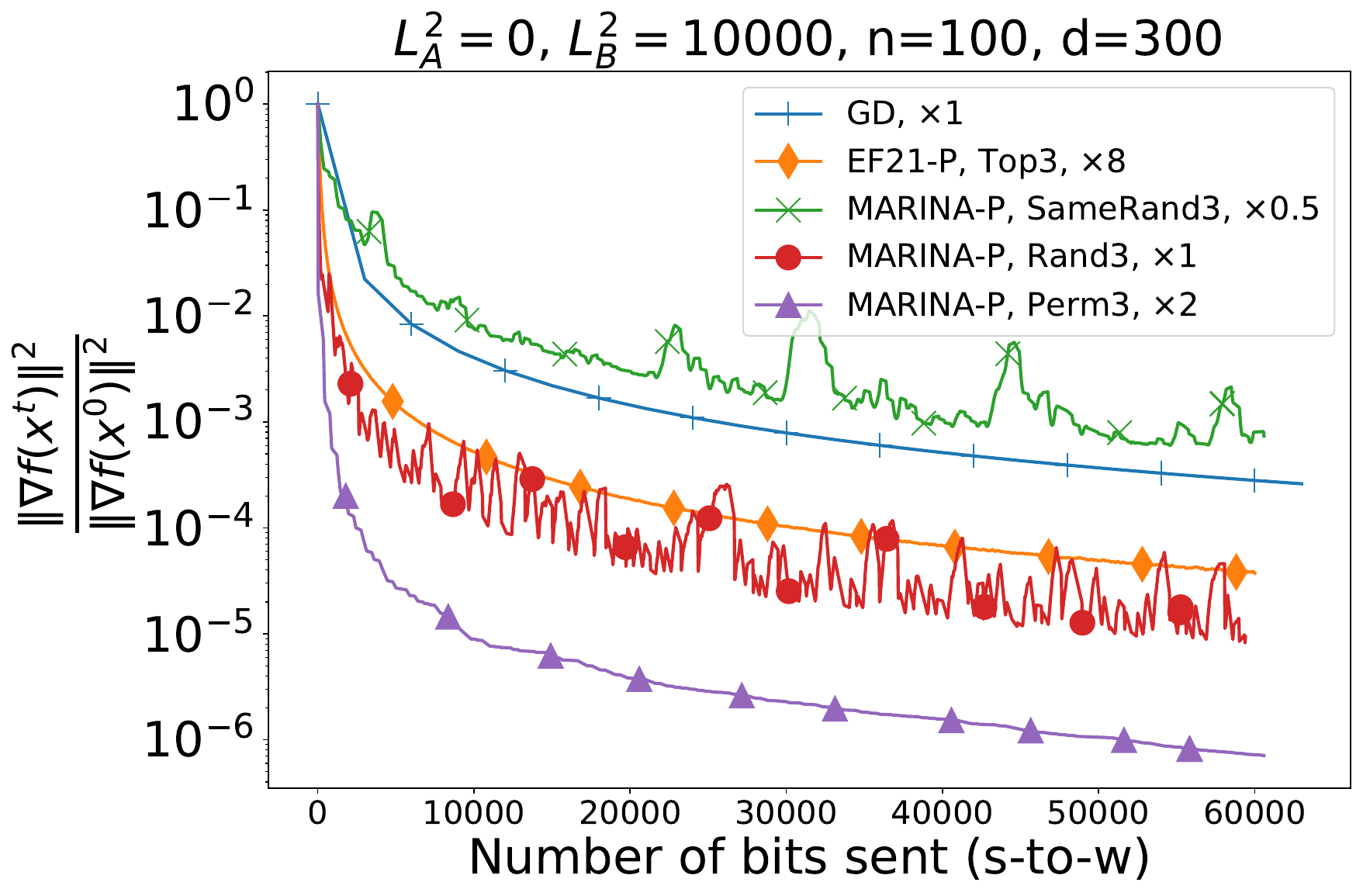}
\end{subfigure}
\begin{subfigure}{0.24\textwidth}
  \centering
  \includegraphics[width=\textwidth]{./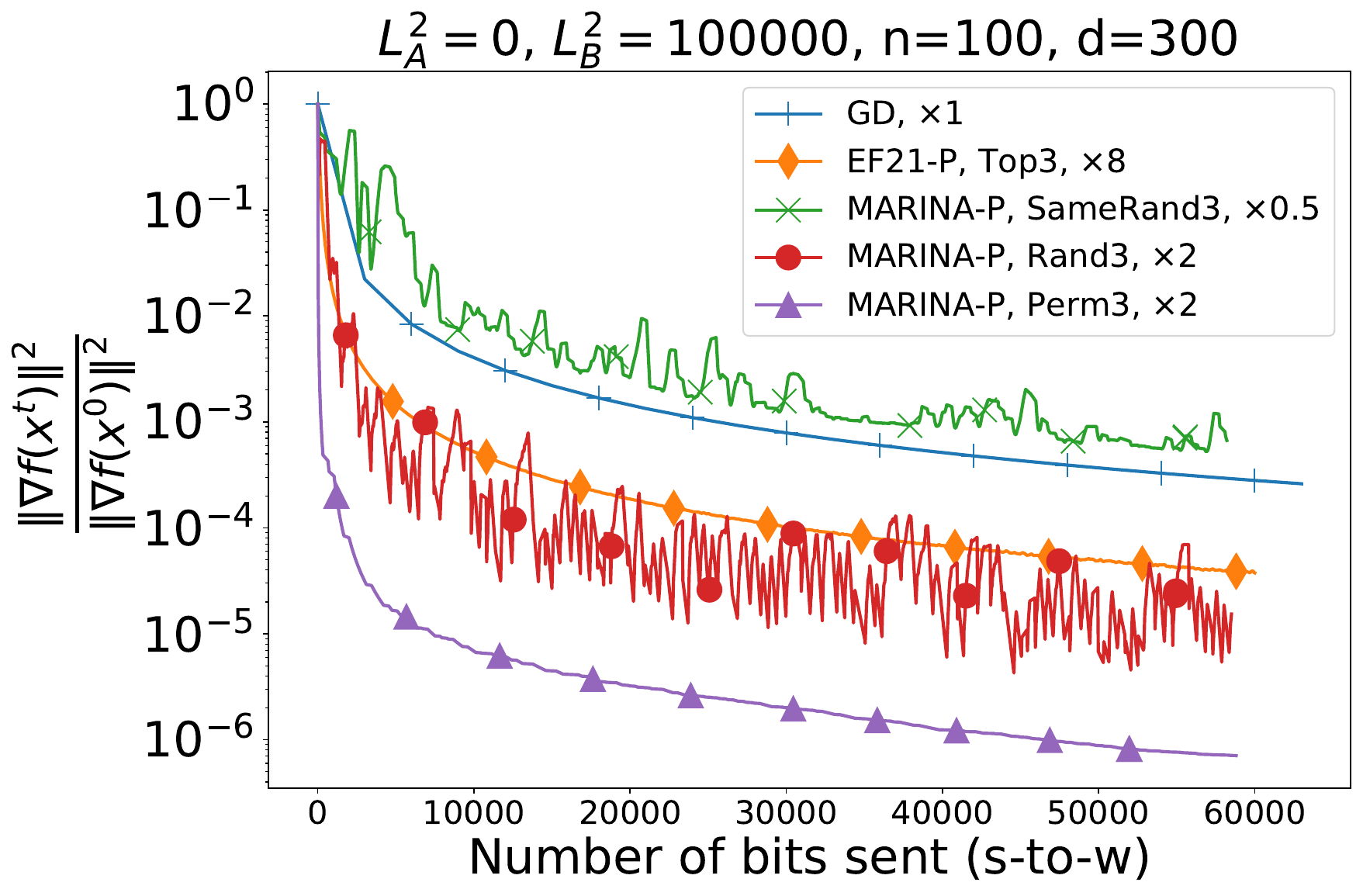}
\end{subfigure}
\begin{subfigure}{0.24\textwidth}
  \centering
  \includegraphics[width=\textwidth]{./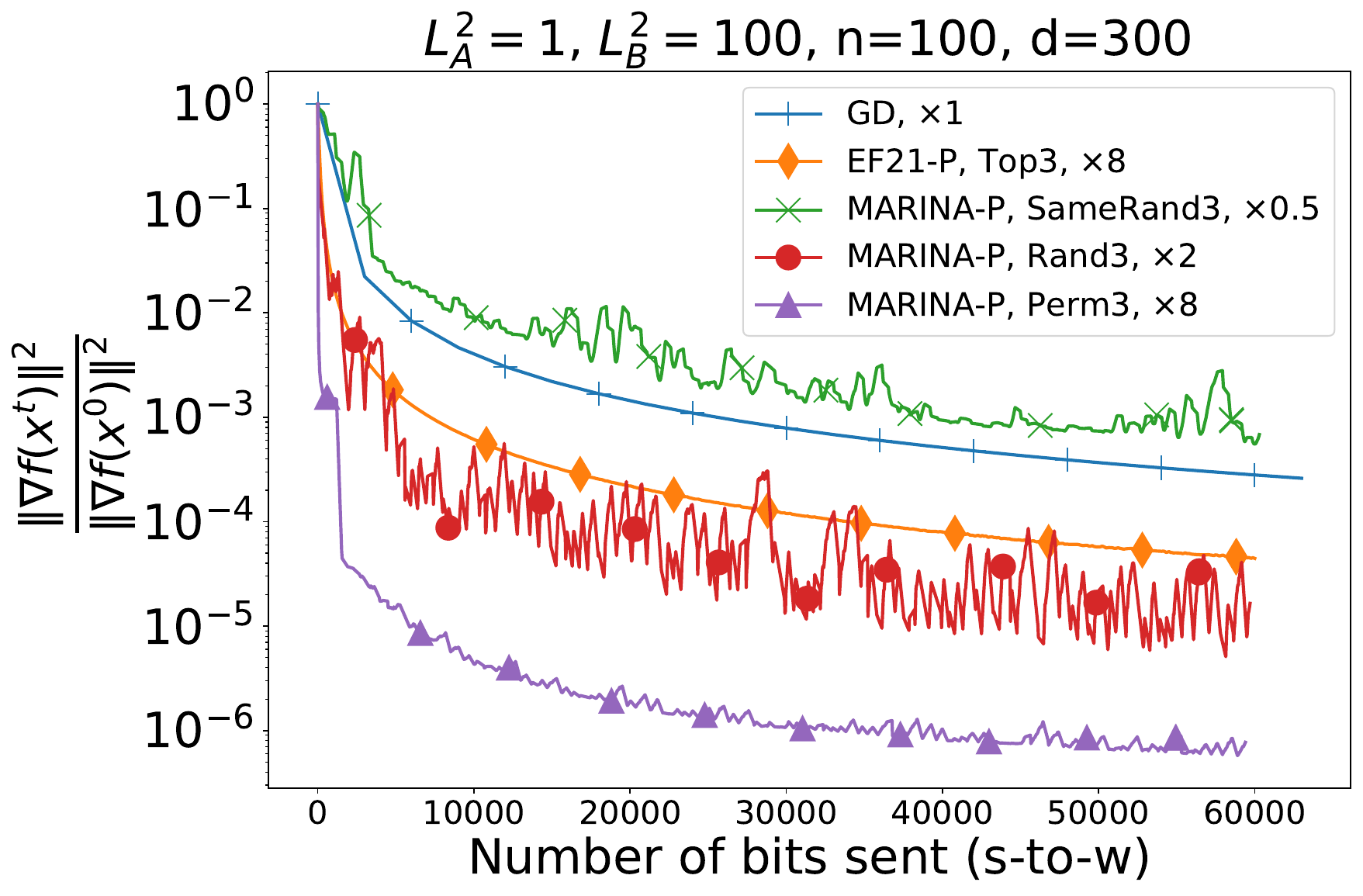}
\end{subfigure}
\begin{subfigure}{0.24\textwidth}
  \centering
  \includegraphics[width=\textwidth]{./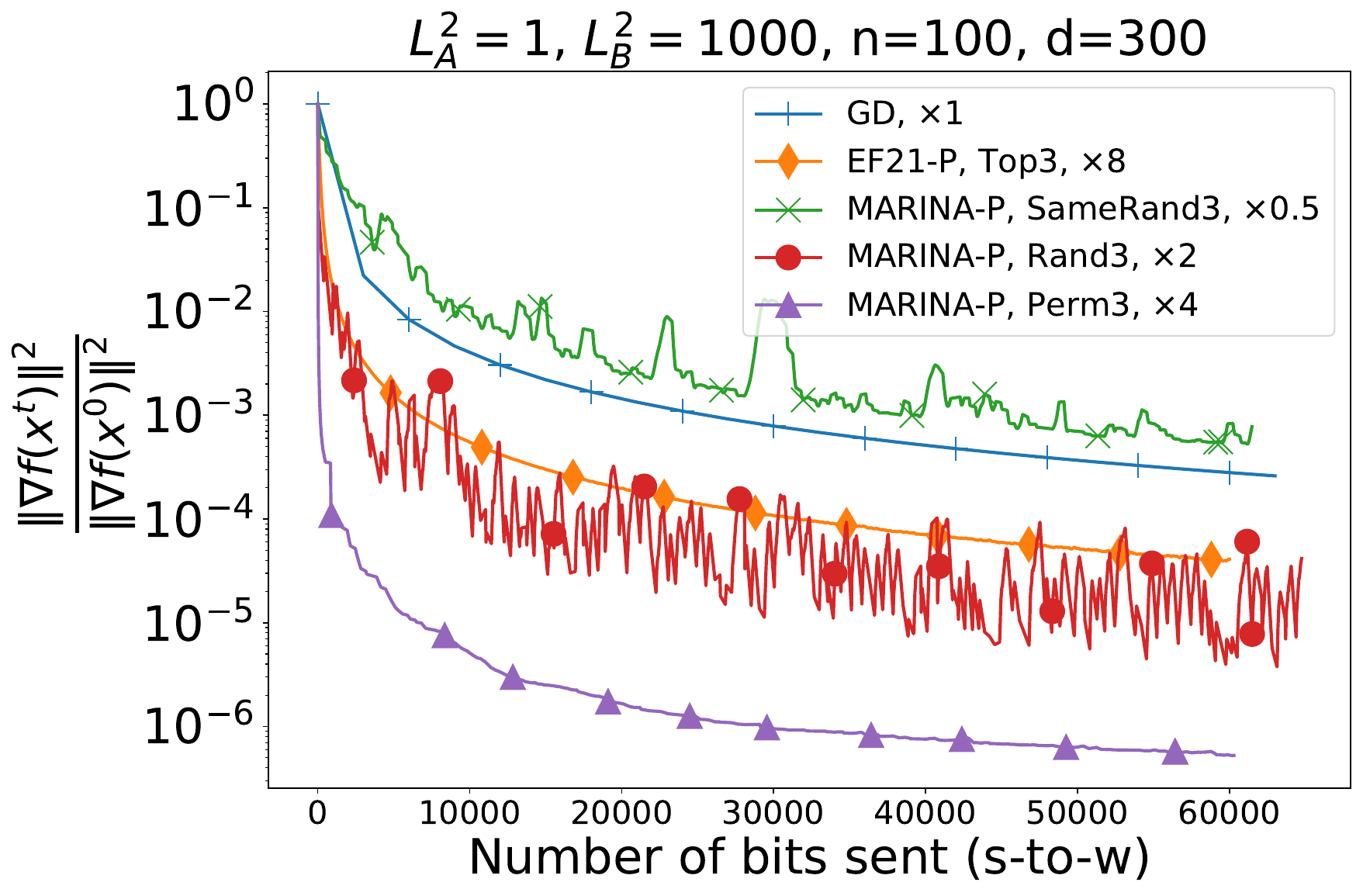}
\end{subfigure}
\begin{subfigure}{0.24\textwidth}
  \centering
  \includegraphics[width=\textwidth]{./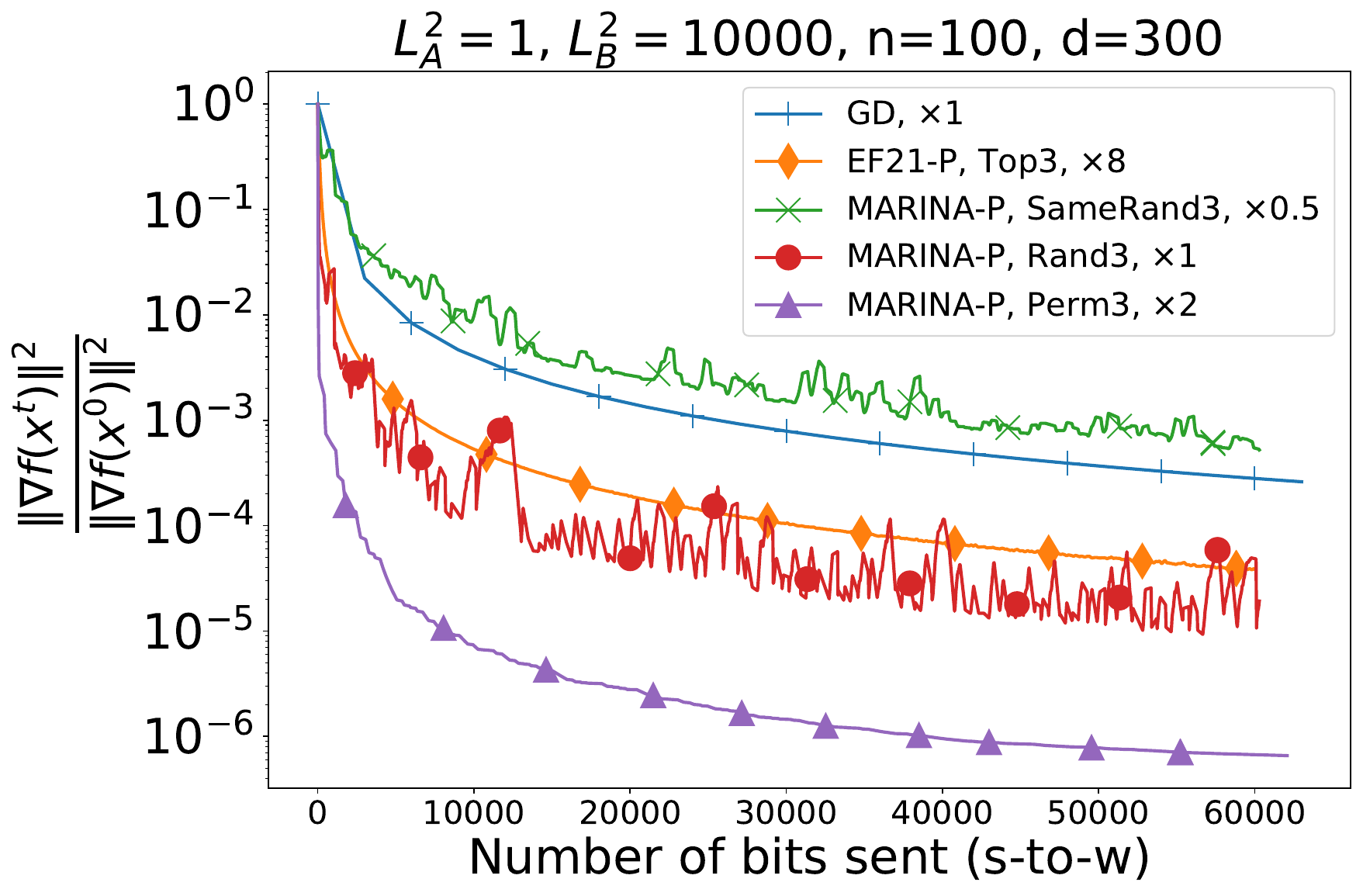}
\end{subfigure}
\begin{subfigure}{0.24\textwidth}
  \centering
  \includegraphics[width=\textwidth]{./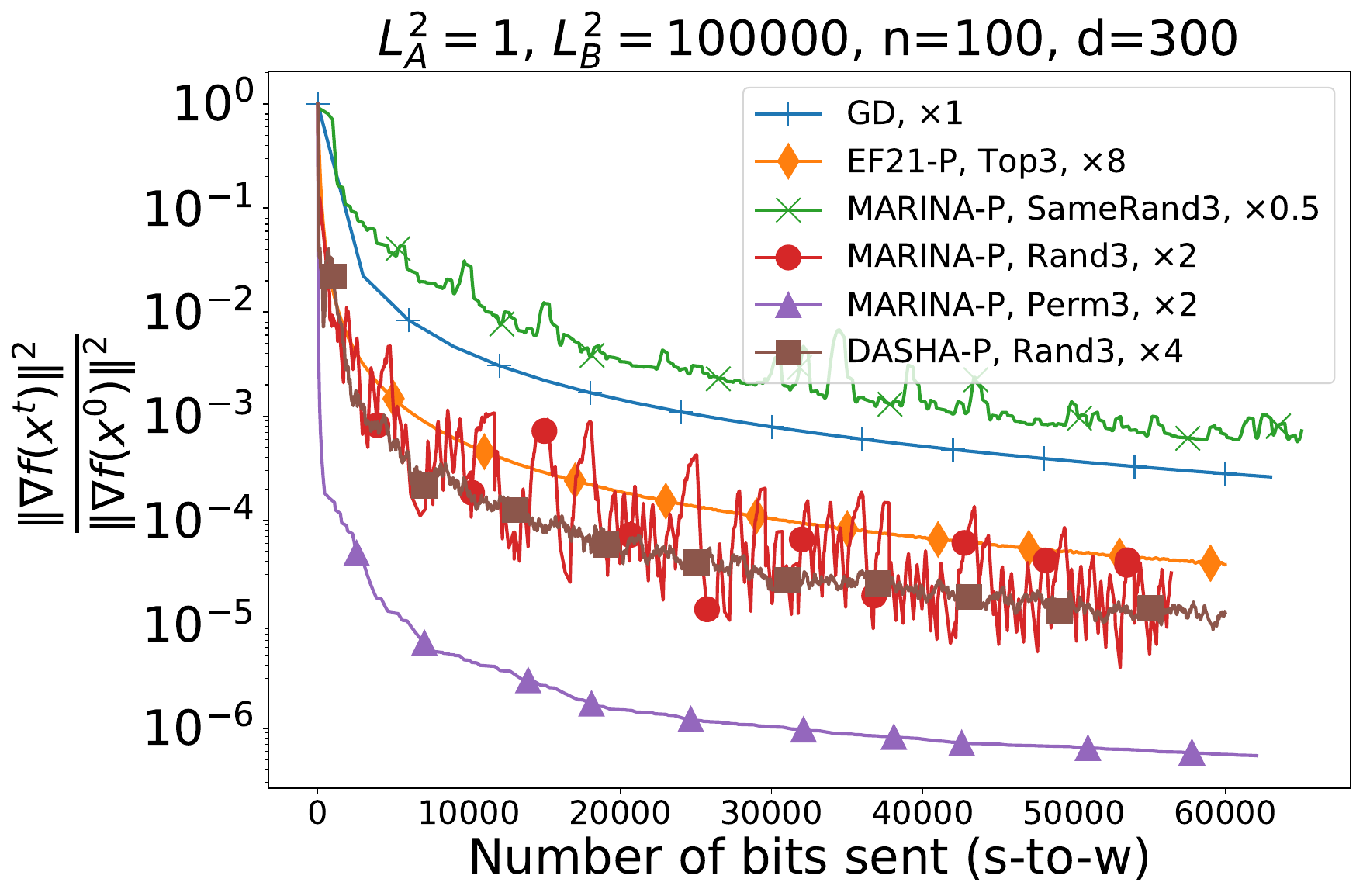}
\end{subfigure}
\begin{subfigure}{0.24\textwidth}
  \centering
  \includegraphics[width=\textwidth]{./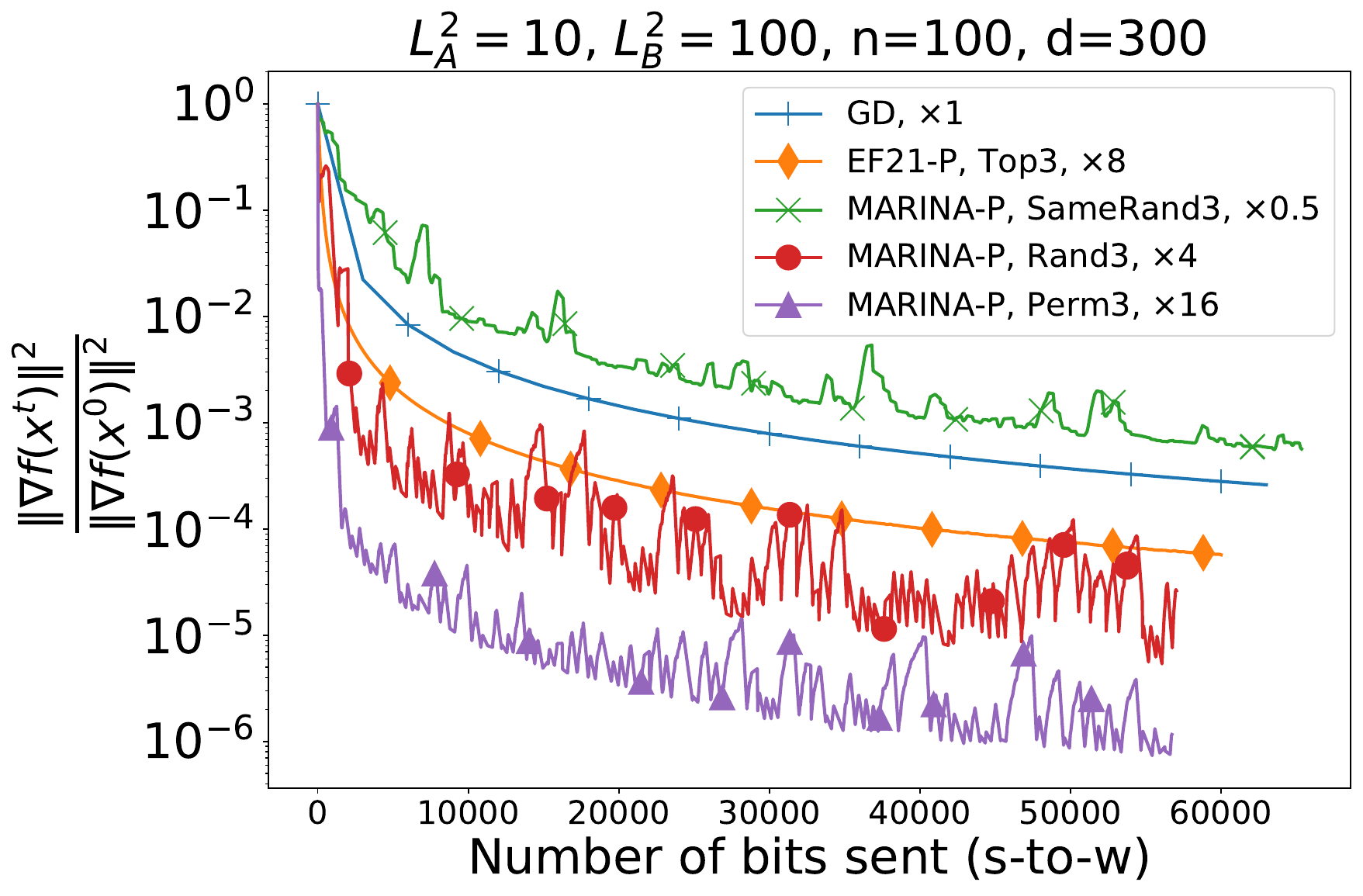}
\end{subfigure}
\begin{subfigure}{0.24\textwidth}
  \centering
  \includegraphics[width=\textwidth]{./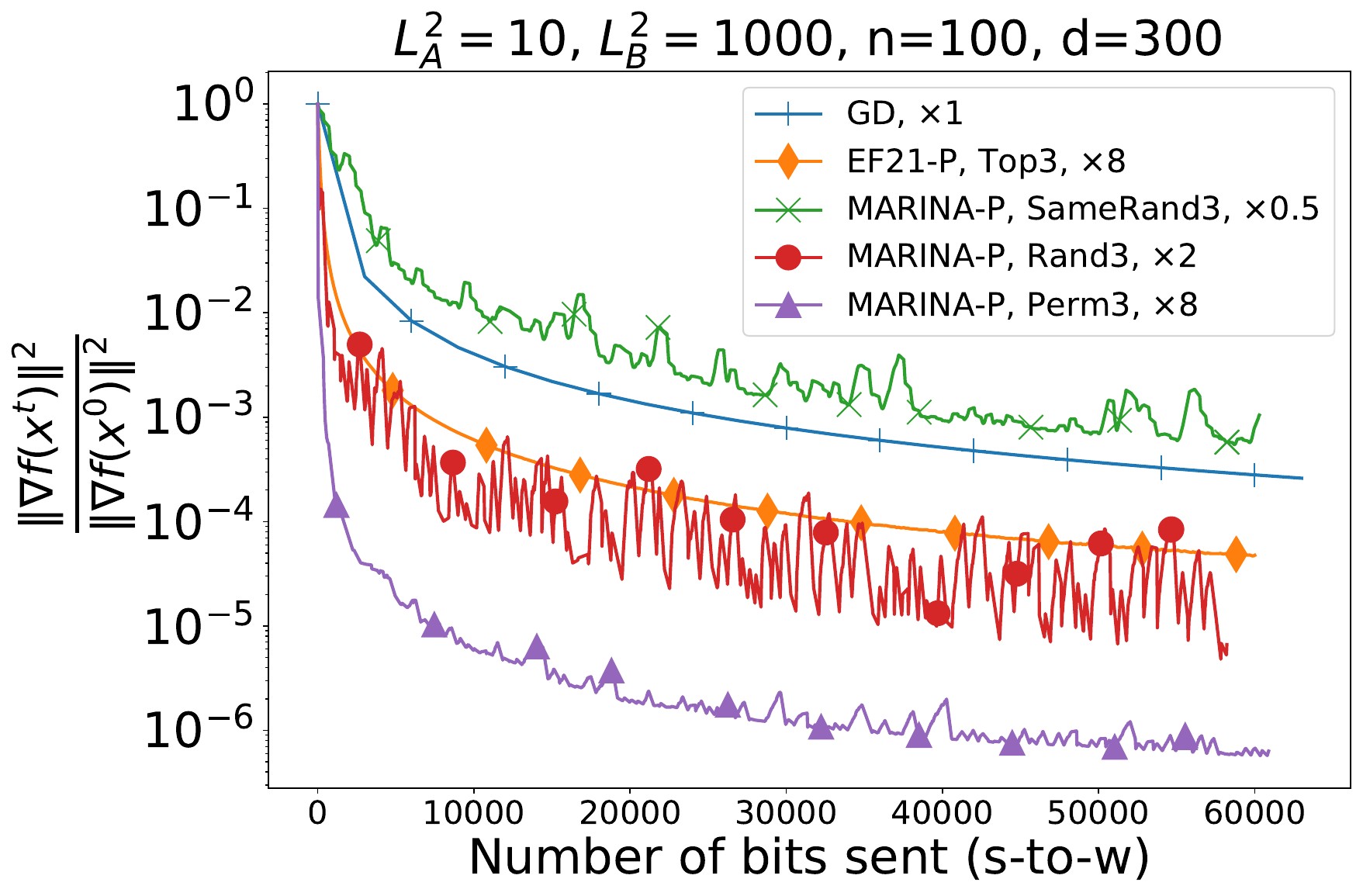}
\end{subfigure}
\begin{subfigure}{0.24\textwidth}
  \centering
  \includegraphics[width=\textwidth]{./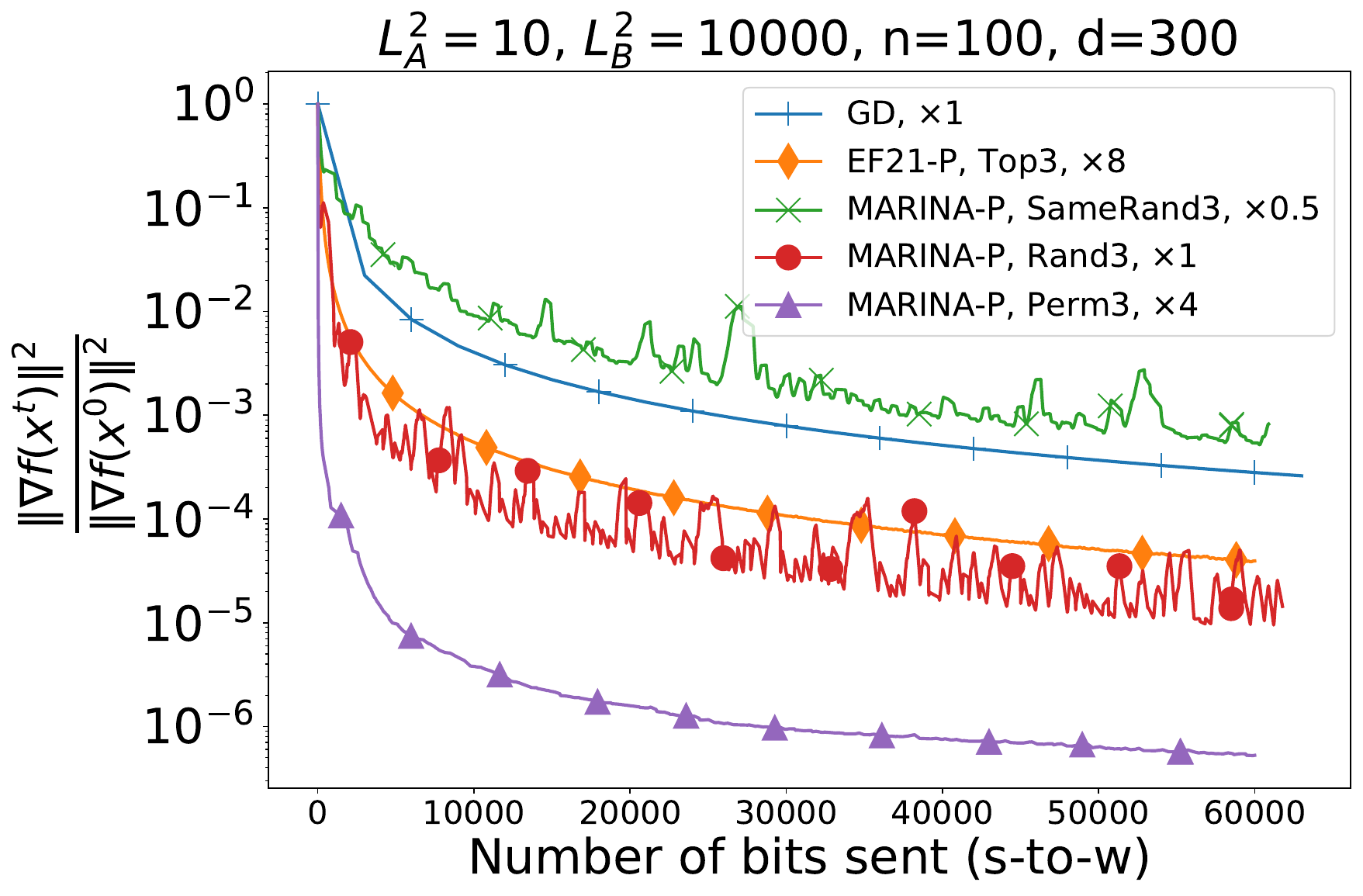}
\end{subfigure}
\begin{subfigure}{0.24\textwidth}
  \centering
  \includegraphics[width=\textwidth]{./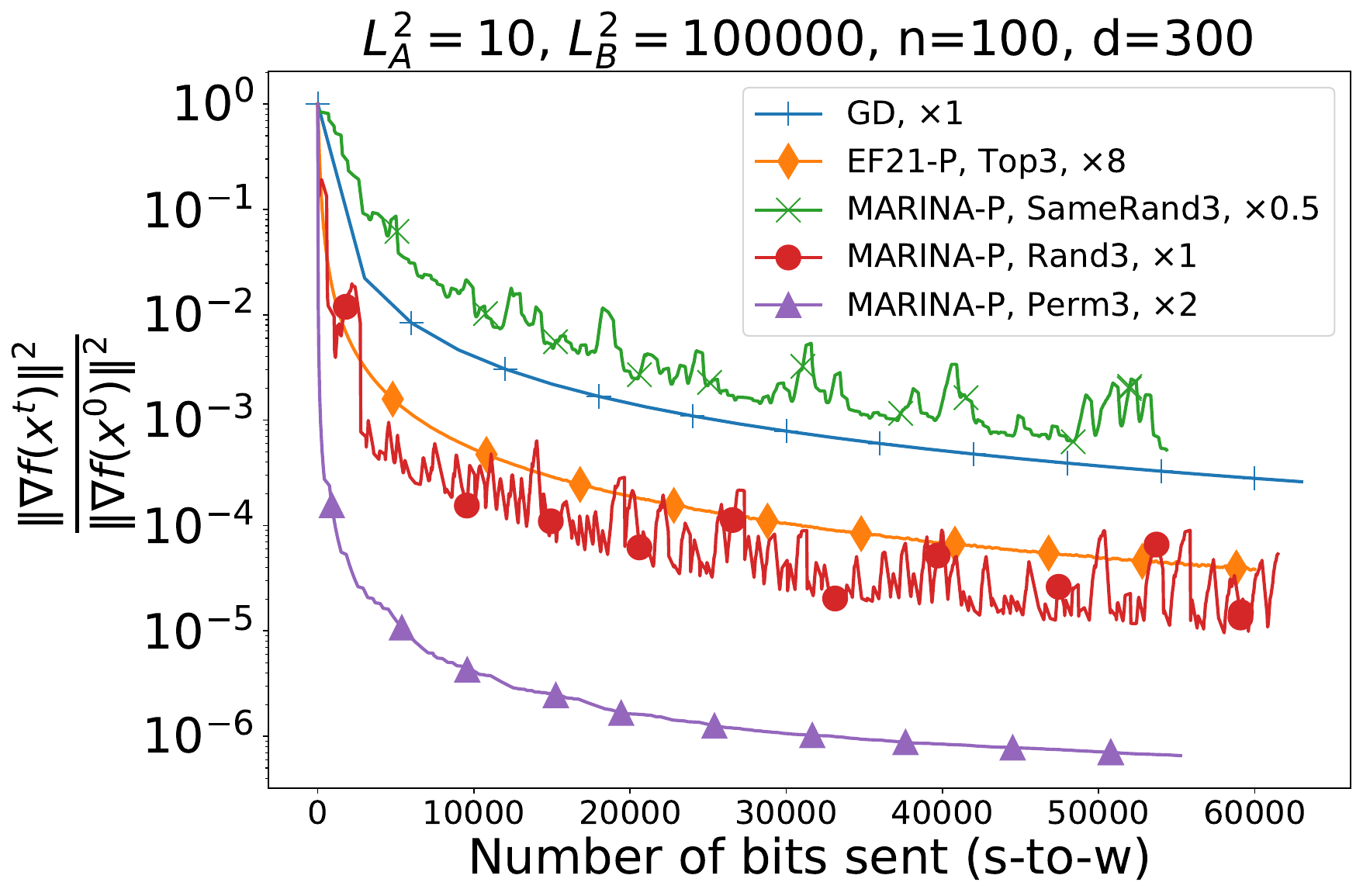}
\end{subfigure}
\begin{subfigure}{0.24\textwidth}
  \centering
  \includegraphics[width=\textwidth]{./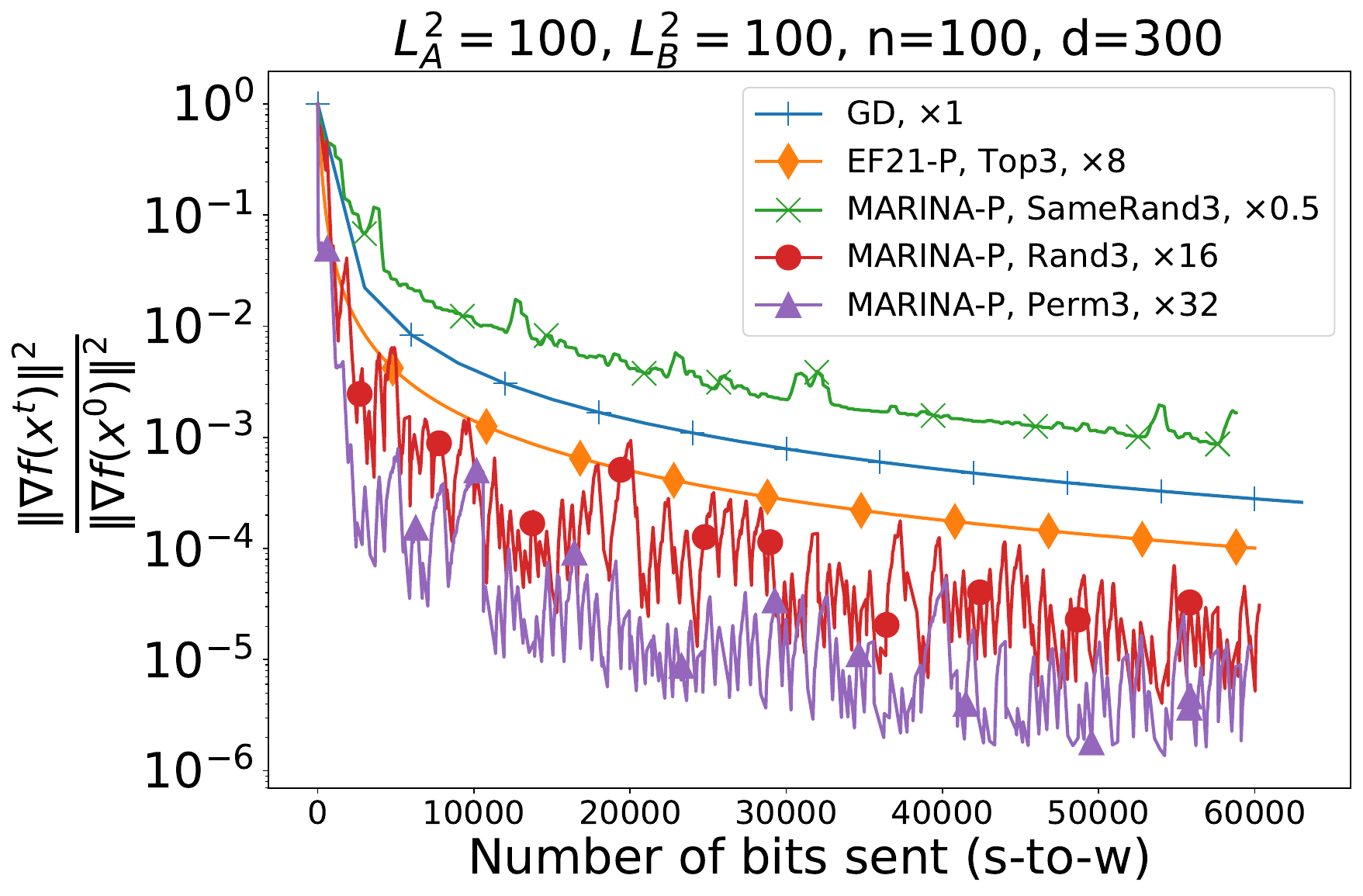}
\end{subfigure}
\begin{subfigure}{0.24\textwidth}
  \centering
  \includegraphics[width=\textwidth]{./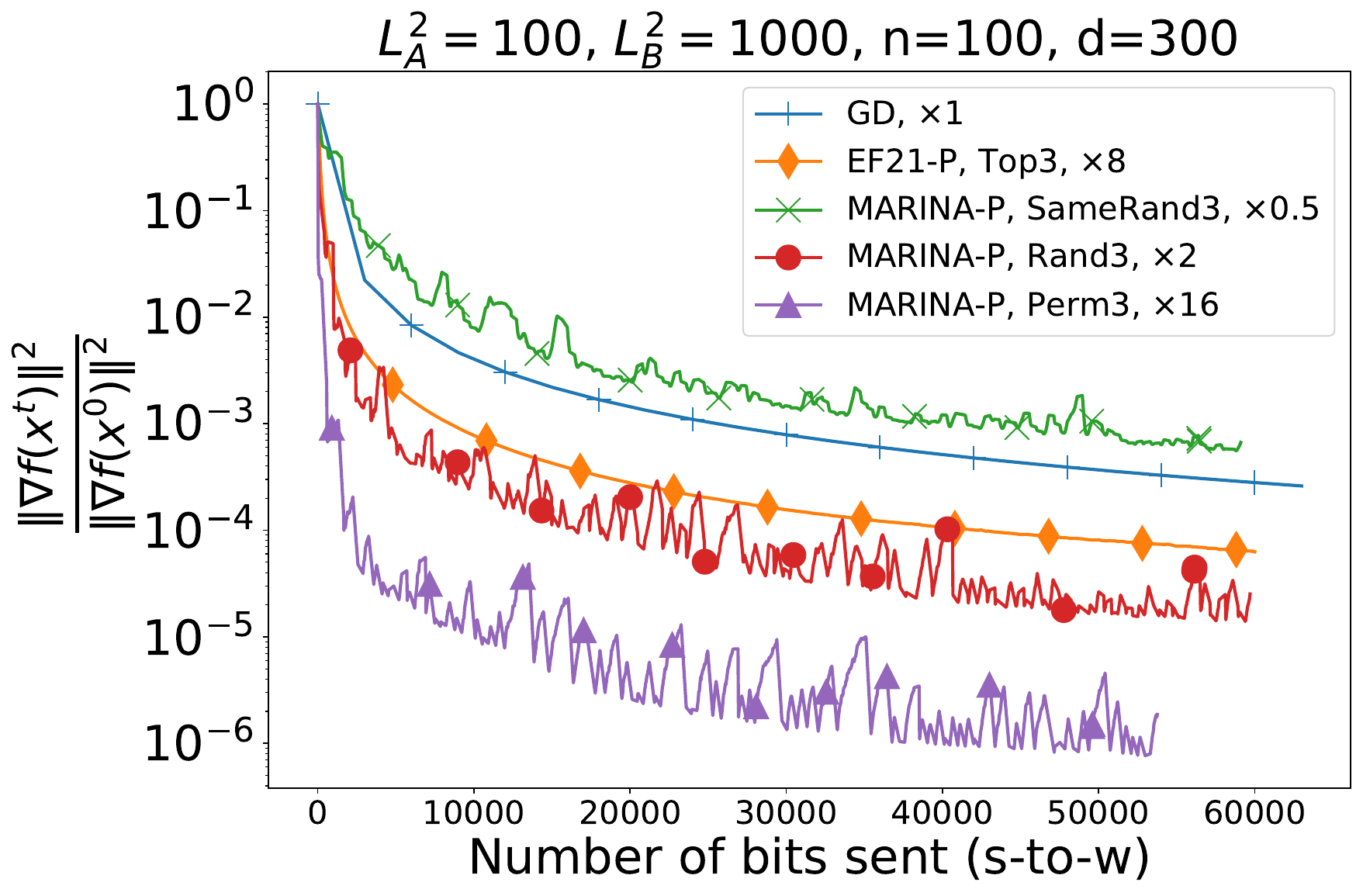}
\end{subfigure}
\begin{subfigure}{0.24\textwidth}
  \centering
  \includegraphics[width=\textwidth]{./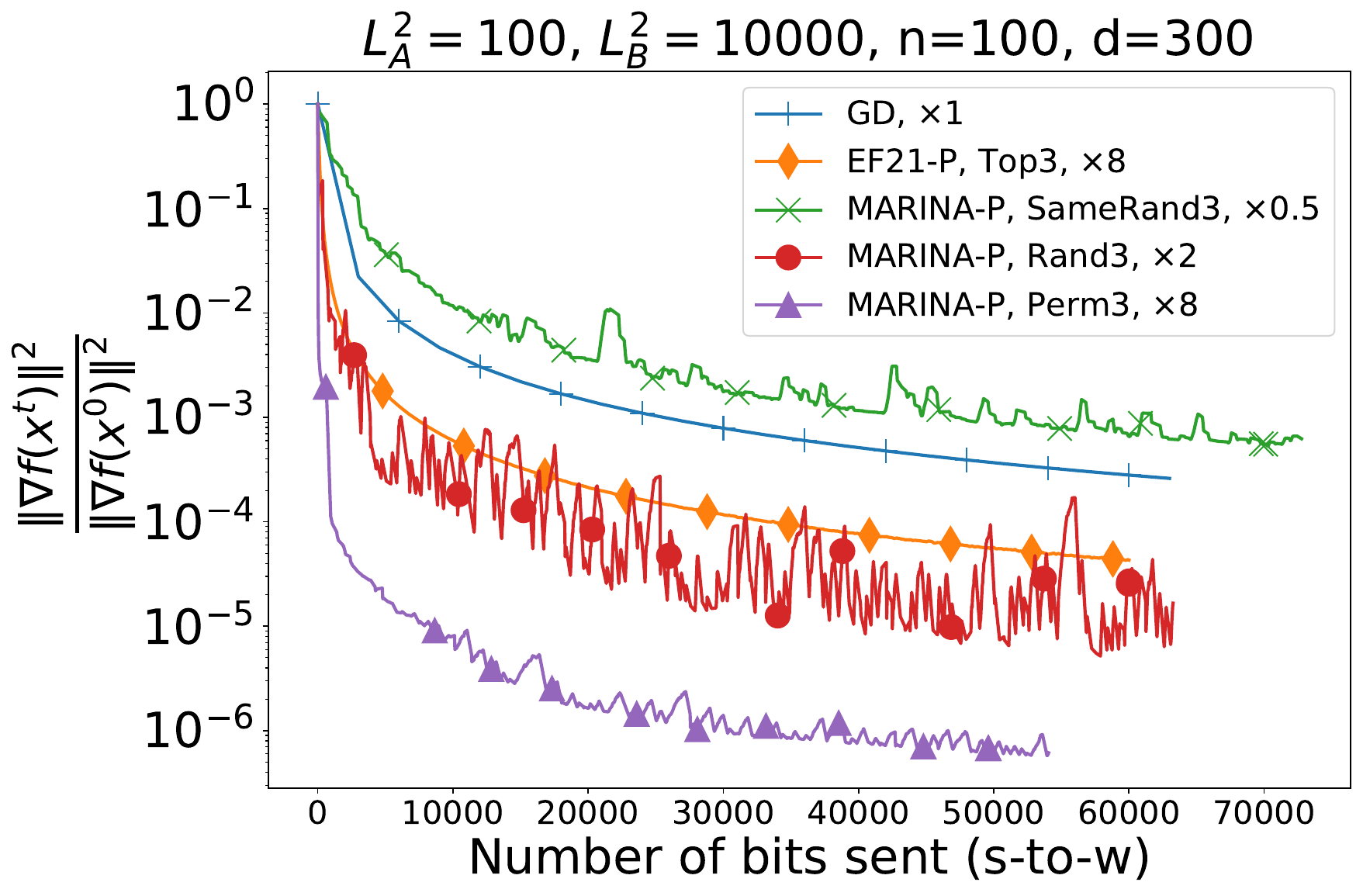}
\end{subfigure}
\begin{subfigure}{0.24\textwidth}
  \centering
  \includegraphics[width=\textwidth]{./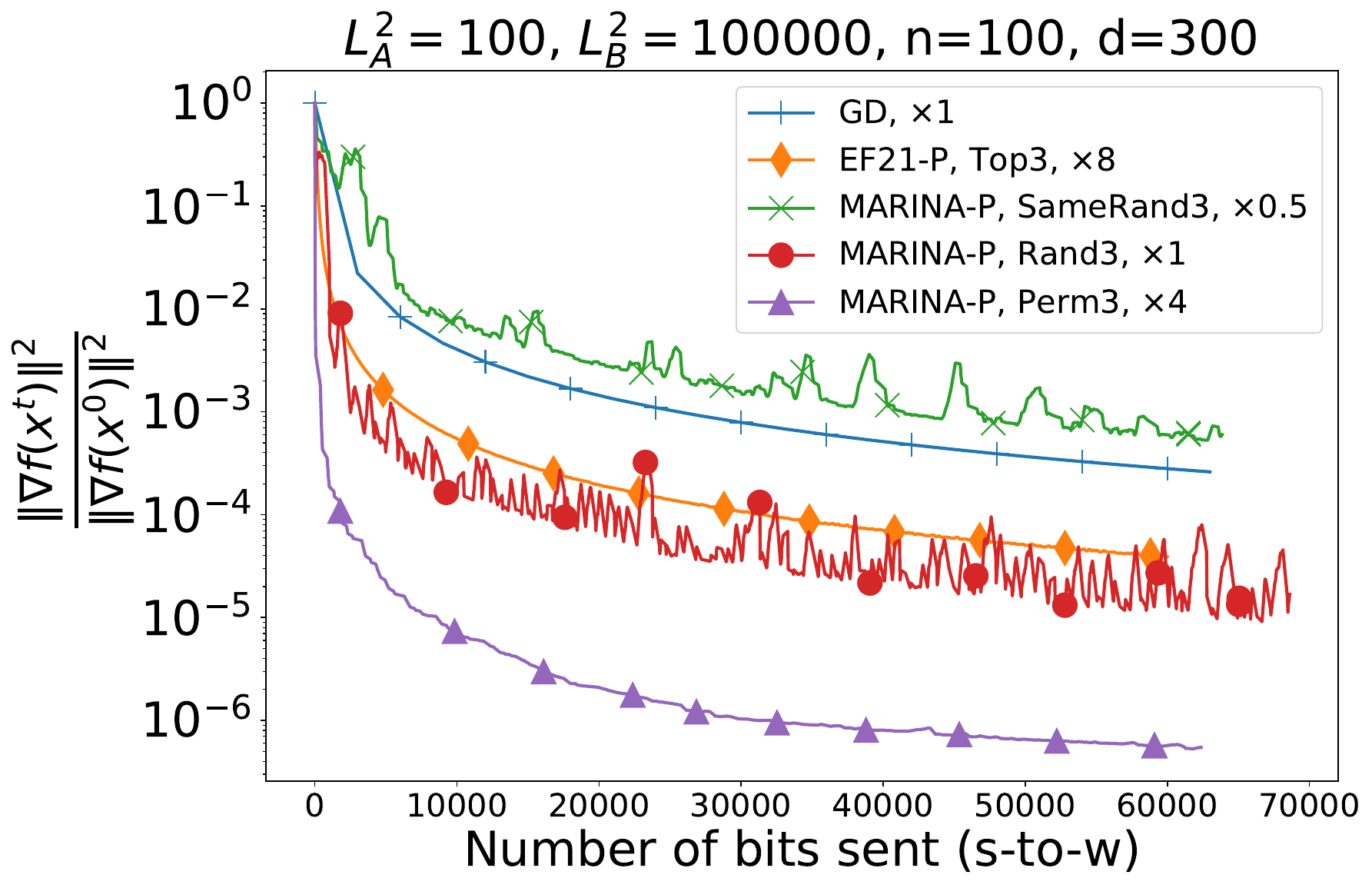}
\end{subfigure}
\caption{Experiments on the quadratic optimization problem from Section~\ref{sec:exp_quad_lalb} with $n=100$ for $L_A^2 \in \brac{0,1,10,100}$ and $L_B^2 \in \brac{100,1000,10000,100000}$.}
\label{fig:grid_n100}
\end{figure}

\begin{figure}[t]
\centering
\begin{subfigure}{0.24\textwidth}
  \centering
  \includegraphics[width=\textwidth]{./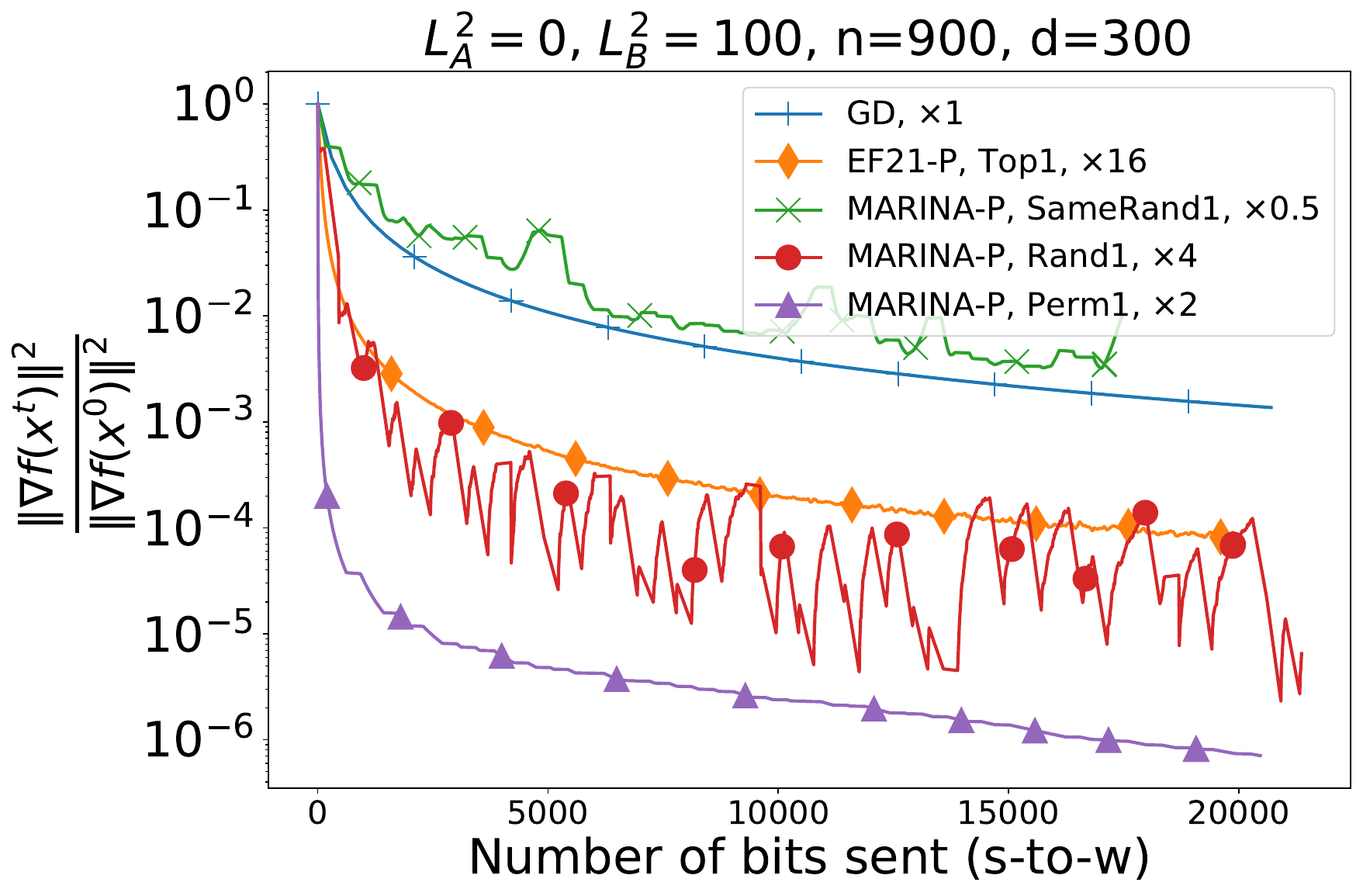}
\end{subfigure}%
\begin{subfigure}{0.24\textwidth}
  \centering
  \includegraphics[width=\textwidth]{./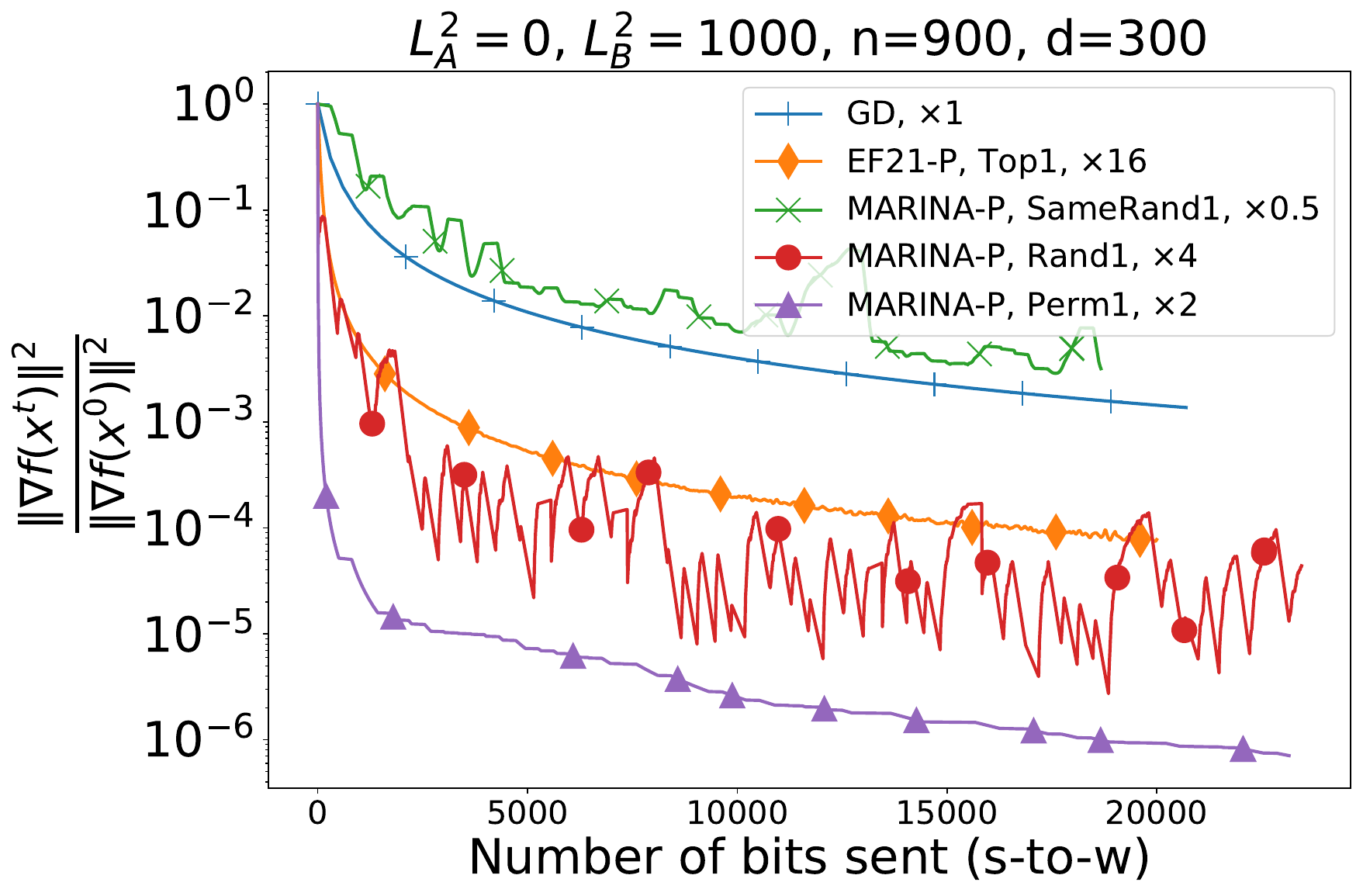}
\end{subfigure}
\begin{subfigure}{0.24\textwidth}
  \centering
  \includegraphics[width=\textwidth]{./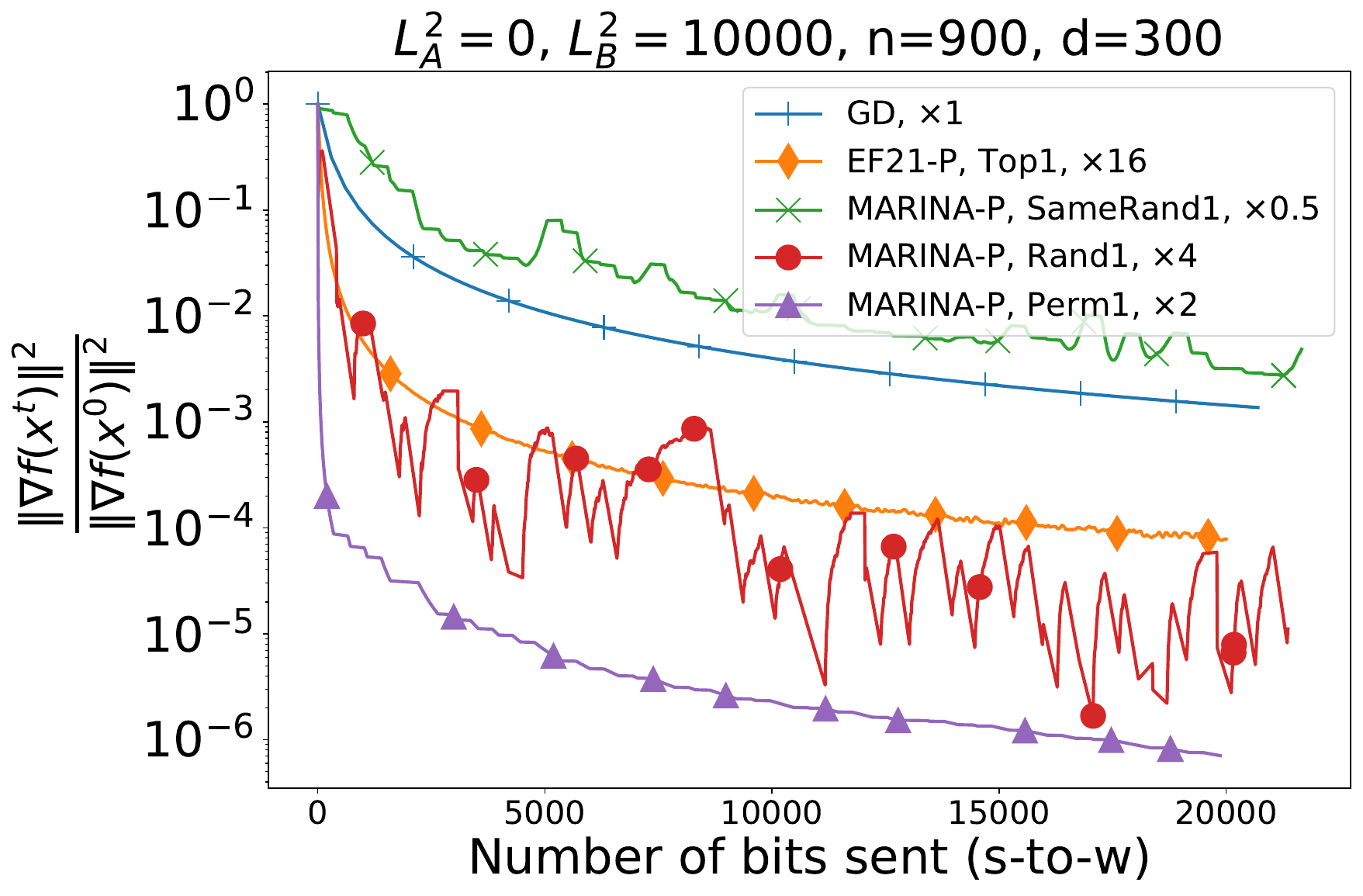}
\end{subfigure}
\begin{subfigure}{0.24\textwidth}
  \centering
  \includegraphics[width=\textwidth]{./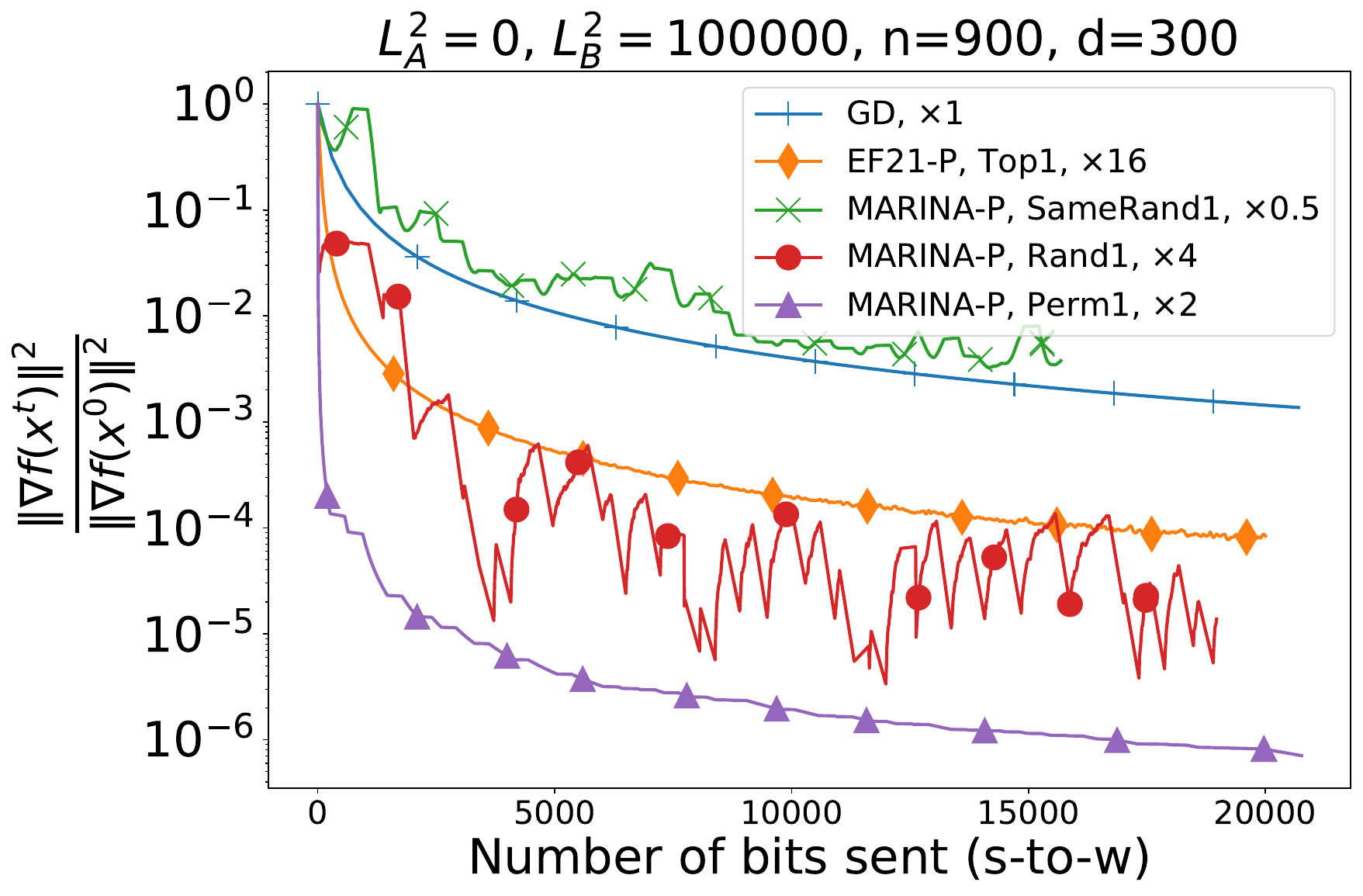}
\end{subfigure}
\begin{subfigure}{0.24\textwidth}
  \centering
  \includegraphics[width=\textwidth]{./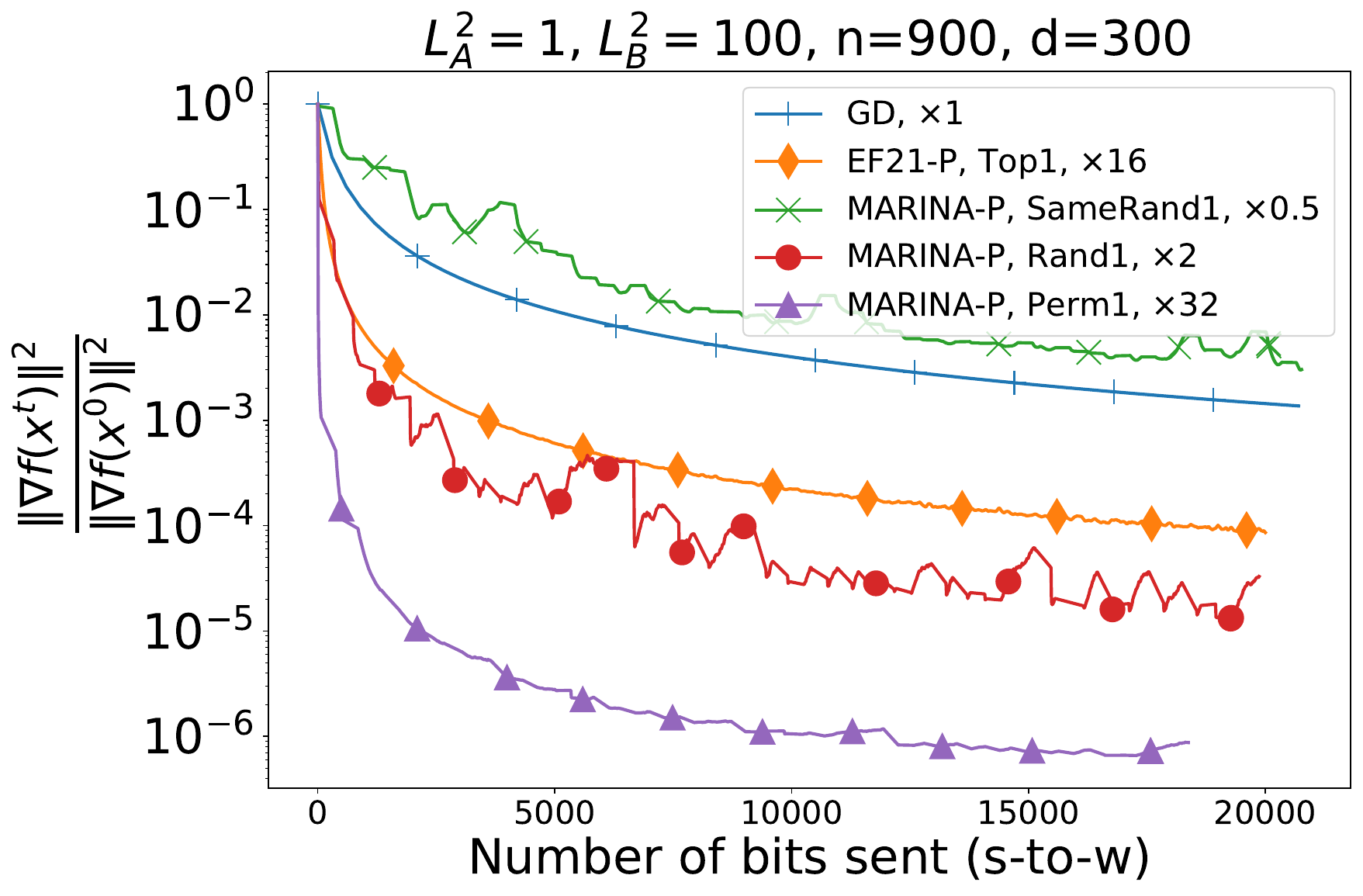}
\end{subfigure}
\begin{subfigure}{0.24\textwidth}
  \centering
  \includegraphics[width=\textwidth]{./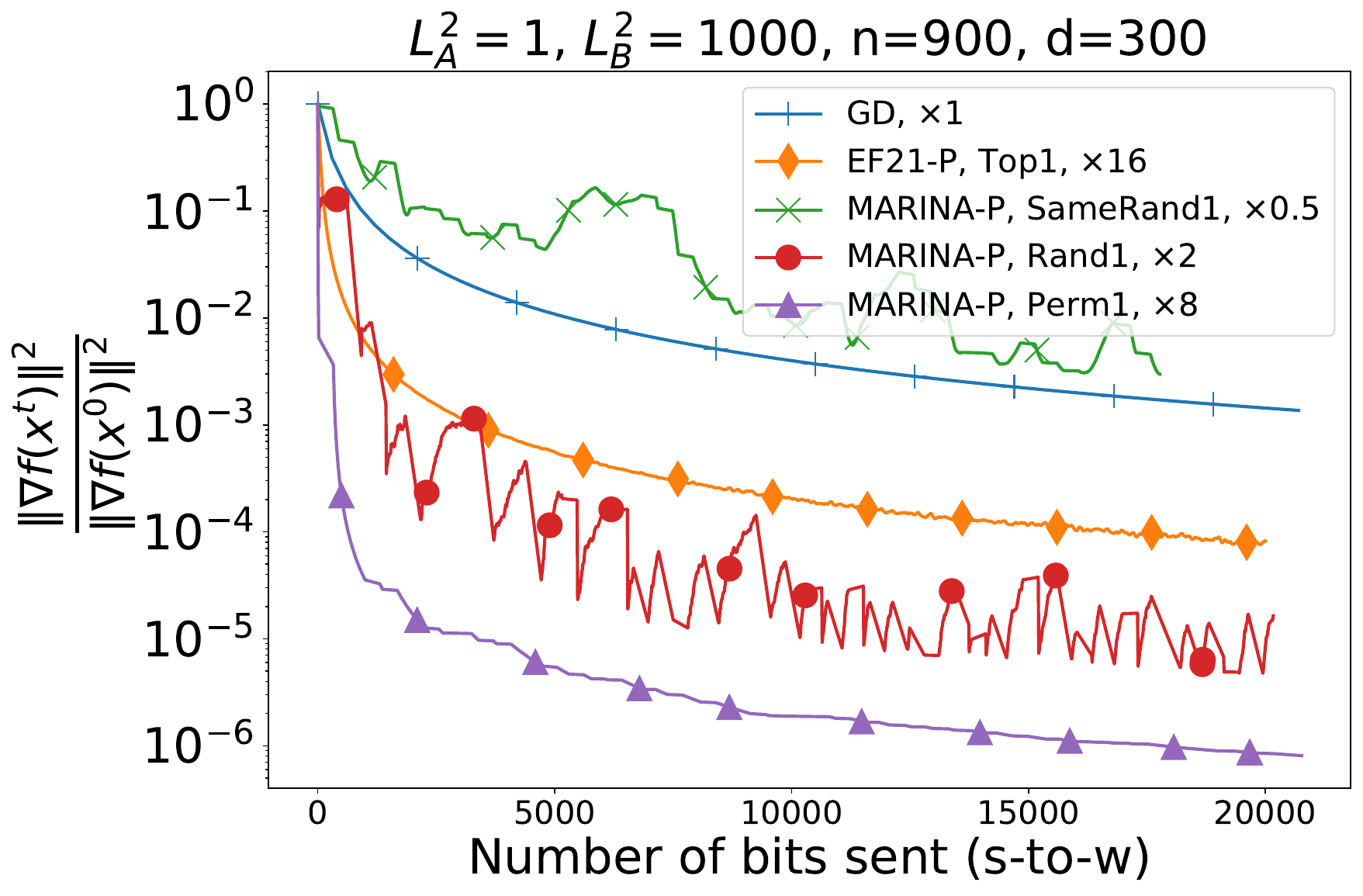}
\end{subfigure}
\begin{subfigure}{0.24\textwidth}
  \centering
  \includegraphics[width=\textwidth]{./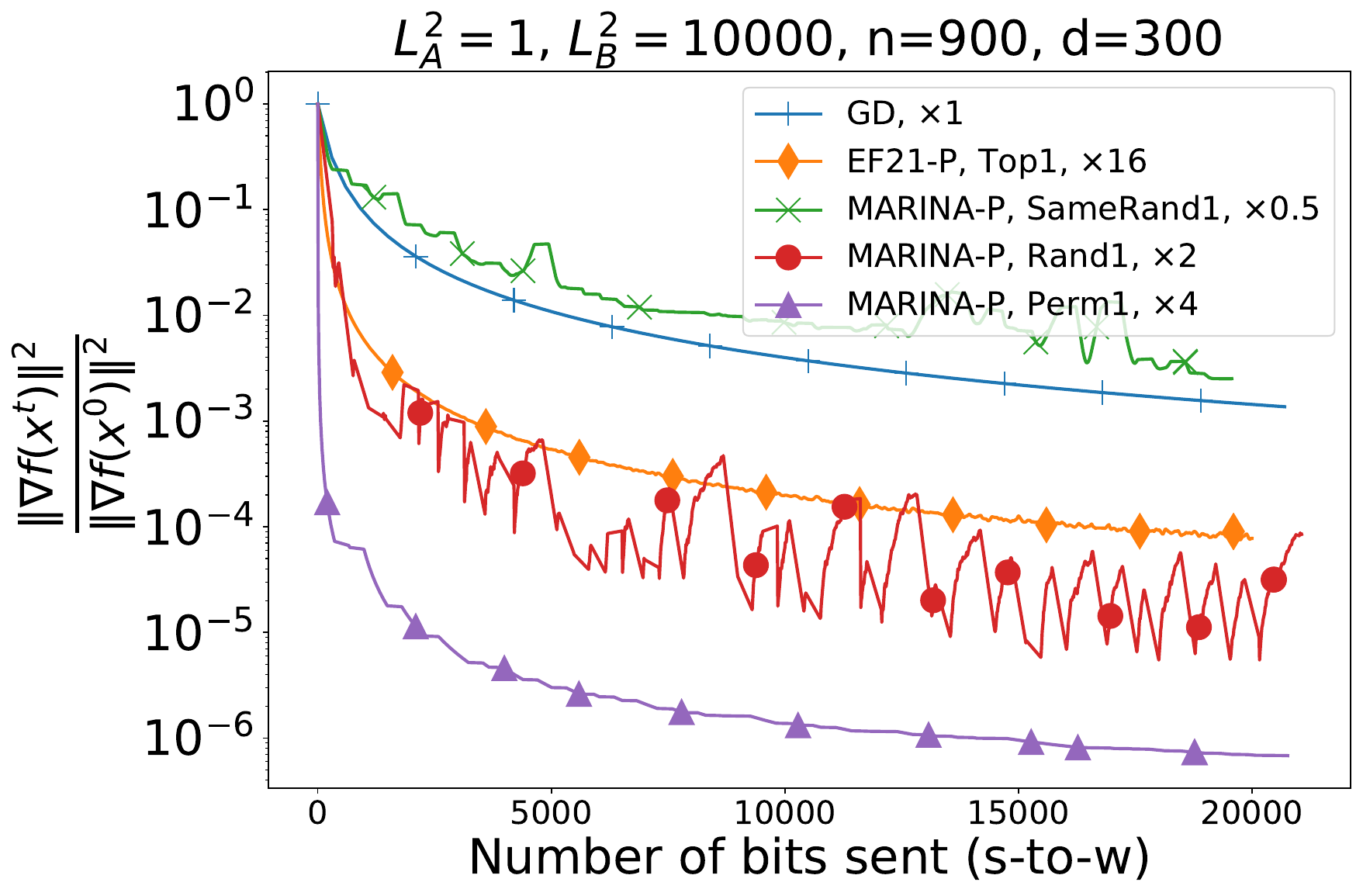}
\end{subfigure}
\begin{subfigure}{0.24\textwidth}
  \centering
  \includegraphics[width=\textwidth]{./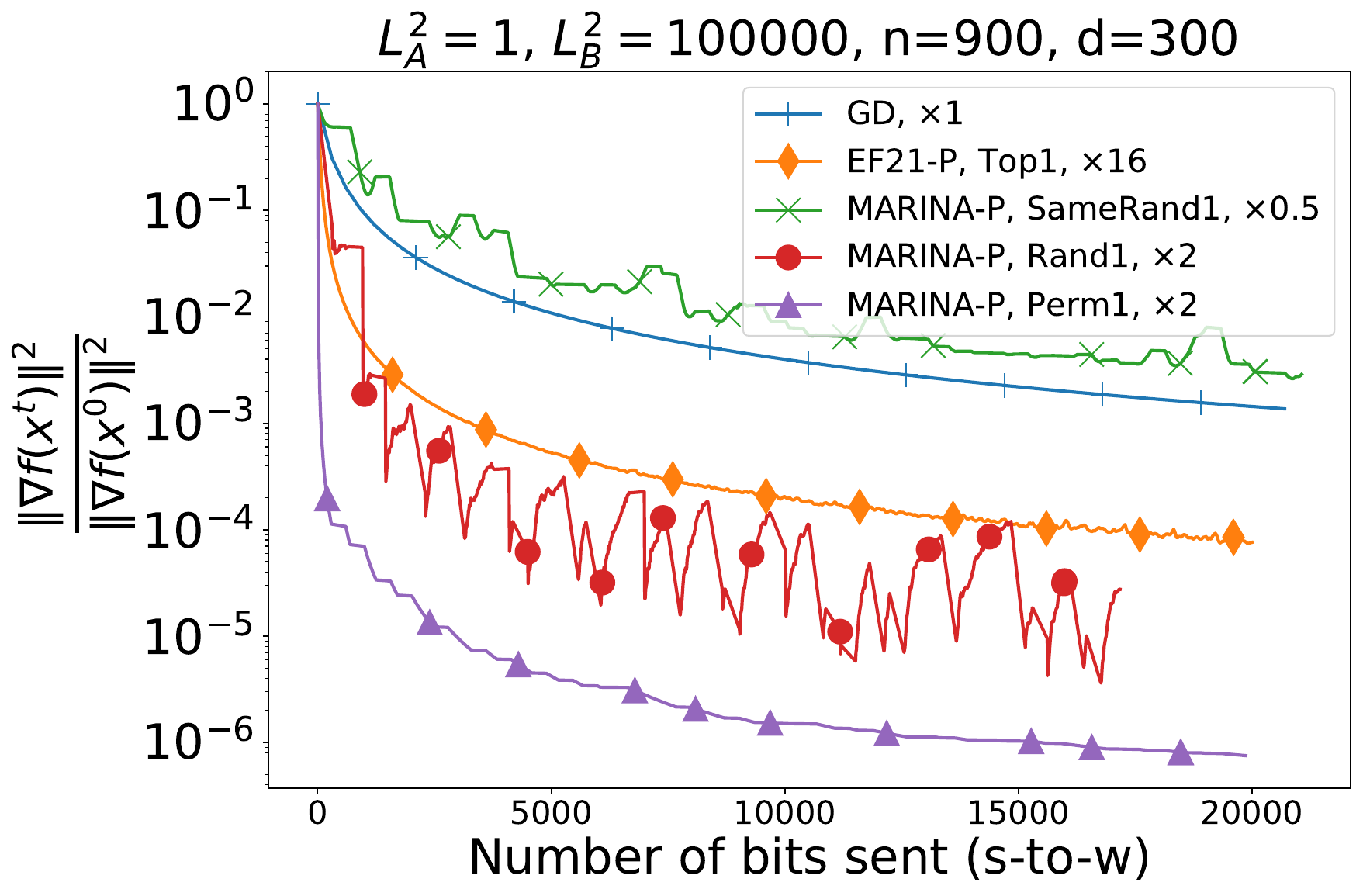}
\end{subfigure}
\begin{subfigure}{0.24\textwidth}
  \centering
  \includegraphics[width=\textwidth]{./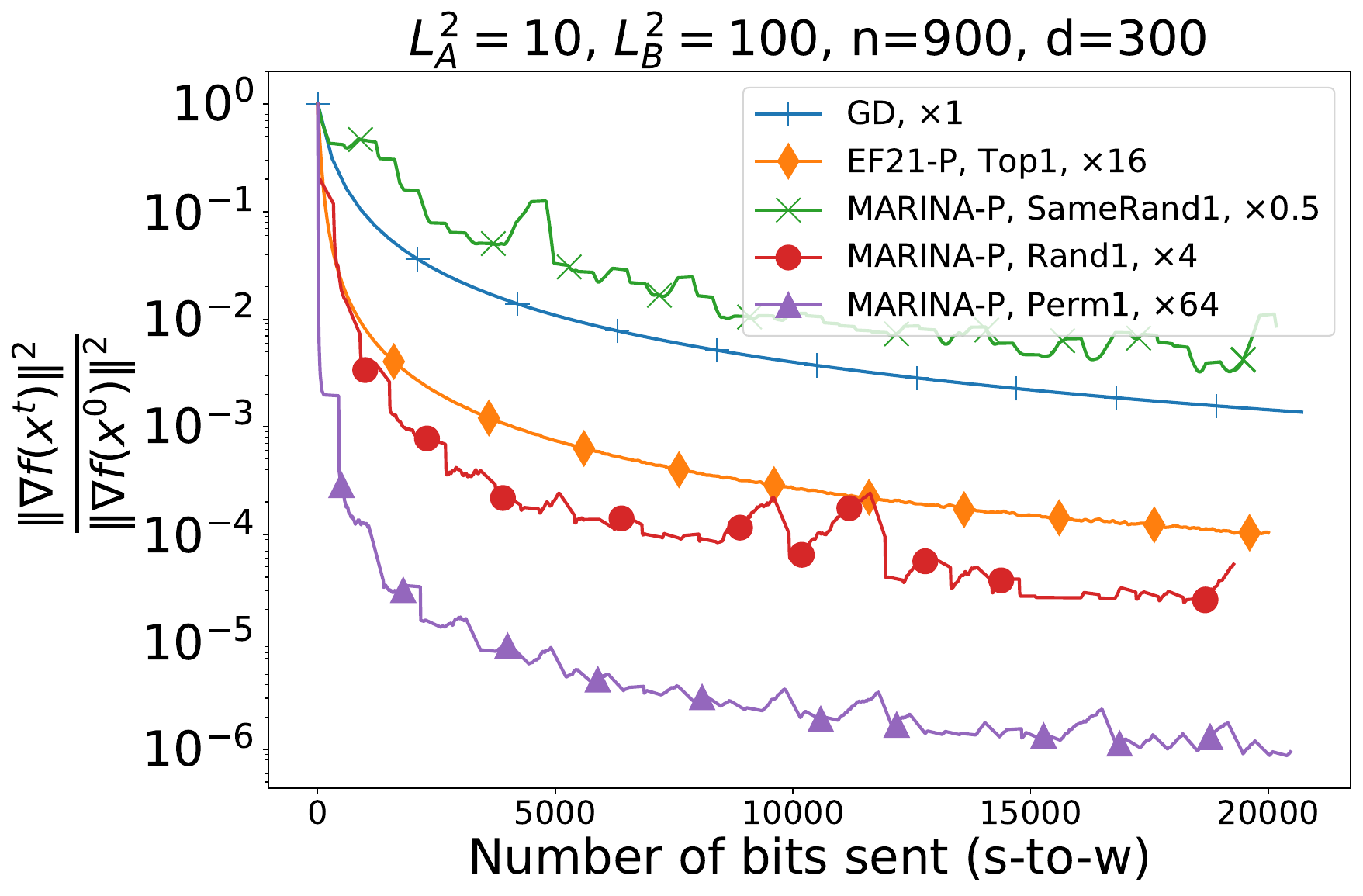}
\end{subfigure}
\begin{subfigure}{0.24\textwidth}
  \centering
  \includegraphics[width=\textwidth]{./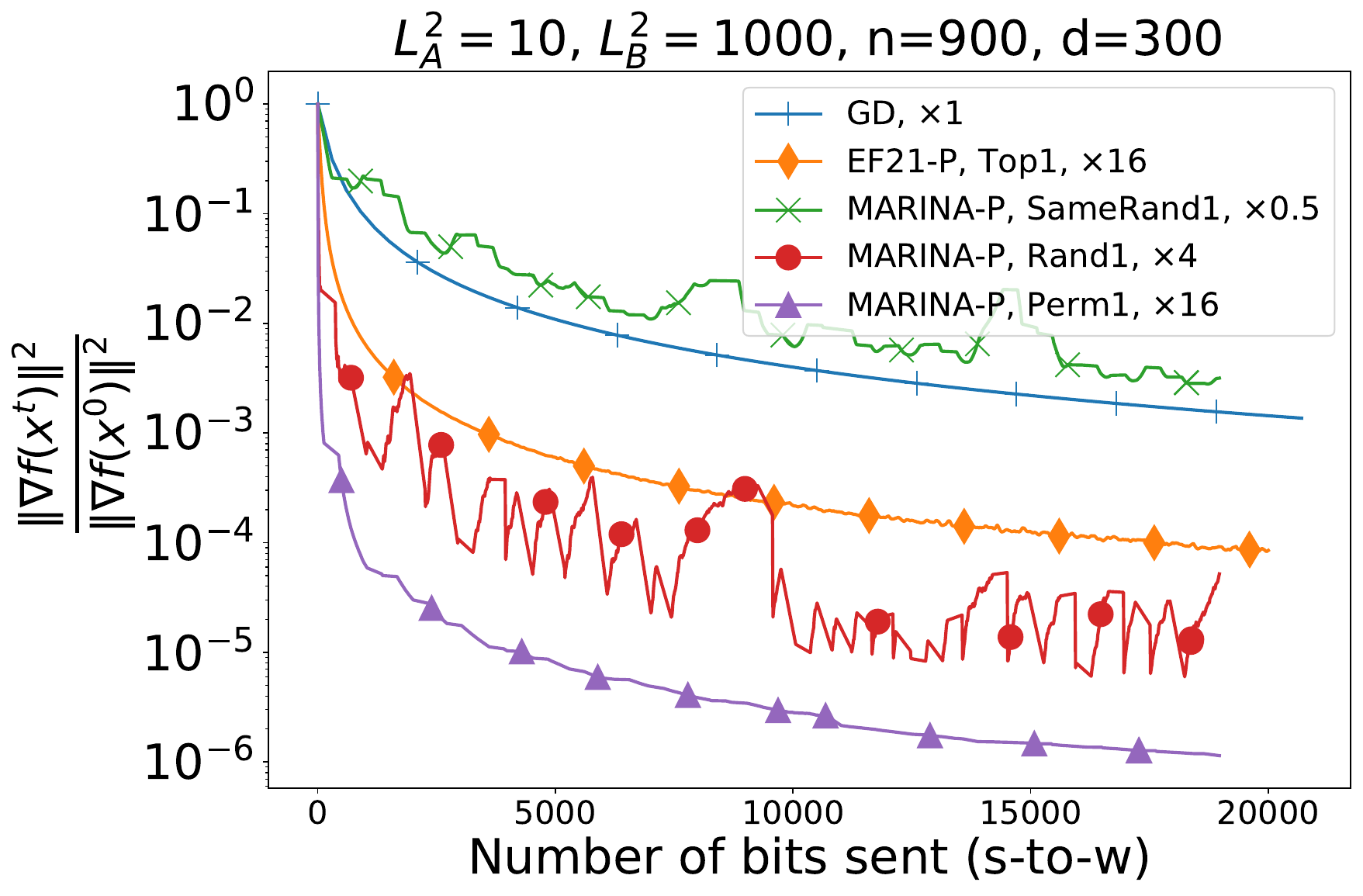}
\end{subfigure}
\begin{subfigure}{0.24\textwidth}
  \centering
  \includegraphics[width=\textwidth]{./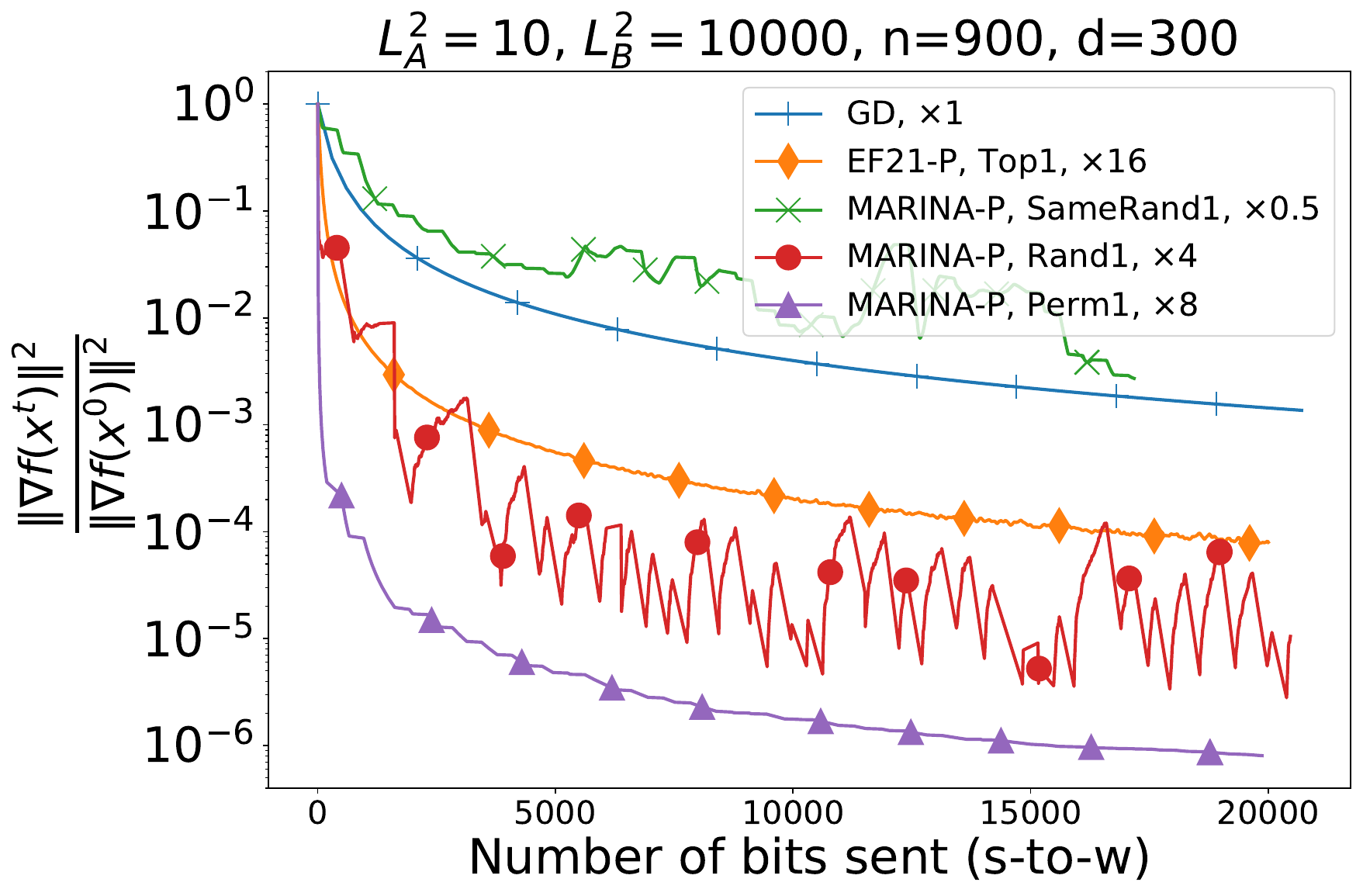}
\end{subfigure}
\begin{subfigure}{0.24\textwidth}
  \centering
  \includegraphics[width=\textwidth]{./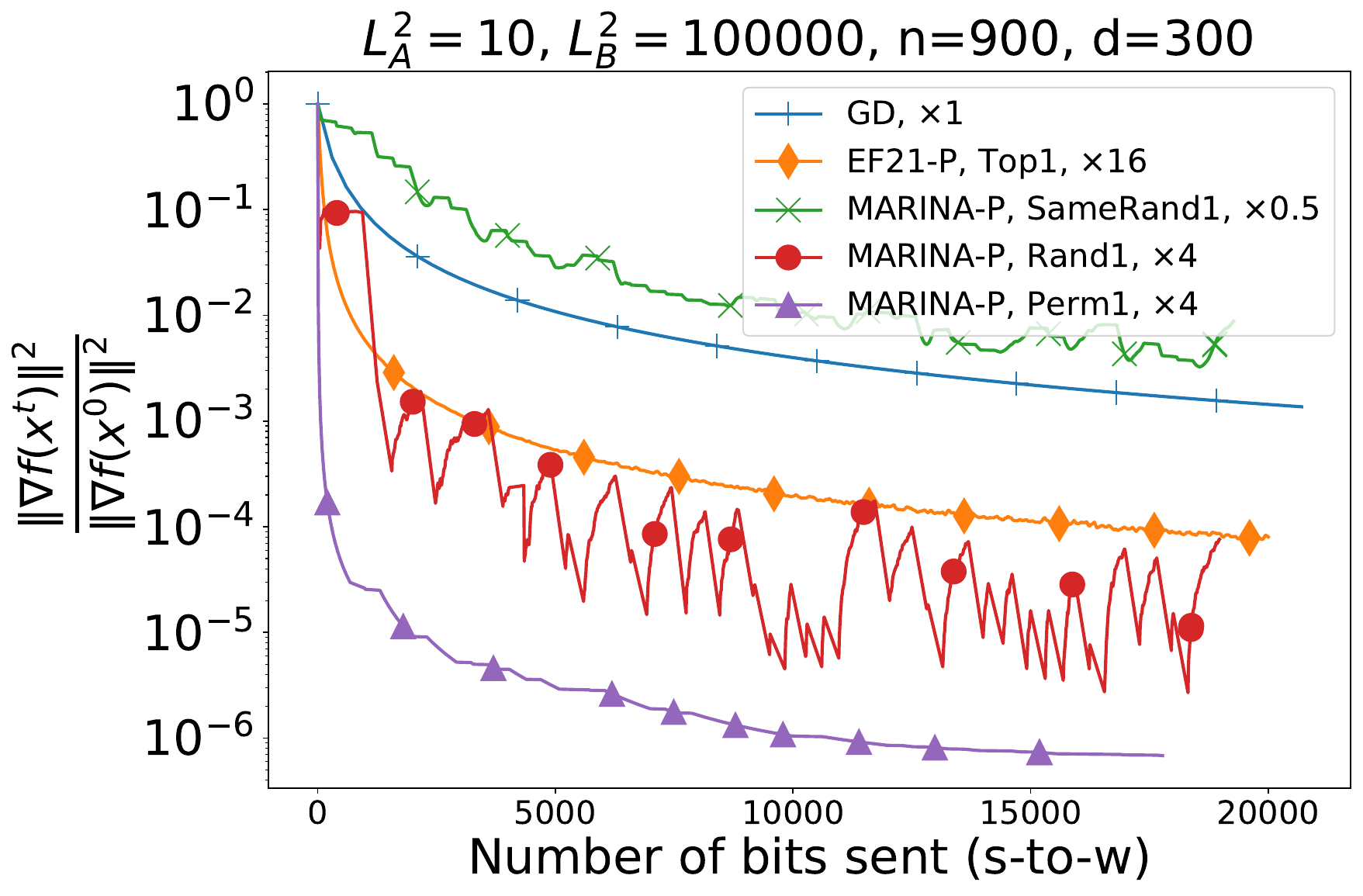}
\end{subfigure}
\begin{subfigure}{0.24\textwidth}
  \centering
  \includegraphics[width=\textwidth]{./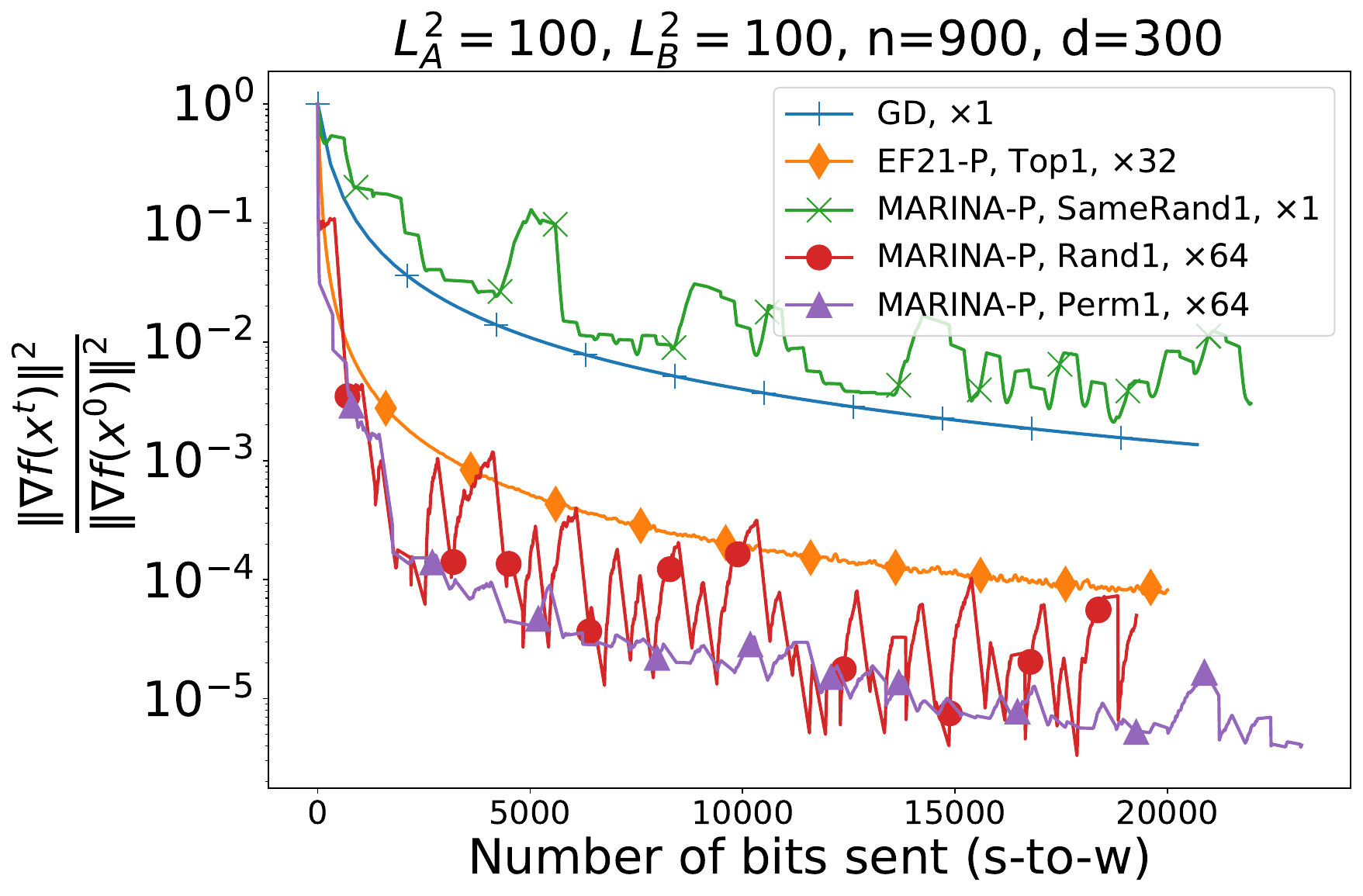}
\end{subfigure}
\begin{subfigure}{0.24\textwidth}
  \centering
  \includegraphics[width=\textwidth]{./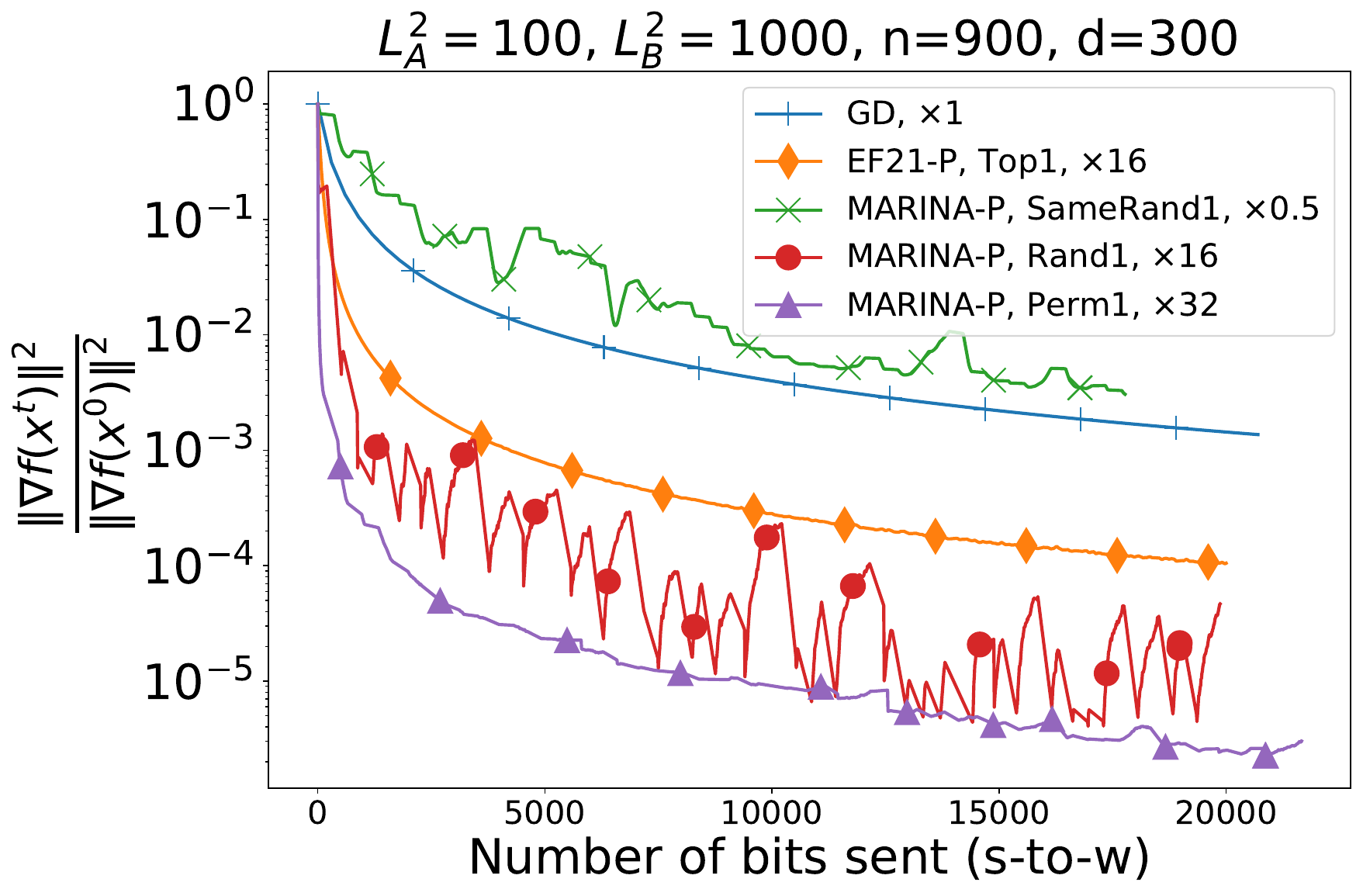}
\end{subfigure}
\begin{subfigure}{0.24\textwidth}
  \centering
  \includegraphics[width=\textwidth]{./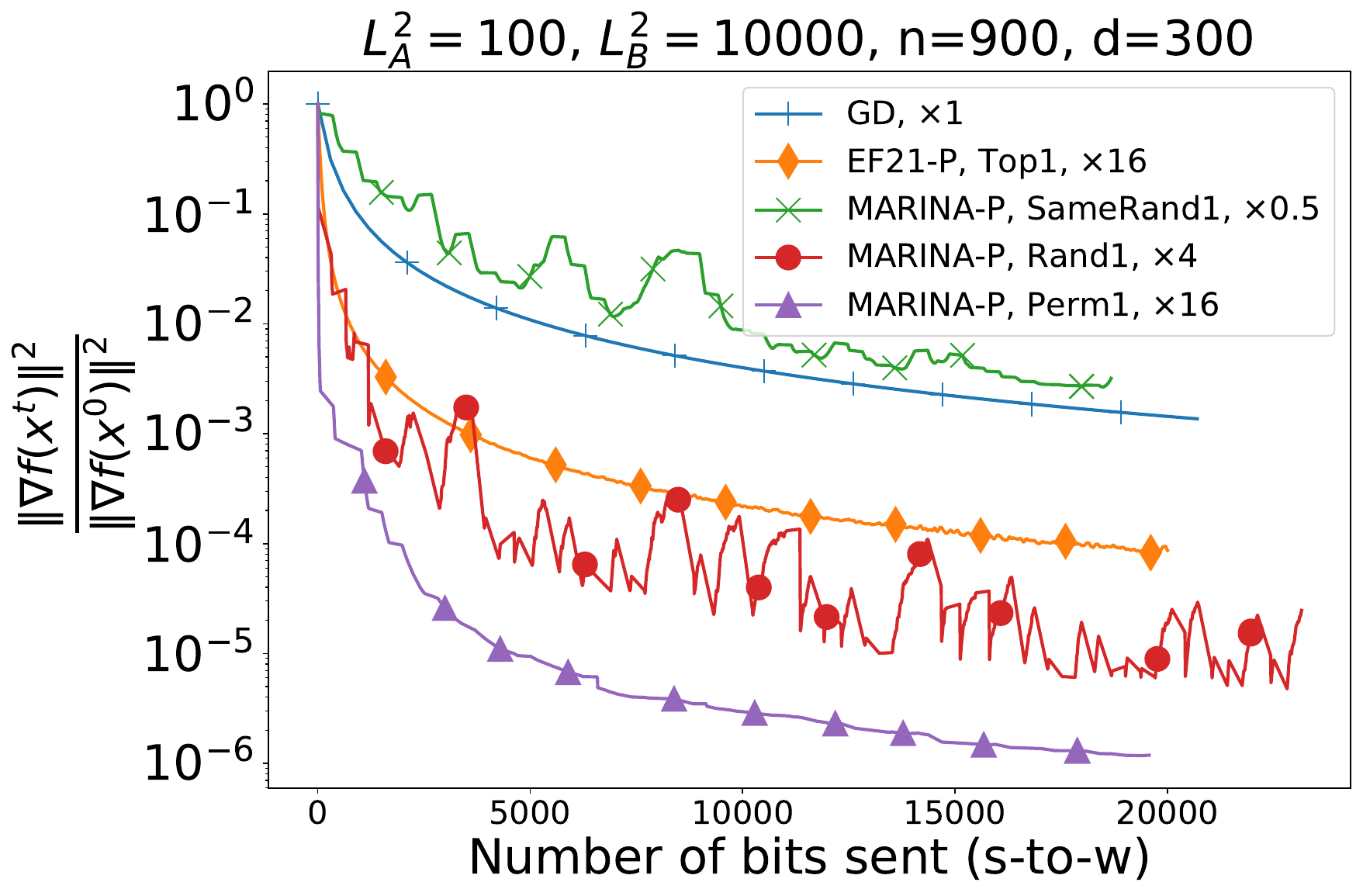}
\end{subfigure}
\begin{subfigure}{0.24\textwidth}
  \centering
  \includegraphics[width=\textwidth]{./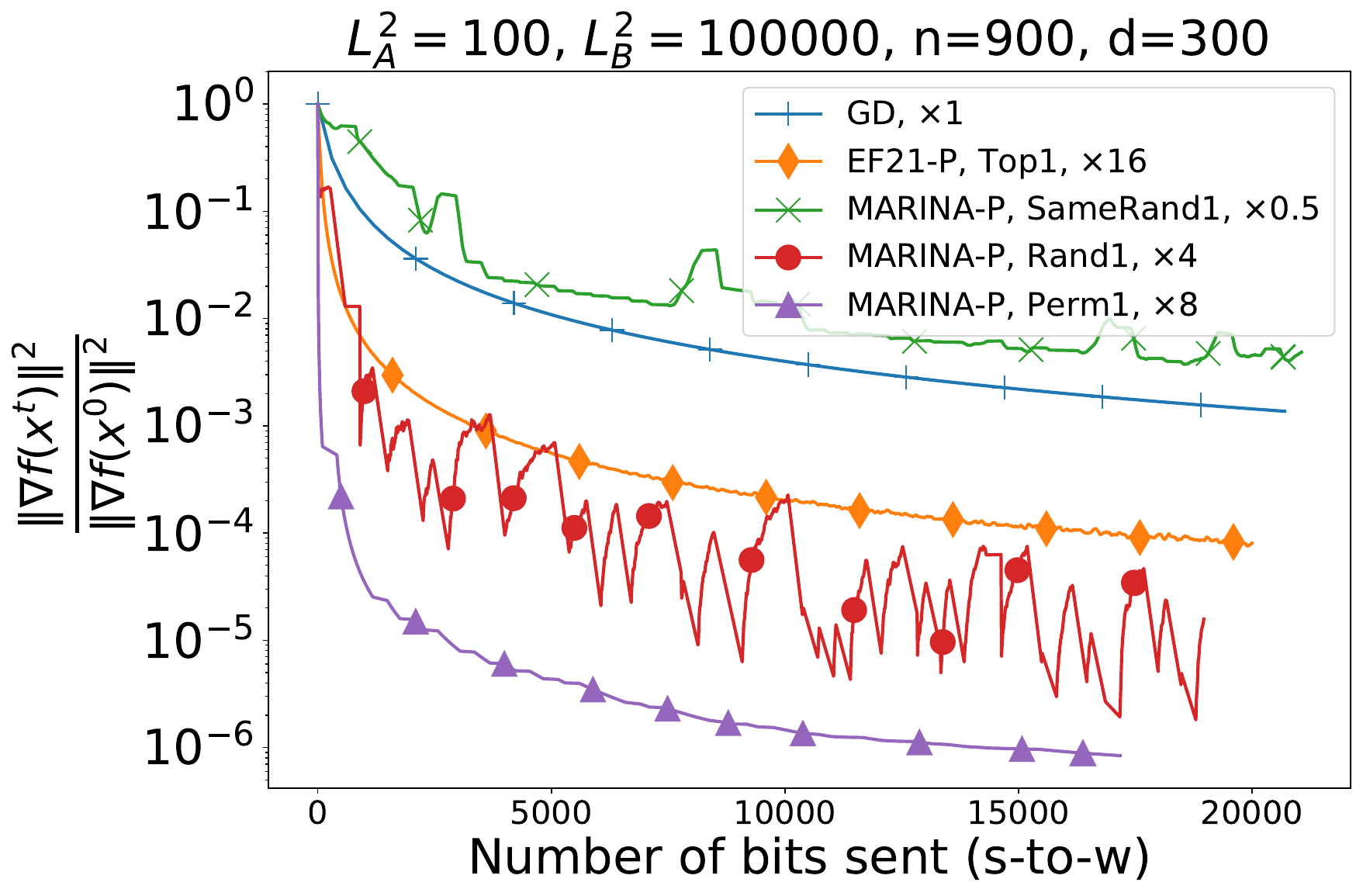}
\end{subfigure}
\caption{Experiments on the quadratic optimization problem from Section~\ref{sec:exp_quad_lalb} with $n=900$ for $L_A^2 \in \brac{0,1,10,100}$ and $L_B^2 \in \brac{100,1000,10000,100000}$.}
\label{fig:grid_n900}
\end{figure}

\clearpage
\newpage

\section{Proof of the Lower Bounds}
\label{sec:lower_bound}

\subsection{The ``difficult'' function from the nonconvex world}
\label{sec:worst_case}
In our lower bound, we use the function from \citet{carmon2020lower,arjevani2022lower}. For any $T \in \N,$ let
\begin{align}
    \label{eq:worst_case}
    F_T(x) \eqdef -\Psi(1) \Phi([x]_1) + \sum_{i=2}^T \left(\Psi(-[x]_{i-1})\Phi(-[x]_i) - \Psi([x]_{i-1})\Phi([x]_i)\right),
\end{align}
where
\begin{align*}
    \Psi(x) = \begin{cases}
        0, & x \leq 1/2, \\
        \exp\left(1 - \frac{1}{(2x - 1)^2}\right), & x \geq 1/2,
    \end{cases}
    \quad\textnormal{and}\quad
    \Phi(x) = \sqrt{e} \int_{-\infty}^{x}e^{-\frac{1}{2}t^2}dt.
\end{align*}

\citet{carmon2020lower, arjevani2022lower} also proved the following properties of the function:

\begin{lemma}[\cite{carmon2020lower, arjevani2022lower}]
    \label{lemma:worst_function}
    The function $F_T$ satisfies:
    \begin{enumerate}
        \item $F_T(0) - \inf_{x \in \R^T} F_T(x) \leq \Delta^0 T,$ where $\Delta^0 = 12.$
        \item The function $F_T$ is $l_1$--smooth, where $l_1 = 152.$
        \item For all $x \in \R^T,$ $\norm{\nabla F_T(x)}_{\infty} \leq \gamma_{\infty},$ where $\gamma_{\infty} = 23.$
        \item For all $x \in \R^T,$ $\textnormal{prog}(\nabla F_T(x)) \leq \textnormal{prog}(x) + 1.$
        \item For all $x \in \R^T,$ if $\textnormal{prog}(x) < T,$ then $\norm{\nabla F_T(x)} > 1,$
    \end{enumerate}
\end{lemma}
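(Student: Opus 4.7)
The plan is to verify each of the five properties directly from the construction of $F_T$ in~\eqref{eq:worst_case}, largely by recalling the arguments of \citet{carmon2020lower, arjevani2022lower}. The central structural fact driving the whole analysis is that $F_T$ is a \emph{zero-chain}: each summand couples only two consecutive coordinates through the mappings $\Psi$ and $\Phi$, and $\Psi$ (together with all its derivatives) vanishes on $(-\infty, 1/2]$. All quantitative claims then reduce to uniform bounds on $\Psi, \Phi$ and their first two derivatives, which are explicit calculus estimates.

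First, for Property~1, I would plug in $x = 0$ and note that $\Psi(0) = 0$, so most terms in \eqref{eq:worst_case} collapse and $F_T(0) = -\Psi(1)\Phi(0)$. To bound $\inf F_T$ below, I would use the uniform bound $|\Psi(u)\Phi(v)| \leq \Psi(1)\cdot \sqrt{2\pi e}$, telescope over the $T$ summands, and pick the constant $\Delta^0 = 12$ so that $F_T(0) - \inf F_T \leq \Delta^0 T$. For Properties~2 and~3, I would differentiate $F_T$ coordinate-wise: only terms involving $[x]_i$ contribute to $[\nabla F_T]_i$ and only a handful of Hessian entries are nonzero per row; the smoothness and infinity-norm constants $l_1 = 152$ and $\gamma_\infty = 23$ then fall out of applying uniform bounds $|\Psi|, |\Psi'|, |\Psi''|, |\Phi|, |\Phi'|, |\Phi''| \leq O(1)$ (these are the bounds computed explicitly in \citet{carmon2020lower}).

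The substantive ingredient is Property~4. Writing out $[\nabla F_T(x)]_i$, one sees it depends only on $[x]_{i-1}, [x]_i, [x]_{i+1}$, and each nonzero contribution requires $\Psi(\pm [x]_{i-1}) \neq 0$ or $\Psi(\pm [x]_{i+1}) \neq 0$, i.e., $|[x]_{i-1}| > 1/2$ or $|[x]_{i+1}| > 1/2$. Hence $[\nabla F_T(x)]_i = 0$ whenever $i > \mathrm{prog}(x) + 1$, which gives the desired zero-chain bound. Property~5 then follows from a direct lower bound at the ``frontier'' index: if $\mathrm{prog}(x) = k < T$, then $[x]_{k+1} = 0$, and evaluating $[\nabla F_T(x)]_{k+1}$ produces a term proportional to $\Phi'(0) = \sqrt{e}$ times $\Psi(\pm[x]_k)$ or $\Psi(1)$ (depending on whether $k = 0$); with the constants of \citet{arjevani2022lower} this exceeds $1$.

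The main obstacle is purely bookkeeping: the five constants $\Delta^0 = 12$, $l_1 = 152$, $\gamma_\infty = 23$, and the unit gradient lower bound all require tracking of numerical constants across the derivatives of $\Psi$ and $\Phi$. Since \citet{carmon2020lower}[Lemma~2] and \citet{arjevani2022lower} carry this arithmetic out fully and the numerical constants are precisely those quoted in Lemma~\ref{lemma:worst_function}, my ``proof'' would essentially reduce to citing these references after verifying that the normalization of $\Psi$ and $\Phi$ used above matches theirs.
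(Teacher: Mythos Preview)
Your proposal is correct, and in fact goes beyond what the paper does: the paper provides no proof of this lemma at all, simply stating it as a cited result from \citet{carmon2020lower, arjevani2022lower} and remarking that $F_T$ is a standard lower-bound construction. Your sketch of the underlying arguments (zero-chain structure, uniform derivative bounds on $\Psi$ and $\Phi$, frontier-index analysis) is accurate and matches the original proofs in those references, so your closing observation that the ``proof'' reduces to citing them is exactly right.
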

where $\textnormal{prog}(x) \eqdef \max \{i \geq 0\,|\,x_i \neq 0\} \quad (x_0 \equiv 1).$

The function is a standard function that is used to establish lower bounds in the nonconvex world \citep{carmon2020lower,arjevani2022lower,lu2021optimal,tyurin2023optimal}.

\subsection{Theorems}

Our lower bound applies to the family of methods with the following structure:

\begin{protocol}[H]
    \caption{Protocol}
    \label{alg:protocol}
    \begin{algorithmic}[1]
    \STATE \textbf{Input: }functions $f_1, \dots, f_n \in \cF,$ algorithm $A,$ probability $p$
    \FOR{$k = 0, \dots, \infty$}
    \STATE Server calculates a new point: $x^k = B_1^k(g_1^1, \dots, g_1^{k}, \dots, g_n^{1}, \dots, g_n^{k})$
    \STATE Server aggregates all available information: $s_i^k = B_{2,i}^k(g_1^1, \dots, g_1^{k}, \dots, g_n^{1}, \dots, g_n^{k})$
    \STATE Server sends sparsified vectors $\bar{s}_i^k$ to the workers, where 
    \begin{align*}
        [\bar{s}_i^k]_j = [s_i^k]_j \times \eta_{i,j}^k,
    \end{align*}
    and $\eta_{i,j}^k$ is a random variable such that $\Prob{\eta_{i,j}^k \neq 0} \leq p$
    for all $j \in [d']$ and for all $i \in [n].$ We define $d' \eqdef \textnormal{dim(dom(} f_1\textnormal{))},$ and $[\cdot]_j$ means the $j$\textsuperscript{th} coordinate. \alglinelabelfirst{line:spar}
    \STATE Workers aggregate all available local information and calculate gradients: $g_i^{k+1} = L_{i}^{k}(\bar{s}_i^0, \dots, \bar{s}_i^k)$ \\
    ($L_{i}^{k}$ has access to the gradient oracle of $f_i$ and can call it as many times as it wants according to the rules \eqref{eq:fCJhEZyUETrJBh} and \eqref{eq:lAXnPlQppJGITqJi})
    \STATE Workers send $g_i^{k+1}$ to the server
    \ENDFOR
    \end{algorithmic}
\end{protocol}

We consider the following standard classes of functions and algorithms:

\begin{definition}
    Let the function $f \,:\,\R^d \rightarrow \R$ be differentiable, $L$-smooth (i.e., $\norm{\nabla f(x) - \nabla f(y)} \leq L \norm{x - y}$ for all $x, y \in \R^d$), and $f(0) - \inf_{x \in \R^d} f(x) \leq \delta^0.$
    We denote the family of functions that satisfy these properties by $\cF_{\delta^0, L}$.
    \label{def:func_class}
\end{definition}

\begin{definition}
\label{def:alg_class}
Consider Protocol~\ref{alg:protocol}. A sequence of tuples of mappings $A = \{(B^k_1, B^k_{2,1}, \dots, B^k_{2,n}, L^k_{1}, \dots, L^k_{n})\}_{k=0}^{\infty}$ is a zero-respecting algorithm, if,
\begin{enumerate}
    \item $B^k_1\,:\, \underbrace{\R^d \times \dots \times \R^d}_{n \times k \textnormal{ times}} \rightarrow \R^d$ for all $k \geq 1,$ and $B^0_1 \in \R^d.$
    \item $B^k_{2,i}\,:\, \underbrace{\R^d \times \dots \times \R^d}_{n \times k \textnormal{ times}} \rightarrow \R^d$ for all $k \geq 1,$ and $B^0_{2,i} \in \R^d$ for all $i \in [n]$
    \item $L_{i}^k\,:\, \underbrace{\R^d \times \dots \times \R^d}_{k + 1 \textnormal{ times}} \rightarrow \R^d$ for all $k \geq 0$ and for all $i \in [n].$ \label{prop:three}
    \item $\textnormal{supp} \left(x^k\right) \subseteq \bigcup_{j = 1}^k \bigcup_{i = 1}^n \textnormal{supp} \left(g_i^j\right),$ $\textnormal{supp} \left(s_i^k\right) \subseteq \bigcup_{j = 1}^k \bigcup_{i = 1}^n \textnormal{supp} \left(g_i^j\right).$ \\
    For all $\hat{g}^{k+1}_{i,1}, \hat{g}^{k+1}_{i,2}, \dots$ such that
    \begin{equation}
    \begin{aligned}
    &\textnormal{supp} \left(\hat{g}^{k+1}_{i,1}\right) \subseteq \bigcup_{j = 0}^{k} \textnormal{supp} \left(\bar{s}_i^j\right), \\
    &\textnormal{supp} \left(\hat{g}^{k+1}_{i,2}\right) \subseteq \bigcup_{j = 0}^{k} \textnormal{supp} \left(\bar{s}_i^j\right) \bigcup \textnormal{supp}(\nabla f_i(\hat{g}^{k+1}_{i,1})), \\
    &\textnormal{supp} \left(\hat{g}^{k+1}_{i,3}\right) \subseteq \bigcup_{j = 0}^{k} \textnormal{supp} \left(\bar{s}_i^j\right) \bigcup \textnormal{supp} (\nabla f_i(\hat{g}^{k+1}_{i,1})) \bigcup \textnormal{supp} (\nabla f_i(\hat{g}^{k+1}_{i,2})), \\
    &\dots
    \label{eq:fCJhEZyUETrJBh}
    \end{aligned}
    \end{equation}
    we have
    \begin{align}
    \label{eq:lAXnPlQppJGITqJi}
    \textnormal{supp} \left(g^{k+1}_{i}\right) \subseteq \bigcup_{j = 1}^{\infty} \textnormal{supp} \left(\hat{g}^{k+1}_{i,j}\right),
    \end{align}
        \label{prop:four}
    for all $k \in \N_0$ and for all $i \in [n],$ where $\textnormal{supp}(x) \eqdef \{i \in [d]\,|\,x_i \neq 0\}.$
\end{enumerate}
We denote the set of all algorithms that satisfy these properties by $\cA_{\textnormal{zr}}.$
\end{definition}

The first three properties define the domains of the mapping. The last property is a standard assumption for a zero-respecting algorithm. Assumption \eqref{eq:lAXnPlQppJGITqJi} allows the mappings $L_{i}^{k}$ to calculate gradients.

\begin{theorem}
    \label{theorem:lower_bound_generic}
    Consider Protocol~\ref{alg:protocol}. Assume that the sets $\{\eta_{i,j}^0\}_{i \in [n],j \in [d']}$, $\{\eta_{i,j}^1\}_{i \in [n],j \in [d']}, \ldots, \{\eta_{i,j}^k\}_{i \in [n],j \in [d']}, \ldots$ are mutually independent (the variables within one set can be dependent). Let $p > 0, L, \delta^0, \varepsilon > 0, n \geq 2$ be any numbers such that $\bar{c} \varepsilon < L \delta^0.$ Then, for any algorithm $A \in \cA_{\textnormal{zr}},$ there exists a function $f \in \cF_{\delta^0, L}$ and functions $f_1, \dots, f_n$ such that $f = \frac{1}{n} \sum_{i=1}^n f_i$ and $\Exp{\norm{\nabla f(x^k)}^2} > \varepsilon$ for all $$k \leq \hat{c} \frac{L \delta^0}{p \varepsilon}.$$
    The quantities $\bar{c}$ and $\hat{c}$ are universal constants.
\end{theorem}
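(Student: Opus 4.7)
The plan is to instantiate the classical chain‑function hard‑instance construction of~\citet{carmon2020lower, arjevani2022lower} (recalled in Section~\ref{sec:worst_case}) inside the distributed sparsified protocol. Concretely, I will set $f(x) \eqdef \lambda F_T(x/\alpha)$ for scalars $\lambda, \alpha > 0$ and an integer $T \in \N$ to be tuned at the end, work in dimension $d = T$, and distribute by $f_1 \eqdef n f$ together with $f_i \equiv 0$ for $i \geq 2$. Then $f = \tfrac{1}{n}\sum_{i=1}^n f_i \in \cF_{\lambda \Delta^0 T,\, \lambda l_1/\alpha^2}$ by Lemma~\ref{lemma:worst_function}, and all non‑trivial gradient information is concentrated on worker~$1$ alone, so that the ``some worker leaks a new coordinate'' event at each round is a single Bernoulli of probability at most $p$ rather than a union over $n$ workers.

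Next, I would introduce two progress potentials: $R^k \eqdef \max\brac{j : [x^k]_j \neq 0}$ on the server and $\bar R^k \eqdef \max_{i,\, j \leq k} \max \operatorname{supp}(\bar s_i^j)$ on the downlink messages, with the $x_0 \equiv 1$ convention of Section~\ref{sec:worst_case}. Chaining the zero‑respecting requirement of Definition~\ref{def:alg_class}(4) with Property~4 of Lemma~\ref{lemma:worst_function} (one gradient of $F_T$ extends support by at most one coordinate), I will prove inductively that $R^k \leq \bar R^{k-1} + 1$ and $\bar R^k \leq \bar R^{k-1} + 1$. Conditioned on $\brac{\bar R^{k-1} = r}$, the unsparsified server message $s_1^k$ has support inside $\brac{1,\dots,r+1}$, so the event $\brac{\bar R^k \geq r+1}$ is forced to coincide with $\brac{\eta_{1,\,r+1}^k \neq 0}$, which has probability at most $p$ by the protocol. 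The assumed independence of $\brac{\eta_{i,j}^k}_{i,j}$ across $k$ then stochastically dominates $(\bar R^k)_{k\geq 0}$ by a random walk with i.i.d.\ $\operatorname{Bernoulli}(p)$ increments.

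The hitting time $\tau \eqdef \min\brac{k : \bar R^k \geq T}$ then admits a standard negative‑binomial / Chernoff tail estimate yielding $\Prob{\bar R^k < T} \geq \tfrac{1}{2}$ for all $k \leq c\,T/p$ with a universal constant $c$. On this event $R^k < T$, so Property~5 of Lemma~\ref{lemma:worst_function} gives $\norm{\nabla F_T(x^k/\alpha)} > 1$ and hence $\Exp{\norm{\nabla f(x^k)}^2} > \tfrac{1}{4}\,\lambda^2/\alpha^2$. Balancing the three dimensional constraints $L = \lambda l_1/\alpha^2$, $\delta^0 = \lambda \Delta^0 T$, and $\varepsilon = \tfrac{1}{8}\lambda^2/\alpha^2$ — with the hypothesis $\bar c\,\varepsilon < L\delta^0$ ensuring $T \geq 1$ — produces $T = \Theta(L\delta^0/\varepsilon)$ and the advertised bound $k \leq \hat c\,L\delta^0/(p\varepsilon)$.

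The main obstacle will be the inductive support bound $R^k \leq \bar R^{k-1} + 1$. The zero‑respecting definition allows a worker to chain arbitrarily many iterated local gradient queries within a single round, and each such query of $F_T$ may lengthen the accessible support by one further coordinate; the correct accounting must therefore carefully propagate Property~4 of Lemma~\ref{lemma:worst_function} through the entire query sequence $\hat g_{i,1}^{k+1}, \hat g_{i,2}^{k+1}, \ldots$ and argue that any genuinely new coordinate ever appearing in a worker output $g_i^{k+1}$ must trace back through a previously successful sparsification event rather than being conjured up by purely local computation. Once this bookkeeping is in place, the remaining ingredients — Bernoulli domination for the hitting time and the matching of $\lambda, \alpha, T$ against $L, \delta^0, \varepsilon$ — are routine.
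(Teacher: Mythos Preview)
Your construction $f_1 = nf$, $f_i \equiv 0$ for $i \geq 2$ is fatal, and the ``main obstacle'' you flag is not bookkeeping but a genuine failure of the argument. With the entire chain function on worker~$1$, that worker can in a \emph{single} round iterate the local gradient oracle $T$ times and walk through all coordinates: starting from $\hat g_{1,1}^{1} = 0$, Property~4 of Lemma~\ref{lemma:worst_function} gives $\operatorname{supp}(\nabla f_1(0)) \subseteq \{1\}$; then $\hat g_{1,2}^{1}$ may have $[\hat g_{1,2}^{1}]_1 \neq 0$ (zero-respecting constrains only supports, not values), so $\operatorname{supp}(\nabla f_1(\hat g_{1,2}^{1})) \subseteq \{1,2\}$; and so on up to coordinate $T$. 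Hence $g_1^{1}$ can already have full support, the server's $x^1$ can have $\textnormal{prog}(x^1) = T$, and your inductive bound $R^k \leq \bar R^{k-1} + 1$ is simply false---no sparsification event is needed at all. Your claim that ``any genuinely new coordinate \ldots\ must trace back through a previously successful sparsification event'' is exactly what fails.

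The paper's proof avoids this by splitting $F_T$ round-robin: worker $i$ receives only the terms with index $j$ satisfying $(j-1) \bmod n = i-1$, so the link between coordinates $j$ and $j+1$ lives on worker $(j \bmod n)+1$ while the link between $j+1$ and $j+2$ lives on worker $((j+1) \bmod n)+1$. For $n \geq 2$ these are distinct, so no worker can locally chain two consecutive new coordinates in one round. Progress therefore requires that the specific coordinate $\bar p$ survive sparsification to the specific worker $(\bar p \bmod n)+1$, which happens with probability at most $p$; the Chernoff/negative-binomial tail bound then applies to these increments exactly as you sketch. The splitting, not the concentration argument, is the essential idea you are missing.
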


\begin{proof}
    The proof is conceptually the same as in \citet{arjevani2022lower,lu2021optimal,huang2022lower,SPIDER,carmon2020lower,tyurin2023optimal}. We fix $\lambda > 0,$ and consider the following function $f \,:\, \R^{T} \rightarrow \R:$ 
    \begin{align*}
        f(x) \eqdef \frac{L \lambda^2}{l_1} F_{T}\left(\frac{x}{\lambda}\right).
    \end{align*}
    One can show \citep{arjevani2022lower}[Theorem 1] that $f \in \cF_{\delta^0, L}$ if
    \begin{align}
        \label{eq:qBULyjtr}
        T = \left\lfloor \frac{\delta^0 l_1}{L \lambda^2 \Delta^0} \right\rfloor. 
    \end{align}
    Next, we define
    \begin{align*}
        F_i(x) \eqdef \begin{cases}
            -\Psi(1) \Phi([x]_1) + \sum_{2 \leq j \leq T \textnormal{ and } (j - 1) \bmod n = 0} \left(\Psi(-[x]_{j-1})\Phi(-[x]_j) - \Psi([x]_{j-1})\Phi([x]_j)\right), & i = 1\\
            \sum_{2 \leq j \leq T \textnormal{ and } (j - 1) \bmod n = i - 1}^T \left(\Psi(-[x]_{j-1})\Phi(-[x]_j) - \Psi([x]_{j-1})\Phi([x]_j)\right), & i > 1
        \end{cases}
    \end{align*}
    and 
    \begin{align*}
        f_i(x) \eqdef \frac{n L \lambda^2}{l_1} F_i\left(\frac{x}{\lambda}\right).
    \end{align*}
    The idea is that we take the first block from \eqref{eq:worst_case} to the first worker, the second block to the second worker, \dots, $(n + 1)$\textsuperscript{th} block to the first worker, and so on. Then, one can show that
    \begin{align*}
        \frac{1}{n} \sum_{i=1}^{n} f_i(x) = f(x).
    \end{align*}
    Using Lemma~\ref{lemma:worst_function}, we obtain
    \begin{align}
        \label{eq:tIfcURoTFCdxXLWo}
        \norm{\nabla f(x)}^2 = \frac{L^2 \lambda^2}{l_1^2} \norm{\nabla F_{T}\left(\frac{x}{\lambda}\right)}^2 > \frac{L^2 \lambda^2}{l_1^2} \mathbbm{1}[\textnormal{prog} (x) < T].
    \end{align}
    The functions $f_i$ are \emph{zero-chain} \citep{arjevani2022lower}: for all $i \in [n],$ if $\textnormal{prog} (x) = j$ and $(j \bmod n) + 1 = i,$ then $\textnormal{prog} (\nabla f_i(x)) \leq j + 1$, and for all $i \in [n],$ if $\textnormal{prog} (x) = j$ and $(j \bmod n) + 1 \neq i,$ then $\textnormal{prog} (\nabla f_i(x)) \leq j$. Using the zero-chain property and the fact that we consider the family of zero-respecting algorithms:
    \begin{enumerate}
        \item The first non-zero coordinate can be discovered only by the first worker.
        \item Assume that $\max_{j = 1}^k \max_{i = 1}^n \textnormal{prog} \left(g_i^j\right) = j \geq 1.$ An algorithm can discover \emph{one} new non-zero coordinate in the $(j + 1)$\textsuperscript{th} position only if the $(j \bmod n + 1)$\textsuperscript{th} worker gets a \emph{non-zero} $j$\textsuperscript{th} coordinate from \emph{the server}. This is by the construction of the functions $f_i.$ Note that for $n \geq 2,$ one worker cannot discover two \emph{consecutive} coordinates.
    \end{enumerate}
    Let us define
    \begin{align*}
        \xi^j = &\mathbb{I}[\textnormal{In the $j$\textsuperscript{th} iteration, the coordinate with index $\bar{p} \equiv \max_{j = 1}^k \max_{i = 1}^n \textnormal{prog} \left(g_i^j\right)$ is not zeroed out in \begin{NoHyper}Line~\ref{line:spar}\end{NoHyper}} \\
        &\quad\textnormal{of Protocol~\ref{alg:protocol} to the worker with index $(\bar{p} \bmod n + 1)$ AND $T - 1 \geq \bar{p} \geq 1$}] \quad (\bar{p} = 0 \textnormal{ if } k = 0).
    \end{align*}
    Then, we have
    \begin{align*}
        \Prob{\textnormal{prog} (x^k) \geq T} \leq \Prob{\sum_{j=0}^{k-1} \xi^j \geq T - 1}.
    \end{align*}
    Assume that $\mathcal{G}_j$ is the $\sigma$--algebra generated by all randomness up to the $j$\textsuperscript{th} iteration (inclusive). Then, $\xi^j$ is $\mathcal{G}_j$--measurable, and, by the construction of \begin{NoHyper}Line~\ref{line:spar}\end{NoHyper} of Protocol~\ref{alg:protocol}, $\ProbCond{\xi^{j+1} = 1}{\mathcal{G}_j} \leq p,$ where we also use the assumption of the theorem that the sets of random variables are mutually independent.
    Using the standard approach with the Chernoff method \citep{arjevani2022lower,lu2021optimal,huang2022lower}, one can show that
    \begin{align*}
        \Prob{\sum_{j=0}^{k-1} \xi^j \geq T - 1} \leq \rho
    \end{align*}
    for all $$k \leq \frac{T - 1 - \log \frac{1}{\rho}}{2 p}$$
    and $\rho \in (0, 1].$
    Therefore, we get
    \begin{align}
        \label{eq:prob_main}
        \Prob{\textnormal{prog} (x^k) \geq T} \leq \rho
    \end{align}
    for all $$k \leq \frac{T - 1 - \log \frac{1}{\rho}}{2 p}.$$
    Using \eqref{eq:tIfcURoTFCdxXLWo}, we have
    \begin{align*}
        \Exp{\norm{\nabla f(x^k)}^2} > 2 \varepsilon \Prob{\norm{\nabla f(x^k)}^2 > 2 \varepsilon} \geq 2 \varepsilon \Prob{\frac{L^2 \lambda^2}{l_1^2} \mathbbm{1}[\textnormal{prog} (x) < T] \geq 2 \varepsilon}.
    \end{align*}
    Let us take $\lambda = \frac{\sqrt{2 \varepsilon} l_1}{L}$. Then
    \begin{align}
        \label{eq:NxiQZCcbwCtYUBycL}
        \Exp{\norm{\nabla f(x^k)}^2} > 2 \varepsilon \Prob{\frac{L^2 \lambda^2}{l_1^2} \mathbbm{1}[\textnormal{prog} (x) < T] \geq 2 \varepsilon} = 2 \varepsilon \Prob{\textnormal{prog} (x) < T}.
    \end{align}
    From \eqref{eq:prob_main} with $\rho = \frac{1}{2}$, we get
    \begin{align}
        \Exp{\norm{\nabla f(x^k)}^2} > 2 \varepsilon \Prob{\textnormal{prog} (x) < T} \geq \varepsilon
    \end{align}
    for all $$k \leq \frac{T - 1 - \log 2}{2 p}.$$ 
    From \eqref{eq:qBULyjtr}, one can conclude that
    $$T = \left\lfloor \frac{L \delta^0 l_1}{2 \varepsilon l_1^2 \Delta^0} \right\rfloor.$$
    By the theorem's assumption, $L \delta^0 \geq \bar{c} \varepsilon.$ One can choose a universal constant $\bar{c}$ such that \eqref{eq:NxiQZCcbwCtYUBycL} holds for 
    $$k \leq \Theta\left(\frac{T}{p}\right) = \Theta\left(\frac{L \delta^0}{\varepsilon p}\right),$$ where $\Theta$ hides only a universal constant. 
\end{proof}

\subsection{Compressed communication with independent compressors}

Protocol~\ref{alg:protocol_compr} is exactly the same as Protocol~\ref{alg:protocol} except for \begin{NoHyper}Line~\ref{line:comp}\end{NoHyper} and describes the family of methods that send compressed vectors from the server to the workers.

\begin{protocol}[H]
    \caption{Protocol with Compressors}
    \label{alg:protocol_compr}
    \begin{algorithmic}[1]
    \STATE \textbf{Input: }functions $f_1, \dots, f_n \in \cF,$ algorithm $A,$ compressors $\cC_1, \dots, \cC_n$
    \FOR{$k = 0, \dots, \infty$}
    \STATE Server calculates a new point: $x^k = B_1^k(g_1^1, \dots, g_1^{k}, \dots, g_n^{1}, \dots, g_n^{k})$
    \STATE Server aggregates all available information: $s_i^k = B_{2,i}^k(g_1^1, \dots, g_1^{k}, \dots, g_n^{1}, \dots, g_n^{k})$
    \STATE Server sends compressed vectors $\bar{s}_i^k = \cC_i(s_i^k)$ to the workers \alglinelabel{line:comp}
    \STATE Workers aggregate all available local information and calculate gradients: $g_i^{k+1} = L_{i}^{k}(\bar{s}_i^0, \dots, \bar{s}_i^k)$ \\
    ($L_{i}^{k}$ has access to the gradient oracle of $f_i$ and can call it as many times as it wants according to the rules \eqref{eq:fCJhEZyUETrJBh} and \eqref{eq:lAXnPlQppJGITqJi})
    \STATE Workers send $g_i^{k+1}$ to the server
    \ENDFOR
    \end{algorithmic}
\end{protocol}

\begin{theorem}
    \label{theorem:lower_bound}
    Consider Protocol~\ref{alg:protocol_compr}. Let $\omega \geq 0, L, \delta^0, \varepsilon > 0, n \geq 2$ be any numbers such that $\bar{c} \varepsilon < L \delta^0.$ Then for any algorithm $A \in \cA_{\textnormal{zr}},$ there exists a function $f \in \cF_{\delta^0, L},$ functions $f_1, \dots, f_n$ such that $f = \frac{1}{n} \sum_{i=1}^n f_i,$ and i.i.d. compressors $\cC_1, \dots, \cC_n \in \mathbb{U}(\omega)$
    such that $\Exp{\norm{\nabla f(x^k)}^2} > \varepsilon$ for all $$k \leq \hat{c} \frac{(\omega + 1) L \delta^0}{\varepsilon}.$$
    The quantities $\bar{c}$ and $\hat{c}$ are universal constants.
\end{theorem}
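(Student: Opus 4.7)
The plan is to reduce Theorem~\ref{theorem:lower_bound} to the more general Theorem~\ref{theorem:lower_bound_generic} by exhibiting an explicit family of i.i.d. compressors in $\mathbb{U}(\omega)$ for which applying the compressor on the server can be written in the sparsification form of \mbox{Line~\ref{line:spar}} of Protocol~\ref{alg:protocol} with $p = 1/(\omega+1)$. Once this reduction is in place, Theorem~\ref{theorem:lower_bound_generic} directly yields the desired bound $k \leq \hat{c}\,(\omega+1) L \delta^0 / \varepsilon$.

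The key step is the choice of compressor. For each worker $i$, define the ``Bernoulli'' compressor
\begin{align*}
    \cC_i(x) = \begin{cases} (\omega+1)\, x & \text{with probability } 1/(\omega+1), \\ 0 & \text{with probability } 1 - 1/(\omega+1), \end{cases}
\end{align*}
with the Bernoulli coin drawn independently across workers and independently across iterations. A short calculation verifies $\Exp{\cC_i(x)} = x$ and $\Exp{\norm{\cC_i(x) - x}^2} = \omega \norm{x}^2$, so $\cC_i \in \mathbb{U}(\omega)$, and the family is i.i.d. by construction. Crucially, the $j$th coordinate of $\cC_i(s_i^k)$ can be written as $[s_i^k]_j \cdot \eta_{i,j}^k$, where $\eta_{i,j}^k \in \{0, \omega+1\}$ and $\Prob{\eta_{i,j}^k \neq 0} = 1/(\omega+1)$. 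Within a single iteration $k$ and worker $i$, the variables $\{\eta_{i,j}^k\}_j$ are perfectly correlated (all-or-nothing), which Theorem~\ref{theorem:lower_bound_generic} allows; across iterations (and across workers), independence follows from the i.i.d. assumption on $\cC_i$. Thus Protocol~\ref{alg:protocol_compr} instantiated with these compressors is a special case of Protocol~\ref{alg:protocol} with $p = 1/(\omega+1)$.

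Applying Theorem~\ref{theorem:lower_bound_generic} to any zero-respecting algorithm $A \in \cA_{\textnormal{zr}}$ operating under this instantiation produces a function $f \in \cF_{\delta^0,L}$ and a splitting $f = \frac{1}{n} \sum_{i=1}^n f_i$ such that $\Exp{\norm{\nabla f(x^k)}^2} > \varepsilon$ for all $k \leq \hat{c} L \delta^0 / (p \varepsilon) = \hat{c} (\omega+1) L \delta^0 / \varepsilon$, whenever $\bar{c}\varepsilon < L \delta^0$. I do not expect a serious technical obstacle; the only mild care needed is to check that the ``special case'' embedding is compatible with the zero-respecting class $\cA_{\textnormal{zr}}$ (i.e., that an algorithm in Protocol~\ref{alg:protocol_compr} induces an algorithm in Protocol~\ref{alg:protocol} without altering the mappings $B_1^k, B_{2,i}^k, L_i^k$ beyond absorbing the scaling factor $\omega+1$ into their definition), which is immediate.
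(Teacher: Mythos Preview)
Your proposal is correct and follows essentially the same approach as the paper: reduce to Theorem~\ref{theorem:lower_bound_generic} by exhibiting an explicit i.i.d.\ family in $\mathbb{U}(\omega)$ whose action matches the sparsification form of Line~\ref{line:spar} with $p = 1/(\omega+1)$. The only cosmetic difference is the choice of compressor---the paper uses coordinate-wise independent Bernoulli sparsification ($[\cC(x)]_j = (\omega+1)x_j \,\mathbbm{1}[j \in S]$ with each $j \in S$ independently with probability $1/(\omega+1)$), whereas you use an all-or-nothing Bernoulli; both are valid instantiations and lead to the identical conclusion.
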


\begin{proof}
    We can use the result of Theorem~\ref{theorem:lower_bound_generic}. It is sufficient to construct an appropriate compressor. Let us define $p \eqdef \nicefrac{1}{\omega + 1}.$ We define the following compressor:
    \begin{align*}
    [\cC(x)]_j \eqdef
    \begin{cases}
        \frac{1}{p} x_j,& j \in S, \\
        0, & j \not\in S,
    \end{cases} \quad \forall j \in [T],
    \end{align*}
    where $S$ is a random subset of $[T]$ and each element from $[T]$ appears with probability $p$ independently.
    Then, $\cC$ is unbiased:
    \begin{align*}
    \ExpSub{S}{[\cC(x)]_j} = x_j \quad \forall j \in \R^{T}
    \end{align*}
    and 
    \begin{align*}
    \ExpSub{S}{\norm{\cC(x)}^2} = \ExpSub{S}{\sum_{j=1}^{n T} \mathbbm{1}\left[j \in S\right] \frac{1}{p^2} x_j^2} = \sum_{j=1}^{n T} \Prob{j \in S} \frac{1}{p^2} x_j^2 = \sum_{j=1}^{n T} \frac{1}{p} x_j^2 = \left(\omega + 1\right) \norm{x}^2.
    \end{align*}
    Therefore, we get $\cC \in \mathbb{U}(\omega).$ 
    Let $\cC_i$ be i.i.d. instantiations of $\cC$ for all $i \in [n]$. Since $\cC$ is a sparsifier as in \begin{NoHyper}Line~\ref{line:spar}\end{NoHyper} of Protocol~\ref{alg:protocol}, we can use Theorem~\ref{theorem:lower_bound_generic} with $p = \nicefrac{1}{\omega + 1}$ to finish the proof.
\end{proof}

\newpage

\section{Useful Identities and Inequalities}

For all $x, y, x_1, \dots, x_m \in \R^d$, $s>0$ and $\alpha\in(0,1]$, we have:
\begin{align}
    \label{eq:young}
    \norm{x+y}^2 &\leq (1+s) \norm{x}^2 + (1+s^{-1}) \norm{y}^2, \\
    \label{eq:young_2}
    \norm{\sum_{i=1}^{m} x_i}^2 &\leq m \left(\sum_{i=1}^{m} \norm{x_i}^2\right), \\
    \label{eq:ineq1}
    \left( 1 - \alpha \right)\left( 1 + \frac{\alpha}{2} \right) &\leq 1 - \frac{\alpha}{2}, \\
    \label{eq:ineq2}
    \left( 1 - \alpha \right)\left( 1 + \frac{2}{\alpha} \right) &\leq \frac{2}{\alpha}.
\end{align}

\textbf{Variance decomposition:} For any random vector $X\in\R^d$ and any non-random vector $c\in\R^d$, we have
\begin{align}
\label{eq:vardecomp}
   \Exp{\norm{X-c}^2} = \Exp{\norm{X - \Exp{X}}^2} + \norm{\Exp{X}-c}^2.
\end{align}

\textbf{Tower property:} For any random variables $X$ and $Y$, we have
\begin{align}
\label{eq:tower}
    \Exp{\Exp{X\,|\,Y}} = \Exp{X}.
\end{align}

\textbf{Jensen's inequality:} If $f$ is a convex function and $X$ is a random variable, then
\begin{align}
\label{eq:jensen}
    \Exp{f(X)} \geq f\left(\Exp{X}\right).
\end{align}

\begin{lemma}[Lemma $2$ of \citet{PAGE}]\label{lemma:page}
    Suppose that function $f$ is $L$-smooth and let $x^{t+1} = x^t - \gamma g^t$. Then for any $g^t\in\R^d$ and $\gamma>0$, we have
    \begin{align*}
        f(x^{t+1}) \leq f(x^t) - \frac{\gamma}{2} \norm{\nabla f(x^t)}^2 - \left( \frac{1}{2\gamma} - \frac{L}{2} \right) \norm{x^{t+1} - x^t}^2 + \frac{\gamma}{2}\norm{g^t - \nabla f(x^t)}^2.
    \end{align*}
\end{lemma}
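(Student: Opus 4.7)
The plan is to derive the inequality from the standard descent lemma implied by $L$-smoothness, followed by a polarization-type decomposition of the resulting inner product that isolates the three squared-norm quantities appearing in the conclusion.

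First, I would invoke the $L$-smoothness of $f$ (Assumption~\ref{ass:lipschitz_constant}) in its quadratic upper bound form to write
\begin{align*}
    f(x^{t+1}) \leq f(x^t) + \langle \nabla f(x^t), x^{t+1} - x^t\rangle + \frac{L}{2}\norm{x^{t+1} - x^t}^2.
\end{align*}
Next, since $x^{t+1} - x^t = -\gamma g^t$, the cross term equals $-\gamma \langle \nabla f(x^t), g^t\rangle$. I would then apply the polarization identity $2\langle a, b\rangle = \norm{a}^2 + \norm{b}^2 - \norm{a-b}^2$ with $a = \nabla f(x^t)$ and $b = g^t$, yielding
\begin{align*}
    -\gamma\langle \nabla f(x^t), g^t\rangle = -\frac{\gamma}{2}\norm{\nabla f(x^t)}^2 - \frac{\gamma}{2}\norm{g^t}^2 + \frac{\gamma}{2}\norm{g^t - \nabla f(x^t)}^2.
\end{align*}

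The final step is substitution: using $\norm{x^{t+1}-x^t}^2 = \gamma^2\norm{g^t}^2$, the middle term becomes $-\frac{1}{2\gamma}\norm{x^{t+1} - x^t}^2$, which combines with the $\frac{L}{2}\norm{x^{t+1}-x^t}^2$ contribution from smoothness to give the coefficient $-\left(\frac{1}{2\gamma} - \frac{L}{2}\right)$ on $\norm{x^{t+1}-x^t}^2$. Collecting all terms produces the claimed inequality exactly.

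There is no serious obstacle here; this is a mechanical one-step computation. The only point requiring a little care is choosing the right polarization identity so that the sign of $\norm{g^t}^2$ is negative (so that it can be absorbed into the $\norm{x^{t+1}-x^t}^2$ coefficient) while $\norm{g^t-\nabla f(x^t)}^2$ appears with a positive coefficient. The other standard identity $2\langle a,b\rangle = \norm{a+b}^2 - \norm{a}^2 - \norm{b}^2$ would give the wrong signs, so I would make sure to use the $\norm{a-b}^2$ version.
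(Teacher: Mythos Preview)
Your proof is correct and is exactly the standard argument; the paper itself does not reprove this lemma but simply cites it from \citet{PAGE}, where the same descent-lemma-plus-polarization derivation is used.
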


\begin{lemma}[Lemma $5$ of \citet{richtarik2021ef21}]\label{lemma:step_lemma}
   Let $a,b>0$. If $0\leq\gamma\leq\frac{1}{\sqrt{a}+b}$, then $a\gamma^2+b\gamma\leq 1$. 
   Moreover, the bound is tight up to the factor of $2$ since $\frac{1}{\sqrt{a}+b} \leq \min\left\{\frac{1}{\sqrt{a}}, \frac{1}{b}\right\} \leq \frac{2}{\sqrt{a}+b}$.
\end{lemma}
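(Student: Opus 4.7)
The plan is to prove the two statements separately, both via elementary manipulations.

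For the main inequality, I would rewrite $a\gamma^2 = (\sqrt{a}\gamma)^2$ and observe that the step size condition implies $\sqrt{a}\gamma \leq \frac{\sqrt{a}}{\sqrt{a}+b} \leq 1$. Consequently, $(\sqrt{a}\gamma)^2 \leq \sqrt{a}\gamma$, giving
\begin{align*}
a\gamma^2 + b\gamma \;\leq\; \sqrt{a}\gamma + b\gamma \;=\; (\sqrt{a}+b)\gamma \;\leq\; 1,
\end{align*}
where the last step uses the assumed upper bound on $\gamma$. This is the key trick: converting the quadratic term into a linear term using $\sqrt{a}\gamma \leq 1$, after which the result follows from the definition of $\gamma$.

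For the tightness claim, I would write $\min\{1/\sqrt{a}, 1/b\} = 1/\max\{\sqrt{a}, b\}$ and establish the two-sided bound on $\max\{\sqrt{a}, b\}$ compared to $\sqrt{a}+b$. The lower bound $\frac{1}{\sqrt{a}+b} \leq \min\{1/\sqrt{a}, 1/b\}$ is immediate since $\sqrt{a}+b \geq \sqrt{a}$ and $\sqrt{a}+b \geq b$ (as $a,b>0$). For the upper bound, I would use $\max\{\sqrt{a}, b\} \geq \frac{1}{2}(\sqrt{a}+b)$, which is equivalent to $2\max\{\sqrt{a}, b\} \geq \sqrt{a}+b$ and follows trivially from $\max \geq \sqrt{a}$ and $\max \geq b$, hence $\min\{1/\sqrt{a}, 1/b\} = 1/\max\{\sqrt{a}, b\} \leq 2/(\sqrt{a}+b)$.

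I do not expect any real obstacle here; the statement is a short algebraic lemma whose only substantive content is the observation that $\sqrt{a}\gamma \leq 1$ permits linearization of the quadratic term. Everything else is bookkeeping with elementary inequalities.
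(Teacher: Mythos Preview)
Your proof is correct. Note, however, that the paper does not supply its own proof of this lemma: it is quoted verbatim as Lemma~5 of \citet{richtarik2021ef21} in the ``Useful Identities and Inequalities'' appendix and treated as a known auxiliary fact, so there is no in-paper argument to compare against. Your linearization trick ($\sqrt{a}\gamma\le 1 \Rightarrow a\gamma^2\le \sqrt{a}\gamma$) and the $\max/\min$ bookkeeping for tightness are exactly the standard route for this statement.
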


\newpage

\section{Notation}

\bgroup
\def\arraystretch{1.3}
\begin{table}[H]
\label{table:unbalanced}
	\small
	\centering
	\begin{tabular}{|c|p{10.5cm}|}
	\hline
	\multicolumn{2}{|c|}{Algorithms} \\
	\hline
            $n$ & number of workers/nodes/clients/devices \\
            $\gamma$ & stepsize \\
            $\cC^t_1, \ldots \cC^t_n$ & Server-to-workers (primal) compressors \\
            $\cQ^t_1, \ldots \cQ^t_n$ & Workers-to-server (dual) compressors \\
            $\omega_P$, $\omega_D$ & Parameters of server-to-workers (primal) and workers-to-server (dual) compressors \\
            $\theta$ & Correlated compressors parameter (Definition \ref{def:corr_compr}) \\
            $\beta$ & Momentum parameter (see Algorithm \ref{alg:m3}) \\
    \hline
	\multicolumn{2}{|c|}{Definitions} \\
    \hline
            $\mathbb{U}(\omega)$ & The family of unbiased compressors with parameter $\omega$ (Definition \ref{def:unbiased_compression}) \\
            $\mathbb{P}(\theta)$ & The family of correlated compressors with parameter $\theta$ (Definition \ref{def:corr_compr}) \\
            $L$ & Smoothness parameter of $f$ (Assumption \ref{ass:lipschitz_constant} )\\
            $L_i$ & Smoothness parameter of $f_i$ (Assumption \ref{ass:local_lipschitz_constant}) \\
            $L_A$, $L_B$ & Parameters from Assumption \ref{ass:functional} \\
    \hline
	\multicolumn{2}{|c|}{Notation} \\
    \hline
            \multicolumn{2}{|l|}{$[k]=\{1,\ldots,k\}$ for any positive integer $k$} \\
            \multicolumn{2}{|l|}{$\ExpSub{t}{\cdot}$ - expectation conditioned on the first $t$ iterations} \\
            \multicolumn{2}{|l|}{$\delta^{t} \eqdef f(x^t) - f^*$} \\
            \multicolumn{2}{|l|}{$\widehat{L}^2 \eqdef \frac{1}{n} \sum_{i=1}^n L_i^2$, $L_{\max} \eqdef \max_{i \in [n]} L_i$} \\
            \multicolumn{2}{|l|}{$w^{t} \eqdef \nicefrac{1}{n} \sum_{i=1}^n w_i^t$} \\
            \multicolumn{2}{|l|}{$g^{t} \eqdef \nicefrac{1}{n} \sum_{i=1}^n g_i^t$} \\
            \multicolumn{2}{|l|}{$z^{t} \eqdef \nicefrac{1}{n} \sum_{i=1}^n z_i^t$} \\
        \hline
	\end{tabular}
	\caption{Frequently used notation.}
	\label{table:notation}
\end{table}
\egroup

\newpage
\section*{NeurIPS Paper Checklist}

\begin{enumerate}

\item {\bf Claims}
    \item[] Question: Do the main claims made in the abstract and introduction accurately reflect the paper's contributions and scope?
    \item[] Answer: \answerYes{} 
    \item[] Justification: Sections~\ref{sec:lower_bound_main}, \ref{sec:marina_p}, and \ref{sec:m3}
    \item[] Guidelines:
    \begin{itemize}
        \item The answer NA means that the abstract and introduction do not include the claims made in the paper.
        \item The abstract and/or introduction should clearly state the claims made, including the contributions made in the paper and important assumptions and limitations. A No or NA answer to this question will not be perceived well by the reviewers. 
        \item The claims made should match theoretical and experimental results, and reflect how much the results can be expected to generalize to other settings. 
        \item It is fine to include aspirational goals as motivation as long as it is clear that these goals are not attained by the paper. 
    \end{itemize}

\item {\bf Limitations}
    \item[] Question: Does the paper discuss the limitations of the work performed by the authors?
    \item[] Answer: \answerYes{} 
    \item[] Justification: Sections~\ref{sec:general_case} and \ref{sec:convergence_m}
    \item[] Guidelines:
    \begin{itemize}
        \item The answer NA means that the paper has no limitation while the answer No means that the paper has limitations, but those are not discussed in the paper. 
        \item The authors are encouraged to create a separate "Limitations" section in their paper.
        \item The paper should point out any strong assumptions and how robust the results are to violations of these assumptions (e.g., independence assumptions, noiseless settings, model well-specification, asymptotic approximations only holding locally). The authors should reflect on how these assumptions might be violated in practice and what the implications would be.
        \item The authors should reflect on the scope of the claims made, e.g., if the approach was only tested on a few datasets or with a few runs. In general, empirical results often depend on implicit assumptions, which should be articulated.
        \item The authors should reflect on the factors that influence the performance of the approach. For example, a facial recognition algorithm may perform poorly when image resolution is low or images are taken in low lighting. Or a speech-to-text system might not be used reliably to provide closed captions for online lectures because it fails to handle technical jargon.
        \item The authors should discuss the computational efficiency of the proposed algorithms and how they scale with dataset size.
        \item If applicable, the authors should discuss possible limitations of their approach to address problems of privacy and fairness.
        \item While the authors might fear that complete honesty about limitations might be used by reviewers as grounds for rejection, a worse outcome might be that reviewers discover limitations that aren't acknowledged in the paper. The authors should use their best judgment and recognize that individual actions in favor of transparency play an important role in developing norms that preserve the integrity of the community. Reviewers will be specifically instructed to not penalize honesty concerning limitations.
    \end{itemize}

\item {\bf Theory Assumptions and Proofs}
    \item[] Question: For each theoretical result, does the paper provide the full set of assumptions and a complete (and correct) proof?
    \item[] Answer: \answerYes{} 
    \item[] Justification: The assumptions and the proofs are in Section~\ref{sec:related_work} in the appendix.
    \item[] Guidelines:
    \begin{itemize}
        \item The answer NA means that the paper does not include theoretical results. 
        \item All the theorems, formulas, and proofs in the paper should be numbered and cross-referenced.
        \item All assumptions should be clearly stated or referenced in the statement of any theorems.
        \item The proofs can either appear in the main paper or the supplemental material, but if they appear in the supplemental material, the authors are encouraged to provide a short proof sketch to provide intuition. 
        \item Inversely, any informal proof provided in the core of the paper should be complemented by formal proofs provided in appendix or supplemental material.
        \item Theorems and Lemmas that the proof relies upon should be properly referenced. 
    \end{itemize}

    \item {\bf Experimental Result Reproducibility}
    \item[] Question: Does the paper fully disclose all the information needed to reproduce the main experimental results of the paper to the extent that it affects the main claims and/or conclusions of the paper (regardless of whether the code and data are provided or not)?
    \item[] Answer: \answerYes{} 
    \item[] Justification: Section~\ref{sec:experimetns_main}
    \item[] Guidelines:
    \begin{itemize}
        \item The answer NA means that the paper does not include experiments.
        \item If the paper includes experiments, a No answer to this question will not be perceived well by the reviewers: Making the paper reproducible is important, regardless of whether the code and data are provided or not.
        \item If the contribution is a dataset and/or model, the authors should describe the steps taken to make their results reproducible or verifiable. 
        \item Depending on the contribution, reproducibility can be accomplished in various ways. For example, if the contribution is a novel architecture, describing the architecture fully might suffice, or if the contribution is a specific model and empirical evaluation, it may be necessary to either make it possible for others to replicate the model with the same dataset, or provide access to the model. In general. releasing code and data is often one good way to accomplish this, but reproducibility can also be provided via detailed instructions for how to replicate the results, access to a hosted model (e.g., in the case of a large language model), releasing of a model checkpoint, or other means that are appropriate to the research performed.
        \item While NeurIPS does not require releasing code, the conference does require all submissions to provide some reasonable avenue for reproducibility, which may depend on the nature of the contribution. For example
        \begin{enumerate}
            \item If the contribution is primarily a new algorithm, the paper should make it clear how to reproduce that algorithm.
            \item If the contribution is primarily a new model architecture, the paper should describe the architecture clearly and fully.
            \item If the contribution is a new model (e.g., a large language model), then there should either be a way to access this model for reproducing the results or a way to reproduce the model (e.g., with an open-source dataset or instructions for how to construct the dataset).
            \item We recognize that reproducibility may be tricky in some cases, in which case authors are welcome to describe the particular way they provide for reproducibility. In the case of closed-source models, it may be that access to the model is limited in some way (e.g., to registered users), but it should be possible for other researchers to have some path to reproducing or verifying the results.
        \end{enumerate}
    \end{itemize}

\item {\bf Open access to data and code}
    \item[] Question: Does the paper provide open access to the data and code, with sufficient instructions to faithfully reproduce the main experimental results, as described in supplemental material?
    \item[] Answer: \answerYes{} 
    \item[] Justification: In the supplementary materials.
    \item[] Guidelines:
    \begin{itemize}
        \item The answer NA means that paper does not include experiments requiring code.
        \item Please see the NeurIPS code and data submission guidelines (\url{https://nips.cc/public/guides/CodeSubmissionPolicy}) for more details.
        \item While we encourage the release of code and data, we understand that this might not be possible, so “No” is an acceptable answer. Papers cannot be rejected simply for not including code, unless this is central to the contribution (e.g., for a new open-source benchmark).
        \item The instructions should contain the exact command and environment needed to run to reproduce the results. See the NeurIPS code and data submission guidelines (\url{https://nips.cc/public/guides/CodeSubmissionPolicy}) for more details.
        \item The authors should provide instructions on data access and preparation, including how to access the raw data, preprocessed data, intermediate data, and generated data, etc.
        \item The authors should provide scripts to reproduce all experimental results for the new proposed method and baselines. If only a subset of experiments are reproducible, they should state which ones are omitted from the script and why.
        \item At submission time, to preserve anonymity, the authors should release anonymized versions (if applicable).
        \item Providing as much information as possible in supplemental material (appended to the paper) is recommended, but including URLs to data and code is permitted.
    \end{itemize}

\item {\bf Experimental Setting/Details}
    \item[] Question: Does the paper specify all the training and test details (e.g., data splits, hyperparameters, how they were chosen, type of optimizer, etc.) necessary to understand the results?
    \item[] Answer: \answerYes{} 
    \item[] Justification: Section~\ref{sec:experimetns_main}
    \item[] Guidelines:
    \begin{itemize}
        \item The answer NA means that the paper does not include experiments.
        \item The experimental setting should be presented in the core of the paper to a level of detail that is necessary to appreciate the results and make sense of them.
        \item The full details can be provided either with the code, in appendix, or as supplemental material.
    \end{itemize}

\item {\bf Experiment Statistical Significance}
    \item[] Question: Does the paper report error bars suitably and correctly defined or other appropriate information about the statistical significance of the experiments?
    \item[] Answer: \answerYes{} 
    \item[] Justification: We run experiments with different seeds and plot averages to reduce noise factors (see the description in Sections~\ref{sec:core_m3} and \ref{sec:autoencode}).
    \item[] Guidelines:
    \begin{itemize}
        \item The answer NA means that the paper does not include experiments.
        \item The authors should answer "Yes" if the results are accompanied by error bars, confidence intervals, or statistical significance tests, at least for the experiments that support the main claims of the paper.
        \item The factors of variability that the error bars are capturing should be clearly stated (for example, train/test split, initialization, random drawing of some parameter, or overall run with given experimental conditions).
        \item The method for calculating the error bars should be explained (closed form formula, call to a library function, bootstrap, etc.)
        \item The assumptions made should be given (e.g., Normally distributed errors).
        \item It should be clear whether the error bar is the standard deviation or the standard error of the mean.
        \item It is OK to report 1-sigma error bars, but one should state it. The authors should preferably report a 2-sigma error bar than state that they have a 96\% CI, if the hypothesis of Normality of errors is not verified.
        \item For asymmetric distributions, the authors should be careful not to show in tables or figures symmetric error bars that would yield results that are out of range (e.g. negative error rates).
        \item If error bars are reported in tables or plots, The authors should explain in the text how they were calculated and reference the corresponding figures or tables in the text.
    \end{itemize}

\item {\bf Experiments Compute Resources}
    \item[] Question: For each experiment, does the paper provide sufficient information on the computer resources (type of compute workers, memory, time of execution) needed to reproduce the experiments?
    \item[] Answer: \answerYes{} 
    \item[] Justification: Section~\ref{sec:experimetns_main}
    \item[] Guidelines:
    \begin{itemize}
        \item The answer NA means that the paper does not include experiments.
        \item The paper should indicate the type of compute workers CPU or GPU, internal cluster, or cloud provider, including relevant memory and storage.
        \item The paper should provide the amount of compute required for each of the individual experimental runs as well as estimate the total compute. 
        \item The paper should disclose whether the full research project required more compute than the experiments reported in the paper (e.g., preliminary or failed experiments that didn't make it into the paper). 
    \end{itemize}
    
\item {\bf Code Of Ethics}
    \item[] Question: Does the research conducted in the paper conform, in every respect, with the NeurIPS Code of Ethics \url{https://neurips.cc/public/EthicsGuidelines}?
    \item[] Answer: \answerYes{} 
    \item[] Justification: We have reviewed the code of ethics, and we are confident that our paper is in compliance with it.
    \item[] Guidelines:
    \begin{itemize}
        \item The answer NA means that the authors have not reviewed the NeurIPS Code of Ethics.
        \item If the authors answer No, they should explain the special circumstances that require a deviation from the Code of Ethics.
        \item The authors should make sure to preserve anonymity (e.g., if there is a special consideration due to laws or regulations in their jurisdiction).
    \end{itemize}

\item {\bf Broader Impacts}
    \item[] Question: Does the paper discuss both potential positive societal impacts and negative societal impacts of the work performed?
    \item[] Answer: \answerNA{} 
    \item[] Justification: Our work considers a mathematical problem from machine learning.
    \item[] Guidelines:
    \begin{itemize}
        \item The answer NA means that there is no societal impact of the work performed.
        \item If the authors answer NA or No, they should explain why their work has no societal impact or why the paper does not address societal impact.
        \item Examples of negative societal impacts include potential malicious or unintended uses (e.g., disinformation, generating fake profiles, surveillance), fairness considerations (e.g., deployment of technologies that could make decisions that unfairly impact specific groups), privacy considerations, and security considerations.
        \item The conference expects that many papers will be foundational research and not tied to particular applications, let alone deployments. However, if there is a direct path to any negative applications, the authors should point it out. For example, it is legitimate to point out that an improvement in the quality of generative models could be used to generate deepfakes for disinformation. On the other hand, it is not needed to point out that a generic algorithm for optimizing neural networks could enable people to train models that generate Deepfakes faster.
        \item The authors should consider possible harms that could arise when the technology is being used as intended and functioning correctly, harms that could arise when the technology is being used as intended but gives incorrect results, and harms following from (intentional or unintentional) misuse of the technology.
        \item If there are negative societal impacts, the authors could also discuss possible mitigation strategies (e.g., gated release of models, providing defenses in addition to attacks, mechanisms for monitoring misuse, mechanisms to monitor how a system learns from feedback over time, improving the efficiency and accessibility of ML).
    \end{itemize}
    
\item {\bf Safeguards}
    \item[] Question: Does the paper describe safeguards that have been put in place for responsible release of data or models that have a high risk for misuse (e.g., pretrained language models, image generators, or scraped datasets)?
    \item[] Answer: \answerNA{} 
    \item[] Justification:
    \item[] Guidelines:
    \begin{itemize}
        \item The answer NA means that the paper poses no such risks.
        \item Released models that have a high risk for misuse or dual-use should be released with necessary safeguards to allow for controlled use of the model, for example by requiring that users adhere to usage guidelines or restrictions to access the model or implementing safety filters. 
        \item Datasets that have been scraped from the Internet could pose safety risks. The authors should describe how they avoided releasing unsafe images.
        \item We recognize that providing effective safeguards is challenging, and many papers do not require this, but we encourage authors to take this into account and make a best faith effort.
    \end{itemize}

\item {\bf Licenses for existing assets}
    \item[] Question: Are the creators or original owners of assets (e.g., code, data, models), used in the paper, properly credited and are the license and terms of use explicitly mentioned and properly respected?
    \item[] Answer: \answerYes{} 
    \item[] Justification: Section~\ref{sec:experimetns_main}
    \item[] Guidelines:
    \begin{itemize}
        \item The answer NA means that the paper does not use existing assets.
        \item The authors should cite the original paper that produced the code package or dataset.
        \item The authors should state which version of the asset is used and, if possible, include a URL.
        \item The name of the license (e.g., CC-BY 4.0) should be included for each asset.
        \item For scraped data from a particular source (e.g., website), the copyright and terms of service of that source should be provided.
        \item If assets are released, the license, copyright information, and terms of use in the package should be provided. For popular datasets, \url{paperswithcode.com/datasets} has curated licenses for some datasets. Their licensing guide can help determine the license of a dataset.
        \item For existing datasets that are re-packaged, both the original license and the license of the derived asset (if it has changed) should be provided.
        \item If this information is not available online, the authors are encouraged to reach out to the asset's creators.
    \end{itemize}

\item {\bf New Assets}
    \item[] Question: Are new assets introduced in the paper well documented and is the documentation provided alongside the assets?
    \item[] Answer: \answerYes{} 
    \item[] Justification: In the supplementary materials.
    \item[] Guidelines:
    \begin{itemize}
        \item The answer NA means that the paper does not release new assets.
        \item Researchers should communicate the details of the dataset/code/model as part of their submissions via structured templates. This includes details about training, license, limitations, etc. 
        \item The paper should discuss whether and how consent was obtained from people whose asset is used.
        \item At submission time, remember to anonymize your assets (if applicable). You can either create an anonymized URL or include an anonymized zip file.
    \end{itemize}

\item {\bf Crowdsourcing and Research with Human Subjects}
    \item[] Question: For crowdsourcing experiments and research with human subjects, does the paper include the full text of instructions given to participants and screenshots, if applicable, as well as details about compensation (if any)? 
    \item[] Answer: \answerNA{} 
    \item[] Justification: 
    \item[] Guidelines:
    \begin{itemize}
        \item The answer NA means that the paper does not involve crowdsourcing nor research with human subjects.
        \item Including this information in the supplemental material is fine, but if the main contribution of the paper involves human subjects, then as much detail as possible should be included in the main paper. 
        \item According to the NeurIPS Code of Ethics, workers involved in data collection, curation, or other labor should be paid at least the minimum wage in the country of the data collector. 
    \end{itemize}

\item {\bf Institutional Review Board (IRB) Approvals or Equivalent for Research with Human Subjects}
    \item[] Question: Does the paper describe potential risks incurred by study participants, whether such risks were disclosed to the subjects, and whether Institutional Review Board (IRB) approvals (or an equivalent approval/review based on the requirements of your country or institution) were obtained?
    \item[] Answer: \answerNA{} 
    \item[] Justification: 
    \item[] Guidelines:
    \begin{itemize}
        \item The answer NA means that the paper does not involve crowdsourcing nor research with human subjects.
        \item Depending on the country in which research is conducted, IRB approval (or equivalent) may be required for any human subjects research. If you obtained IRB approval, you should clearly state this in the paper. 
        \item We recognize that the procedures for this may vary significantly between institutions and locations, and we expect authors to adhere to the NeurIPS Code of Ethics and the guidelines for their institution. 
        \item For initial submissions, do not include any information that would break anonymity (if applicable), such as the institution conducting the review.
    \end{itemize}

\end{enumerate}

\end{document}